\documentclass{amsart}
\usepackage{amsmath,amsthm,amssymb}
\usepackage{enumerate}
\usepackage{graphicx}
\usepackage{cite}
\usepackage{comment}
\usepackage{oands}
\usepackage{tikz}
\usepackage{changepage}
\usepackage{bbm}
\usepackage{mathtools}
\usepackage[margin=1.15in]{geometry}
\usepackage{hyperref}
\usepackage{appendix}
\usepackage[pagewise,mathlines]{lineno}
\usepackage{multicol}

\theoremstyle{plain}
\newtheorem{thm}{Theorem}[section]

\newtheorem{lem}[thm]{Lemma}
\newtheorem{prop}[thm]{Proposition}

\newtheorem{notation}[thm]{Notation}

\numberwithin{equation}{section}
 
\theoremstyle{definition}
\newtheorem{defn}[thm]{Definition}
\newtheorem{remark}[thm]{Remark}

\def\@rst #1 #2other{#1}

\hypersetup{
    colorlinks=false,
    linktocpage,
    }

\newcommand{\dsb}{\begin{adjustwidth}{2.5em}{0pt}
\begin{footnotesize}}
\newcommand{\dse}{\end{footnotesize}
\end{adjustwidth}}

\newcommand{\ssb}{\begin{adjustwidth}{2.5em}{0pt}}
\newcommand{\sse}{\end{adjustwidth}}

\newcommand{\aryb}{\begin{eqnarray*}}
\newcommand{\arye}{\end{eqnarray*}}
\def\alb#1\ale{\begin{align*}#1\end{align*}}
\def\allb#1\alle{\begin{align}#1\end{align}}
\newcommand{\eqb}{\begin{equation}}
\newcommand{\eqe}{\end{equation}}
\newcommand{\eqbn}{\begin{equation*}}
\newcommand{\eqen}{\end{equation*}}

\newcommand{\BB}{\mathbbm}
\newcommand{\ol}{\overline}
\newcommand{\ul}{\underline}
\newcommand{\op}{\operatorname}

\newcommand{\frk}{\mathfrak}
\newcommand{\eqD}{\overset{d}{=}}
\newcommand{\ep}{\epsilon}
\newcommand{\rta}{\rightarrow}

\newcommand{\wt}{\widetilde}
\newcommand{\wh}{\widehat} 
\newcommand{\mcl}{\mathcal}

\newcommand*\tc[1]{\tikz[baseline=(char.base)]{\node[shape=circle,draw,inner sep=1pt] (char) {#1};}}
\newcommand*\tb[1]{\tikz[baseline=(char.base)]{\node[shape=rectangle,draw,inner sep=2.5pt] (char) {#1};}}

\let\originalleft\left
\let\originalright\right
\renewcommand{\left}{\mathopen{}\mathclose\bgroup\originalleft}
\renewcommand{\right}{\aftergroup\egroup\originalright}

\newcommand*\patchAmsMathEnvironmentForLineno[1]{  \expandafter\let\csname old#1\expandafter\endcsname\csname #1\endcsname
  \expandafter\let\csname oldend#1\expandafter\endcsname\csname end#1\endcsname
  \renewenvironment{#1}     {\linenomath\csname old#1\endcsname}     {\csname oldend#1\endcsname\endlinenomath}}\newcommand*\patchBothAmsMathEnvironmentsForLineno[1]{  \patchAmsMathEnvironmentForLineno{#1}  \patchAmsMathEnvironmentForLineno{#1*}}\AtBeginDocument{\patchBothAmsMathEnvironmentsForLineno{equation}\patchBothAmsMathEnvironmentsForLineno{align}\patchBothAmsMathEnvironmentsForLineno{flalign}\patchBothAmsMathEnvironmentsForLineno{alignat}\patchBothAmsMathEnvironmentsForLineno{gather}\patchBothAmsMathEnvironmentsForLineno{multline}}

\title[Scaling limits for the FK model II]{Scaling limits for the critical Fortuin-Kasteleyn model on a random planar map II: local estimates and empty reduced word exponent} 
 
\author{Ewain Gwynne}
 
\author{Xin Sun}

\subjclass[2010]{Primary 60F17, 60G50; Secondary 82B27}
\keywords{Fortuin-Kasteleyn model, random planar maps, hamburger-cheeseburger bijection, random walks in cones, scaling limits, local limit theorems, Liouville quantum gravity}

\address{Department of Mathematics\\
  Massachusetts Institute of Technology\\
  Cambridge, MA 02139}
\email{ewain@mit.edu \\  xinsun89@math.mit.edu}

\begin{document}

\begin{abstract}
We continue our study of the inventory accumulation introduced by Sheffield (2011), which encodes a random planar map decorated by a collection of loops sampled from the critical Fortuin-Kasteleyn (FK) model. We prove various \emph{local estimates} for the inventory accumulation model, i.e., estimates for the precise number of symbols of a given type in a reduced word sampled from the model. Using our estimates, we obtain the scaling limit of the associated two-dimensional random walk conditioned on the event that it stays in the first quadrant for one unit of time and ends up at a particular position in the interior of the first quadrant. 
We also obtain the exponent for the probability that a word of length $2n$ sampled from the inventory accumulation model corresponds to an empty reduced word, which is equivalent to an asymptotic formula for the partition function of the critical FK planar map model.
The estimates of this paper will be used in a subsequent paper to obtain the scaling limit of the lattice walk associated with a finite-volume FK planar map.
\end{abstract}
 
\maketitle

\tableofcontents

\section{Introduction}
\label{sec-intro}

\subsection{Overview}

For $q>0$, a \emph{(critical) Fortuin-Kasteleyn (FK) planar map} of size $n\in\BB N$ is a pair $(M , S)$ consisting of a planar map $M$ and a subset $S$ of the edge set of $M$, sampled from the uniform distribution on such pairs weighted by the partition function of the critical Fortuin-Kasteleyn model on $M$. The set $S$ can equivalently be described by means of the collection $\mcl L$ of loops which separate connected components of $S$ from connected components of the set $S^*$ of edges in the dual map $M^*$ which do not cross edges of $S$. In~\cite{shef-burger}, Sheffield introduces a method to encode an FK planar map in terms of a certain random word in an alphabet of five symbols. This word, in turn, gives rise to a two-dimensional lattice walk. This encoding is called the \emph{hamburger-cheeseburger bijection} because it has a natural interpretation as the inventory accumulation process of a certain burger restaurant. The hamburger-cheeseburger bijection generalizes a bijection due to Mullin~\cite{mullin-maps} (see also~\cite{bernardi-maps}) and is equivalent for a fixed choice of map $M$ to the bijection described in~\cite[Section 4]{bernardi-sandpile}.

 In addition to its interest as a tool for studying FK planar maps, the hamburger-cheeseburger bijection serves as the main source of discrete intuition behind the recent works~\cite{wedges,sphere-constructions}  
which introduce the ``peanosphere construction" to encode a conformal loop ensemble ($\op{CLE}_\kappa$)~\cite{shef-cle,shef-werner-cle,ig1,ig2,ig3,ig4} on a Liouville quantum gravity (LQG) surface with parameter $\gamma = 4/\sqrt\kappa$\cite{shef-kpz,shef-zipper,wedges}. The main result in \cite{shef-burger} is that the lattice walk constructed from the word associated with an infinite-volume FK-weighted random planar map converges in the scaling limit to a positively correlated two-sided Brownian motion (see~\cite[Theorem 2.5]{shef-burger}). This Brownian motion has the same correlation as the Brownian motion appearing in the peanosphere construction in~\cite[Theorem 1.13]{wedges} when the FK paramter satisfies $q = 2 + 2 \cos(8\pi/\kappa)$. Hence \cite{shef-burger} can be seen as a scaling limit result from FK planar maps to CLE-decorated LQG in a certain topology, namely the one in which two loop-decorated surfaces are said to be close if the two-dimensional paths which encode them are close. 

There have been several recent works regarding the hamburger-cheeseburger approach to critical FK planar maps. In the article~\cite{gms-burger-cone} the present authors and C. Mao improve the topology in the scaling limit result of \cite{shef-burger} by proving a statement which implies, among other things, the convergence of the quantum areas and quantum lengths associated with macroscopic FK loops. The authors of~\cite{blr-exponents} identify the tail exponents for the laws of several quantities associated with FK loops (\cite{gms-burger-cone} independently proves that the tail is actually regularly varying with the same exponent for several of these quantities). The work~\cite{chen-fk} studies the infinite-volume version of the hamburger-cheeseburger bijection. The paper~\cite{sun-wilson-unicycle} studies the sandpile model and unicycles on a random planar map using the hamburger-cheeseburger bijection. 

We note that in the special case of a uniform planar map (without loop decoration), there is also another approach based on the bijection of Schaeffer~\cite{schaeffer-bijection}, which has met with substantial success in showing that the scaling limit of uniform random planar maps is a continuum metric space called the \emph{Brownian map}~\cite{legall-uniqueness,miermont-brownian-map,bjm-uniform}. The recent works~\cite{sphere-constructions,tbm-characterization,lqg-tbm1,lqg-tbm2,lqg-tbm3} (some of which are still in preparation) construct a metric on LQG for $\gamma = \sqrt{8/3}$ under which it is isometric to the Brownian map.

In this paper, we continue the theme of 
\cite{gms-burger-cone} by studying the fine asymptotic properties of the word associated with a critical FK planar map. In particular, we will prove a variety of \emph{local estimates} which give us up-to-constants asymptotics for the probability that the reduced word corresponding to a word sampled from the inventory accumulation model contains a \emph{particular} number of symbols of each type. Such estimates play a crucial role in the study of small-scale events associated with the inventory accumulation model, e.g.\ the event that the associated lattice walk ends up at a particular point after a given amount of time.
Local estimates are not proven in the works~\cite{shef-burger,gms-burger-cone,blr-exponents}, which focus mainly on the behavior of the word at large scales. The starting point of the proofs of our local estimates is the bivariate local limit theorem of Doney~\cite{doney-bivariate}. 

As an application of our estimates, in Theorem \ref{thm-local-conv} we will prove that if we condition on the lattice walk in the hamburger-cheeseburger model to stay in the first quadrant for a certain amount of time and end up at a fix interior point, then the walk converges in the scaling limit to a correlated Brownian bridge conditioned on staying in the first quadrant. As another application, in Theorem~\ref{thm-empty-prob} we obtain the exact exponent of the probability that a word of length $2n$ reduces to the empty word. This latter estimate is equivalent to a certain estimate for the partition function of the critical FK planar map model, up to an error of $n^{o_n(1)}$; see Section~\ref{sec-partition-function}.

The estimates established in this paper will also be used in the forthcoming work~\cite{gms-burger-finite}, in which we will prove analogues of the scaling limit results of~\cite{shef-burger,gms-burger-cone} for the finite-volume version of the model in~\cite{shef-burger} (which is encoded by a word of length $2n$ conditioned to reduce to the empty word); and in~\cite{gwynne-miller-cle}, in which the first author and J. Miller prove convergence of the full topological structure of FK planar maps to that of a conformal loop ensemble on an independent Liouville quantum gravity surface.
\bigskip

\noindent{\bf Acknowledgments}
We thank Ga\"etan Borot, Nina Holden, Cheng Mao, Jason Miller, and Scott Sheffield for helpful discussions. We thank an anonymous referee for many helpful comments on an earlier version of this article. We thank the Isaac Newton Institute for its hospitality during part of our work on this project. The first author was supported by the U.S. Department of Defense via an NDSEG fellowship. The second author was partially supported by NSF grant DMS-1209044. 
 
\subsection{Notation}
\label{sec-burger-prelim}
In this section we will introduce some notation which will remain fixed throughout the paper. This notation is in agreement with that used in~\cite{gms-burger-cone}. 

\subsubsection{Basic notation}
   

\begin{notation} \label{def-discrete-intervals}
For $a < b \in \BB R$, we define the discrete intervals $[a,b]_{\BB Z} := [a, b]\cap \BB Z$ and $(a,b)_{\BB Z} := (a,b)\cap \BB Z$. 
\end{notation}

\begin{notation}\label{def-asymp}
If $a$ and $b$ are two quantities, we write $a\preceq b$ (resp. $a \succeq b$) if there is a constant $C$ (independent of the parameters of interest) such that $a \leq C b$ (resp. $a \geq C b$). We write $a \asymp b$ if $a\preceq b$ and $a \succeq b$. 
\end{notation}

\begin{notation} \label{def-o-notation}
If $a$ and $b$ are two quantities which depend on a parameter $x$, we write $a = o_x(b)$ (resp. $a = O_x(b)$) if $a/b \rta 0$ (resp. $a/b$ remains bounded) as $x \rta 0$ (or as $x\rta\infty$, depending on context). We write $a = o_x^\infty(b)$ if $a = o_x(b^s)$ for each $s \in\BB R$. 
\end{notation}

Unless otherwise stated, all implicit constants in $\asymp, \preceq$, and $\succeq$ and $O_x(\cdot)$ and $o_x(\cdot)$ errors involved in the proof of a result are required to satisfy the same dependencies as described in the statement of said result.

\subsubsection{Inventory accumulation model} 
\label{sec-burger-prelim'}
 
Let $p\in (0,1/2)$. We will always treat $p$ as fixed and do not make dependence on $p$ explicit. As explained in~\cite[Section 4.2]{shef-burger}, the parameter $p$ corresponds to an FK-weighted map of parameter $q=4p^2/(1-p)^2$, which is conjectured to converge in the scaling limit to a $\gamma$-LQG surface decorated by a $\op{CLE}_\kappa$ with $\kappa \in (4,8)$ and $\gamma \in (0,2)$ satisfying
\eqb \label{eqn-p-kappa}
p = \frac{\sqrt{2 + 2\cos (8\pi/\kappa)}}{2 + \sqrt{2 + 2\cos (8\pi/\kappa)}} \quad \op{and} \quad  \gamma  = \frac{16}{\kappa} .
\eqe

Let $\Theta:= \{\tc{H} , \tc{C} , \tb{H} , \tb{C} , \tb{F}\}$. We view elements of $\Theta$ as representing a hamburger, a cheeseburger, a hamburger order, a cheeseburger order, and a flexible order, respectively. The set $\Theta$ generates a semigroup, which consists of the set of all finite words in elements of $\Theta$, modulo the relations
\eqb \label{eqn-theta-ful}
\tc C \tb C = \tc H \tb H = \tc C \tb F = \tc H \tb F = \emptyset  \quad \text{(order fulfilment)}
\eqe
 and
\eqb\label{eqn-theta-com}
\tc C \tb H = \tb H \tc C ,\qquad \tc H \tb C = \tb C \tc H \quad \text{(commutativity)} .
\eqe 
Given a word $x$ consisting of elements of $\Theta$, we denote by $\mathcal R(x)$ the word reduced modulo the above relations, with all burgers to the right of all orders. We also write $|x|$ for the number of symbols in $x$. We definite a probability measure on $\Theta$ by
\eqb \label{eqn-theta-prob}
\BB P\left(\tc{H}\right) = \BB P\left(     \tc{C} \right) = \frac14 ,\quad  \BB P\left( \tb{H} \right) = \BB P\left( \tb{C} \right) = \frac{1-p}{4} ,\quad \BB P\left(\tb{F} \right) =  \frac{p}{2} .
\eqe 
 
Let $X = \dots X_{-1} X_0 X_1 \dots$ be an infinite word with each symbol sampled independently according to the probabilities~\eqref{eqn-theta-prob}. For $a \leq b\in \BB R$, let
\eqb \label{eqn-X(a,b)}
X(a,b) := \mcl R\left(X_{\lfloor a\rfloor} \dots X_{\lfloor b\rfloor} \right) .
\eqe 
We adopt the convention that $X(a,b) = \emptyset$ if $b < a$.  
 
By \cite[Proposition 2.2]{shef-burger}, it is a.s.\ the case that the ``infinite reduced word" $X(-\infty,\infty)$ is empty, i.e.\ each symbol $X_i$ in the word $X$ has a unique match which cancels it out in the reduced word.

\begin{notation}\label{def-match-function}
For $i \in \BB Z$ we write $\phi(i)$ for the index of the match of $X_i$. 
\end{notation}

\begin{notation} \label{def-theta-count}
For $\theta\in \Theta$ and a word $x$ consisting of elements of $\Theta$, we write $\mcl N_{\theta}(x)$ for the number of $\theta$-symbols in $x$. We also let
\alb 
d(x)  := \mcl N_{\tc H}(x) - \mcl N_{\tb H}(x) ,\quad 
d^*(x) := \mcl N_{\tc C}(x) - \mcl N_{\tb C}(x),\quad 
D(x)  := \left(d(x) , d^*(x)\right) .
\ale
\end{notation}

The reason for the notation $d$ and $d^*$ is that these functions (applied to segments of the word $Y$, defined just below) give the distances from the root edge in the tree and dual tree which encode the collection of loops in the bijection of \cite[Section 4.1]{shef-burger}. 
  
For $i\in\BB Z$, we define $Y_i = X_i$ if $X_i \in \{\tc{H} , \tc{C} , \tb{H} ,  \tb{C}\}$; $Y_i = \tb{H}$ if $X_i = \tb F$ and $X_{\phi(i)} = \tc{H}$; and $Y_i = \tb{C}$ if $X_i = \tb F$ and $X_{\phi(i)} = \tc{C}$. For $a\leq b \in \BB R$, define $Y(a,b)$ as in~\eqref{eqn-X(a,b)} with $Y$ in place of $X$.  
 
For $n \geq0$, define $ d(n) =  d(Y(1,n))$ and for $n<0$, define $ d(n) = -  d(Y( n+1 , 0))$. Define $d^*(n)$ similarly. Extend each of these functions from $\BB Z$ to $\BB R$ by linear interpolation. 
Let 
\eqb \label{eqn-discrete-path}
  D(t) := (d(t) , d^*(t)) .
\eqe  
For $n\in\BB N$ and $t\in \BB R$, let 
\eqb \label{eqn-Z^n-def}
U^n(t) := n^{-1/2} d (n t) ,\quad V^n(t) := n^{-1/2} d^*(n  t) , \quad Z^n(t) := (U^n(t) , V^n(t) ) .
\eqe 
We note that the condition that $X(1,2n) =\emptyset$ is equivalent to the condition that $Z^n([0,2]) \subset [0,\infty)^2$ and $Z^n(2) = 0$. 

Let $Z= (U,V)$ be a two-sided two-dimensional Brownian motion with $Z(0) = 0$ and variances and covariances at each time $t\in \BB R$ given by 
\eqb \label{eqn-bm-cov}
\op{Var}(U(t) ) = \frac{1-p}{2} |t| \quad \op{Var}(V(t)) = \frac{1-p}{2} |t| \quad \op{Cov}(U(t) , V(t) ) = \frac{p}{2} |t| .
\eqe
It is shown in \cite[Theorem 2.5]{shef-burger} that as $n\rta \infty$, the random paths $Z^n$ defined in~\eqref{eqn-Z^n-def} converge in law in the topology of uniform convergence on compacts to the random path $Z$ of~\eqref{eqn-bm-cov}.

There are several stopping times for the word $X$ which we will use throughout this paper. Namely, let
\eqb \label{eqn-I-def}
I := \inf\left\{i \in \BB N \,:\, \text{$X(1,i)$ contains an order}\right\} ,
\eqe
so that $I$ is a stopping time for $X$, read forward, and $\{I > n\}$ is the event that $X(1,n)$ contains no orders. For $m\in\BB N$, let
\eqb \label{eqn-J^H-def}
J_m^H := \inf\left\{j \in \BB N \,:\, \mcl N_{\tc H}\left(X(-j,-1)\right) = m\right\} ,\quad L_m^H := d^*\left(X(-J_m^H,-1)\right) 
\eqe
be, respectively, the $m$th time a hamburger is added to the stack when we read $X$ backward and the number of cheeseburgers minus the number of cheeseburger orders in $X(-J_m^H,-1)$. Define $J_m^C$ and $L_m^C$ similarly with the roles of hamburgers and cheeseburgers interchanged. 
Then $J_m^H$ and $J_m^C$ are stopping times for $X$, read backward. 
Furthermore, by the strong Markov property that words $X_{-J_m^H} \dots X_{-J_{m-1}^H-1}$ for $m\in\BB N$ are iid, and each of the reduced words $X(-J_m^H,-J_{m-1}^H-1)$ contains exactly one hamburger and no hamburger orders or flexible orders. 
 
Let 
\eqb \label{eqn-cone-exponent}
 \mu := \frac{\pi}{2\left( \pi - \arctan \frac{\sqrt{1-2p} }{p} \right) }  =  \frac{\kappa}{8} \in (1/2,1) ,  \quad \mu' := \frac{\pi}{2\left( \pi + \arctan \frac{\sqrt{1-2p} }{p} \right) }   = \frac{\kappa }{4(\kappa -2)} \in (1/3,1/2) ,
\eqe 
where here $p$ and $\kappa$ are related as in~\eqref{eqn-p-kappa}. The parameters $\mu$ and $\mu'$ appear as exponents for several probabilities related to the inventory accumulation model studied in this paper. See~\cite{gms-burger-cone,blr-exponents} as well as the later results of this paper.

\subsection{Statements of main results}
\label{sec-main-results}

Here we will state the main results of this article. The following event will play a key role throughout the paper. 

\begin{defn} \label{def-local-event}
For $n , h , c \in \BB N$, we denote by $\mcl E_n^{h,c}$ the event that $X(1,n)$ contains no orders (i.e.\ $I>n$), $h$ hamburgers, and $c$ cheeseburgers.
\end{defn}

In terms of the path $Z^n$ of~\eqref{eqn-Z^n-def}, $\mcl E_n^{h,c}$ is the event that $Z^n$ stays in the first quadrant for one unit of time and satisfies $Z^n(1) = (n^{-1/2} h , n^{-1/2} c)$. 
Our first main result is the following scaling limit result for the path $Z^n$ of~\eqref{eqn-Z^n-def} conditioned on the event $\mcl E_n^{h,c}$. 

\begin{thm} \label{thm-local-conv}
Fix $C>1$. For each $\ep > 0$, there exists $n_* \in \BB N$ such that the following is true. For each $n\geq n_*$ and each $(h,c) \in \left[C^{-1} n^{1/2} , C n^{1/2}\right]_{\BB Z}^2$, the Prokhorov distance (with respect to the uniform metric) between the conditional law of $Z^n$ given the event $\mcl E_n^{h,c}$ of Definition~\ref{def-local-event} and the law of a correlated Brownian motion $Z$ as in~\eqref{eqn-bm-cov} conditioned to stay in the first quadrant for one unit of time and satisfy $Z(1) = (n^{-1/2} h , n^{-1/2} c)$ is at most $\ep$. 
\end{thm}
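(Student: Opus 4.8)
The plan is to prove this via a two-step "local + global" scheme, which is the standard route for upgrading a local limit theorem to a conditioned path convergence. First I would establish the relevant \emph{local estimates}: sharp (up-to-constants, and in fact asymptotic) bounds for $\BB P(\mcl E_n^{h,c})$ when $(h,c)$ ranges over $[C^{-1} n^{1/2}, Cn^{1/2}]_{\BB Z}^2$, together with analogous two-sided estimates controlling the word $X$ run forward past time $n$ and backward from time $0$. The starting point here is Doney's bivariate local limit theorem \cite{doney-bivariate} applied to the two-dimensional walk $D$, combined with the cone-survival exponent $\mu$ from \eqref{eqn-cone-exponent}; the subtlety is that $\mcl E_n^{h,c}$ is simultaneously a cone-confinement event \emph{and} a pinning event at an interior point, so the probability should factor (up to constants) as a cone-survival term times a local pinning term $\asymp n^{-1}$. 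I expect this factorization to follow by decomposing $[1,n]$ into $[1, n/2]$ and $[n/2, n]$, using the strong Markov property of the (reduced) word, the fact that the walk at the midpoint is with uniformly positive probability well-inside the quadrant at distance $\asymp n^{1/2}$ from the boundary, and then running a cone LCLT on each half. These estimates are exactly the "local estimates" the abstract promises, so I would state and prove them as standalone lemmas first.

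Second, with the local estimates in hand, I would prove the path convergence by a coupling/absolute-continuity argument relative to the \emph{global} (unconditioned) scaling limit of \cite[Theorem 2.5]{shef-burger}. Concretely: pick a small $\delta > 0$; condition instead on the event $\mcl F$ that $X(1,n)$ contains no orders and $Z^n(1)$ lies in the $\delta$-ball around $(n^{-1/2}h, n^{-1/2}c)$ (a "fattened" version of $\mcl E_n^{h,c}$). On $[0, 1-\delta']$ the conditioned law of $Z^n$ is comparable to that of $Z^n$ conditioned only to stay in the quadrant and have $Z^n(1-\delta')$ near a boundary-interior point, and Tanaka/Shimura-type results (or an independent input on convergence of cone-confined walks to cone-confined Brownian motion, which one may quote or prove via \cite{shef-burger} plus a boundary-hitting estimate) give convergence to the Brownian motion conditioned to stay in the first quadrant. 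Then the remaining piece on $[1-\delta', 1]$ is handled by the Markov property: given $Z^n(1-\delta')$, the conditional law of the increment over $[1-\delta', 1]$ conditioned to land exactly at the target point is, by the local estimates of step one (now run on a window of size $\delta' n$), within $o_\delta(1)$ Prokhorov distance of the corresponding Brownian bridge increment conditioned to stay in the quadrant. Sending $\delta, \delta' \to 0$ after $n \to \infty$, and using that the target point $(n^{-1/2}h, n^{-1/2}c)$ lies in a compact subset of the open quadrant interior uniformly in $n$, closes the argument; the uniformity over $(h,c) \in [C^{-1}n^{1/2}, Cn^{1/2}]^2_{\BB Z}$ comes for free because all the estimates above are uniform over this range.

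One technical point worth isolating: the object "$Z$ conditioned to stay in the first quadrant for one unit of time and satisfy $Z(1) = z$" must be defined as a genuine probability measure (it is a degenerate conditioning), e.g.\ via an $h$-transform using the quadrant-survival harmonic function, or as a weak limit of the $\delta$-fattened conditionings; I would fix this definition up front and check the $\delta$-fattened discrete conditionings converge to it, so that the two limiting procedures (discrete $\to$ continuum, $\delta \to 0$) commute.

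\medskip

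\noindent\textbf{Main obstacle.} I expect the hard part to be the first step — proving that $\BB P(\mcl E_n^{h,c})$ (and its two-sided refinements) has the right order $\asymp n^{-1} \times (\text{cone term})$ \emph{uniformly} for $(h,c)$ throughout the bulk range $[C^{-1}n^{1/2}, Cn^{1/2}]^2_{\BB Z}$. Doney's LCLT gives the interior pinning behavior but must be reconciled with the presence of the boundary constraint $I > n$; getting matching upper and lower bounds requires a careful decomposition of the word at an intermediate time together with quantitative control (of both the survival probability and the position) of cone-confined walks, and the combinatorics of the reduced-word Markov chain (tracking the burger stack and whether a flexible order resolves to $\tb H$ or $\tb C$) makes the underlying walk only approximately a random walk, so Doney's theorem is applied after an appropriate reduction. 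Everything in step two is then soft once these local estimates, and the comparison with \cite[Theorem 2.5]{shef-burger}, are available.
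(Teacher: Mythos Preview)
Your high-level architecture (sharp local estimates for $\BB P(\mcl E_n^{h,c})$, then an absolute-continuity argument transferring the known cone-confined convergence to the pinned law, then a Markovian patching of a short terminal segment) is exactly the route the paper takes. But there is a genuine gap in how you propose to get the local estimates, and it is precisely the ``appropriate reduction'' you flag but do not specify.

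You propose to apply Doney's bivariate LCLT to the two-dimensional walk $D$ and to decompose the time interval at a deterministic midpoint. This does not work directly: the increments of $D$ are \emph{not} iid, because a flexible order $\tb F$ resolves to $\tb H$ or $\tb C$ depending on the top of the burger stack, which depends on the entire history. So there is no stable law whose domain of attraction $D$ lives in, and Doney's theorem has no object to act on. The paper's resolution---and the idea your proposal is missing---is to change variables. Read the word backward from $-1$ and let $J_m^H$ be the $m$th time a hamburger appears; then $X(-J_m^H,-1)$ contains no flexible orders, and the pairs $(J_m^H-J_{m-1}^H,\,L_m^H-L_{m-1}^H)$ \emph{are} iid with a bivariate stable limit (index $(1/2,1)$). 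Doney's LCLT applies to these. To connect back to $\mcl E_n^{h,c}$, one decomposes not at a deterministic time but at the random time $K_{n,m_h^\delta}^H$, the last $i\le n$ at which the hamburger count equals $\lfloor(1-\delta)h\rfloor$; conditionally on $X_1\dots X_{K_{n,m_h^\delta}^H}$, the event $\mcl E_n^{h,c}$ becomes exactly a statement about $(J_r^H,L_r^H)$ for the word read backward from $n$, and this is where the LCLT bites. The continuity/regularity lemmas you anticipate (your step two) then go through using this same random-time decomposition rather than a deterministic one.

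In short: your outline is correct, but the specific reduction---from the non-Markovian word to an iid sequence amenable to Doney---is the whole technical content of the proof, and a deterministic midpoint split will not supply it.
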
 

See Section~\ref{sec-bm-cond} for a precise definition of a Brownian motion under the conditioning of Theorem~\ref{thm-local-conv}. Theorem~\ref{thm-local-conv} implies in particular that for any sequence of pairs $(h_n , c_n) \in \BB N^2$ such that $n^{-1/2} h_n \rta u > 0$ and $n^{-1/2} c_n \rta v > 0$, the conditional law of $Z^n$ given $\mcl E_n^{h_n,c_n}$ converges as $n\rta\infty$ to a correlated Brownian motion $Z$ as in~\eqref{eqn-bm-cov} conditioned to stay in the first quadrant for one unit of time and satisfy $Z(1) = (u,v)$. 
Theorem~\ref{thm-local-conv} extends the scaling limit results~\cite[Theorem 2.5]{shef-burger} (for the unconditioned law of $Z^n$) and~\cite[Theorem A.1]{gms-burger-cone} (for the path $Z^n$ conditioned to stay in the first quadrant, but without its location at time 1 specified). 
  
If we could allow $(h,c) = (0,0)$ in Theorem~\ref{thm-local-conv}, we would obtain convergence of the path $Z^n$ in the finite-volume version of Sheffield's bijection, which corresponds to a random planar map on the sphere. However, treating this case will take quite a bit of additional work, both because of the additional conditioning near the tip of the path (it has to stay in the first quadrant despite being very close to the origin) and because difficulties resulting from the presence of flexible orders. The $(h,c) = (0,0)$ case will be treated in the sequel~\cite{gms-burger-finite} to this paper. Theorem~\ref{thm-local-conv} is in some sense an intermediate step toward a proof of convergence of the path $Z^n$ conditioned on empty reduced word, but we will actually use only the estimates involved in the proof of Theorem~\ref{thm-local-conv} in~\cite{gms-burger-finite}, not the statement of Theorem~\ref{thm-local-conv} itself. 

Theorem~\ref{thm-local-conv} is also noteworthy in that it gives a scaling limit statement for a non-Markovian random walk conditioned to stay in a cone and end up at a particular point. We remark that there is a body of existing literature concerning scaling limits of random walks in dimension $\geq 2$ with independent increments conditioned to stay in a cone. See~\cite{shimura-cone-walk,garbit-cone-walk,dw-cones,dw-limit} and the references therein.  

\begin{remark}
As a consequence of \cite[Theorem 1.8]{gms-burger-cone} and Theorem~\ref{thm-local-conv}, one can also obtain an analogue of the cone time convergence statement \cite[Theorem 1.8]{gms-burger-cone} in the setting of Theorem~\ref{thm-local-conv}. A very similar argument will be given in~\cite{gms-burger-finite} to prove the analogous statement when we condition on $\{X(1,2n) = \emptyset\}$, so we do not give the details here. 
\end{remark}
  
Our second main result gives the exponent for the probability of the event that a word of length $2n$ sampled according to the probabilities~\eqref{eqn-theta-prob} reduces to the empty word. 
 
\begin{thm} \label{thm-empty-prob}
For $n\in\BB N$, we have
\eqbn
\BB P\left(X(1,2n) = \emptyset \right) = n^{-1-2\mu + o_n(1)} ,
\eqen
with $\mu$ as in~\eqref{eqn-cone-exponent}.
\end{thm}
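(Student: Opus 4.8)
The plan is to compute $\BB P(X(1,2n) = \emptyset)$ by summing over the possible values $(h,c)$ of the burger counts at the midpoint time $n$. The key observation is the decomposition: $X(1,2n) = \emptyset$ if and only if $X(1,n)$ reduces to a word consisting of $h$ hamburgers and $c$ cheeseburgers (no orders) for some $(h,c)$, and $X(n+1,2n)$, read backward, reduces to a word consisting of exactly $h$ hamburger orders and $c$ cheeseburger orders (no burgers), with the two pieces matching up. By the independence of disjoint blocks of the word and the symmetry of the law under reversal, for each fixed $(h,c)$ the probability that this happens factors (up to combinatorial bookkeeping for interleaving the orders of the two burger types) essentially as $\BB P(\mcl E_n^{h,c})^2$ times a polynomial correction, so that
\eqbn
\BB P\left(X(1,2n) = \emptyset\right) = \sum_{h,c \geq 0} \BB P\left(\mcl E_n^{h,c}\right)^2 \cdot (\text{correction}) .
\eqen
So the first step is to reduce Theorem~\ref{thm-empty-prob} to estimating $\BB P(\mcl E_n^{h,c})$ for all relevant $(h,c)$, which is precisely the type of local estimate the paper is built to provide.

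The second step is to establish the up-to-polynomial asymptotics for $\BB P(\mcl E_n^{h,c})$. The exponent should come from the probability that the walk $Z^n$ stays in the first quadrant for one unit of time; by the cone exponent results of \cite{gms-burger-cone,blr-exponents}, the probability of staying in the quadrant for time $n$ is $n^{-\mu + o_n(1)}$, and requiring the endpoint to land near a specified interior point of the quadrant at scale $n^{1/2}$ costs an additional factor of order $n^{-1}$ (two one-dimensional local-limit factors of $n^{-1/2}$ each), coming from Doney's bivariate local limit theorem \cite{doney-bivariate} combined with the conditioning. Thus one expects $\BB P(\mcl E_n^{h,c}) = n^{-1-\mu + o_n(1)}$ uniformly for $(h,c)$ in a compact region of scale $n^{1/2}$ away from the boundary, and a suitable upper bound (which degrades near the boundary but not enough to affect the exponent after summation) for all $(h,c)$. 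The key inputs here are the local estimates proven earlier in the paper, the known cone exponent, and a Markov-type/gluing argument to pass from "stays in the quadrant for time $n$" to "stays in the quadrant and ends at a prescribed point", using that after a short initial and final time the walk is macroscopically inside the quadrant with good probability.

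The third step is to carry out the summation. Over the roughly $n$ values of $(h,c)$ with both coordinates of order $n^{1/2}$ — more precisely, the number of lattice points in a region of area $\asymp n$ — each contributing $\left(n^{-1-\mu+o_n(1)}\right)^2 = n^{-2-2\mu+o_n(1)}$, we get a total contribution of $n \cdot n^{-2-2\mu + o_n(1)} = n^{-1-2\mu + o_n(1)}$, which is the claimed exponent. One then has to check that the contributions from $(h,c)$ with a coordinate much smaller than $n^{1/2}$ (near the boundary of the quadrant) or much larger (deep in the tail) do not dominate: for small coordinates the local estimate gives a smaller probability (the walk has to stay near the boundary), and for large coordinates the Gaussian-type tail of the endpoint distribution kills the contribution; since we only want the exponent up to $o_n(1)$, crude bounds suffice for these regimes.

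The main obstacle I expect is the second step — obtaining the two-sided estimate $\BB P(\mcl E_n^{h,c}) = n^{-1-\mu+o_n(1)}$ with enough uniformity in $(h,c)$, and in particular the lower bound. The difficulty is that the underlying walk $D$ is \emph{not} a random walk with i.i.d.\ increments (the definition via the matching function $\phi$ and the treatment of flexible orders $\tb F$ introduce long-range dependence), so Doney's theorem does not apply directly; one must either work with an auxiliary genuinely-i.i.d.\ walk and control the discrepancy, or decompose the word at the stopping times $I$, $J_m^H$, $J_m^C$ and use a renewal-type structure. Once the local estimate is in hand with the correct exponent, the reduction and the summation are comparatively routine, and since the target is only an exponent up to $o_n(1)$ error, one has considerable slack for the polynomial corrections and boundary terms.
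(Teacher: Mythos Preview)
Your midpoint decomposition gives the right heuristic exponent, but the proof as written has a genuine gap at the factorization step, and this gap is exactly the difficulty the paper's multi-scale argument is designed to circumvent.

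First, the claimed factorization $\BB P(\mcl E_n^{h,c})^2$ is not correct. Given $\mcl E_n^{h,c}$, the event $\{X(1,2n)=\emptyset\}$ requires that the reduced word $\mcl R(X(n+1,2n))$ consist only of orders which exactly consume the $h$ hamburgers and $c$ cheeseburgers of $X(1,n)$. Because of the flexible orders $\tb F$, this reduced word need \emph{not} contain exactly $h$ symbols $\tb H$ and $c$ symbols $\tb C$; it may contain $\tb F$'s, and whether those $\tb F$'s consume hamburgers or cheeseburgers depends on the precise interleaving of burgers in $X(1,n)$, not just on $(h,c)$. So the second-half probability does not factor off the first half cleanly, and the model has no reversal symmetry that would let you identify the second-half event with $\mcl E_n^{h,c}$: swapping burgers with orders sends $\tb F$ to nothing.

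Second, and more seriously, even if you reformulate the second-half event as ``the walk started from $(h,c)$ stays in the quadrant for $n$ steps and ends exactly at $(0,0)$'', you now need a local estimate at the \emph{corner} of the cone. The paper explicitly flags this as the hard case: Theorem~\ref{thm-local-conv} and the surrounding local estimates are proved only for endpoints in the interior $[C^{-1}n^{1/2},Cn^{1/2}]^2$, and the paper states that allowing $(h,c)=(0,0)$ ``will take quite a bit of additional work'' and is deferred to the sequel~\cite{gms-burger-finite}. Your upper bound would also need uniform control of $\BB P(\mcl E_n^{h,c})$ down to $h\wedge c = O(1)$, which Proposition~\ref{prop-no-order-upper} does give, but the matching second-half bound is missing.

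The paper avoids the corner entirely by a multi-scale scheme. For the lower bound it builds the event $\{X(-2n,-1)=\emptyset\}$ scale by scale: at scale $\delta^k n$ it forces the remaining burger counts into a window of size $(\delta^k n)^{1/2}$ at cost $\asymp \delta^{1+\mu}$ per step (Lemma~\ref{prop-bm-point-asymp}), and iterates $\asymp \log_{1/\delta} n$ times. For the upper bound it runs the word backward, stopping at times $N_n^k, K_n^k$ where the number of remaining orders drops from $n^{\nu^{k-1}}$ to $n^{\nu^k}$; the key technical inputs are the few-orders local estimate (Proposition~\ref{prop-corner-local}), the bound on flexible orders relative to hamburger orders (Lemma~\ref{prop-few-F-H}), and the anti-tube estimate (Lemma~\ref{prop-no-tube}). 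Each scale contributes a factor $n^{-2(1+\mu)(\nu^{k-1}-\nu^k)}$, and the telescoping product gives $n^{-1-2\mu + o(1)}$. This never requires landing exactly at the origin.
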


Theorem~\ref{thm-empty-prob} confirms a prediction of Sheffield~\cite[Section 4.2]{shef-burger} that $\BB P\left(X(1,2n) = \emptyset \right)$ has polynomial decay, and in fact yields the exact tail exponent. We note that a trivial, but far from optimal, polynomial upper bound of $n^{-3/2+o_n(1)}$ follows from the fact that the total number of burgers in $X(1,i)$ minus the total number of orders in $X(1,i)$ evolves as a simple random walk. 
As we will explain in Section~\ref{sec-partition-function}, Theorem~\ref{thm-empty-prob} is equivalent to a certain estimate for the partition function of critical FK planar maps. This gives an alternative interpretation of the theorem and suggests an alternative proof based on results in the theoretical physics literature. 

In the course of proving Theorems~\ref{thm-local-conv} and~\ref{thm-empty-prob}, we will also prove several local estimates for quantities associated with the inventory accumulation model, i.e.\ estimates for the probability that a certain random reduced word contains a specified number of symbols of a given type. Local estimates like the ones in this paper are necessary for studying finer properties of the model, like questions about the event of Definition~\ref{def-local-event} or the event $\{X(1,2n) =\emptyset\}$. This paper provides a toolbox of local estimates which are applicable whenever one is interested in small scale properties of the word $X$, and illustrates how to use such estimates. 
The local estimates of this paper will also be used in the sequel~\cite{gms-burger-finite} to this work to obtain scaling limit results for the path $Z^n$ of~\eqref{eqn-Z^n-def} when we condition on $\{X(1,2n) =\emptyset\}$. 

Here we summarize the local estimates which are proven in this paper and highlight some particular estimates.
\begin{itemize}
\item In Section~\ref{sec-J^H-local}, we prove local estimates for the pairs $(J_m^H ,L_m^H)$ of~\eqref{eqn-J^H-def}. See in particular Proposition~\ref{prop-J^H-local} for a general estimate for $\BB P\left( (J_m^H ,L_m^H) = (k,l)\right)$ for given $m , k, l \in \BB N$ and Section~\ref{sec-J^H-reg} for extensions of this estimate which either include regularity conditions or concern unusually large or unusually small values of $k$ and $l$. 
\item In Section~\ref{sec-no-order-local} we prove local estimates when we condition on the event that $I >n$ (recall~\eqref{eqn-I-def}), i.e., the reduced word $X(1,n)$ contains no orders. In particular, lower and upper bounds for the probability of the event $\mcl E_n^{h,c}$ of Definition~\ref{def-local-event} are obtained in Propositions~\ref{prop-no-order-lower} and~\ref{prop-no-order-upper}, respectively. 
\item In Section~\ref{sec-few-order-local}, we will prove Proposition~\ref{prop-corner-local}, which gives an estimate for the probability that there exists a time $j\in\BB N$ which is close to $n$ for which the reduced word $X(-j,-1)$ contains few orders and approximately $h$ hamburgers and $c$ cheeseburgers (i.e., the probability of an approximate version of $\mcl E_n^{h,c}$). 
\end{itemize}

\subsection{Partition function}
\label{sec-partition-function}

In this subsection we will explain the relationship between Theorem~\ref{thm-empty-prob} and the partition function for critical FK planar maps. For $n\in\BB N$, let $\mcl M_n$ be the set of pairs $(M , S)$ consisting of a planar map $M$ with $n$ edges and a distinguished subset $S$ of the set of edges of $M$. Also let $K(S)$ be equal to the number of connected components of $S$ plus the number of complementary connected components of $S$. For $q\in (0,4)$, let
\eqb \label{eqn-partition-function}
\mcl Z_n := \sum_{(M , S) \in \mcl M_n} q^{K(S)/2}
\eqe 
be the critical FK planar map partition function.  

\begin{lem}  \label{prop-partition-function-equiv}
Let $q\in (0,4)$ and let $p := \sqrt q/(2+\sqrt q)  \in (0,1/2) $, so that $2p/(1-p) = \sqrt q$. If we sample $X$ as in Section~\ref{sec-burger-prelim'} for this choice of $p$, then 
\eqb \label{eqn-partition-function-equiv}
\mcl Z_n = \frac12 8^n (2+\sqrt q)^n   n^{-1 }  \BB P\left(X(1,2n) = \emptyset \right)   . 
\eqe 
\end{lem}
\begin{proof} 
Let $\mcl M_n'$ be the set of triples $(M , e_0 , S)$ with $(M,S) \in \mcl M_n$ and $e_0$ an oriented root edge for $M$. 
By~\cite{shef-burger}, there is a bijection between $\mcl M_n'$ and the set of words of length $2n$ consisting of elements of $\Theta$ which reduce to the empty word. Given $(M,S) \in \mcl M_n$, let $x = x(M,e_0,S)$ be the corresponding word. The quantity $K(S)$ is equal to the number of loops $\mcl L$ separating clusters and dual clusters on $M$, which in turn is equal to the number of $\tb F$-symbols in $x$. If $X$ is the bi-infinite word from Section~\ref{sec-burger-prelim'}, then 
\eqbn
 \BB P\left( X(1,2n) = \emptyset \right) =  \left( \frac{1-p}{16} \right)^n    \sum_{x : \mcl R(x) =\emptyset} \left(\frac{2p}{1-p} \right)^{  \mcl N_{\tb F}(x)} 
\eqen
where the sum is over all words $x$ of length $2n$ which reduce to the empty word. Note that here we used the probabilities~\eqref{eqn-theta-prob} and the fact that $x$ has $n$ burgers and $n$ orders. Applying Sheffield's bijection, recalling the relationship between $p$ and $q$, and re-arranging gives
\eqbn
  \sum_{(M , e_0 , S) \in \mcl M_n'} q^{K(S)/2}  =   8^n (2+\sqrt q)^n  \BB P\left( X(1,2n) = \emptyset \right)         ,
\eqen
with $\mu$ as in~\eqref{eqn-partition-function}.
The map from $\mcl M_n'$ to $\mcl M_n$ given by forgetting the root is $2n$-to-1, so dividing by $2n$ gives~\eqref{eqn-partition-function}.
\end{proof}

In light of Lemma~\ref{prop-partition-function-equiv}, the statement of Theorem~\ref{thm-empty-prob} is equivalent to the statement that
\eqb \label{eqn-partition-function-asymp}
\mcl Z_n = 8^n (2+\sqrt q)^n   n^{-2-2\mu + o_n(1)} ,\quad \op{where}\: \mu = \frac{\pi}{2(\pi - \arctan\sqrt{  4/q - 1})} .
\eqe 
It was pointed out to us by G. Borot in private communication~\cite{borot-comm} that it should be possible to obtain a stronger version of~\eqref{eqn-partition-function-asymp} (with $(c + o_n(1)) n^{-2-2\mu}$ in place of $n^{-2-2\mu + o_n(1)}$) using various results in the theoretical physics literature.
As explained in~\cite[Sections 2 and 3]{bbg-loop-models}, the critical FK model essentially reduces to the $O(\sqrt q)$ model, so its partition function has the same functional equations and critical exponents. 
The asymptotics of the partition function of the $O(\sqrt q)$ model on a planar map were first computed in the physics literature~\cite{gk-o-n-model,ez-o-n-model} by using random matrix models to non-rigorously derive a certain functional equation which was then solved rigorously. 
The FK model on a planar map is studied in~\cite{bbg-loop-models} from the point of view of analytic combinatorics. There, it is proven rigorously that $\mcl Z_n$ satisfies the functional equation of~\cite{gk-o-n-model,ez-o-n-model} using the results of~\cite[Section 6]{bbg-bending} (this can also be done using~\cite[Appendix A]{borot-eynard-o-n-enumeration}); see~\cite[Equations 3.22-3.23]{bbg-loop-models}. This leads to a rigorous derivation of the aforementioned stronger form of~\eqref{eqn-partition-function-asymp}. 
However, our proof of Theorem~\ref{thm-empty-prob} is more self-contained than this potential approach (we use only the results of~\cite{shef-burger,gms-burger-cone}, and elementary facts from probability theory) and involves several intermediate estimates which are of independent interest and will also be used in~\cite{gms-burger-local,gwynne-miller-cle}.

\subsection{Preliminaries}
\label{sec-prelims}

\subsubsection{Brownian motion conditioned to stay in the first quadrant}
\label{sec-bm-cond}

The statement of Theorem~\ref{thm-local-conv} refers to a correlated Brownian motion as in~\eqref{eqn-bm-cov} conditioned to stay in the first quadrant for one unit of time and satisfy $Z(1) = (u,v)$ for some fixed $(u,v) \in  (0,\infty)^2$. In this subsection we will describe how to make sense of this object. 

We first recall how to make sense of a correlated Brownian motion $Z$ conditioned to stay in the first quadrant (see \cite[Section 3.1]{gms-burger-cone} and \cite{shimura-cone} for more detail). 
In \cite{shimura-cone}, Shimura constructs an uncorrelated two-dimensional Brownian motion conditioned to stay in the cone $\{z\in\BB C \,:\, 0 \leq \op{arg} z  \leq \theta\}$ for one unit of time. By choosing $\theta = \theta(p)$ appropriately and applying a linear transformation which takes this cone to the first quadrant, we obtain a path $\wh Z$ which we interpret as the correlated two-dimensional Brownian motion $Z$ in~\eqref{eqn-bm-cov} conditioned to stay in the first quadrant for one unit of time. We note that the law of $\wh Z$ is uniquely characterized by the conditions that $\wh Z(t)$ a.s.\ lies in the interior of the first quadrant at each fixed time $t\in (0,1)$; and for each $t\in (0,1)$, the conditional law of $\wh Z|_{[t,1]}$ given $\wh Z|_{[0,t]}$ is that of a Brownian motion with variances and covariances as in~\eqref{eqn-bm-cov} started from $\wh Z(t)$ and conditioned on the positive probability event that it stays in the first quadrant for $1-t$ units of time (see \cite[Lemma 3.1]{gms-burger-cone}). 

Given $(u,v) \in  (0,\infty)^2$, the law of a Brownian motion $ Z$ conditioned to stay in the first quadrant for one unit of time and satisfy $Z(1) = (u,v)$ is the regular conditional law of the path $\wh Z$ described above given $\{\wh Z(1) = (u,v)\}$. This law can be sampled from as follows. First fix $t\in (0,1)$ and sample $ Z|_{[0,t]}$ from the law of a Brownian motion with variances and covariances as in~\eqref{eqn-bm-cov} conditioned to stay in the first quadrant for $t$ units of time weighted by $ f_{1-t}^{ Z(t)}(u,v)   $, where for $z\in (0,\infty)^2$, $f_{1-t}^z$ denotes the density with respect to Lebesgue measure of the law of a Brownian motion as in~\eqref{eqn-bm-cov} started from $z$ conditioned to stay in the first quadrant for $1-t$ units of time (see~\cite[Section 3]{shimura-cone} for a formula for the density of an uncorrelated two-dimensional Brownian motion conditioned to stay in a cone of given opening angle; a formula for $f_{1-t}^z$ can be obtained by applying a linear transformation). 
Then, conditioned on $  Z|_{[0,t]}$, sample a Brownian bridge from $  Z(t)$ to $(u,v)$ in time $1-t$ conditioned to stay in the first quadrant, and concatenate this Brownian bridge with our given realization of $Z|_{[0,t]}$. 

By Brownian scaling, one obtains the conditional law of a Brownian motion as in~\eqref{eqn-bm-cov} conditioned to stay in the first quadrant for a general $t >0$ units of time and satisfy $Z(t) = (u,v) \in (0,\infty)^2$. 
We note that since the density $f_{1-t}^z$ is continuous and the law of a Brownian bridge depends continuously on its parameters, this law depends continuously on $t$, $u$, and $v$. 

The following is the analogue of \cite[Lemma 3.1]{gms-burger-cone} when we further condition on the terminal point of the path, and follows from~\cite[Lemma 3.4]{sphere-constructions}. 

\begin{lem} \label{prop-bm-tip}
Fix $(u,v) \in (0,\infty)^2$. Let $\wh Z$ have the law of a correlated Brownian motion as in~\eqref{eqn-bm-cov} conditioned to stay in the first quadrant for one unit of time and end up at $(u,v)$. Then $\wh Z$ satisfies the following two conditions.
\begin{enumerate}
\item For each fixed $t\in [0,1]$, a.s.\ $\wh Z(t) \in (0,\infty)^2$. \label{item-bm-tip-pos}
\item For each $t  \in (0,1)$, the regular conditional law of $\wh Z|_{[t,1]}$ given $\wh Z|_{[0,t]}$ is that of a Brownian bridge from $\wh Z(t )$ to $(u,v)$ in time $1-t$ with variances and covariances as in~\eqref{eqn-bm-cov} conditioned on the (a.s.\ positive probability) event that it stays in the first quadrant.  \label{item-bm-tip-markov}
\end{enumerate}
If $\wt Z : [0,1]\rta [0,\infty)^2$ is another random continuous path satisfying the above two conditions, then $\wt Z\eqD \wh Z$. 
\end{lem}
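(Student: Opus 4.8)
The plan is to prove uniqueness by a two-stage argument: first identify $\wh Z|_{[0,1)}$ (restricted to the open time interval) and then handle the terminal time $t=1$ by continuity. Assume $\wt Z$ satisfies conditions \eqref{item-bm-tip-pos} and \eqref{item-bm-tip-markov}. Fix any $t \in (0,1)$. By condition \eqref{item-bm-tip-markov} applied at time $t$, the conditional law of $\wt Z|_{[t,1]}$ given $\wt Z|_{[0,t]}$ depends only on $\wt Z(t)$ (it is the first-quadrant-conditioned Brownian bridge from $\wt Z(t)$ to $(u,v)$), so $\wt Z$ is Markovian on $[0,1)$ with the prescribed transition mechanism. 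Hence the entire law of $\wt Z$ is determined once we know the law of $\wt Z|_{[0,t]}$ for each fixed $t \in (0,1)$; indeed, by a standard Kolmogorov-extension / compatibility argument, the family of laws of $\wt Z|_{[0,t]}$ as $t$ ranges over $(0,1)$ together with the transition rule in \eqref{item-bm-tip-markov} pins down the law of $\wt Z|_{[0,1)}$ uniquely, and then by continuity of the path (both $\wt Z$ and $\wh Z$ are continuous on $[0,1]$) the value at $t=1$, which a.s.\ equals $(u,v)$ in both cases, is also determined. So it suffices to show that the law of $\wt Z|_{[0,t]}$ coincides with that of $\wh Z|_{[0,t]}$ for each fixed $t \in (0,1)$.

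For this, I would compare $\wt Z|_{[0,t]}$ with the corresponding object for the Brownian motion conditioned only to stay in the first quadrant (without specifying the endpoint) — call that path $\wh Z_0$, whose law is characterized by \cite[Lemma 2.1]{gms-burger-cone}. By the description of $\wh Z$ given in the paragraph preceding Lemma~\ref{prop-bm-tip}, the law of $\wh Z|_{[0,t]}$ is absolutely continuous with respect to that of $\wh Z_0|_{[0,t]}$ with Radon-Nikodym derivative proportional to $f_{1-t}^{\wh Z_0(t)}(u,v)$. I claim the same must be true of $\wt Z|_{[0,t]}$. The key point is that condition \eqref{item-bm-tip-markov} forces the conditional law of $\wt Z|_{[t,1]}$ given $\wt Z|_{[0,t]}$ to be the first-quadrant-conditioned bridge; "undoing" this conditioning (i.e.\ reweighting the bridge to a plain Brownian path, via the Brownian bridge / Brownian motion disintegration) shows that the law of $\wt Z|_{[0,1]}$ is obtained from $\wt Z|_{[0,t]} \times (\text{free Brownian motion on } [t,1])$ by conditioning on staying in the first quadrant on $[t,1]$ and on ending at $(u,v)$. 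Matching this against the analogous statement for $\wh Z_0$ (which is precisely \cite[Lemma 2.1]{gms-burger-cone}: conditioning the first-quadrant path on $[0,t]\times[t,1]$ structure), one concludes that the law of $\wt Z|_{[0,t]}$, up to the normalizing constant, must be the law of $\wh Z_0|_{[0,t]}$ weighted by $f_{1-t}^{\wh Z_0(t)}(u,v)$ — i.e.\ exactly the law of $\wh Z|_{[0,t]}$. Summing/integrating consistency over $t$ then gives $\wt Z \eqD \wh Z$.

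An alternative and perhaps cleaner route, which I would likely use to package the above, is simply to quote \cite[Lemma 3.4]{sphere-constructions} directly: that result establishes exactly this kind of characterization (uniqueness of a conditioned Brownian path by a first-coordinate-positivity condition plus a Markov-bridge condition) in the setting of the peanosphere constructions, and the present Lemma~\ref{prop-bm-tip} is asserted in the text to "follow from" it. So the proof can be structured as: (i) verify that $\wh Z$ as constructed satisfies \eqref{item-bm-tip-pos} and \eqref{item-bm-tip-markov} — \eqref{item-bm-tip-pos} from the corresponding property of $\wh Z_0$ together with absolute continuity on $[0,t]$ and positivity of the bridge at interior times, and \eqref{item-bm-tip-markov} by construction; and (ii) invoke \cite[Lemma 3.4]{sphere-constructions}, after translating it through the linear map that sends the cone $\{0 \le \arg z \le \theta(p)\}$ to the first quadrant, to get uniqueness.

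The main obstacle is the careful handling of the terminal time $t = 1$: both conditions \eqref{item-bm-tip-pos} and \eqref{item-bm-tip-markov} are only imposed for $t < 1$, so a priori the two laws could differ in how the path approaches $(u,v)$. The resolution is that continuity of the paths on the closed interval $[0,1]$, combined with agreement of the laws of the restrictions to $[0,t]$ for all $t<1$ (hence agreement of finite-dimensional distributions at all times $<1$, hence agreement as probability measures on $C([0,1-\delta])$ for every $\delta>0$), forces agreement on $C([0,1])$; the value at time $1$ carries no extra information because it is the a.s.\ limit of the path. A secondary technical point is justifying the Radon-Nikodym / reweighting manipulations, in particular that $f_{1-t}^z(u,v)$ is a genuine positive finite density jointly measurable in $z$ — this is where one uses that $\wh Z_0(t)$ lies a.s.\ in the open first quadrant, so that the weight is a.s.\ positive and finite, and the construction is non-degenerate.
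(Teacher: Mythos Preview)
Your proposal is correct and, in its second route, matches the paper exactly: the paper does not give a self-contained proof but simply states that the lemma ``follows from \cite[Lemma 3.4]{sphere-constructions}'' (after the linear change of coordinates taking the cone to the first quadrant). Your first, more detailed argument is a reasonable unpacking of why that citation suffices, and the technical points you flag (handling $t=1$ by continuity, positivity and measurability of the Radon--Nikodym weight $f_{1-t}^z(u,v)$) are the right ones to check.
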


\subsubsection{Regular variation}
\label{sec-reg-var}

A function $f : [1,\infty) \rta (0,\infty)$ is called \emph{regularly varying of exponent $\alpha \in \BB R$} if for each $\lambda >0$, 
\eqbn
\lim_{t\rta\infty} \frac{f(\lambda  t)}{f(t)} =\lambda^{-\alpha} .
\eqen
The function $f$ is called \emph{slowly varying} if $f$ is regularly varying of exponent 0. 
For example, $t\mapsto \log(1+t)^\beta$ is slowly varying for any $\beta \in \BB R$ and $t\mapsto t^{-\alpha} \log(1+t)^\beta$ is regularly varying of exponent $\alpha$ for any $\alpha,\beta\in\BB R$. 

Every function $f$ which is regularly varying of exponent $\alpha$ can be represented in the form $f(t) = \psi(t) t^{-\alpha}$, where $\psi$ is slowly varying. For each slowly varying function $\psi$, there exists $t_0 > 0$ and bounded functions $a , b : [0,\infty)\rta\BB R$ with $\lim_{t\rta\infty} b(t) = 0$ such that for $t\geq t_0$,
\eqbn
\psi(t) = \exp\left(a(t) + \int_{t_0}^t \frac{b(s)}{s} \, ds\right) .
\eqen
We refer the reader to~\cite{reg-var-book} for more on regularly varying functions.

The following lemmas are proven in \cite[Section A.2]{gms-burger-cone}. We recall the definition of the exponent $\mu$ from~\eqref{eqn-cone-exponent} and the definition of $I$ from~\eqref{eqn-I-def}.  

\begin{lem} \label{prop-I-reg-var}
The law of $I$ is regularly varying with exponent $\mu$, i.e.\ there exists a slowly varying function $\psi_0 : [0,\infty) \rta (0,\infty)$ such that
\eqbn
\BB P\left(I > n \right) = \psi_0(n) n^{-\mu} ,\qquad \forall n\in\BB N .
\eqen
\end{lem}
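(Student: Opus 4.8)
The plan is to recast $\{I>n\}$ as the event that a two-dimensional walk stays in a cone, and then to invoke the theory of random walks in cones. Concretely, first I would identify $\{I>n\}$ with the event that the walk $D$ of~\eqref{eqn-discrete-path} stays in the closed first quadrant through time $n$; equivalently, that $Z^n$ of~\eqref{eqn-Z^n-def} stays in $[0,\infty)^2$ for one unit of time. Indeed, if some $X(1,j)$ with $j\le n$ contains an order, then at the first time $i\le j$ at which $X(1,i)$ contains an order the symbol $X_i$ is itself an order which fails to cancel, so no compatible burger was available, and therefore $d(i)<0$ or $d^*(i)<0$. Conversely, if no $X(1,j)$ with $j\le n$ contains an order then neither does $Y(1,j)$, so $d(j)=\mcl N_{\tc H}(X(1,j))\ge 0$ and $d^*(j)=\mcl N_{\tc C}(X(1,j))\ge 0$. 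Hence $\BB P(I>n)=\BB P\big(D(j)\in[0,\infty)^2\text{ for all }1\le j\le n\big)$.

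Next I would pin down the exponent. By~\cite[Theorem 2.5]{shef-burger} the rescaled walk $D$ converges to the centered correlated Brownian motion $Z$ of~\eqref{eqn-bm-cov}, so the survival exponent of $D$ in the quadrant should coincide with that of a driftless planar walk whose increment covariance is the matrix $\Sigma$ appearing in~\eqref{eqn-bm-cov}; and for such a walk the exponent depends only on the cone once the covariance is normalized. Applying a linear map $A$ with $A\Sigma A^{\mathsf T}=\mathrm{Id}$ carries the first quadrant to a wedge of opening angle $\theta$ with $\cos\theta=-\rho$, where $\rho=p/(1-p)$ is the correlation of $Z$; since $\arccos\rho=\arctan\frac{\sqrt{1-2p}}{p}$, this gives $\theta=\pi-\arctan\frac{\sqrt{1-2p}}{p}$, and the survival exponent of a driftless planar walk in (or, by Spitzer's formula, Brownian motion in) a wedge of angle $\theta$ is $\pi/(2\theta)$, which is exactly $\mu$ by~\eqref{eqn-cone-exponent}. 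The quantitative ingredient is a survival estimate for random walks in cones in the spirit of Denisov--Wachtel~\cite{dw-cones}, which in the classical i.i.d.\ case gives the sharp $\BB P\sim c\,n^{-\mu}$ and hence regular variation with exponent $\mu$. Since $D$ starts at the apex of the cone, I would first peel off one or two steps --- with probability bounded below $D(2)$ lies in the interior of the quadrant at distance of order $1$ from the axes --- and apply the interior estimate from there, as in~\cite{garbit-cone-walk}.

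The hard part is that $D$ is \emph{not} a random walk with independent increments: when $X_i=\tb F$, the increment of $D$ is $(-1,0)$ or $(0,-1)$ according as the burger currently at the top of the stack is a hamburger or a cheeseburger, which is a function of the past. I expect essentially all of the work to lie here. One route is to use the fact that, conditionally on $\{I>i\}$ and on $D(i)$, the burger stack is a uniformly random arrangement of its multiset, so that $D$, on the survival event, is a Markov chain whose one-step law depends on the state only through its angular coordinate; one then adapts the harmonic-function construction behind the Denisov--Wachtel estimates to this mildly state-dependent setting and verifies that the limiting diffusion is still $Z$, so that the exponent is unchanged. An alternative is to couple $D$ with a genuine correlated random walk $\wt S$ --- agreeing at every non-flexible step and differing only at the density-$p/2$ flexible steps --- and to control the discrepancy well enough to transfer the survival asymptotics of $\wt S$ to $D$. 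Either way, combined with the previous step this yields $\BB P(I>n)=\psi_0(n)\,n^{-\mu}$ with $\psi_0$ slowly varying.

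If one only wanted the exponent in the weaker form $\BB P(I>n)=n^{-\mu+o_n(1)}$, the cone-walk machinery could be bypassed: a supermultiplicativity bound $\BB P(I>m+n)\succeq\BB P(I>m)\,\BB P(I>n)$ (enlarging the starting stack only helps), together with a matching Markov-type upper bound, gives $\BB P(I>n)=n^{-\alpha+o_n(1)}$ for some $\alpha\ge0$, after which $\alpha=\mu$ is read off from the Brownian scaling limit and Spitzer's exit-time exponent for the wedge. Upgrading this to the regular variation asserted in the lemma, though, seems to require the sharper estimates above.
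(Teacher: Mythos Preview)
The paper does not give a proof of this lemma; it is quoted from \cite[Section~A.2]{gms-burger-cone} and used as a black box throughout. So there is no in-paper argument to compare against directly, and what follows assesses your outline on its own merits and against what is actually done in \cite{gms-burger-cone}.

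Your identification of $\{I>n\}$ with the event that $D$ stays in $[0,\infty)^2$ up to time $n$ is correct, and your wedge-angle computation recovering $\mu=\pi/(2\theta)$ with $\theta=\pi-\arctan(\sqrt{1-2p}/p)$ is right. You have also correctly isolated the real difficulty: $D$ is not a walk with i.i.d.\ increments, because at a flexible order the increment is determined by the type of the top burger on the stack, which is a functional of the entire past. This is precisely the obstruction to a direct appeal to Denisov--Wachtel.

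There is, however, a genuine gap in your first proposed workaround. The claim that ``conditionally on $\{I>i\}$ and on $D(i)$, the burger stack is a uniformly random arrangement of its multiset'' is false. A small counterexample at $p=0$: take $i=5$ and $D(5)=(2,1)$. Enumerating all $35$ admissible words $X_1\cdots X_5$ (four burgers and one matched order) and tracking the stack, one finds the three stack arrangements $\tc H\tc H\tc C$, $\tc H\tc C\tc H$, $\tc C\tc H\tc H$ (bottom to top) occur with probabilities $12/35$, $11/35$, $12/35$ respectively. The time-reversal symmetry explains the equality of the two outer values, but there is no mechanism forcing the middle one to agree. So $D$ is not even Markov in the state $D(i)$ on the survival event, and the ``angular-coordinate Markov chain'' picture does not hold. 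Any adaptation of \cite{dw-cones} would have to grapple with the full stack process, not just $D$.

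Your second route---couple $D$ with a genuine correlated random walk $\wt S$ and control the discrepancy---is closer in spirit to what can be made to work, but note that the naive discrepancy is of the same order $n^{1/2}$ as the distance to the boundary, so one cannot simply absorb it. The argument in \cite{gms-burger-cone} instead first establishes the conditional invariance principle (the analogue of their Theorem~A.1: $Z^n\mid\{I>n\}\Rightarrow\wh Z$) by model-specific arguments that exploit the smallness of $\mcl N_{\tb F}(X(1,n))$ on $\{I>n\}$, and then reads off regular variation from the limit: one writes $\BB P(I>\lambda n)/\BB P(I>n)$ as the conditional probability, given $I>n$, that the word survives a further $(\lambda-1)n$ steps, and evaluates this in the scaling limit using the known exit-time exponent for $\wh Z$ in the quadrant. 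Your final paragraph (supermultiplicativity plus the Brownian exponent) captures exactly the $n^{-\mu+o_n(1)}$ version of this; the upgrade to genuine regular variation comes from the conditional convergence, not from a direct cone-walk local estimate.
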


\begin{lem} \label{prop-P-reg-var}
Let $P$ be the smallest $j \in \BB N$ for which $X(-j,-1)$ contains no orders. Then the law of $P$ is regularly varying with exponent $1-\mu$, i.e.\ there exists a slowly varying function $\psi_1  : [0,\infty) \rta (0,\infty)$ such that
\eqbn
\BB P\left(P > n \right) = \psi_1(n) n^{-(1-\mu)} ,\qquad \forall n\in\BB N .
\eqen
\end{lem}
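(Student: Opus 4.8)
The strategy is to realize the law of $P$ as the first renewal gap of the backward-read word and then transfer the regular variation of Lemma~\ref{prop-I-reg-var} by a Tauberian argument. The starting point is the identity $u_n := \BB P\!\left(X(-n,-1)\text{ contains no orders}\right) = \BB P(I>n)$. Indeed, $X(-n,-1)=\mcl R(X_{-n}\cdots X_{-1})$ has the same law as $X(1,n)=\mcl R(X_1\cdots X_n)$ since the $X_i$ are i.i.d., and $\{X(1,i)\text{ contains an order}\}$ is increasing in $i$ (appending a symbol on the right of a word in canonical form never decreases the number of orders), so $\BB P(X(1,n)\text{ contains no orders})=\BB P(I>n)$. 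By Lemma~\ref{prop-I-reg-var}, $u_n=\psi_0(n)n^{-\mu}$, and $u_0=1$.

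Next I would set up the renewal structure. Let $\mcl T:=\{n\ge 0: X(-n,-1)\text{ contains no orders}\}$; by the convention $X(0,-1)=\emptyset$ one has $0\in\mcl T$, and $P=\min(\mcl T\cap\BB N)$. The crucial point is a \emph{freezing} property: if $X(-m,-1)$ contains no orders, so that $\mcl R(X(-m,-1))$ is a word of burgers only, then for every $m'>m$ one has $\mcl R(X_{-m'}\cdots X_{-1})=\mcl R(X_{-m'}\cdots X_{-m-1})\cdot B'$ with $B'$ a word of burgers. This holds because, in reducing $w\cdot b$ with $b$ a word of burgers, an order of $\mcl R(w)$ is never cancelled (cancellation in~\eqref{eqn-theta-ful} requires a burger to the \emph{left} of the order), while a burger of $\mcl R(w)$ can only commute or cancel within $\mcl R(w)$ and then be absorbed into $b$; the analogous statement for flexible orders uses $\tc H\tb F=\tc C\tb F=\emptyset$. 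Hence the event $\{X(-m',-1)\text{ contains an order}\}$ depends only on $X_{-m'},\dots,X_{-m-1}$, which is independent of $X_{-m},\dots,X_{-1}$. It follows that the successive gaps of $\mcl T$ are i.i.d.\ with the law of $P$, so the renewal equation $u_n=\sum_{m=1}^n f_m\,u_{n-m}$ holds for $n\ge 1$, where $f_m:=\BB P(P=m)$.

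Passing to generating functions $\wh u(s):=\sum_{n\ge 0}u_ns^n$ and $\wh f(s):=\sum_{n\ge 1}f_ns^n$, the renewal equation reads $\wh u(s)\,(1-\wh f(s))=1$. Since $\mu<1$ we have $\sum_n u_n=\infty$, so $\wh u(s)\to\infty$ as $s\to 1^-$, and Karamata's Tauberian theorem (applied to the monotone partial sums $\sum_{k\le n}u_k\sim\psi_0(n)n^{1-\mu}/(1-\mu)$) shows that $\wh u$ is regularly varying of index $-(1-\mu)$ at $1$. Hence $1-\wh f(s)=1/\wh u(s)$ is regularly varying of index $1-\mu$ at $1$; in particular $\wh f(1^-)=1$, so $P<\infty$ a.s., and $\sum_{n\ge 0}\BB P(P>n)s^n=(1-\wh f(s))/(1-s)$ is regularly varying of index $-\mu$ at $1$. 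Since $n\mapsto\BB P(P>n)$ is nonincreasing, the monotone density form of Karamata's theorem gives $\BB P(P>n)=\psi_1(n)\,n^{-(1-\mu)}$ with $\psi_1(n)=\tfrac{\sin(\pi\mu)}{\pi\,\psi_0(n)}$ slowly varying, which is the assertion; the regular-variation facts used here are standard, see~\cite{reg-var-book}.

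Apart from this routine Tauberian computation, the only delicate step is the second one: one must verify the freezing property, and the resulting conditional independence, carefully enough in the semigroup~\eqref{eqn-theta-ful}--\eqref{eqn-theta-com} — paying particular attention to flexible orders — to be sure that the gaps of $\mcl T$ are genuinely i.i.d.\ copies of $P$, so that the renewal equation is valid. Everything else then follows formally once Lemma~\ref{prop-I-reg-var} is in hand.
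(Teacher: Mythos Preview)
Your proof is correct. The paper itself does not prove this lemma; it simply cites \cite[Section~A.2]{gms-burger-cone} (specifically Lemma~A.8 there, as invoked in the proof of Lemma~\ref{prop-P-interval-reg-var}). Your renewal/Tauberian argument is the standard route and almost certainly what appears in the cited reference. The two structural ingredients you isolate --- that the successive gaps $P_m-P_{m-1}$ of $\mcl T$ are i.i.d.\ copies of $P$ (your ``freezing property''), and that $\BB P(n\in\mcl T)=\BB P(I>n)$ --- are exactly what the present paper uses without proof in the argument for Lemma~\ref{prop-P-interval-reg-var}, so your derivation is fully consistent with the paper's framework.

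One small quibble: you write $\psi_1(n)=\tfrac{\sin(\pi\mu)}{\pi\,\psi_0(n)}$ with an equality, but the Tauberian step only gives this asymptotically. The clean way to finish is to set $\psi_1(n):=\BB P(P>n)\,n^{1-\mu}$ by definition and observe that it is asymptotic to the slowly varying function $\tfrac{\sin(\pi\mu)}{\pi\,\psi_0(n)}$, hence itself slowly varying.
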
 

We will treat the functions $\psi_0$ and $\psi_1$ as fixed throughout this paper. 
In Lemma~\ref{prop-J^H-law} below, we will prove that the laws of $J^H$ and $L^H$ defined as in~\eqref{eqn-J^H-def} are regularly varying with exponetns $1/2$ and $1$, respectively, with no slow-varying correction.

\subsection{Outline}
\label{sec-outline}

The remainder of this article is structured as follows. In Section~\ref{sec-J^H-local}, we will use the bivariate local limit theorem of~\cite{doney-bivariate} to prove various local estimates for the pairs $(J_m^H,L_m^H)$ of~\eqref{eqn-J^H-def}. The times $J_m^H$ are especially useful because each $X(-J_m^H ,-1)$ contains no flexible orders; and because the law of the pair $(J_1^H , L_1^H)$ is in the normal domain of attraction for a bivariate stable law (see Lemma~\ref{prop-J^H-law} below). 

In Section~\ref{sec-no-order-local}, we will use the results of Sections~\ref{sec-J^H-local} to obtain local estimates for the probability that the reduced word $X(1,n)$ contains no orders and a particular number of burgers of each type. From these estimates we will deduce Theorem~\ref{thm-local-conv}. 

In Section~\ref{sec-few-order-local}, we will use the estimates of Sections~\ref{sec-J^H-local} and~\ref{sec-no-order-local} to obtain estimates for the probability that a word has few orders and approximately a given number of burgers of each type. These estimates will be used in the proof of the upper bound in Theorem~\ref{thm-empty-prob} as well as in~\cite{gms-burger-finite}. 

In Section~\ref{sec-empty-prob}, we will complete the proof of Theorem~\ref{thm-empty-prob}. The proof of the lower bound follows from a relatively straightforward argument which is similar to those given in \cite[Section 3]{gms-burger-cone} (in fact, a version of this argument appeared in a previous version of \cite{gms-burger-cone}). The proof of the upper bound is more complicated, and requires the estimates of Section~\ref{sec-few-order-local} as well as some additional estimates, including a modification of the estimate \cite[Lemma 3.7]{gms-burger-cone} for the number of flexible orders in a reduced word. 

For the convenience of the reader we include in Appendix~\ref{sec-index} an index of commonly used symbols along with reminders of their definitions and the locations where they are defined.

\section{Local estimates for times when hamburgers are added}
\label{sec-J^H-local}

In this section, we will consider a ``local limit" type result (i.e.\ a uniform convergence statement for densities) for the pairs $(J_m^H , L_m^H)$ introduced in~\eqref{eqn-J^H-def}. This result will turn out to be a straightforward consequence of the bivariate local limit theorem for stable laws proven in~\cite{doney-bivariate}. We will then prove some refinements on this result in Section~\ref{sec-J^H-reg}. The estimates of this section are a key input in the proofs of Theorems~\ref{thm-empty-prob} and~\ref{thm-local-conv}. However, this section also has the following broader purpose. As we will see in Sections~\ref{sec-no-order-local} and~\ref{sec-few-order-local} below, local estimates for the pairs $(J_m^H , L_m^H)$ are the basic tools for the proofs of many other local estimates and scaling limit results for the inventory accumulation model considered in this paper; see also \cite{gms-burger-finite,gwynne-miller-cle} for more applications of these estimates. This section can be read as a general collection of estimates for the pairs $(J_m^H , L_m^H)$.

We note that (by symmetry) all of the results of this section are still valid if we instead consider the pairs $(J_m^C , L_m^C)$ defined just below~\eqref{eqn-J^H-def}. However, for the sake of brevity we state our results only for the pairs $(J_m^H , L_m^H)$. 
 
\subsection{Local limit theorem}
\label{sec-J^H-local-sub}

The starting point of all of the estimates in this section is the following straightforward consequence of the unconditioned Brownian motion scaling limit result~\cite[Theorem 2.5]{shef-burger}. 

\begin{lem} \label{prop-J^H-law}
Define $J_m^H$ and $L_m^H$ for $m\in\BB N$ as in~\eqref{eqn-J^H-def}. Also let $\tau$ be the smallest $t>0$ for which $U(-t) = - 1$. Then we have the following convergence in law:
\eqb \label{eqn-J^H-law}
 \left(m^{-2} J_m^H , m^{-1} L_m^H  \right) \rta \left(\tau ,  V(-\tau) \right) . 
\eqe 
Furthermore, there is a constant $a_0 > 0$ such that  
\eqb \label{eqn-J^H-tail}
\BB P\left(J^H_1  > n \right) = (a_0+o_n(1)) n^{-1/2 }   
\eqe  
and there are constants $a_1,a_2>0$ such that  
\begin{align} \label{eqn-d-J^H-tail}
\BB P\left(L_1^H   > n \right) = (a_1+o_n(1)) n^{-1}  \quad \op{and} \quad \BB P\left(L_1^H  \leq -n \right) = (a_2+o_n(1)) n^{-1  }  .
\end{align} 
\end{lem}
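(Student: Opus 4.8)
The plan is to realize the pair $(J_1^H, L_1^H)$ as the first passage data of a two-dimensional random walk with i.i.d.\ increments, and then invoke the bivariate local/stable limit theory. Read the word $X$ backward from $0$; each time we encounter a hamburger-type symbol $\tc H$ (added to the stack) or a hamburger order $\tb H$ (which either pops a hamburger off the stack, or, if the stack is empty, is itself ``matched to the right''), the net hamburger count $-d(X(-j,-1))$ performs a simple random walk step. Since the probability of $\tc H$ equals the probability of $\tb H$ plus part of $\tb F$ under~\eqref{eqn-theta-prob}, and a $\tb F$ matched to a hamburger behaves like $\tb H$ via the definition of $Y$, the relevant one-dimensional walk governing when hamburgers accumulate to total $m$ in the \emph{reduced} word is a mean-zero walk; $J_m^H$ is essentially its $m$th ``record-type'' time. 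More precisely, I would first reduce to the statement that $(J_m^H, L_m^H)$ is a sum of $m$ i.i.d.\ copies of $(J_1^H, L_1^H)$: by the renewal/regeneration structure of reading $X$ backward (the reduced word $X(-j,-1)$ only sees burgers, and at the times $J_k^H$ the ``hamburger part'' of the stack is reset in a Markovian way), the increments $\bigl(J_k^H - J_{k-1}^H,\, L_k^H - L_{k-1}^H\bigr)$ are i.i.d.\ and distributed as $(J_1^H,L_1^H)$. This is the key structural input and I expect it to require a careful but standard argument about the stack dynamics — this is the main obstacle.

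Granting the i.i.d.\ decomposition, the convergence in law~\eqref{eqn-J^H-law} follows from a bivariate functional limit theorem: $J_1^H$ is in the domain of attraction of a one-sided $1/2$-stable law (it is the first passage time to level $-1$ of a mean-zero, finite-variance one-dimensional walk, classically $\asymp n^{-1/2}$ tails), and jointly with it $L_1^H$ — the dual-burger displacement accumulated up to that passage time — converges after the appropriate $m^{-2}, m^{-1}$ rescaling to $(\tau, V(-\tau))$, where $\tau = \inf\{t>0 : U(-t) = -1\}$ and $(U,V)$ is the correlated Brownian motion of~\eqref{eqn-bm-cov}. I would deduce this by first proving $Z^n \to Z$ as in~\cite[Theorem 2.5]{shef-burger} restricted to the backward word, noting $m^{-2}J_m^H$ is a hitting-time functional which is a.s.\ continuous for the limiting Brownian motion, and applying the mapping theorem; the joint convergence of $m^{-1}L_m^H$ comes along because $L_m^H = d^*(X(-J_m^H,-1))$ is the second coordinate evaluated at that (converging) random time. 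Here I use that $\mu = \kappa/8 \in (1/2,1)$ ensures $\tau$ is a.s.\ finite with the stated tail behavior.

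For the tail estimates~\eqref{eqn-J^H-tail} and~\eqref{eqn-d-J^H-tail}, the strategy is to upgrade the weak convergence to \emph{exact} (up-to-constant, with $o_n(1)$ relative error) polynomial tails by invoking the bivariate local limit theorem of Doney~\cite{doney-bivariate}. Once one knows $(J_1^H, L_1^H)$ lies in the \emph{normal} domain of attraction of the relevant bivariate stable law — which follows from the finite variance of the one-dimensional hamburger walk together with control on the correlation structure — Doney's theorem gives uniform asymptotics for $\BB P(J_1^H = n, L_1^H = \ell)$, and summing over $\ell$ (resp.\ over $n$ and over the sign of $\ell$) produces the one-variable tails $\BB P(J_1^H > n) \sim a_0 n^{-1/2}$, $\BB P(L_1^H > n) \sim a_1 n^{-1}$, $\BB P(L_1^H \le -n) \sim a_2 n^{-1}$, with the constants $a_0, a_1, a_2$ determined by the limiting stable density. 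Alternatively, the $J_1^H$ tail is a classical first-passage-time estimate (Kemperman / the reflection argument for mean-zero finite-variance walks) giving the $n^{-1/2}$ rate directly; and the $L_1^H$ tails follow because, conditionally on $J_1^H = n$, $L_1^H$ is of order $\sqrt n$ with Gaussian fluctuations, so $\BB P(L_1^H > n) = \sum_k \BB P(J_1^H = k)\BB P(L_1^H > n \mid J_1^H = k) \asymp \sum_{k \gtrsim n^2} k^{-3/2} \asymp n^{-1}$, and making this precise with the local limit theorem pins down the constant. I would present the unified version via Doney's theorem since that same local limit input is used repeatedly in the later sections.
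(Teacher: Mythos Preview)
Your argument for the convergence in law~\eqref{eqn-J^H-law} is essentially the paper's: establish the i.i.d.\ decomposition of $(J_m^H,L_m^H)$ into increments distributed as $(J_1^H,L_1^H)$, invoke~\cite[Theorem 2.5]{shef-burger} to get $Z^n\rta Z$, and apply continuous mapping to the hitting-time functional (a.s.\ continuous at $Z$ because Brownian motion crosses a level immediately after first reaching it). Two small corrections: the i.i.d.\ structure is not the ``main obstacle'' --- it drops out once you observe that $X(-J_k^H,-1)$ never contains $\tb H$ or $\tb F$, so $\mcl N_{\tc H}$ is additive under concatenation; and your remark that $\mu\in(1/2,1)$ is needed for $\tau<\infty$ is a red herring, since $\tau$ is a Brownian first-passage time and is a.s.\ finite regardless of $p$.

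Your route to the tail estimates~\eqref{eqn-J^H-tail} and~\eqref{eqn-d-J^H-tail}, however, has a genuine gap. Invoking Doney's bivariate local limit theorem here is circular: Doney's theorem takes as \emph{hypothesis} that the increment $(J_1^H,L_1^H)$ lies in the normal domain of attraction of a bivariate stable law, and for the marginals this is exactly the statement $\BB P(J_1^H>n)\sim a_0 n^{-1/2}$ and $\BB P(\pm L_1^H>n)\sim a_\pm n^{-1}$ that you are trying to prove. Moreover, Doney's output is a local estimate for $\BB P((J_m^H,L_m^H)=(k,l))$ as $m\rta\infty$, not a single-increment tail; in the paper Doney is invoked only in the \emph{subsequent} Proposition~\ref{prop-J^H-local}, after the present lemma is in hand. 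Your fallback via Kemperman/reflection is also not clean: because $\tb F$ symbols couple the two burger types in the reduction, $J_1^H$ is not literally the first-passage time of a mean-zero i.i.d.\ walk, so the classical formula does not apply without further argument.

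The paper's approach is much shorter and avoids both issues. Once~\eqref{eqn-J^H-law} is established, the fact that $m^{-2}J_m^H\rta\tau$ with $J_m^H$ an i.i.d.\ sum and normalization \emph{exactly} $m^{2}=m^{1/(1/2)}$ (no slowly varying correction) says precisely that $J_1^H$ lies in the \emph{normal} domain of attraction of the positive $1/2$-stable law of $\tau$; the classical characterization of the normal domain then gives $\BB P(J_1^H>n)=(a_0+o_n(1))n^{-1/2}$ immediately. The same logic applied to $m^{-1}L_m^H\rta V(-\tau)$ (an affine image of a Cauchy variable, since $V$ decomposes as a constant times $U$ plus an independent Brownian motion) yields the two one-sided $n^{-1}$ tails for $L_1^H$. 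No local limit theorem is needed at this stage.
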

\begin{proof}
Suppose we have (using \cite[Theorem 2.5]{shef-burger} and the Skorokhod theorem) coupled $(Z^n)$ with $Z$ in such a way that $Z^n\rta Z$ uniformly on compacts a.s. Let $\tau_m := m^{-2} J_m^H$. The time $\tau_m$ is the smallest $t>0$ for which $U^{m^2}(-t) =1$. Since $Z$ a.s.\ crosses the line $\{(x,y) \in \BB R^2 \,:\, x = 1\}$ immediately after hitting this line when run in the reverse direction, it follows that a.s.\ $\tau_m \rta \tau$. By uniform convergence, a.s.\ $V(-\tau_m) \rta V(-\tau)$. Thus~\eqref{eqn-J^H-law} holds. 

The time $\tau$ has the law of a stable random variable with index $1/2$ and skewness parameter $1$, and~\eqref{eqn-J^H-law} implies that $J_m^H$ is in the normal domain of attraction for this law. By the elementary theory of stable processes, we infer that~\eqref{eqn-J^H-tail} holds (see, e.g.,~\cite{gk-limit-theorems} or the remark just below~\cite[Theorem 3.7.2]{durrett}).

By the Markov property of $Z$ and Brownian scaling, we find that $V(-\tau)$ has a $1$-stable distribution. Since each of the words $X(-J^H_k , -J^H_{k-1}-1)$ contains no $\tb F$'s, we infer that
\eqbn
m^{-1} L_m^H  = - \frac{1}{m} \sum_{k=1}^m d (X(-J^H_k , -J^H_{k-1}-1)) .
\eqen 
Since $m^{-1} L_m^H \rta V(-\tau)$, we conclude as above that~\eqref{eqn-d-J^H-tail} holds.
\end{proof}

From Lemma~\ref{prop-J^H-law}, we obtain the following statement, which will play a key role in the remainder of this paper.

\begin{prop}\label{prop-J^H-local}
Let $g$ be the joint probability density function of the pair $(\tau , V(-\tau))$ from Lemma~\ref{prop-J^H-law}. Then we have  
\eqb \label{eqn-J^H-local}
\lim_{m\rta\infty} \sup_{(k, l) \in \BB N\times \BB Z} \left| m^{ 3 } \BB P\left((J_m^H , L_m^H) = (k,l) \right)  - g\left(\frac{k}{m^2} ,   \frac{l}{m}\right)    \right|  = 0  .
\eqe 
\end{prop}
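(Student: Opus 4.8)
The plan is to derive Proposition~\ref{prop-J^H-local} from the bivariate local limit theorem of Doney~\cite{doney-bivariate}, using Lemma~\ref{prop-J^H-law} to supply the hypotheses of that theorem. First I would identify the underlying i.i.d. structure: by definition of $J_m^H$, reading $X$ backward and recording successive times at which a hamburger is placed on the stack, the pair $(J_m^H, L_m^H)$ is a sum of $m$ i.i.d. copies of $(J_1^H, L_1^H)$, namely the increments $\big(J_k^H - J_{k-1}^H,\ d^*(X(-J_k^H, -J_{k-1}^H - 1))\big)$ for $k \in [1,m]_{\BB Z}$ — this is exactly the decomposition already invoked in the proof of Lemma~\ref{prop-J^H-law}. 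So $(J_m^H, L_m^H)$ is a two-dimensional random walk with i.i.d. steps, and I want a local limit theorem for it after rescaling the first coordinate by $m^{-2}$ and the second by $m^{-1}$.

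Next I would check that the step distribution lies in the (normal) domain of attraction of the bivariate stable law of $(\tau, V(-\tau))$, with the correct normalization. Lemma~\ref{prop-J^H-law} already gives the distributional convergence $(m^{-2} J_m^H, m^{-1} L_m^H) \rta (\tau, V(-\tau))$, and the tail estimates~\eqref{eqn-J^H-tail} and~\eqref{eqn-d-J^H-tail} pin down that $J_1^H$ has a $1/2$-stable tail (index $1/2$, skewness $1$) and that $L_1^H$ has Cauchy-type ($1$-stable, symmetric up to the affine factors) tails, with no slowly varying correction — the tails are of the form $(\text{const} + o(1)) n^{-1/2}$ and $(\text{const}+o(1)) n^{-1}$, which is precisely the normal domain of attraction. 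I should also note $\BB E[J_1^H - 2 \text{(something)}]$ is not needed since both marginals are heavy-tailed; the only centering issue is for $L_1^H$, whose symmetric-ish tails mean no centering drift survives under the $m^{-1}$ scaling (this matches the zero-median statement implicit in the proof of Lemma~\ref{prop-J^H-law}). The limiting density $g$ exists and is continuous — it is the joint density of $\tau$ (a positive $1/2$-stable variable) and $V(-\tau)$ (an affine function of a Cauchy variable, conditioned through $\tau$), which is smooth on $(0,\infty)\times\BB R$.

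Then I would invoke~\cite{doney-bivariate}: for a lattice-valued bivariate random walk in the domain of attraction of a stable law with continuous density, with the lattice being aperiodic in the appropriate sense, one has the uniform local limit theorem
\eqbn
\sup_{(k,l)} \left| a_n b_n\, \BB P\big((S_n^{(1)}, S_n^{(2)}) = (k,l)\big) - g\!\left(\frac{k}{a_n}, \frac{l}{b_n}\right)\right| \rta 0,
\eqen
where here $n = m$, $a_n = m^2$, $b_n = m$, so $a_n b_n = m^3$, giving exactly~\eqref{eqn-J^H-local}. The one genuine point to verify carefully — and the step I expect to be the main obstacle — is the lattice/aperiodicity hypothesis: $J_1^H$ takes all sufficiently large positive integer values (indeed it can equal essentially any $k \geq 1$), while $L_1^H$ ranges over $\BB Z$, and I must confirm the pair $(J_1^H, L_1^H)$ is not supported on a proper sublattice of $\BB Z^2$ (e.g. that $\gcd$ conditions hold and there is no parity obstruction coupling $k$ and $l$). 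A short combinatorial argument exhibiting explicit short words realizing, say, $(J_1^H, L_1^H) \in \{(1,0), (2,0), (2,1), (2,-1), \dots\}$ should establish that the support generates all of $\BB Z^2$ and has the right aperiodicity, so that Doney's theorem applies with no periodic correction factor. Beyond that, the proof is essentially a citation plus bookkeeping to match normalizations.
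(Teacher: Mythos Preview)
Your proposal is correct and follows essentially the same route as the paper: decompose $(J_m^H, L_m^H)$ as a sum of $m$ i.i.d.\ increments (the paper emphasizes that each $X(-J_k^H,-J_{k-1}^H-1)$ contains no $\tb F$-symbols, which is what makes $L_m^H$ a genuine sum of the $d^*$-increments), invoke Lemma~\ref{prop-J^H-law} for the domain-of-attraction input, and then cite \cite[Theorem~1]{doney-bivariate}. The paper's proof is in fact terser than yours---it does not pause to verify the lattice/aperiodicity hypothesis you flag---so your extra care there is a bonus rather than a deviation.
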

\begin{proof}
The words $X_{-J_m^H}\dots X_{-J_{m-1}^H-1}$ for $m\in\BB N$ are iid (where here we set $J_0^H = 0$). Furthermore, since each $X(-J_m^H , -J_{m-1}^H-1)$ contains no $\tb F$-symbols, we have
\eqbn
L_m^H = \sum_{k=1}^m d^*\left(X(-J_m^H , -J_{m-1}^H-1)\right)   .
\eqen
The local limit result~\eqref{eqn-J^H-local} now follows from Lemma~\ref{prop-J^H-law} and the local limit theorem for bivariate stable laws~\cite[Theorem 1]{doney-bivariate}. 
\end{proof}

When we apply the estimate of Proposition~\ref{prop-J^H-local}, it will be convenient to have a formula for the limiting density $g$. 

\begin{lem} \label{prop-g-form}
Let $g$ be as in Proposition~\ref{prop-J^H-local}. Then 
\eqb \label{eqn-g-form}
g(t,v) =   a_0 t^{-2} \exp\left(-\frac{a_1 + a_2  (v+a_3)^2  }{ t} \right) ,\quad \forall (t,v) \in (0,\infty)\times \BB R
\eqe
for constants $a_0 , a_1 , a_2,a_3 > 0$ depending only on $p$. In particular, $g$ is bounded.
\end{lem}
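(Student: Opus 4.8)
The plan is to identify the pair $(\tau, V(-\tau))$ explicitly in terms of a standard planar Brownian motion and then read off the density from the known joint law of a one-dimensional Brownian motion's hitting time and the value of an independent (after a suitable rotation) Brownian motion at that time. Recall that $Z = (U,V)$ is a linear image of a standard planar Brownian motion $W = (W^1, W^2)$: concretely one can write $U(t) = \sigma W^1(t)$ and $V(t) = \rho U(t)/\sigma + \sigma' W^2(t)$ for suitable constants $\sigma, \sigma', \rho$ determined by \eqref{eqn-bm-cov} (Cholesky-type decomposition). Then $\tau$, the first time $U$ run backward hits $-1$, depends only on $W^1$, so conditionally on $\{\tau = t\}$ we have $U(-\tau) = -1$ deterministically and $W^2(-\tau)$ is an independent centered Gaussian with variance $t$. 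Hence $V(-\tau) = -\rho/\sigma \cdot 1 \cdot(\text{sign}) + \sigma' W^2(-\tau)$, i.e.\ conditionally on $\tau = t$, $V(-\tau)$ is Gaussian with some fixed mean $-a_3$ (a constant depending only on $p$ through the covariance structure) and variance proportional to $t$.

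First I would make the decomposition of $Z$ into an affine image of standard planar Brownian motion completely explicit, being careful about the backward-in-time direction (which is harmless since Brownian motion is reversible). Second, I would invoke the classical formula for the density of the first passage time of a one-dimensional Brownian motion with variance parameter $v$ to level $1$: it is of the form $c \, t^{-3/2} e^{-c'/t}$ for positive constants $c, c'$. Third, conditionally on $\tau = t$, the law of $V(-\tau)$ has density (in $v$) of the form $(\text{const}) \, t^{-1/2} e^{-(v+a_3)^2 \cdot (\text{const})/t}$. Multiplying the marginal density of $\tau$ by this conditional density gives
\[
g(t,v) = (\text{const}) \, t^{-3/2} e^{-c'/t} \cdot (\text{const})\, t^{-1/2} e^{-a_2'(v+a_3)^2/t} = a_0 t^{-2} \exp\!\left(-\frac{a_1 + a_2 (v+a_3)^2}{t}\right),
\]
which is exactly \eqref{eqn-g-form} after collecting constants, with all of $a_0, a_1, a_2, a_3 > 0$ depending only on $p$. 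Boundedness is then immediate: on $t$ bounded away from $0$ the factor $t^{-2}$ is bounded and the exponential is at most $1$, while as $t \to 0$ the factor $e^{-a_1/t}$ decays faster than $t^{-2}$ grows (uniformly in $v$, since the $v$-dependent term in the exponent only helps), so $g$ is bounded on all of $(0,\infty) \times \mathbb{R}$.

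The only mild obstacle is bookkeeping: pinning down the precise affine decomposition and keeping track of which constant is which, and making sure the mean shift $a_3$ and the variance constants come out strictly positive (this uses $0 < p < 1/2$, so the covariance matrix in \eqref{eqn-bm-cov} is nondegenerate and the off-diagonal term is nonzero, forcing $a_3 \neq 0$; if one wishes $a_3 > 0$ rather than $a_3 \neq 0$ one may need to note $V(-\tau)$ has the appropriate sign of mean, or simply absorb a sign by symmetry of the squared term). None of this is deep — it is the standard computation for the joint law of $(\text{hitting time}, \text{coordinate at hitting time})$ for correlated Brownian motion — but it is worth doing carefully so that the constants in \eqref{eqn-g-form} are genuinely functions of $p$ alone, as claimed.
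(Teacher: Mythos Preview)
Your proposal is correct and follows essentially the same route as the paper: decompose $V$ as $a_3 U$ plus an independent Brownian motion, use the standard inverse-Gaussian density for the hitting time $\tau$ of $U$, and multiply by the conditional Gaussian density of $V(-\tau)$ given $\tau$. Your additional remarks on boundedness and on the sign of $a_3$ (which is indeed positive since $\op{Cov}(U,V)=p/2>0$) are fine elaborations but not points of divergence.
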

\begin{proof}
The function $g$ is the joint density of $(\tau , V(\tau))$, where $\tau$ is the first time the Brownian motion $U$ in~\eqref{eqn-bm-cov} hits $-1$ and $V$ is the other Brownian motion in~\eqref{eqn-bm-cov}. Thus marginal density of $\tau$ is given by
\eqbn
\BB P\left(t < \tau  < t+ dt \right) = C_1 t^{-3/2} \exp\left( -\frac{a_1}{t}  \right)
\eqen
for constants $C_1 > 0$ and $a_1 >0$ depending only on $p$. Recalling that $U$ and $V$ are positively correlated, we can write $V = \wt V + a_3 U$, where $a_3 > 0$ is a constant depending on $p$ and $\wt V$ is a constant times a standard linear Brownian motion independent from $U$. We have $V(-\tau) = \wt V(-\tau) -a_3$ so the conditional density of $V(\tau)$ given $\tau$ is given by
\eqbn
\BB P\left(v < V(\tau)  < v+ dv \,|\, t < \tau < t+ dt \right) = C_2 t^{-1/2} \exp\left( -\frac{a_2(v + a_3)^2}{t} \right) ,
\eqen
for constants $C_2 > 0$ and $a_2 >0$ depending only on $p$. Combining these formulas yields~\eqref{eqn-g-form}. 
\end{proof}

We next prove an analogue of Lemma~\ref{prop-J^H-law} for (roughly speaking) times at which hamburger orders are added when we run forward. For this we need the following basic fact.

\begin{lem}\label{prop-subsequence-law}
Let $(\xi_j)$ be a sequence of iid non-negative random variables. For $m\in\BB N$, let $S_m := \sum_{j=1}^m \xi_j$. Let $(N_n)$ be an increasing sequence of positive integer-valued random variables such that $N_n/n$ a.s.\ converges to a constant $q > 0$. For $n\in\BB N$, let $\wh S_n :=   S_{N_n}$. Suppose there is a random variable $X$ and an $\alpha > 0$ such that $n^{-\alpha} \wh S_n \rta X$ in law. Then $m^{-\alpha} S_m \rta q^{-\alpha} X$ in law.
\end{lem}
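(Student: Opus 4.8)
The plan is to reduce everything to the continuous-mapping theorem together with the assumed convergence of $n^{-\alpha}\wh S_n$ and of $N_n/n$. First I would fix notation: write $q>0$ for the a.s.\ limit of $N_n/n$, and observe that since the $\xi_j$ are non-negative, the partial sums $S_m$ are non-decreasing in $m$, which is the monotonicity we will exploit to sandwich $S_m$ between two "subsequence" sums of the form $\wh S_{n}$.

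The key steps, in order, are as follows. \emph{Step 1: invert the index.} Given $m\in\BB N$, set $n = n(m) := \lceil m/q\rceil$, so that $N_{n}/n\to q$ a.s.\ forces $N_{n(m)}/m \to 1$ a.s.\ as $m\to\infty$ (because $n(m)/m \to 1/q$ and $N_{n}/n\to q$). \emph{Step 2: squeeze $S_m$.} Since $N_{n(m)}/m\to 1$, for any $\delta\in(0,1)$ we have, off an event of probability tending to $0$ (indeed a.s.\ eventually), the inclusion $N_{n'(m)} \le m \le N_{n''(m)}$ where $n'(m) := \lceil (1-\delta)m/q\rceil$ and $n''(m) := \lceil (1+\delta)m/q\rceil$; here one uses that $N$ is increasing together with $N_{n'(m)}/m \to (1-\delta)$ and $N_{n''(m)}/m \to (1+\delta)$, so these bounds on $m$ in terms of the $N$-values hold a.s.\ for $m$ large. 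By monotonicity of $S$, on this event
\[
\wh S_{n'(m)} = S_{N_{n'(m)}} \;\le\; S_m \;\le\; S_{N_{n''(m)}} = \wh S_{n''(m)} .
\]
\emph{Step 3: pass to the limit in each bound.} By hypothesis $n^{-\alpha}\wh S_n \to X$ in law; applying this along $n = n'(m)$ and using $n'(m)^{-\alpha} m^\alpha \to (1-\delta)^{-\alpha} q^{\alpha}$ (a deterministic scalar), Slutsky's lemma gives $m^{-\alpha}\wh S_{n'(m)} \to (1-\delta)^{-\alpha} q^{\alpha} X$ in law, and similarly $m^{-\alpha}\wh S_{n''(m)} \to (1+\delta)^{-\alpha} q^{\alpha} X$ in law. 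Combining with Step 2 (and the fact that the squeezing event has probability $\to 1$) yields, for every continuity point $x$ of the relevant distribution functions,
\[
\BB P\!\left((1+\delta)^{\alpha} q^{-\alpha} X \le x\right) - o(1)
\;\le\; \BB P\!\left(m^{-\alpha} S_m \le x\right)
\;\le\; \BB P\!\left((1-\delta)^{\alpha} q^{-\alpha} X \le x\right) + o(1).
\]
\emph{Step 4: let $\delta\downarrow 0$.} Since $(1\pm\delta)^{\alpha} q^{-\alpha} X \to q^{-\alpha} X$ in law as $\delta\to 0$, sending $\delta\downarrow 0$ (after $m\to\infty$) pins down $\lim_m \BB P(m^{-\alpha} S_m \le x) = \BB P(q^{-\alpha} X \le x)$ at every continuity point $x$ of the limit, which is exactly $m^{-\alpha} S_m \to q^{-\alpha} X$ in law.

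The only mild subtlety — and the step I would be most careful about — is Step 2: converting the a.s.\ statement "$N_{n(m)}/m\to 1$" into the two-sided inclusion $N_{n'(m)}\le m\le N_{n''(m)}$ a.s.\ for large $m$. This is where monotonicity of $N$ (hence of $S$) is essential, and one should note that no moment or independence assumption on the $\xi_j$ beyond non-negativity is used; the limit random variable $X$ and the exponent $\alpha$ are treated as black boxes. Everything else is bookkeeping with Slutsky's lemma and continuity points of distribution functions, so I would present Steps 1–4 compactly and leave the $\epsilon$-level verifications to the reader, consistent with the excerpt's remark that "the proof is left to the reader."
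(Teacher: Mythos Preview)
Your proof is correct and follows essentially the same route as the paper's (commented-out) argument: both use non-negativity of the $\xi_j$ to sandwich $S_m$ between two subsequence sums $\wh S_{n'}$ and $\wh S_{n''}$, apply the assumed convergence in law together with the deterministic scaling, and then send a small parameter to zero. One minor slip to fix in Step~3: the scalar should be $(n'(m)/m)^\alpha \to ((1-\delta)/q)^\alpha = (1-\delta)^\alpha q^{-\alpha}$, not $(1-\delta)^{-\alpha} q^{\alpha}$; your displayed CDF inequality in fact uses the correct constants, so the argument still goes through once this is corrected.
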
  
\begin{proof}
Fix $\ep > 0$ and for $m\in \BB N$, set $n_m := \lfloor q^{-1}(1+\ep) m \rfloor$. 
Since $  N_n/n \rta q$, with probability tending to 1 as $m\rta\infty$ we have $N_{n_m} \geq m$. 
If $N_{n_m} \geq m$ then $\wh S_{n_m} \geq S_m$. Therefore, for each $a>0$, 
\alb
\BB P\left( m^{-\alpha} S_m > a \right) 
&\leq \BB P\left( m^{-\alpha} \wh S_{ n_m } > a \right) + \BB P\left( N_{n_m} < m \right) \\
&\leq \BB P\left( n_m^{-\alpha} \wh S_{n_m} > n_m^{-\alpha} m^\alpha a \right) + o_m(1) \\
&= \BB P\left( q^{-\alpha} X >  (1+\ep)^{-\alpha} a \right) + o_m(1) . 
\ale
Similarly, $\BB P\left( m^{-\alpha} S_m > a \right) \geq \BB P\left( q^{-\alpha} X >  (1-\ep)^{-\alpha} a \right) + o_m(1)$. 
Since $\ep > 0$ is arbitrary, we find that whenever $\BB P(q^{-\alpha} X = a) = 0$, 
\eqbn
\lim_{m\rta\infty} \BB P\left( m^{-\alpha} S_m > a \right) = \BB P\left( q^{-\alpha} X > a \right) ,
\eqen
which implies the desired convergence in law.
\end{proof} 
 
\begin{lem} \label{prop-I^H-law}
For $m\in\BB N$, let $\wt I_m^H$ be the $m$th smallest $i \in \BB N$ for which $X(1,i)$ contains no hamburgers. Also let $\tau$ be the smallest $t>0$ for which $U(t) = - 1$. Then  
\eqb \label{eqn-I^H-law}
 m^{-2} \wt I_m^H  \rta   \left(\frac{4}{1-p} \right)^2 \tau  \qquad \text{in law}.  
\eqe 
Furthermore, there is a constant $b_0 > 0$ such that  
\eqb \label{eqn-I^H-tail}
\BB P\left(\wt I^H_1  > n \right) = (b_0+o_n(1)) n^{-1/2 }    .
\eqe   
\end{lem}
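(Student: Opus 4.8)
The plan is to identify $\wt I_m^H$ with the running-minimum epochs of the forward walk $d$ of~\eqref{eqn-discrete-path}, and then to read off both assertions from the scaling limit \cite[Theorem 2.5]{shef-burger} together with Lemma~\ref{prop-subsequence-law}. The first step is a combinatorial identity: for every $i\geq 0$, the reduced word $X(1,i)$ contains no hamburgers if and only if $d(i)=\min_{0\leq j\leq i}d(j)$. To see this I would use the word $Y$ of the introduction, which contains no flexible orders. Every flexible order occurring in a prefix $X_1\cdots X_i$ is matched by a burger earlier in that prefix, and replacing it by the corresponding hamburger- or cheeseburger-order (as in the definition of $Y$) does not change the reduction, so $\mcl R(X_1\cdots X_i)=\mcl R(Y_1\cdots Y_i)$. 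In $Y$ the hamburger symbols and the cheeseburger symbols do not interact, so the number of hamburgers in $\mcl R(Y_1\cdots Y_i)$ equals the Skorokhod reflection $d(i)-\min_{0\leq j\leq i}d(j)$ of the walk $d$ at its running minimum. Hence $\wt I_m^H$ is the $m$-th time $i\geq 1$ at which $d$ attains a running minimum.

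Next I would set up a regeneration structure. At a hamburger-free time $i$ the burgers surviving in $X(1,i)$ are all cheeseburgers, so any flexible order placed after time $i$ that reaches back before time $i$ is a cheeseburger order in $Y$ and does not affect $d$; since $\{\wt I_m^H=i\}$ is $\sigma(X_1,\dots,X_i)$-measurable, this shows that conditionally on that event the increments $(d(i+\ell)-d(i))_{\ell\geq1}$ are a fixed functional of $(X_{i+\ell})_{\ell\geq1}$ alone. Consequently the excursions of $d$ between successive running-minimum levels are i.i.d., the first-passage times $\rho_K:=\inf\{i\geq 0:d(i)=-K\}$ form an i.i.d.\ partial-sum process, and the number $K(m):=-\min_{0\leq j\leq \wt I_m^H}d(j)$ of records broken by time $\wt I_m^H$ satisfies $K(m)/m\to q$ a.s.\ for a deterministic $q\in(0,1)$, namely the reciprocal of the expected number of returns of $d$ to a running-minimum level before it is broken (an expectation computed directly from the one-step law of $X$ at such a time). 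Since $\rho_{K(m)}\leq \wt I_m^H<\rho_{K(m)+1}$, Lemma~\ref{prop-subsequence-law} is exactly what transfers a scaling limit between the i.i.d.\ sum $\rho_\cdot$ and the (non-i.i.d.-indexed) sequence $\wt I_m^H$.

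Finally, the scaling input: by \cite[Theorem 2.5]{shef-burger}, $U^n:=n^{-1/2}d(n\cdot)$ converges in law to the Brownian motion $U$ of~\eqref{eqn-bm-cov}, and since $U$ crosses $-1$ transversally, $\rho_K/K^2=\inf\{t:U^{K^2}(t)=-1\}\to\tau$ in law. Combining this with $K(m)/m\to q$ and the squeeze $\rho_{K(m)}\leq \wt I_m^H<\rho_{K(m)+1}$ gives $m^{-2}\wt I_m^H\to c\,\tau$ in law, and the elementary computation of $q$ from~\eqref{eqn-bm-cov} produces the constant $(4/(1-p))^2$ asserted in~\eqref{eqn-I^H-law}. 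For~\eqref{eqn-I^H-tail}, note that up to an event of probability of the correct order $\wt I_1^H$ is a single excursion length of $d$, hence lies in the normal domain of attraction of the stable-$\tfrac12$ law of $\tau$ (equivalently, one sandwiches $\wt I_1^H$ between quantities of the same type as the $J_1^H$ of Lemma~\ref{prop-J^H-law}); the tail $(a_0+o_n(1))n^{-1/2}$ is precisely the statement that this variable is in that normal domain of attraction.

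The step I expect to be the main obstacle is the regeneration argument of the second paragraph: because $d$ is not adapted to the natural filtration of $X$ (its increment at a flexible order depends on where that order is matched, possibly far in the future), one must carefully control how flexible orders straddle a hamburger-free time in order to obtain the clean i.i.d.\ excursion structure — and it is to absorb the resulting failure of exact independence of the gaps of $\wt I_m^H$ that one routes the argument through the i.i.d.\ process $\rho_\cdot$ and Lemma~\ref{prop-subsequence-law} rather than working with $\wt I_m^H$ directly. A secondary, purely computational nuisance is the bookkeeping needed to pin down the constant.
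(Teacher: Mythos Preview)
There is a genuine gap in your regeneration argument. You claim that at a hamburger-free time $i$ the increments $(d(i+\ell)-d(i))_{\ell\geq 1}$ depend only on $(X_{i+\ell})_{\ell\geq 1}$, because any flexible order after time $i$ that matches before time $i$ necessarily matches a cheeseburger. But orders match \emph{earlier} burgers (so the dependence is on the past of $X$, not the future as you write), and a flexible order $X_k=\tb F$ with $k>i$ can have $\phi(k)<1$: this happens precisely when $X(1,k-1)$ contains no burgers at all, i.e.\ the cheeseburger stack inherited at time $i$ has been exhausted as well. In that case $Y_k$ is determined by a burger outside the window $[1,\infty)$ and may well equal $\tb H$, so the increment $d(k)-d(k-1)$ --- and with it your first-passage times $\rho_K$ and the record count $K(m)$ --- depends on $X_{<1}$. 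Thus $\rho_K-\rho_{K-1}$ are not i.i.d., and the law of large numbers for $K(m)/m$ is not justified. (Your intermediate assertion $\mcl R(X_1\cdots X_i)=\mcl R(Y_1\cdots Y_i)$ is false for the same reason --- unmatched $\tb F$'s remain in $X(1,i)$ but become $\tb H$ or $\tb C$ in $Y(1,i)$ --- though the weaker fact $\mcl N_{\tc H}(X(1,i))=\mcl N_{\tc H}(Y(1,i))$ that you actually need does hold.)

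The paper sidesteps this by never appealing to an i.i.d.\ structure for $d$. It works instead with $I_m^H$, the first time $X(1,\cdot)$ contains $m$ hamburger orders, which is $\sigma(X_1,\dots,X_{I_m^H})$-measurable with no reference to $Y$ or to $X_{<1}$. The convergence $m^{-2}I_m^H\to\tau$ is read off the functional limit $Z^{m^2}\to Z$ together with the bound $\mcl N_{\tb F}(X(1,I_m^H))=o(m)$ from \cite[Lemma~2.8]{gms-burger-cone}; no renewal structure is needed for this step. The i.i.d.\ input for Lemma~\ref{prop-subsequence-law} comes from the excursions $\wt I_m^H-\wt I_{m-1}^H$ themselves: the hamburger count $\mcl N_{\tc H}(X(1,j))$ (unlike $d(j)$) is $\sigma(X_1,\dots,X_j)$-measurable, and at a hamburger-free time the burger stack, viewed as acting on subsequent symbols, restarts, so these excursions genuinely are i.i.d. Your argument can be repaired along these lines by replacing $d$ and $\rho_K$ with the hamburger count and $I_m^H$.
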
 
\begin{proof}
For $m\in\BB N$, let $ I_m^H$ be the smallest $i\in\BB N$ for which $X(1,i)$ contains at least $m$ hamburger orders. For $m\in\BB N$, let $E_m$ be the event that  $\wt I_m^H = \wt I_{m-1}^H + 1$ and $X_{\wt I_m^H} = \tb H$. Also let $N_m := \sum_{k\leq m} \BB 1_{E_k}$. Observe that $E_m$ occurs if and only if $\wt I_m^H = I_k^H$ for some $k\in\BB N$. Therefore $\wt I_{N_m}^H = I_m^H$. Furthermore, the events $E_m$ are independent, and each has probability $(1-p)/4$. By the law of large numbers, we have $ N_m/m \rta (1-p)/4$ a.s. 

Let $\tau_m := m^{-2} I_m^H$. We claim that $\tau_m\rta \tau$ in law. Suppose we have (using \cite[Theorem 2.5]{shef-burger} and the Skorokhod theorem) coupled $(Z^{m^2})$ with a correlated Brownian motion $Z$ as in~\eqref{eqn-bm-cov} in such a way that $Z^{m^2} \rta Z$ uniformly on compacts a.s. Observe that
\eqb \label{eqn-I^H-count}
  -1 - m^{-1} \mcl N_{\tb F}\left( X(1,I_m^H )\right) \leq  U^{m^2}(\tau_m) \leq  - 1  .
\eqe 
For $\delta>0$, let $\tau_\delta$ be the first time $t>0$ such that $U(t) = -1+\delta$. 
For each $\ep > 0$, we can find $\delta > 0$ such that with probability at least $1-\ep$, we have $\tau -\tau_\delta \leq \ep$. By \cite[Lemma 3.7]{gms-burger-cone}, we can find $m_* = m_*(\ep) \in \BB N$ such that for $m\geq m_*$, it holds with probability at least $1-\ep$ that $m^{-1} \mcl N_{\tb F}\left( X(1,I_m^H)\right) \leq \delta$. 
 By~\eqref{eqn-I^H-count}, for $m\geq m_*$, it holds with probability at least $1-2\ep$ that $|\tau_m - \tau| \leq \ep$. Since $\ep$ is arbitrary, this implies $\tau_m \rta \tau$ in probability, hence also in law. 

By Lemma~\ref{prop-subsequence-law}, we have~\eqref{eqn-I^H-law}.   
We now obtain~\eqref{eqn-I^H-tail} in exactly the same manner as in Lemma~\ref{prop-J^H-law}. 
\end{proof}

From Lemma~\ref{prop-I^H-law} we deduce the following, which will be used several times below. 
 
\begin{lem} \label{prop-J^H-time}
There is a constant $b_1 > 0$ such that for $n\in\BB N$, 
\eqbn
\BB P\left( \text{$n = J_m^H$ for some $m\in\BB N$} \right) = (b_1 +o_n(1))  n^{-1/2} .
\eqen 
\end{lem}
\begin{proof}
The event that $n = J_m^H$ for some $m\in\BB N$ is the same as the event that $X(-n+1,-1)$ contains no hamburger orders or flexible orders and $X_{-n} = \tc H$. By translation invariance this probability is the same as the probability that $X(1,i)$ contains a hamburger for each $i\in [1,n]_{\BB Z}$, i.e.\ the probability that $\wt I_1^H > n$, with $\wt I_1^H$ as in Lemma~\ref{prop-I^H-law}. The statement of the lemma now follows Lemma~\ref{prop-I^H-law}. 
\end{proof}

\subsection{Regularity and large deviation estimates}
\label{sec-J^H-reg}

In this section we will prove two (closely related) types of results, which sharpen the estimates which come from Proposition~\ref{prop-J^H-local}: estimates for the probability that $(J_m^H , L_m^H) = (k,l)$ when $k$ and $l$ are far from $m^2$ and $m$, respectively; and regularity estimates for the conditional law of $X_1\dots X_{J_m^H}$ given $\{(J_m^H , L_m^H) = (k,l)\}$.
The starting point of our estimates is Proposition~\ref{prop-J^H-local} plus the following tail bound for the pairs $(J_m^H , L_m^H)$.

\begin{lem} \label{prop-J^H-sup}
For $m\in\BB N$ and $R>0$, we have
\begin{align}  
&\BB P\left(J_m^H \geq R  \right) \preceq R^{-1/2} m,  \label{eqn-J^H-sup-big} \\
&\BB P\left(J_m^H \leq R^{-1}  \right) \preceq   e^{- a R m^2  }, \quad \op{and}\label{eqn-J^H-sup-small} \\ 
&\BB P\left(|L_m^H| \geq R   \right) \preceq R^{-1   } m   \label{eqn-J^H-sup-length}
\end{align}
with $a >0$ a universal constant and the implicit constants depending only on $p$. 
\end{lem}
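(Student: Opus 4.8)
The plan is to derive all three bounds from the i.i.d.\ decomposition of $(J_m^H, L_m^H)$ into the blocks $X(-J_k^H, -J_{k-1}^H-1)$, $k = 1, \dots, m$, together with the single-block tail estimates~\eqref{eqn-J^H-tail} and~\eqref{eqn-d-J^H-tail} from Lemma~\ref{prop-J^H-law}, rather than from the local limit statement in Proposition~\ref{prop-J^H-local} (which would lose the uniformity in $R$). Write $J_m^H = \sum_{k=1}^m \xi_k$ with $\xi_k$ i.i.d.\ copies of $J_1^H$, and $L_m^H = \sum_{k=1}^m \eta_k$ with $\eta_k := d^*(X(-J_k^H, -J_{k-1}^H-1))$ i.i.d.\ copies of $L_1^H$, as noted in the proof of Proposition~\ref{prop-J^H-local}.

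For~\eqref{eqn-J^H-sup-big}: since $\BB P(\xi_1 > n) = (a_0 + o_n(1)) n^{-1/2}$ by~\eqref{eqn-J^H-tail}, a union bound gives $\BB P(J_m^H \geq R) \leq \BB P(\exists k \leq m : \xi_k \geq R/m) \leq m \cdot \BB P(\xi_1 \geq R/m) \preceq m (R/m)^{-1/2} = m^{3/2} R^{-1/2}$. This is worse than claimed; to get the stated $R^{-1/2} m$ I would instead argue directly at the level of the Brownian motion via the scaling in~\eqref{eqn-J^H-law}: $\BB P(J_m^H \geq R) = \BB P(m^{-2} J_m^H \geq R m^{-2})$, and since $m^{-2} J_m^H$ converges to the stable-$1/2$ variable $\tau$ with tail $\BB P(\tau > s) \asymp s^{-1/2}$, one gets $\preceq (R m^{-2})^{-1/2} = m R^{-1/2}$ --- but to make this uniform in both $m$ and $R$ one needs a non-asymptotic bound. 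The clean route is: $J_m^H \geq R$ forces $U$ not to have reached $-1$ after $\asymp R$ units of reverse time in the discrete walk, i.e.\ it is comparable to $\BB P(\wt I_{\lceil cm \rceil}^H \vee (\text{analogous quantity}) \geq R)$; more simply, use that the unreduced hamburger-count walk is a lazy simple random walk, so $J_m^H \geq R$ implies this walk stays below level $m$ for time $R$, which has probability $\preceq m/\sqrt{R}$ by the reflection principle / ballot-type estimate for mean-zero increments. I would write the proof this way.

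For~\eqref{eqn-J^H-sup-small}: this is a small-deviation (lower-tail) estimate. Here I would use that $J_m^H \leq R$ means the reverse walk added $m$ hamburgers to the stack within $R$ steps, which forces the net hamburger count $d(X(-R,-1))$ (a lazy SRW run for $R$ steps) to reach level $m$; by the standard Gaussian/Hoeffding bound for the maximum of a mean-zero bounded-increment walk, $\BB P(\max_{j \leq R} |d(X(-j,-1))| \geq m) \preceq e^{-a m^2/R}$. Alternatively, at the continuum level, $\BB P(\tau \leq s) = \BB P(\inf_{t \leq s} U(-t) \leq -1) \asymp \BB P(|N(0,s)| \geq 1) \preceq e^{-a/s}$ with $s = R m^{-2}$; transferring this to the discrete walk uniformly in $R$ is again handled by Hoeffding. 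I expect this to be the most delicate of the three, since one must rule out the regime $R \ll m^2$ uniformly and cannot simply invoke a convergence-in-law statement; the right tool is a quantitative maximal inequality for the underlying simple random walk, applied before passing to the subsequence $(J_m^H)$.

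For~\eqref{eqn-J^H-sup-length}: by~\eqref{eqn-d-J^H-tail}, $\BB P(|\eta_1| > n) \preceq n^{-1}$, so $\eta_1$ is (barely) not in $L^1$ but is in the domain of attraction of a Cauchy law, and $L_m^H$ scales like $m$. A union bound gives $\BB P(\max_k |\eta_k| \geq R/m) \preceq m \cdot (R/m)^{-1} = m^2/R$, again off by a factor $m$. To recover the sharp $m/R$, I would use the truncation method standard for Cauchy-domain sums: split $\eta_k = \eta_k \BB 1_{|\eta_k| \leq R} + \eta_k \BB 1_{|\eta_k| > R}$; the probability that any summand exceeds $R$ in absolute value is $\preceq m \BB P(|\eta_1| > R) \preceq m/R$, and on the complement one controls $\BB P(|\sum_k \eta_k \BB 1_{|\eta_k| \leq R}| \geq R)$ by Chebyshev using $\op{Var}(\eta_1 \BB 1_{|\eta_1| \leq R}) \preceq R$ (from the $n^{-1}$ tail), giving $\preceq m R / R^2 = m/R$; one must also center, but $|\BB E(\eta_1 \BB 1_{|\eta_1| \leq R})| \preceq \log R$ from symmetry of the two tails up to constants, which is absorbed when $R \gtrsim$ the relevant scale, and the range $R \lesssim m$ is trivial since the bound exceeds $1$ there. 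Summing the two contributions yields~\eqref{eqn-J^H-sup-length}. The main obstacle overall is obtaining these estimates \emph{uniformly in $R$ and $m$} with the correct exponent of $m$, which rules out naive union bounds and forces either the truncation argument above or a reduction to maximal inequalities for the simple random walks underlying the stack dynamics.
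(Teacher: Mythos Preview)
Your arguments for \eqref{eqn-J^H-sup-small} and \eqref{eqn-J^H-sup-length} are correct and take a different route from the paper. For \eqref{eqn-J^H-sup-small}, the inclusion $\{J_m^H \leq R\} \subset \{\mcl N_{\tc H}(X(-R,-1)) \geq m\} \subset \{|X(-R,-1)| \geq m\}$ together with \cite[Lemma~3.13]{shef-burger} gives the Gaussian bound directly and uniformly. For \eqref{eqn-J^H-sup-length}, the truncation-plus-Chebyshev argument for i.i.d.\ sums in the Cauchy domain works; the centering term $m\,|\BB E(\eta_1 \BB 1_{|\eta_1|\leq R})| = O(m\log R)$ is harmless since the claimed bound exceeds a constant whenever $m\log R \gtrsim R$. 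The paper instead handles all three estimates by a single device: because $\BB P(J_1^H>n)\asymp n^{-1/2}$ and $\BB P(|L_1^H|>n)\asymp n^{-1}$, one can sandwich each increment between constant multiples of i.i.d.\ \emph{stable} variables of the appropriate index, and then exact stability of the sum (e.g.\ $m^{-2}\sum_{k\le m}Y_k \eqD Y_1$ for index $1/2$) reduces the problem to the tail and small-ball behaviour of a single stable law. Your approach is more hands-on; the paper's is shorter and uniform in flavour.

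There is, however, a genuine gap in your treatment of \eqref{eqn-J^H-sup-big}. The ``unreduced hamburger-count walk'' you invoke is not a mean-zero lazy simple random walk: the raw count of $\tc H$'s minus $\tb H$'s has drift $p/4>0$, the version with $\tb F$'s included has drift $-p/4<0$, and the reduced-word quantity $j\mapsto \mcl N_{\tc H}(X(-j,-1))$ is monotone and not even Markov (a $\tb F$ in the order queue can be cancelled by either burger type, so the queue dynamics are not a reflected SRW). Hence no reflection-principle or ballot estimate is available, and your heuristic $\BB P(J_m^H\geq R)\preceq m R^{-1/2}$, while correct, is not justified by the argument you sketch. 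The paper closes this gap exactly via the stable-coupling idea above: if $A^{-1}Y_k^1\leq J_k^H-J_{k-1}^H\leq A Y_k^2$ with $Y_k^i$ i.i.d.\ positive stable of index $1/2$, then $\BB P(J_m^H\geq R)\leq \BB P\big(Y_1\geq (R/A)m^{-2}\big)\preceq m R^{-1/2}$, uniformly in $m$ and $R$. You could also repair your proof by observing that $\{J_m^H>R\}$ is (by translation invariance) the event that the forward word $X(1,i)$ contains at least one hamburger for every $i\le R$ after an initial stretch producing $m$ unmatched hamburgers; but making this precise with the right exponent again leads back to a stable-type tail estimate rather than a simple-random-walk bound.
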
 
\begin{proof}
Let $(Y_m)_{m\in\BB N}$ be an iid sequence of positive stable random variables of index $1/2$, with zero centering parameter. 
Equivalently, each $Y_m$ has the law of the first time that a standard linear Brownian motion hits 1. 
By Lemma~\ref{prop-J^H-law} and since the increments $J_m^H - J_{m-1}^H$ are iid, we can find a constant $A >0$, depending only on $p$, and a coupling of two copies $(Y_m^1)$ and $(Y_m^2)$ of the sequence $(Y_m)$ with the sequence $(J_m^H)$ such that $A^{-1} Y_m^1 \leq J_m^H - J_{m-1}^H \leq A Y_m^2+1$ a.s.\ for each $m\in\BB N$. Then with $S_m^i := \sum_{j=1}^m Y_j^i$ for $i\in \{1,2\}$, we have
\eqbn
\BB P\left(J_m^H \geq R   \right) \leq \BB P\left(S_m^2 \geq \frac{R-m}{A}  \right) = \BB P\left(m^{-2} S_m^2 \geq    m^{-2} \frac{R-m}{A} \right) 
\eqen
and
\eqbn
\BB P\left(J_m^H \leq R^{-1}   \right) \leq \BB P\left(S_m^1 \leq A  R^{-1} \right) = \BB P\left(m^{-2} S_m^1 \leq A  R^{-1} m^{-2} \right) .
\eqen
By stability, $m^{-2} S_m^i \eqD Y_1$ for $i\in\{1,2\}$. Hence we obtain
\eqbn
\BB P\left(J_m^H \geq R  \right) \leq \BB P\left(Y_1 \geq  m^{-2} \frac{R-m}{A}  \right) \preceq \min\{ 1 , 0\vee (R-m)^{-1/2} m \} \preceq R^{-1/2} m  
\eqen
and (recalling the representation of $Y_1$ in terms of Brownian motion)
\eqbn
\BB P\left(J_m^H \leq R  \right) \leq \BB P\left(Y_1 \leq A  R^{-1} m^{-2} \right) \preceq e^{- a  R m^2    }
\eqen
for an appropriate constant $a > 0$ as in the statement of the lemma. This yields~\eqref{eqn-J^H-sup-big} and~\eqref{eqn-J^H-sup-small}.

To prove~\eqref{eqn-J^H-sup-length}, we observe that Lemma~\ref{prop-J^H-law} implies that we can find constants $A , B > 0$ and $\beta \in (0,1)$ (depending only on $p$) such that the law of $L_1^H$ is stochastically dominated by the law of $A\wt Y+B$, where $\wt Y$ is a 1-stable random variable with skewness parameter $\beta$ and zero centering parameter.  
Let $(\wt Y_m)_{m\in\BB N}$ be an iid sequence of such 1-stable random variables. Then we can find a coupling of $(L_m^H)$ with $(\wt Y_m)$ such that a.s.\ 
\eqbn
   L_m^H -  L_{m-1}^H \leq   A\wt Y_m +  B \qquad \forall m\in\BB N.
\eqen
With $\wt S_m  =  \sum_{j=1}^m \wt Y_j $, we have $m^{-1} \wt S_m \eqD \wt Y_1 $. Therefore,
\alb
\BB P\left( L_m^H  \geq R   \right) &\leq \BB P\left(\wt S_m \geq \frac{R - \wt B m }{\wt A}  \right)   \\
&=  \BB P\left(\wt Y_1 \geq   m^{-1} \frac{ R - \wt B m}{\wt A}       \right)  \\
&\preceq \min\left\{1,  0\vee (R - \wt B m)^{-1} m  \right\} \preceq R^{-1} m .
\ale 
We similarly obtain $\BB P\left( L_m^H  \leq - R   \right) \preceq R^{-1} m$. 
\end{proof}

Now we will prove a sharper version of the upper bound implicit in Proposition~\ref{prop-J^H-local} which gives stronger estimates when the pair $(k,l)$ is in some sense unusual or if the reduced word $|X(-j,-1)|$ is unusually long for some $j\in [1,J_m^H]_{\BB Z}$.

\begin{lem} \label{prop-J^H-long-exact}
Suppose $m\in\BB N$ and $(k,l)\in \BB N \times \BB Z$. Then  
\eqb \label{eqn-J^H-long-exact}
 \BB P\left((J_m^H , L_m^H) = (k,l)\right) \preceq  m^{-3} \wedge\left( k^{-1/2} m^{-2 }  \right) \wedge \left(|l|^{-1} m^{-2}\right) \wedge \left( e^{-a_0 m^2/k} m^{-3} \right) ,
\eqe 
with $a_0 > 0$ a universal constant and the implicit constant depending only on $p$. Furthermore, there is a universal constant $a_1 > 0$ such that for $R>0$ we have
\eqb \label{eqn-J^H-long-exact-sup}
 \BB P\left( (J_m^H , L_m^H) = (k,l) ,\, \sup_{j\in [1,J_m^H]_{\BB Z}} |X(-j,-1)| \geq R \right) \preceq \exp\left( - \frac{a_1 m^2}{k}  -\frac{a_1 R}{k^{1/2}}     \right) m^{-3} ,
\eqe 
with the implicit constant depending only on $p$. 
\end{lem}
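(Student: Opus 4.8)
The bound \eqref{eqn-J^H-long-exact} has four constituents. The factor $m^{-3}$ is the easiest: for $m$ larger than some universal constant it follows from Proposition~\ref{prop-J^H-local} together with the boundedness of the limiting density $g$ (Lemma~\ref{prop-g-form}), using that the convergence in \eqref{eqn-J^H-local} is uniform over \emph{all} $(k,l)\in\BB N\times\BB Z$; for $m$ below that constant the estimate is trivial. The other three constituents follow from a ``divide and conquer''. As noted in the proof of Proposition~\ref{prop-J^H-local}, the increments $(J_i^H-J_{i-1}^H,\,L_i^H-L_{i-1}^H)$, $i\ge1$ (with $J_0^H=L_0^H=0$), are i.i.d., so $(J_m^H,L_m^H)\eqD S'+S''$ where $S'$ and $S''$ are independent with the respective laws of $(J_{\lfloor m/2\rfloor}^H,L_{\lfloor m/2\rfloor}^H)$ and $(J_{\lceil m/2\rceil}^H,L_{\lceil m/2\rceil}^H)$. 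Writing the convolution $\BB P((J_m^H,L_m^H)=(k,l))=\sum_{(k',l')}\BB P(S'=(k',l'))\BB P(S''=(k-k',l-l'))$ and splitting according to whether $k'\ge k/2$ or $k-k'\ge k/2$, one bounds one of the two factors by the (already established, uniform) $m^{-3}$ estimate and sums the other over the appropriate half-line; via \eqref{eqn-J^H-sup-big} this gives $m^{-3}\BB P(J_{\lfloor m/2\rfloor}^H\ge k/2)\preceq k^{-1/2}m^{-2}$. The $|l|^{-1}m^{-2}$ bound is obtained identically using \eqref{eqn-J^H-sup-length} for $\BB P(|L_{\lfloor m/2\rfloor}^H|\ge|l|/2)$, and the $e^{-a_0m^2/k}m^{-3}$ bound by instead summing the non-$m^{-3}$ factor over all summands with first coordinate $\le k$ and invoking \eqref{eqn-J^H-sup-small}.

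For \eqref{eqn-J^H-long-exact-sup}, let $A_R$ be the event that $\sup_{j\in[1,J_m^H]_{\BB Z}}|X(-j,-1)|\ge R$. Since $|X(-j,-1)|\le j$, the event $A_R\cap\{J_m^H=k\}$ is empty unless $R\le k$, and when $R$ is at most a fixed multiple of $k^{1/2}\vee m$ the claimed bound is weaker than \eqref{eqn-J^H-long-exact}; so assume $C(k^{1/2}\vee m)\le R\le k$. The plan rests on two inputs. First, each milestone word $X(-J_i^H,-1)$ contains no hamburger orders and no flexible orders — this is the characterization used in the proof of Corollary~\ref{prop-J^H-time} — so it consists of $i$ hamburgers together with $|L_i^H|$ cheeseburgers when $L_i^H\ge0$, or $|L_i^H|$ cheeseburger orders when $L_i^H<0$; hence $|X(-J_i^H,-1)|=i+|L_i^H|$. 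Second, the process $j\mapsto|X(-j,-1)|$ has increments bounded by $1$ and, since $X(-j,-1)$ typically has length of order $j^{1/2}$, satisfies a large-deviation estimate $\BB P(\sup_{j\le k}|X(-j,-1)|\ge R)\preceq e^{-cR^2/k}\le e^{-cR/k^{1/2}}$ (the last step using $R\le k$), which can be extracted from the bounded-increment structure of the word. To upgrade this to the bound carrying the extra factor $m^{-3}$, decompose at the first time $\sigma\le k$ that $|X(-j,-1)|$ reaches $R/2$: conditionally on $X_{-1},\dots,X_{-\sigma}$ the subsequent symbols are i.i.d., and on $\{J_m^H=k\}$ they must still accumulate the prescribed number of hamburgers and bring the cheeseburger displacement to $l$ within the remaining $k-\sigma$ steps, which via \eqref{eqn-J^H-long-exact} (or Proposition~\ref{prop-J^H-local}) applied to the continuation contributes an $m^{-3}$-type factor, while the initial segment — on which $|X(-j,-1)|$ performs an excursion of height of order $R$ that must moreover be undone by time $k$ — contributes $e^{-a_1R^2/k}\le e^{-a_1R/k^{1/2}}$.

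The main obstacle is making this restart argument quantitative. Decomposing at $\sigma$ introduces dependence on the random reduced word $X(-\sigma,-1)$, so one needs a version of \eqref{eqn-J^H-long-exact} for the continuation that is uniform in the hamburger count and cheeseburger displacement of that word; this is obtained by summing \eqref{eqn-J^H-long-exact} against the relevant location/overshoot distributions at time $\sigma$, with the tails of Lemma~\ref{prop-J^H-sup} controlling the initial segment. The delicate point is to extract the \emph{exponential} factor $e^{-a_1R/k^{1/2}}$ — rather than merely the polynomial bound a crude union bound would produce — from the resulting double sum over the value of $\sigma$ and the state at time $\sigma$; this is exactly where one must use that, under the constraint $J_m^H=k$, an excursion of the word up to size $R$ has only $k$ steps in which both to occur and to be reversed. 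One should also handle separately the boundary range $R\le C(k^{1/2}\vee m)$, where \eqref{eqn-J^H-long-exact} already suffices, and the endpoint sub-case where $m+|L_m^H|=m+|l|$ is itself $\ge R$ (so $A_R$ is automatic): there, when $m\ge R/2$ one gets $e^{-a_0m^2/k}m^{-3}\le e^{-a_1R/k^{1/2}}m^{-3}$ directly from \eqref{eqn-J^H-long-exact}, and when instead $|l|\ge R/2$ the extra input is sub-Gaussian control, conditional on $\{J_m^H=k\}$, of the $\pm1$-increment displacement walk $j\mapsto d^*(X(-j,-1))$.
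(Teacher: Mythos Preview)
Your argument for \eqref{eqn-J^H-long-exact} is essentially the paper's: the uniform $m^{-3}$ bound from Proposition~\ref{prop-J^H-local} plus Lemma~\ref{prop-g-form}, then the split at $\lfloor m/2\rfloor$ combined with Lemma~\ref{prop-J^H-sup}. That part is fine.

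For \eqref{eqn-J^H-long-exact-sup} you take a different and harder route than the paper, and you leave the hard steps only sketched. Your stopping-time decomposition at the first $\sigma$ with $|X(-\sigma,-1)|\ge R/2$ forces you to track the random state at $\sigma$, to prove a version of \eqref{eqn-J^H-long-exact} uniform in that state, and then to extract the product $e^{-a_1 m^2/k}\cdot e^{-a_1 R/k^{1/2}}\cdot m^{-3}$ from a double sum --- and the ``sub-Gaussian control conditional on $\{J_m^H=k\}$'' you invoke in the endpoint sub-case is not something you have available (the conditioning destroys the i.i.d.\ structure). None of these obstacles is fatal, but none is actually carried out either.

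The paper avoids all of this by reusing the \emph{same} split at $m'=\lfloor m/2\rfloor$. On the event $\{J_m^H=k\}\cap\{\sup_{j\le J_m^H}|X(-j,-1)|\ge R\}$, a triangle-inequality argument for reduced words shows that one of the two halves satisfies $\sup|X(\cdot,\cdot)|\ge R/2$ on an interval of length $\le k$; call this event $H_m^i(R,k)$. By \cite[Lemma~3.13]{shef-burger} (the ``bounded-increment'' large-deviation estimate you allude to), $\BB P(H_m^i(R,k))\preceq e^{-a_1' R/k^{1/2}}$. The other half is independent of $H_m^i(R,k)$ and, since its $J$-component is $\le k$, the already-proven third bound in \eqref{eqn-J^H-long-exact} (applied with $m'$ or $m-m'$ in place of $m$) gives that the conditional probability of hitting the required point is $\preceq e^{-a_0 m^2/(4k)} m^{-3}$. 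Multiplying the two yields \eqref{eqn-J^H-long-exact-sup} in one line, with no stopping-time restart, no uniformity-over-states issue, and no conditional sub-Gaussian input.
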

\begin{proof}
It is clear from Proposition~\ref{prop-J^H-local} and Lemma~\ref{prop-g-form} that for any $m\in\BB N$ and any $(k,l)\in\BB N \times \BB Z$, we have
\eqb \label{eqn-J^H-upper-basic}
 \BB P\left((J_m^H , L_m^H) = (k,l)\right) \preceq  m^{-3}  ,
\eqe 
with the implicit constant depending only on $p$. 

To prove the other estimates in the statement of the lemma, we will split the word $X_{-J_m^H} \dots X_{-1}$ into two independent segments, apply the estimate of Proposition~\ref{prop-J^H-local} in one segment, apply the estimate of Lemma~\ref{prop-J^H-sup} in the other segment, then multiply the probabilities for the two segments and sum over the two possibilities corresponding to which estimate is applied in which segment. 
To this end, define
\begin{align} \label{eqn-J^1-J^2}
&m' := \lfloor m/2 \rfloor ,\quad  J_m^{H,1} := J_{m'}^H  , \quad J_m^{H,2} := J_m^H - J_{m'}^H ,\notag\\
& L_m^{H,1}:= L_{m'}^H , \quad\op{and}\quad L_m^{H,2} := d^*\left(X(- J_m^H , -  J_{m'}^H-1)\right) .
\end{align}
Observe that the words $X_{-J_m^H} \dots X_{-J_{m'}^H-1}$ and $X_{-J_{m'}^H} \dots X_{-1}$, hence also the pairs $(J_m^{H,1} , L_m^{H,1})$ and $(J_m^{H,2} ,L_m^{H,2})$, are independent. 
Also note that $J_m^H = J_m^{H,1} + J_m^{H,2}$ and $L_m^H = L_m^{H,1} + L_m^{H,2}$. 

For $i\in \{1,2\}$, $R>0$, and $k\in\BB N$, define events
\alb
&E_m^i(R) := \left\{J_m^{H,i} \geq \frac{R}{2} \right\} ,\quad F_m^i(R) := \left\{ |L_m^{H,i}| \geq \frac{R}{2}  \right\} ,\quad G_m^i(R) := \left\{J_m^{H,i} \leq R \right\} \\
&H_m^1(R,k) := \left\{\sup_{j\in [1,J_m^{H,1}]_{\BB Z}} |X(-j,-1)| \geq \frac{R}{2} ,\, J_m^{H,1} \leq k \right\} \\
&H_m^2(R,k) := \left\{\sup_{j\in [1 ,J_m^{H,2}]_{\BB Z}} |X(-j -J_m^{H,1}   ,-J_m^{H,1}-1)| \geq \frac{R}{2} ,\, J_m^{H,2} \leq k \right\}.
\ale
Then we have
\alb
&\left\{J_m^H \geq R \right\} \subset E_m^1(R) \cup E_m^2(R) ,\, \quad \left\{|L_m^H| \geq R\right\} \subset F_m^1(R) \cup F_m^2(R)  ,\, \quad \left\{J_m^H \leq R \right\} \subset G_m^1(R) \cap G_m^2(R) \\
&\left\{\sup_{j\in [1,J_m^H]_{\BB Z}} |X(-j,-1)| \geq R ,\, J_m^H \leq k \right\} \subset H_m^1(R,k) \cup H_m^2(R,k) .
\ale
By Lemma~\ref{prop-J^H-sup}, for $i\in\{1,2\}$ we have
\eqbn
\BB P\left(E_m^i(R) \right) \preceq R^{-1/2} m ,\qquad \BB P\left(F_m^i(R) \right) \preceq R^{-1 } m ,\qquad \BB P\left(G_m^i(R) \right) \preceq e^{-a_0 m^2/R}  ,
\eqen
with $a_0 > 0$ a universal positive constant.
By \cite[Lemma 3.13]{shef-burger}, there is a universal constant $a_1' > 0$  such that
\eqb \label{eqn-J^H-sup-prob0}
\BB P\left(H_m^i(R,k)\right) \preceq e^{-a_1'   R/k^{1/2} } .
\eqe 
 
By~\eqref{eqn-J^H-upper-basic} and independence, for each $i\in\{1,2\}$, the conditional probability that $(J_m^H , L_m^H) = (k,l)$ given any realization of $(J_m^{H,i} , L_m^{H,i})$ is at most a constant (depending only on $p$) times $m^{-3}$. Hence
\alb
&\BB P\left((J_m^H , L_m^H) = (k,l)\right) \leq \sum_{i=1}^2 \BB P\left(   (J_m^H , L_m^H) = (k,l) \,|\, E_m^i(k) \right) \BB P\left(E_m^i(k)\right) \preceq k^{-1/2} m^{-2} \\
&\BB P\left((J_m^H , L_m^H) = (k,l)\right)  \leq \sum_{i=1}^2 \BB P\left(   (J_m^H , L_m^H) = (k,l) \,|\, F_m^i(|l|) \right) \BB P\left(F_m^i(|l|)\right) \preceq |l|^{-1} m^{-2} \\
&\BB P\left((J_m^H , L_m^H) = (k,l)\right) \leq   \BB P\left(   (J_m^H , L_m^H) = (k,l) \,|\, G_m^1(k) \right) \BB P\left(G_m^1(k)\right) \preceq  e^{-a_0 m^2/k} m^{-3} .
\ale
This yields~\eqref{eqn-J^H-long-exact}. For~\eqref{eqn-J^H-long-exact-sup}, we first use~\eqref{eqn-J^H-sup-prob0} to get
\alb
&\BB P\left((J_m^H , L_m^H) = (k,l)  ,\, \sup_{j\in [1,J_m^H]_{\BB Z}} |X(-j,-1)| \geq R  \right)\\
 &\qquad \leq \sum_{i=1}^2 \BB P\left(   (J_m^H , L_m^H) = (k,l)   \,|\,      H_m^{3-i} (R,k) \right)  \BB P\left(   H_m^{3-i} (R,k) \right)  \\
&\qquad \preceq \sum_{i=1}^2 \BB P\left(   (J_m^H , L_m^H) = (k,l)    \,|\,      H_m^{3-i} (R,k) \right) e^{-a_1'   R/k^{1/2} }       .
\ale
On the other hand, by the fourth inequality in~\eqref{eqn-J^H-long-exact} applied with either $m'$ or $m-m'$ in place of $m$ and since $(J_m^{H,i} , L_m^{H,i})$ is independent from $H_m^{3-i}(R,k)$, we have
\eqbn
\BB P\left(   (J_m^H , L_m^H) = (k,l)    \,|\,      H_m^{3-i} (R,k) \right) \preceq   e^{-a_0 m^2/4k}    m^{-3} .
\eqen
Combining these estimates yields~\eqref{eqn-J^H-long-exact-sup} with $a_1 = a_1' \wedge (a_0/4)$. 
\end{proof}

From Lemma~\ref{prop-J^H-long-exact}, we obtain a regularity estimate when we condition on a particular realization of $(J_m^H,L_m^H)$.

\begin{lem} \label{prop-J^H-local-reg}
For each $C > 1$ and each $q\in (0,1)$, there exists $A>0$ and $m_* = m_*  \in\BB N$, both depending only on $C$ and $q$, such that for $m\geq m_*$ and $(k,l) \in [C^{-1} m^2 , C m^2]_{\BB Z} \times [-Cm , Cm]_{\BB Z}$, we have 
\eqbn
\BB P\left(\sup_{j\in[1,J_m^H]} |X(-j,-1)|  \leq Am  \,\big|\, (J_m^H , L_m^H) = (k,l) \right) \geq 1-q .
\eqen  
\end{lem}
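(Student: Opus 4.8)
The plan is to write the conditional probability as a ratio of two unconditional probabilities and bound the numerator and denominator using estimates already established in this section. Fix $C>1$ and $q\in(0,1)$, let $A>0$ be a constant to be chosen at the end, and let $(k,l)\in [C^{-1}m^2 , Cm^2]_{\BB Z}\times[-Cm,Cm]_{\BB Z}$. Then
\eqbn
\BB P\left(\sup_{j\in[1,J_m^H]} |X(-j,-1)| \geq Am \,\big|\, (J_m^H , L_m^H) = (k,l) \right) = \frac{\BB P\left((J_m^H , L_m^H) = (k,l),\, \sup_{j\in[1,J_m^H]} |X(-j,-1)| \geq Am \right)}{\BB P\left((J_m^H , L_m^H) = (k,l)\right)} ,
\eqen
and it suffices to show the right-hand side is at most $q$ once $A$ is large enough.

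For the denominator I would invoke Proposition~\ref{prop-J^H-local} together with the explicit formula for the limiting density $g$ from Lemma~\ref{prop-g-form}. Since $g(t,v) = a_0 t^{-2}\exp\left(-(a_1 + a_2 (v+a_3)^2)/t\right)$ is continuous and strictly positive, it is bounded below by a constant $c_0 = c_0(C)>0$ on the compact rectangle $[C^{-1},C]\times[-C,C]$, which contains $(k/m^2 , l/m)$ for every $(k,l)$ in the range above. Hence, taking $m$ large enough (depending only on $C$) that the supremum in~\eqref{eqn-J^H-local} is at most $c_0/2$, we get $\BB P\left((J_m^H , L_m^H) = (k,l)\right)\geq (c_0/2)\,m^{-3}$ uniformly over this range.

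For the numerator I would apply~\eqref{eqn-J^H-long-exact-sup} with $R=Am$. Since $k\leq Cm^2$ we have $k^{1/2}\leq C^{1/2}m$, so $R/k^{1/2}\geq AC^{-1/2}$; discarding the nonnegative term $a_1 m^2/k$ in the exponent then bounds the numerator by $C_1 e^{-a_1 A C^{-1/2}} m^{-3}$, with $C_1$ depending only on $p$. Combining, the right-hand side of the displayed identity is at most $(2C_1/c_0)\,e^{-a_1 A C^{-1/2}}$, and choosing $A=A(C,q)$ large enough makes this at most $q$; passing to the complement gives the lemma, with $m_*$ depending only on $C$ and $A$ on $C$ and $q$, as required.

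There is really no serious obstacle here: the substantive work is done by the strong exponential tail bound~\eqref{eqn-J^H-long-exact-sup}, which is already in hand, and the only mild point is the uniform lower bound on the denominator, which is immediate from the Gaussian-type form of $g$. I would also note explicitly that, although Lemma~\ref{prop-J^H-corner-lower} cites the present lemma, there is no circularity: the argument above relies only on Proposition~\ref{prop-J^H-local}, Lemma~\ref{prop-g-form}, and Lemma~\ref{prop-J^H-long-exact}.
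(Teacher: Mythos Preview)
Your proof is correct and essentially identical to the paper's own argument: both bound the numerator via~\eqref{eqn-J^H-long-exact-sup} of Lemma~\ref{prop-J^H-long-exact} and the denominator via Proposition~\ref{prop-J^H-local} (with Lemma~\ref{prop-g-form} supplying the positivity of $g$ on $[C^{-1},C]\times[-C,C]$), then divide and choose $A$ large. Your remark on non-circularity with Lemma~\ref{prop-J^H-corner-lower} is accurate and worth keeping.
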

\begin{proof}
By~\eqref{eqn-J^H-long-exact-sup} of Lemma~\ref{prop-J^H-long-exact}, we can find $a  >0$, depending only on $C$, such that for each $A>0$, each $m\in\BB N$, and each $(k,l) \in [C^{-1} m^2 , C m^2]_{\BB Z} \times [-Cm , Cm]_{\BB Z}$,
\eqbn
\BB P\left( \sup_{j\in[1,J_m^H]} |X(-j,-1)| \geq  A  m  ,\,  (J_m^H , L_m^H) = (k,l)  \right) \preceq e^{-a  A} m^{-3}  
\eqen
with the implicit constant depending only on $C$. 
By Proposition~\ref{prop-J^H-local}, we can find $m_* \in\BB N$, depending only on $C$, such that for $m\geq m_*$ and $(k,l) \in [C^{-1} m^2 , C m^2]_{\BB Z} \times [-Cm , Cm]_{\BB Z}$, we have
\eqbn
\BB P\left((J_m^H , L_m^H) = (k,l) \right) \succeq m^{-3}   ,
\eqen
with the implicit constant depending only on $C$. Combining these observations yields the statement of the lemma.
\end{proof}

To complement Lemma~\ref{prop-J^H-long-exact}, we also have a lower bound for the probability that $(J_m^H ,L_m^H) = (k,l)$ and the word $X(-J_m^H, -1)$ has certain unusual behavior.

\begin{lem} \label{prop-J^H-corner-lower}
Fix $C>1$ and $\ep  > 0$. For sufficiently large $m\in\BB N$ (how large depends only on $C$ and $\ep$) and $(k,l) \in \left[C^{-1} m^2 , Cm^2\right]_{\BB Z}\times \left[-Cm , \frac12 \ep  m\right]_{\BB Z}$,
\eqbn
\BB P\left((J_m^H , L_m^H) = (k,l) ,\, \mcl N_{\tc C}\left(X(-J_m^H ,-1)\right) \leq \ep m\right)  \succeq m^{-3}
\eqen
with the implicit constant depending only on $\ep$ and $C$.
\end{lem}
\begin{proof}
Fix $\delta \in (0,\ep/8) $ to be chosen later, depending only on $\ep$ and $C$, and let $m_\delta:= \lfloor (1-\delta ) m\rfloor$. Let $E_m^{k,l}(\delta)$ be the event that
\alb
 &(J_{m_\delta}^H , L_{m_\delta}^H) \in \left[k-2\delta m , k -\delta m\right]_{\BB Z} \times \left[l-\delta m ,l+\delta m\right]_{\BB Z} , \quad
 \mcl N_{\tb C}\left(X(-J_{m_\delta}^H ,-1)\right) \geq  \frac{1}{8} \ep  m , \\
& \qquad \op{and} \quad \mcl N_{\tc C}\left(X(-J_{m_\delta}^H ,-1)\right) \leq  \ep  m.
\ale
By \cite[Theorem 2.5]{shef-burger} (c.f.\ the proof Lemma~\ref{prop-J^H-law}) we and find $m_* \in\BB N$, depending only on $C,\ep,$ and $\delta$ such that for $m\geq m_*$ and $(k,l) \in \left[C^{-1} m^2 , Cm^2\right]_{\BB Z}\times \left[-Cm , \frac12 \ep  m\right]_{\BB Z}$,
\eqb \label{eqn-J^H-corner-close}
\BB P\left(  E_m^{k,l}(\delta) \right) \succeq 1
\eqe 
with the implicit constant depending only on $C,\ep,$ and $\delta$. Here we use that $l \leq \frac12 \ep m$ so that $\ep m - \frac{1}{8} \ep m \geq l - \delta m + \frac38 \ep m$. 
By Lemma~\ref{prop-J^H-local-reg} (applied with the word $X_{-J_m^H} \dots X_{-J_{m_\delta}^H-1}$ in place of $X_{-J_m^H} \dots X_{-1}$) and independence, if $\delta$ is chosen sufficiently small, depending only on $C$ and $\ep$, then
\eqb \label{eqn-J^H-corner-lower-prob}
\BB P\left((J_m^H , L_m^H) = (k,l),\, \sup_{j\in [J_{m_\delta}^H +1 ,J_m^H]_{\BB Z}} |X(-j,-1)| \leq \frac{1}{8} \ep m \,|\, E_m^{k,l}(\delta) \right) \succeq m^{-3}
\eqe 
with the implicit constant depending only on $C$, $\ep$, and $\delta$. 
If $E_m^{k,l}(\delta)$ occur and $\sup_{j\in [J_{m_\delta}^H +1 ,J_m^H]_{\BB Z}} |X(-j,-1)| \leq \frac{1}{8} \ep m$, then each of the cheeseburgers in $X(-J_m^H,-J_{m_\delta}^H-1)$ has a match in $X(-J_{m_\delta}^H ,-1)$. 
Hence we obtain the statement of the lemma for $m\geq m_*$ by combining~\eqref{eqn-J^H-corner-close} and~\eqref{eqn-J^H-corner-lower-prob} and using that $X_{-J_m^H} \dots X_{-J_{m_\delta}^H-1}$ and $X_{-J_{m_\delta}^H} \dots X_{-1}$ are independent. The statement for the finite number of values of $m\leq m_*$, with implicit constant depending on $m_*$, is clear. 
\end{proof}

Next we have a regularity estimate for $X_{-n} \dots X_{-1}$ given only that $n =J_m^H$ for some (unspecified) $m\in\BB N$. 

\begin{lem} \label{prop-J^H-time-reg}
For $n\in\BB N$, let $E_n$ be the event that $n = J_m^H$ for some $m\in\BB N$. For each $q\in (0,1)$, is a constant $A> 0$ depending only on $q$ such that for each $n\in\BB N$, 
\eqb \label{eqn-J^H-time-big}
\BB P\left(\sup_{j\in [1,n]_{\BB Z}} |X(-j,-1)| \leq A n^{1/2} \,\big|\, E_n\right)  \geq 1-q .
\eqe 
\end{lem}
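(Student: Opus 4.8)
The plan is to reduce Lemma~\ref{prop-J^H-time-reg} to the conditional regularity estimate of Lemma~\ref{prop-J^H-long-exact} (in particular~\eqref{eqn-J^H-long-exact-sup}) by decomposing over the value of $m$ for which $n = J_m^H$. First I would record, via Corollary~\ref{prop-J^H-time}, that $\BB P(E_n) \asymp n^{-1/2}$, so that the conditional probability in~\eqref{eqn-J^H-time-big} is comparable to $n^{1/2}$ times the corresponding unconditional probability. Next I would observe that on $E_n$ the integer $m$ with $J_m^H = n$ is uniquely determined and equals $\mcl N_{\tc H}(X(-n,-1))$; the rough size of $m$ is controlled by the fact that $d$ along $X(-n,-1)$ does not fall below the number of hamburger stack-heights, together with Lemma~\ref{prop-J^H-law} (or~\eqref{eqn-J^H-tail}), which tells us $J_m^H \asymp m^2$ in a suitable sense. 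So I would split the union $E_n = \bigcup_m \{J_m^H = n\}$ into a ``typical'' range $m \in [c_1 n^{1/2}, c_2 n^{1/2}]$ and two tails, $m < c_1 n^{1/2}$ and $m > c_2 n^{1/2}$, where $c_1, c_2$ are constants to be chosen.

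For the typical range, I would use~\eqref{eqn-J^H-long-exact-sup} with $k = n$ and $l = L_m^H$ ranging over $\BB Z$: summing the bound $\exp(-a_1 m^2/n - a_1 R/n^{1/2}) m^{-3}$ over $l \in \BB Z$ costs a factor $\asymp m$ (using that the tail in $l$ from~\eqref{eqn-J^H-long-exact} is summable, e.g.\ crudely bound the sum over $|l| \le m$ by $m \cdot m^{-3}$ and the sum over $|l| > m$ using the $|l|^{-1}m^{-2}$ bound, possibly after a further dyadic decomposition), and then summing over $m \in [c_1 n^{1/2}, c_2 n^{1/2}]$ costs another factor $\asymp n^{1/2}$ after the $m^{-2}$ is absorbed. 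The upshot is
\[
\BB P\left( E_n,\, \sup_{j \in [1,n]_{\BB Z}} |X(-j,-1)| \geq A n^{1/2},\, m \in [c_1 n^{1/2}, c_2 n^{1/2}] \right) \preceq e^{-a_1' A} n^{-1/2},
\]
so dividing by $\BB P(E_n) \asymp n^{-1/2}$ and choosing $A$ large makes this contribution at most $q/2$.

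For the two tails, I would argue that $\BB P(E_n,\, m \notin [c_1 n^{1/2}, c_2 n^{1/2}])$ is itself a small multiple of $n^{-1/2}$, uniformly in $n$, once $c_1$ is small and $c_2$ is large: on $\{J_m^H = n\}$ with $m$ too small, the word $X(-n,-1)$ contains few hamburgers relative to its length, which is exponentially unlikely by~\eqref{eqn-J^H-sup-small} of Lemma~\ref{prop-J^H-sup} (applied with $R = n$), while for $m$ too large one uses~\eqref{eqn-J^H-sup-big} with $R = n$; summing the relevant bounds over the tail ranges of $m$ gives $o_{c_1}(1)\, n^{-1/2}$ and $o_{c_2}(1)\, n^{-1/2}$ respectively. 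Hence, after dividing by $\BB P(E_n)$, the tail contribution to the conditional probability of $\{\sup_j |X(-j,-1)| \geq A n^{1/2}\}$ is at most $q/2$ regardless of $A$, and combining with the previous paragraph proves~\eqref{eqn-J^H-time-big}. The main obstacle is the bookkeeping in the typical-range sum — carefully controlling the sum over $l \in \BB Z$ of the right-hand side of~\eqref{eqn-J^H-long-exact-sup} and making sure all implicit constants depend only on the allowed parameters — but this is the same kind of dyadic summation already used in the proofs of Lemmas~\ref{prop-J^H-long-exact} and~\ref{prop-J^H-local-reg}, so no genuinely new input is needed.
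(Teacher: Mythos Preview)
Your overall strategy---decompose $E_n$ over the value of $m$, invoke a bound of the form of~\eqref{eqn-J^H-long-exact-sup}, sum, and divide by $\BB P(E_n)\asymp n^{-1/2}$---is exactly the paper's. But two of the steps you propose do not go through as written.

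\textbf{The sum over $l$ diverges.} The right-hand side of~\eqref{eqn-J^H-long-exact-sup} does not depend on $l$, so summing it over $l\in\BB Z$ gives $+\infty$, not a factor $\asymp m$. Your fallback, the bound $|l|^{-1}m^{-2}$ from~\eqref{eqn-J^H-long-exact}, is not summable either: $\sum_{|l|>m}|l|^{-1}=\infty$. So one cannot recover the needed marginal estimate $\BB P(J_m^H=n,\ \sup_j |X(-j,-1)|>An^{1/2})\preceq (\text{decay in }A)\cdot m^{-2}$ by summing the stated pointwise bounds over $l$. The paper avoids this by \emph{re-running the proof} of~\eqref{eqn-J^H-long-exact-sup} with only $J_m^H$ (not $L_m^H$) specified, which directly yields an $m^{-2}$ bound; equivalently one could quote the univariate local limit theorem for $J_m^H$.

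\textbf{The tail estimates are mis-applied.} You cite~\eqref{eqn-J^H-sup-small} for the small-$m$ tail and~\eqref{eqn-J^H-sup-big} for the large-$m$ tail, but this is backwards: if $m<c_1 n^{1/2}$ and $J_m^H=n$, then $J_m^H$ is \emph{large} relative to $m^2$; if $m>c_2 n^{1/2}$ and $J_m^H=n$, then $J_m^H$ is \emph{small}. Even with the correct pairing, Lemma~\ref{prop-J^H-sup} only gives cumulative bounds $\BB P(J_m^H\geq n)$ or $\BB P(J_m^H\leq n)$, and summing those over the tail range of $m$ produces quantities of order $n^{1/2}$, not $n^{-1/2}$. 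You need the local bound $\BB P(J_m^H=n)\preceq m^{-2}$ (and its refinements) to get the right order. The paper sidesteps the whole tail issue: once the marginal sup-bound above is in hand, it sums over \emph{all} $m$ with a single split at $Cn^{1/2}$, using $\sum_{m>Cn^{1/2}} m^{-2}\preceq C^{-1}n^{-1/2}$ for the upper piece and the exponential factor for the lower piece.

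In short, the missing ingredient in both places is the same: a marginal-in-$J_m^H$ version of the estimates in Lemma~\ref{prop-J^H-long-exact}. With that in hand your outline becomes the paper's proof.
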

\begin{proof}
By the same argument used to prove~\eqref{eqn-J^H-long-exact-sup} of Lemma~\ref{prop-J^H-long-exact}, but with only the value of $J_m^H$ (not the value of $L_m^H$) specified, for each $m\in\BB N$ and $A>0$ we have
\eqbn
\BB P\left(  \sup_{j\in [1,n]_{\BB Z}} |X(-j,-1)| > A n^{1/2}  ,\, J_m^H = n \right) \preceq   e^{-a_0 A n^{1/2}/m}  m^{-2}
\eqen
with $a_0>0$ a universal constants and the implicit constant depending only on $p$. Hence for each $C > 0$,
\alb
 \BB P\left(\sup_{j\in[1,n]_{\BB Z}} |X(-j,-1)| > A n^{1/2} ,\,   E_n\right)
&\preceq    \sum_{m= 1 }^{\lfloor C n^{1/2} \rfloor}  e^{-a_0 A n^{1/2}/m} m^{-2}  + \sum_{m= \lfloor C n^{1/2} \rfloor}^n  m^{-2}    \\
&\preceq \int_0^{C n^{1/2}} e^{-a_0 A n^{1/2}/t } t^{-2} \, dt  + C^{-1} n^{-1/2}    \\ 
& = n^{-1/2} \int_0^{C } e^{-a_0 A s } s^{-2} \, dt  + C^{-1} n^{-1/2}    .
\ale
For any given $\alpha>0$, we can choose $C$ sufficiently large and then $A$ sufficiently large relative to $C$ such that this integral is at most $\alpha n^{-1/2}$. By Lemma~\ref{prop-J^H-time},
\eqbn
\BB P\left(E_n\right) \asymp n^{-1/2}  ,
\eqen
with the implicit constant depending only on $p$. We conclude by choosing $\alpha$ sufficiently small relative to $q$ and dividing. 
\end{proof}

Finally, we prove an analogue of Lemma~\ref{prop-J^H-corner-lower} in the case when we only condition on the event that $n =J_m^H$ for some unspecified $m\in\BB N$.

\begin{lem} \label{prop-J^H-time-lower}
For $n\in\BB N$, let $E_n$ be as in Lemma~\ref{prop-J^H-time-reg}. 
For each $\ep > 0$ and each $n\in\BB N$, 
\eqb \label{eqn-J^H-time-lower} 
\BB P\left(  \mcl N_{\tc C}\left(X(-n ,-1)\right) \leq \ep n^{1/2} ,\,  E_n \right)  \succeq n^{-1/2} 
\eqe 
with the implicit constant depending only on $\ep$.
\end{lem}
\begin{proof}
Fix $C>1$. By Lemma~\ref{prop-J^H-corner-lower}, there exists $n_* = n_*(C,\ep)  \in \BB N$ such that for $n\geq n_*$ and $(m,l) \in \left[C^{-1} n^{1/2} , C n^{1/2} \right]_{\BB Z}\times \left[-C n^{1/2} , 0 \right]_{\BB Z}$, 
\eqbn
\BB P\left((J_m^H , L_m^H) = (n,l) ,\, \mcl N_{\tc C}\left(X(-J_m^H ,-1)\right) \leq \ep n^{1/2} \right) \succeq n^{-3/2}  
\eqen
with the implicit constant depending only on $C$ and $\ep$.  
Summing over all such pairs $(k,l)$ yields~\eqref{eqn-J^H-time-lower} for $n\geq n_*$. Since there are only finitely many possible realizations of $X_{-n} \dots X_{-1}$, it is clear that~\eqref{eqn-J^H-time-lower} holds for $n\leq n_*$ (with the implicit constant depending on $n_*$). 
\end{proof}

\section{Local estimates with no orders}
\label{sec-no-order-local}

In this subsection, we will use the results of Sections~\ref{sec-J^H-local} to establish sharp estimates for the probability that the word $X(1,n)$ contains no orders and a specified number of burgers of each type, and for the conditional law of the word $X_1\dots X_n$ given that this is the case. These estimates will eventually lead to a proof of Theorem~\ref{thm-local-conv}. 

The basic idea of the estimates of this subsection and the proof of Theorem~\ref{thm-local-conv} is as follows. 
Suppose given $h,c , n\in \BB N$, and recall that we want to analyze the conditional law of $X_1\dots X_n$ given the event $\mcl E_n^{h,c}$ of Definition~\ref{def-local-event}.
In Section~\ref{sec-no-order-local-setup} just below, we will define a time $ K_{n,m}^H$ with the following properties.
\begin{enumerate}
\item The word $X(K_{n,m}^H+1, n)$ contains no flexible orders. \label{item-K-no-F}
\item Although $K_{n,m}^H$ is not a stopping time for $X$, the conditional law of $X_{K_{n,m}^H+1} \dots X_n$ given $X_1\dots X_{K_{n,m}^H}$ admits a simple description (which is closely related to the times $\{J_r^H\}_{r\in \BB N}$ studied in Section~\ref{sec-J^H-local}). \label{item-K-law}
\item If $m = \lfloor (1-\delta) h\rfloor$ for $\delta >0$ small but independent from $m$, then with high probability $K_{n,m}^H$ is close to $n$.  \label{item-K-close}
\end{enumerate}
For technical reasons we will also need to consider analogous times $K_{n,m}^C$ defined with the roles of hamburgers and cheeseburgers interchanged.

Conditions~\ref{item-K-no-F} and~\ref{item-K-law} combined with the estimates of Section~\ref{sec-J^H-local} will enable us to estimate the conditional probability of $\mcl E_n^{h,c}$ given a realization of $X_1\dots X_{K_{n,m}^H}$, which will lead to estimates for the probability of $\mcl E_n^{h,c}$ (see Section~\ref{sec-no-order-local-upper}). 
In Section~\ref{sec-no-order-cont-reg} (and Section~\ref{sec-endpoint-reg}), we will estimate the lengths of the reduced words $ X(K_{n,m}^H , i) $ for $i\in [ K_{n,m}^H+1,n]_{\BB Z}$, which, in light of condition~\ref{item-K-close}, will show that the restriction of the path $Z^n$ of Theorem~\ref{thm-local-conv} to $[0,1]$ is in some sense well-approximated by the restriction of $Z^n$ to $[0,n^{-1} K_{n,m}^H]$ for $m$ sufficiently close to $h$. 
By applying Bayes' rule to reverse the conditioning, we will obtain estimates for the conditional law of $X_1\dots X_{K_{n,m}^H}$ given $\mcl E_n^{h,c}$ (see Section~\ref{sec-no-order-cont}) which will lead to a proof of Theorem~\ref{thm-local-conv} in Section~\ref{sec-local-conv-proof}.

We emphasize that condition~\ref{item-K-no-F} is essential for our argument. Indeed, local estimates of the sort we prove in this paper concern only the \emph{number} of symbols of particular type in a certain reduced word. But, if we have two words $x_1$ and $x_2$ such that the reduced word $\mcl R(x_2)$ include $\tb F$'s, then the number of symbols of each type in the reduced word $\mcl R(x_1x_2)$ depends not just on the number of symbols of each type in $\mcl R(x_1)$ and $\mcl R(x_2)$, but also on the \emph{ordering} of these symbols. 
This is why we need to choose a random time $K_{n,m}^H$ above instead of just considering the word $X_1\dots X_{\lfloor (1-\delta) n\rfloor}$, say. 
 
\subsection{Setup}
\label{sec-no-order-local-setup}

In this subsection, we describe the notation we will use throughout this section and make some elementary observations about the objects involved.  
Recall the definition of the event $\mcl E_n^{h,c}$ from Definition~\ref{def-local-event}, the time $I$ from~\eqref{eqn-I-def}, the exponent $\mu$ from~\eqref{eqn-cone-exponent}, and the slowly varying function $\psi_0$ from Lemma~\ref{prop-I-reg-var}. We also introduce the following additional notation to simplify some of our formulas.

\begin{defn} \label{def-frk}
For a word $x$, we write
\eqbn
\frk h(x) := \mcl N_{\tc H}(\mcl R(x)) \quad \op{and}\quad \frk c(x) := \mcl N_{\tc C}(\mcl R(x)) ,
\eqen 
with $\mcl N_{\tc H}$, $\mcl N_{\tc C}$, and $\mcl R$ as in Section~\ref{sec-burger-prelim}. 
\end{defn}
 
For $n , m\in\BB N$, let $K_{n,m}^H  $ (resp. $K_{n,m}^C $) be the largest $i \in [1,n-1]_{\BB Z}$ for which $\mcl N_{\tc H}\left(X(1,i)\right) = m$ (resp. $\mcl N_{\tc C}\left(X(1,i)\right) = m$) and $X_{i+1}$ is a hamburger (resp. cheeseburger) which is not consumed before time $n$; or $K_{n,m}^H = 0$ (resp. $K_{n,m}^C = 0$) if no such $i$ exists. On the event $\{I > n\}$, $K_{n,m}^H$ can equivalently be described as the first time $i \in [1,n]_{\BB Z}$ for which $d(j) \geq m+1$ for each $j \geq i+1$ (or $K_{n,m}^H = 0$ of no such $i$ exists). The time $K_{n,m}^C$ admits a similar description. We write
\eqbn
Q_{n,m}^H := \mcl N_{\tc C}\left(X(1,K_{n,m}^H)\right) \qquad Q_{n,m}^C := \mcl N_{\tc H}\left(X(1,K_{n,m}^C)\right) .
\eqen
See Figure~\ref{fig-last-hit-time} for an illustration.

\begin{figure}[ht!]
 \begin{center}
\includegraphics{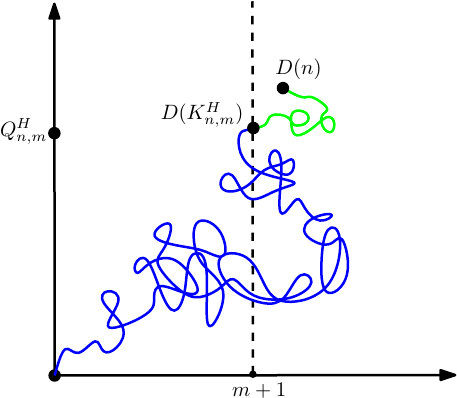} 
\caption{An illustration of the time $K_{n,m}^H$, which is $-1$ plus the last time at which the discrete path $D$ of~\eqref{eqn-discrete-path} crosses the vertical line at distance $m+1$ from the origin and subsequently stays to the right of this line. Here $d(n)$, the horizontal coordinate of $D(n)$, is $\geq m$. If $D(n)$ were to the left of the dotted line, then we would have $K_{n,m}^H = 0$. The quantity $Q_{n,m}^H$ is the vertical coordinate of $D(K_{n,m}^H)$. Note also that $I > n$ in this illustration, i.e.\ the path $D$ stays in the first quadrant.}\label{fig-last-hit-time}
\end{center}
\end{figure}
  
For $r\in\BB N$, let $J_{n,r}^H$ be the smallest $j \in\BB N$ for which $X(n-j,n)$ contains $r $ hamburgers and set $L_{n,r}^H := d^*\left(X(n-J_{n,r}^H , n)\right)$. That is, $(J_{n,r}^H , L_{n,r}^H)$ are defined in the same manner as the pairs~\eqref{eqn-J^H-def} but with the word read backward from $n$ rather than from $-1$. 

The main idea of the proofs in this section is to condition on a realization of the word $X$ up to time $K_{n,m}^H$ for some $m\in\BB N$; then read the word backward from time $n$ and apply the results of Section~\ref{sec-J^H-local} to estimate the pairs $(J_{n,r}^H , L_{n,r}^H)$. The next three lemmas are the basic tools needed to accomplish this. 

\begin{lem} \label{prop-J^H-D-equiv}
Let $D$ be defined as in~\eqref{eqn-discrete-path}.
For $(h,c) \in \BB N^2$ and $m\in [1,h-1]_{\BB Z}$, the event $\mcl E_n^{h,c}$ is the same as the event that  
\alb
&0 < K_{n,m}^H < I ,\quad J_{n,h-m}^H = n-1- K_{n,m}^H ,\quad L_{n,h-m}^H = c  - Q_{n,m}^H   ,\\
&\qquad \op{and}\quad \mcl N_{\tb C}\left(X(n-J_{n,h-m}^H,n)\right) \leq Q_{n,m}^H .
\ale
\end{lem}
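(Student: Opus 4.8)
The plan is to prove that the two events coincide by a direct verification at the level of the ``burger stack'', using the reformulation of $K_{n,m}^H$ recorded in the text right after its definition: on $\{I>n\}$, $K_{n,m}^H$ is the last $i\in[1,n]_{\BB Z}$ with $d(j)\geq m+1$ for all $j\in[i+1,n]_{\BB Z}$, and $Q_{n,m}^H=d^*(K_{n,m}^H)$. Abbreviate $K:=K_{n,m}^H$, $Q:=Q_{n,m}^H$ and $J:=J_{n,h-m}^H$. I will use freely that $\{I>n\}$ is the event that $X(1,n)$ contains no orders (equivalently, every order among $X_1,\dots,X_n$ is matched within $[1,n]_{\BB Z}$); that on this event $\mcl N_{\tc H}(X(1,j))=d(j)$ and $\mcl N_{\tc C}(X(1,j))=d^*(j)$ for $j\leq n$; and that $\mcl N_{\tc C},\mcl N_{\tb C}$ (hence $d^*$) are additive under concatenation and invariant under $\mcl R$.

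The key structural input, which I would isolate as a separate claim, is: if $0<K<I$ and $X_{K+1}$ is a hamburger not consumed before time $n$ --- which is exactly the content of ``$K_{n,m}^H=K\geq 1$ and $K<I$'' --- then $\mcl R(X_{K+1}\cdots X_n)$ contains no $\tb H$ and no $\tb F$ symbol, so it has the form $(\tb C)^{o_C}(\text{burgers})$ for some $o_C\geq 0$, and moreover $\mcl N_{\tc H}(\mcl R(X_{K+1}\cdots X_n))=\mcl N_{\tc H}(\mcl R(X_{K+2}\cdots X_n))+1$. To prove this, run the word forward from the stack of $m$ hamburgers and $Q$ cheeseburgers left by $X_1\cdots X_K$, push the hamburger $X_{K+1}$, and observe that $X_{K+1}$ and everything below it stays on the stack through time $n$. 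An $\tb H$ (resp.\ $\tb F$) arriving at a time in $(K+1,n]_{\BB Z}$ consumes the topmost hamburger (resp.\ topmost burger), and since neither $\tb H$ nor $\tb F$ commutes past the hamburger $X_{K+1}$, such an order can only consume a burger pushed strictly after $X_{K+1}$; hence it is matched inside $[K+1,n]_{\BB Z}$ and does not survive into $\mcl R(X_{K+1}\cdots X_n)$. (A $\tb C$ does commute past hamburgers, so it may reach the original cheeseburgers of $X_1\cdots X_K$ and may fail to be matched within the window; this is why $\tb C$'s can survive.) The same reasoning for the window starting at $K+2$ shows $\mcl R(X_{K+2}\cdots X_n)$ has no $\tb H$ or $\tb F$ either, so prepending the hamburger $X_{K+1}$ --- which cannot cancel a $\tb C$ --- adds exactly one $\tc H$; this gives the last assertion. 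Combined with the elementary fact that $j\mapsto\mcl N_{\tc H}(\mcl R(X_{n-j}\cdots X_n))$ is nondecreasing (prepending any symbol changes it by $0$ or $1$), it follows that, under the claim's hypotheses, $J_{n,r}^H=n-1-K$ precisely when $\mcl N_{\tc H}(\mcl R(X_{K+1}\cdots X_n))=r$.

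For the inclusion of $\mcl E_n^{h,c}$ in the event displayed in the statement: on $\mcl E_n^{h,c}$ we have $I>n$, $d(n)=h$, $d^*(n)=c$. Since $1\leq m\leq h-1=d(n)-1$ and $d(0)=0$, the path $d$ crosses level $m+1$; its last down-crossing gives $K\in[1,n-1]_{\BB Z}$, and $K<n<I$ yields $K<I$, $\mcl N_{\tc H}(X(1,K))=m$, $\mcl N_{\tc C}(X(1,K))=d^*(K)=Q$. By the claim, $\mcl R(X_{K+1}\cdots X_n)=(\tb C)^{o_C}(\text{burgers})$; writing $\mcl R(X_1\cdots X_n)=\mcl R\big(X(1,K)\cdot\mcl R(X_{K+1}\cdots X_n)\big)$ and comparing with its known value ($h$ hamburgers, $c$ cheeseburgers, no orders), all $o_C$ of the $\tb C$'s must be absorbed by $\tc C$'s of $X(1,K)$, forcing $o_C\leq Q$; counting hamburgers forces $\mcl N_{\tc H}(\mcl R(X_{K+1}\cdots X_n))=h-m$, hence $J=n-1-K$ by the previous paragraph. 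Finally additivity of $d^*$ gives $L_{n,h-m}^H=d^*(\mcl R(X_{K+1}\cdots X_n))=d^*(n)-d^*(K)=c-Q$, and $\mcl N_{\tb C}\big(X(n-J,n)\big)=o_C\leq Q$ is the remaining condition.

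Conversely, assume all four conditions. From $0<K<I$ we get, as before, $\mcl N_{\tc H}(X(1,K))=m$, $X_{K+1}$ a hamburger not consumed before $n$, and $X(1,K)$ order-free with exactly $Q$ cheeseburgers; the claim gives $\mcl R(X_{K+1}\cdots X_n)=(\tb C)^{o_C}(\text{burgers})$, and $J=n-1-K$ forces $\mcl N_{\tc H}(\mcl R(X_{K+1}\cdots X_n))=h-m$. The condition $L_{n,h-m}^H=c-Q$ pins the number of cheeseburgers there to $c-Q+o_C$, and the condition $\mcl N_{\tb C}(X(n-J,n))=o_C\leq Q$ guarantees that, upon prepending $X(1,K)$ and reducing, every $\tb C$ is cancelled by a $\tc C$ of $X(1,K)$ (there being no $\tb H$ or $\tb F$ to interfere); the result has $m+(h-m)=h$ hamburgers, $(Q-o_C)+(c-Q+o_C)=c$ cheeseburgers, and no orders, so $X(1,n)$ is order-free with the correct burger counts, i.e.\ $\mcl E_n^{h,c}$ holds. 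The main obstacle is the structural claim, and within it the correct treatment of flexible orders: the asymmetry that a $\tb C$ can slide past the ``sentinel'' hamburger $X_{K+1}$ while a $\tb H$ or $\tb F$ cannot is exactly what makes the last condition a genuine constraint in the converse direction, whereas it is automatically satisfied on $\mcl E_n^{h,c}$. Everything else is bookkeeping with $\mcl N_{\tc H},\mcl N_{\tc C},\mcl N_{\tb C}$ and the concatenation identity.
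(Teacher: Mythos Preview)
Your proof is correct and follows essentially the same approach as the paper's: both hinge on the observation that, since $X_{K+1}$ is a hamburger that survives to time $n$, the reduced word $X(K_{n,m}^H+1,n)$ contains no $\tb H$ or $\tb F$ symbols, after which the burger counts are straightforward bookkeeping. The paper's proof is very terse (it states the structural fact without justification and leaves both inclusions to the reader), whereas you have spelled out the stack argument for why no $\tb H$ or $\tb F$ survives, verified the characterization $J_{n,h-m}^H=n-1-K$ via the increment $\mcl N_{\tc H}(\mcl R(X_{K+1}\cdots X_n))=\mcl N_{\tc H}(\mcl R(X_{K+2}\cdots X_n))+1$, and checked the converse direction explicitly; this is all to the good.
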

\begin{proof}
It is clear that $K_{n,m}^H >0$ on $\mcl E_n^{h,c}$. On the event $\{K_{n,m}^H >0\}$, there is some $j\in\BB N$ for which $n - J_{n,j}^H  = K_{n,m}^H+1$. Since $X(n-J_{n,j}^H , n)$ contains no hamburger orders or flexible orders, it follows that for this choice of $j$ we have
\alb
&\mcl N_{\tc H}\left(X(1,n)\right) = m + j ,\quad
d^*\left(X(1,n)\right) = Q_{n,m}^H +  L_{n,j}^H  , \\
&\mcl N_{\tb C}\left(X(1,n)\right) =  0\vee \left( \mcl N_{\tb C}\left( X(n-J_{n,j}^H , n)     \right)  -  Q_{n,m}^H \right)   .
\ale
The statement of the lemma follows.
\end{proof}

Next we have a description of the law of the objects discussed above. 

\begin{lem}\label{prop-J^H-D-law}
The marginal law of the pairs $(J_{n,r}^H, L_{n,r}^H)_{r\in \BB N}$ is the same as the law of the pairs $(J_r^H , L_r^H)_{r\in \BB N}$ of Section~\ref{sec-J^H-local}. Furthermore, for each $m\in\BB N$ the conditional law of $X_{K_{n,m}^H+1} \dots X_n$ given any realization $x$ of $X_1\dots X_{K_{n,m}^H}$ for which $0 < K_{n,m}^H < I $ is the same as its conditional law given the event 
\eqb \label{eqn-J^H-hit-event}
R_n(x) := \{ \text{$n-1- |x|  =   J_{n,r}^H$ for some $r\in\BB N$}\}.
\eqe 
\end{lem}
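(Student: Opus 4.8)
The plan is to treat the two assertions separately. For the first — that $(J_{n,r}^H, L_{n,r}^H)_{r\in\BB N}$ has the same joint law as $(J_r^H, L_r^H)_{r\in\BB N}$ — I would invoke translation invariance of the i.i.d.\ word $X$. The sequence $(X_n, X_{n-1}, X_{n-2},\dots)$ has the same law as $(X_{-1}, X_{-2}, X_{-3},\dots)$, and the pairs $(J_{n,r}^H, L_{n,r}^H)$ are read off from the first of these sequences by the very same deterministic recipe — reduce the word read backwards, record the steps at which the number of unconsumed hamburgers increases, and record the signed number of cheeseburgers at those steps — that produces $(J_r^H, L_r^H)$ from the second. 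So this part is a one-line observation.

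The content is in the second assertion, and the strategy is to rewrite the two conditioning events in a form that makes their relationship transparent. First I would record a concrete description of $R_n(x)$. Reading $X$ backwards from $n$, the count $\mcl N_{\tc H}\big(X(n-j,n)\big)$ is nondecreasing in $j$ and increases by one exactly when the appended symbol $X_{n-j}$ is a hamburger with $\phi(n-j)>n$; thus $J_{n,1}^H < J_{n,2}^H<\dots$ are precisely the steps at which this happens, and $R_n(x)=\{n-1-|x| = J_{n,r}^H \text{ for some } r\}$ is exactly the event that $X_{|x|+1}$ is a hamburger which is not consumed in $X(|x|+1,n)$, equivalently $X_{|x|+1}=\tc H$ and $\phi(|x|+1)>n$. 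In particular $R_n(x)$ is measurable with respect to $\sigma(X_{|x|+1},\dots,X_n)$ and depends on $x$ only through $|x|$.

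Next I would prove that, as events, $\{0<K_{n,m}^H<I\}\cap\{X_1\dots X_{K_{n,m}^H}=x\}$ equals $\{X_1\dots X_{|x|}=x\}\cap R_n(x)$. On $\{X_1\dots X_{|x|}=x\}$ the hypotheses that $x$ is a realization of $X_1\dots X_{K_{n,m}^H}$ with $0<K_{n,m}^H<I$ force $|x|=K_{n,m}^H>0$, $\frk h(x)=m$, and $\mcl R(x)$ orderless. One inclusion is immediate from the definition of $K_{n,m}^H$: if $K_{n,m}^H=|x|$ then $X_{|x|+1}$ is a hamburger not consumed before time $n$, which is $R_n(x)$. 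For the converse — the crux — assume $R_n(x)$ together with $X_1\dots X_{|x|}=x$. After reading $x$ the burger stack is the orderless word $\mcl R(x)$, which contains $m$ hamburgers, and on $R_n(x)$ the hamburger $X_{|x|+1}$ is pushed on top of this stack and never consumed up to time $n$. The key combinatorial point I would verify is that $\mcl N_{\tc H}(X(1,i))$ then stays $\ge m+1$ for all $i\in[|x|+1,n]$: a hamburger order removes the topmost hamburger and a flexible order removes the topmost burger, each of which lies at or above $X_{|x|+1}$ as long as $X_{|x|+1}$ is present, while a cheeseburger order never removes a hamburger; hence none of the $m$ bottom hamburgers, nor $X_{|x|+1}$, is ever consumed in $[|x|+1,n]$. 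Consequently no $i\in(|x|,n]$ can satisfy the defining condition of $K_{n,m}^H$ (which requires $\mcl N_{\tc H}(X(1,i))=m$), so $K_{n,m}^H\le|x|$, and together with the previous inclusion this gives $K_{n,m}^H=|x|$.

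With this identification of events the conclusion is routine: $\{X_1\dots X_{|x|}=x\}$ is independent of $\sigma(X_{|x|+1},\dots,X_n)$, which contains both $R_n(x)$ and every event describing $X_{|x|+1}\dots X_n$, so conditioning $X_{|x|+1}\dots X_n$ on $\{X_1\dots X_{|x|}=x\}\cap R_n(x)$ gives the same law as conditioning on $R_n(x)$ alone. The main obstacle is the combinatorial step in the previous paragraph — showing that the maximality clause in the definition of $K_{n,m}^H$ is automatically satisfied once $X_{|x|+1}$ is an unconsumed hamburger sitting on a stack whose net hamburger count is $m$ — which requires a careful case analysis of the three order types and their effect on the stack; the rest is bookkeeping plus the independence of disjoint blocks of an i.i.d.\ word.
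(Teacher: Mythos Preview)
Your proof is correct and follows essentially the same approach as the paper: identify the event $\{X_1\dots X_{K_{n,m}^H}=x,\ 0<K_{n,m}^H<I\}$ as the intersection of the $\sigma(X_1,\dots,X_{|x|})$-measurable event $\{X_1\dots X_{|x|}=x\}$ with the $\sigma(X_{|x|+1},\dots,X_n)$-measurable event $R_n(x)$, then invoke independence of the two blocks. Your treatment is in fact more careful than the paper's on one point --- you explicitly verify the maximality clause in the definition of $K_{n,m}^H$ via the LIFO stack argument, whereas the paper simply asserts the event identity.
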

\begin{proof}
The first statement is immediate from translation invariance. To verify this second statement, we observe that for each $k\leq n$, the event $\{K_{n,m}^H =k \} \cap \{K_{n,m}^H < I\}$ is the same as the event that $X(1,k)$ contains no orders and exactly $m$ hamburgers; and that $X_{k+1}$ is a hamburger which does not have a match in $[k+2,n]_{\BB Z}$, i.e.\ $k+1 = n-J_{n,r}^H$ for some $r\in\BB N$. Since $X_1\dots X_k$ is independent from $X_{k+1}\dots X_n$, it follows that the conditional law of $X_{k+1}\dots X_n$ given $\{X_1\dots X_{K_{n,m}^H} = x\}$ is the same as its conditional law given that $   n-|x|-1 = J_{n,r}^H$ for some $r\in\BB N$.
\end{proof}
 
Lemmas~\ref{prop-J^H-D-equiv} and~\ref{prop-J^H-D-law} law together yield the following formulae, which we will use frequently in the remainder of this subsection.

\begin{lem} \label{prop-J^H-D-formulas}
Let $(h,c) \in \BB N^2$, $n\in\BB N$, and $m\in\BB N$ with $m < h$. Let $x$ be any realization of $X_1 \dots X_{K_{n,m}^H}$ for which $0 < K_{n,m}^H < I$, so that $\frk c(x)$ (Definition~\ref{def-frk}) is the corresponding realization of $Q_{n,m}^H$. Then we have
\begin{align} \label{eqn-x-h-c-prob}
&\BB P\left(\mcl E_n^{h,c}  \,|\,  X_1\dots X_{K_{n,m }^H}  =x \right)\notag \\
&\qquad = \frac{\BB P\left((J_{n,h-m}^H , L_{n,h-m}^H) = (n- |x| - 1, c - \frk c(x) ) ,\, \mcl N_{\tb C}\left(X(n-J_{n,h-m}^H,n)\right) \leq \frk c(x) \right)}{    \BB P\left(R_n(x)   \right)}   ,
\end{align}  
with $R_n(x)$ as in~\eqref{eqn-J^H-hit-event}; and
\begin{align} \label{eqn-x-h-c-prob-I}
&\BB P\left(\mcl E_n^{h,c}  \,|\,  X_1\dots X_{K_{n,m }^H}  =x   ,\, I > n \right)\notag \\
&\qquad = \frac{\BB P\left((J_{n,h-m}^H , L_{n,h-m}^H) = (n- |x| - 1, c - \frk c(x) ) ,\, \mcl N_{\tb C}\left(X(n-J_{n,h-m}^H,n)\right) \leq \frk c(x) \right)}{    \BB P\left(R_n(x)  ,\, \mcl N_{\tb C}\left(X(|x|+1 , n)\right) \leq \frk c(x)  \right)}  .
\end{align}  
\end{lem}
\begin{proof}
The first formula is immediate from Lemmas~\ref{prop-J^H-D-equiv} and~\ref{prop-J^H-D-law}. 
The second formula follows from these same two lemmas after noting that, since $X(K_{n,m}^H +1 , n)$ always contains no hamburger orders or flexible orders, on the event $\{ X_1 \dots X_{K_{n,m}^H} = x\}$ we have $I> n$ (i.e.\ $X(1,n)$ contains no orders) if and only if $\mcl N_{\tb C}\left(X(|x|+1 , n)\right) \leq \frk c(x)$. 
\end{proof}

\subsection{Bounds for $\BB P(\mcl E_n^{h,c})$}
\label{sec-no-order-local-upper}

In this subsection we will prove estimates for the probability of the event $\mcl E_n^{h,c}$ of Definition~\ref{def-local-event}. We start with the lower bound, which is easier.

\begin{prop}[Lower bound] \label{prop-no-order-lower}
For each $C>1$, $n \geq C^2$, and $(h,c) \in [C^{-1} n^{1/2} , C n^{1/2}]_{\BB Z}^2$, we have
\eqb \label{eqn-no-order-lower-cond}
\BB P\left( \mcl E_n^{h,c}  \,|\, I > n \right) \succeq  n^{-1}
\eqe 
with the implicit constant depending only on $C$. 
In particular, with $\psi_0$ as in Lemma~\ref{prop-I-reg-var},
\eqb \label{eqn-no-order-lower}
\BB P\left( \mcl E_n^{h,c} \right) \succeq \psi_0(n) n^{-1-\mu} 
\eqe 
with implicit constant depending only on $C$. 
\end{prop}
\begin{proof} 
Fix $C>1$. For $h\in\BB N$ and $\delta>0$, let $m_h^\delta := \lfloor (1-\delta) h\rfloor$ and $r_h^\delta := \lfloor \delta h \rfloor$. 
By Lemma~\ref{prop-J^H-local-reg}, we can find $A >0$, depending only on $p$, such that for each $h\in\BB N$ and each $(k,l) \in [\frac12 \delta^2 h^2 ,  \delta^2 h^2]_{\BB Z} \times [- \delta  h  ,  \delta  h ]_{\BB Z}^2$,
\eqb  \label{eqn-J^H-D-lower}
\BB P\left( \sup_{j\in [1,J_{r_h^\delta}^H]_{\BB Z}} |X(n-j,n)| \leq A r_h^\delta ,\,   (J_{r_h^\delta}^H , L_{r_h^\delta}^H) = (k,l) \right) \succeq \delta^{-3} h^{-3} 
\eqe 
with the implicit constant depending only on $p$. 

Henceforth fix $\delta \in (0,1/2)$ such that $A\delta \leq (2C)^{-1}$, and note that $\delta$ depends only on $C$. By \cite[Theorem A.1]{gms-burger-cone}, for each $n \geq C^2$ and each $(h,c) \in [C^{-1} n^{1/2} , C n^{1/2}]_{\BB Z}^2$, we have
\eqb \label{eqn-KQ-pos}
\BB P\left( (K_{n,m_h^\delta}^H , Q_{n,m_h^\delta}^H) \in \left[n -   \delta^2 h^2 ,n -\frac12  \delta^2 h^2\right]_{\BB Z} \times \left[c- \delta  h  , c+ \delta  h \right]_{\BB Z}^2 \,|\, I > n\right) \succeq 1 .
\eqe 
By~\eqref{eqn-x-h-c-prob-I}, for any realization $x$ of $X_1\dots X_{K_{n,m_h^\delta}^H}$ for which $(K_{n,m_h^\delta}^H , Q_{n,m_h^\delta}^H) \in \left[n -   \delta^2 h^2 ,n -\frac12  \delta^2 h^2\right]_{\BB Z} \times \left[c- \delta  h  , c+ \delta  h \right]_{\BB Z}^2$ and $0 < K_{n,m_h^\delta}^H < I$, we have
\begin{align}  
&\BB P\left(\mcl E_n^{h,c}  \,|\,  X_1\dots X_{K_{n,m_h^\delta}^H}  =x   ,\, I > n \right)\notag \\
&\qquad \geq \frac{\BB P\left((J_{n,r_h^\delta }^H , L_{n,r_h^\delta}^H) = (n- |x| - 1, c - \frk c(x) ) ,\, \mcl N_{\tb C}\left(X(n-J_{n,r_h^\delta}^H,n)\right) \leq \frk c(x) \right)}{    \BB P\left(R_n(x)   \right)} ,
\end{align} 
where $\frk c(x) := \mcl N_{\tc C}\left(\mcl R(x)\right)$. By~\eqref{eqn-J^H-D-lower}, or choice of $\delta$, and Lemma~\ref{prop-J^H-time}, this quantity is
$\succeq (n-|x|)^{1/2} \delta^{-3} h^{-3} \succeq n^{-1}$
with the implicit constant depending only on $C$. By combining this with~\eqref{eqn-KQ-pos}, we obtain~\eqref{eqn-no-order-lower-cond}.
The estimate~\eqref{eqn-no-order-lower} follows from~\eqref{eqn-no-order-lower-cond} together with Lemma~\ref{prop-I-reg-var}. 
\end{proof}

Next we prove an upper bound for $\BB P(\mcl E_n^{h,c})$.

\begin{prop}[Upper bound] \label{prop-no-order-upper}
Let $\psi_0$ be the slowly varying function from Lemma~\ref{prop-I-reg-var}. For each $n\in\BB N$ and $(h,c) \in \BB N^2$,
\eqbn
\BB P\left(  \mcl E_n^{h,c}    \right) \preceq  \psi_0((h\wedge c)^2)   (h\wedge c)^{-2 - 2\mu} 
\eqen
with the implicit constant depending only on $p$. 
\end{prop}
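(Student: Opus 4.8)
The plan is to bound $\BB P(\mcl E_n^{h,c})$ by conditioning on the word up to the time $K_{n,m}^H$ for a suitably chosen $m$, and then using Lemma~\ref{prop-J^H-long-exact} to control the backward word. By the symmetry between hamburgers and cheeseburgers (swapping the roles of $\tc H$ and $\tc C$), it suffices to treat the case $h \leq c$, i.e.\ to show $\BB P(\mcl E_n^{h,c}) \preceq \psi_0(h^2) h^{-2-2\mu}$. First I would dispose of the trivial range: if $h \succeq n^{1/2}$ then $\psi_0(h^2) h^{-2-2\mu} \succeq \psi_0(n) n^{-1-\mu}$ (up to slowly varying factors), and since $\mcl E_n^{h,c} \subset \{I > n\}$, the estimate $\BB P(I>n) = \psi_0(n) n^{-\mu}$ from Lemma~\ref{prop-I-reg-var} together with the trivial fact that the net burger count performs a simple random walk (giving an extra $n^{-1}$ from the return-to-origin-type constraint $\mcl N_{\tc H}(X(1,n)) = h$) should already suffice; the real content is the regime $h = o(n^{1/2})$, or more precisely $h^2 \ll n$.

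The main argument: fix $m := \lfloor h/2 \rfloor$, so $m < h$, and apply Lemma~\ref{prop-J^H-D-equiv} with this $m$. Summing over realizations $x$ of $X_1 \dots X_{K_{n,m}^H}$ with $0 < K_{n,m}^H < I$, and using~\eqref{eqn-x-h-c-prob} together with the fact that $\BB P(R_n(x)) \asymp (n-|x|)^{-1/2}$ by Corollary~\ref{prop-J^H-time} (translation invariance), one gets
\eqbn
\BB P\left(\mcl E_n^{h,c}\right) \preceq \sum_{x} \BB P\left(X_1\dots X_{K_{n,m}^H} = x,\, 0 < K_{n,m}^H < I\right) (n-|x|)^{1/2} \, \BB P\left((J_{n,h-m}^H, L_{n,h-m}^H) = (n-|x|-1,\, c - \frk c(x))\right).
\eqen
Now $\{0 < K_{n,m}^H < I\}$ forces $X(1, K_{n,m}^H)$ to contain no orders, $m$ hamburgers, and $Q_{n,m}^H = \frk c(x)$ cheeseburgers with $\frk c(x) \geq 0$; in particular $\{0 < K_{n,m}^H\} \cap \{I > K_{n,m}^H\}$ is contained in the event that $\sup_{i} (\mcl N_{\tc H}(X(1,i)) \wedge \mcl N_{\tc C}(X(1,i))) \geq m$ up to time $I$ — no wait, more carefully, it's contained in $\{I > K_{n,m}^H\} \cap \{\mcl N_{\tc H}(X(1,K_{n,m}^H)) = m\}$. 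Here is where I would invoke Lemma~\ref{prop-I-burger-sup}: crudely summing the joint probability over all $x$ (i.e.\ dropping the constraint coming from the $J^H$ term except for its uniform upper bound $\preceq (h-m)^{-3} \asymp h^{-3}$ from~\eqref{eqn-J^H-upper-basic}), one is left with
\eqbn
\BB P\left(\mcl E_n^{h,c}\right) \preceq h^{-3} \sup_x (n-|x|)^{1/2} \cdot \BB P\left(0 < K_{n,m}^H < I\right).
\eqe

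The issue with the line above is that $(n - |x|)^{1/2}$ can be as large as $n^{1/2}$, which is too lossy when $h^2 \ll n$; so the crude bound must be refined using the decay of $\BB P((J_{n,h-m}^H, L_{n,h-m}^H) = (k,l))$ in $k$. The cleaner route, which I expect to be the actual argument, is: split according to the value of $K_{n,m}^H$, i.e.\ the value of $n - |x| - 1 =: k$, which is the required value of $J_{n,h-m}^H$. By~\eqref{eqn-J^H-long-exact}, $\BB P((J_{h-m}^H, L_{h-m}^H) = (k, \cdot)) \preceq k^{-1/2} (h-m)^{-2} \asymp k^{-1/2} h^{-2}$ when summed over the second coordinate (the tail in $l$ gives an integrable bound $\preceq |l|^{-1} h^{-2}$, summing to $\log$, but one can instead use that $c - \frk c(x)$ ranges over an interval of length $O(h)$ on the relevant event by Lemma~\ref{prop-I-burger-sup}, so the $l$-sum contributes only $O(h) \cdot h^{-3} \asymp h^{-2}$, or even better, one keeps the constraint $\mcl N_{\tb C} \leq \frk c(x) \leq Q_{n,m}^H$). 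Then
\eqbn
\BB P\left(\mcl E_n^{h,c}\right) \preceq \sum_{k=1}^{n} k^{-1/2} h^{-2} \cdot k^{1/2} \cdot \BB P\left(K_{n,m}^H = n-k-1,\, 0 < K_{n,m}^H < I\right) = h^{-2} \, \BB P\left(0 < K_{n,m}^H < I\right),
\eqe
and it remains to bound $\BB P(0 < K_{n,m}^H < I) \preceq \psi_0(m^2) m^{-2\mu} \asymp \psi_0(h^2) h^{-2\mu}$, which is exactly Lemma~\ref{prop-I-burger-sup} (since $K_{n,m}^H > 0$ with $K_{n,m}^H < I$ implies $\sup_{i \in [1,I]_{\BB Z}} \mcl N_{\tc H}(X(1,i)) \wedge \mcl N_{\tc C}(X(1,i)) \geq m$ — indeed at time $K_{n,m}^H$ there are $m$ hamburgers and $\geq 0$ cheeseburgers; one actually wants $\geq m$ of each, which follows because the cheeseburger count is also large, or one uses the symmetric statement: since $h \leq c$ one should choose $m$ relative to $h$ and the relevant sup is over the minimum — I'd double check that $0<K_{n,m}^H<I$ indeed forces $\mcl N_{\tc C}(X(1,K_{n,m}^H)) \geq $ something comparable to $m$, perhaps by instead running the symmetric argument reading the word so that the binding constraint is the smaller of $h,c$).

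The main obstacle, and the step requiring the most care, is making the $k$-sum and $l$-sum genuinely convergent with the right power of $h$ while correctly carrying the constraint $\mcl N_{\tb C}(X(n-J^H_{n,h-m},n)) \leq \frk c(x) = Q^H_{n,m}$ through — one cannot simply drop it, because the $l$-tail $\sum_l |l|^{-1}$ diverges logarithmically; one needs either the exponential factor $e^{-a_0 m^2/k} m^{-3}$ from~\eqref{eqn-J^H-long-exact} to beat the large-$k$ range, or the fact that $\frk c(x)$ is bounded by something like $C h$ on the bulk of the event (again via Lemma~\ref{prop-I-burger-sup} applied to the cheeseburger count seen from the other side). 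Getting the bookkeeping right so that the final bound is $\psi_0((h\wedge c)^2)(h\wedge c)^{-2-2\mu}$ and not, say, that times a spurious $\log$, is the delicate point.
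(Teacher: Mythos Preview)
Your approach is the paper's approach, and the key displayed computation
\[
\BB P\left(\mcl E_n^{h,c}\right) \preceq h^{-2}\, \BB P\left(0 < K_{n,m}^H < I\right)
\]
is exactly what the paper gets, via precisely the cancellation $(n-|x|)^{1/2}\cdot (n-|x|)^{-1/2}=1$ that you wrote down. But two things are off.

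\medskip

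\textbf{The worry about summing over $l$ is a red herring.} For each realization $x$ of $X_1\dots X_{K_{n,m}^H}$, the pair $(k,l)=(n-|x|-1,\,c-\frk c(x))$ is a \emph{single} point, and Lemma~\ref{prop-J^H-long-exact} already gives the pointwise bound $\BB P((J_{r}^H,L_{r}^H)=(k,l))\preceq k^{-1/2} r^{-2}$ with $r=h-m\asymp h$. There is no sum over $l$ anywhere in the argument, hence no logarithmic divergence to fear, and no need to invoke the constraint $\mcl N_{\tb C}\leq \frk c(x)$ for the upper bound. The passage from~\eqref{eqn-x-h-c-prob} to $\preceq h^{-2}$ is one line:
\[
\frac{\BB P\left((J_{n,r}^H,L_{n,r}^H)=(n-|x|-1,\,c-\frk c(x))\right)}{\BB P(R_n(x))}
\preceq \frac{(n-|x|)^{-1/2} h^{-2}}{(n-|x|)^{-1/2}}=h^{-2}.
\]

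\medskip

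\textbf{The real gap is the marginal bound, and your suspicion there is correct.} The event $\{0<K_{n,m}^H<I\}$ by itself does \emph{not} force $\mcl N_{\tc C}(X(1,K_{n,m}^H))\geq m$: at time $K_{n,m}^H$ there are exactly $m$ hamburgers, but the cheeseburger count could be anywhere $\geq 0$, even though you are assuming $c\geq h$. So Lemma~\ref{prop-I-burger-sup} does not apply directly. The paper's fix is the one you are groping toward at the end of that paragraph: set $m_h=\lfloor h/2\rfloor$, $m_c=\lfloor c/2\rfloor$, and split $\mcl E_n^{h,c}$ according to whether $K_{n,m_c}^C<K_{n,m_h}^H$ or $K_{n,m_h}^H\leq K_{n,m_c}^C$. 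On $\{I>n\}$, for $i>K_{n,m_c}^C$ one has $\mcl N_{\tc C}(X(1,i))\geq m_c+1$; so in the first case, at time $K_{n,m_h}^H$ both counts are $\geq m_h\wedge m_c$, and one conditions on $\{0<K_{n,m_h}^H<I,\ Q_{n,m_h}^H\geq m_c\}$. The conditional bound is still $\preceq h^{-2}\leq (h\wedge c)^{-2}$ by the same computation, and now the marginal is contained in the event of Lemma~\ref{prop-I-burger-sup} with $m=m_h\wedge m_c$, giving $\preceq\psi_0((h\wedge c)^2)(h\wedge c)^{-2\mu}$. The second case is symmetric with $K_{n,m_c}^C$ and the $J^C$-version of Lemma~\ref{prop-J^H-long-exact}. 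Adding the two cases gives the proposition.

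\medskip

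Finally, your initial digression about the ``trivial range'' $h\succeq n^{1/2}$ is unnecessary: the argument above works uniformly for all $(h,c)\in\BB N^2$, with no case split in $n$.
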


To prove our upper bound for the probability of $\mcl E_n^{h,c}$, we first need the following lemma which is the source of a factor of $\psi_0((h\wedge c)^2)   (h\wedge c)^{  - 2\mu}$ in Proposition~\ref{prop-no-order-upper} (the other factor of $(h\wedge c)^{-2}$ will come from Lemma~\ref{prop-J^H-D-formulas}).

\begin{lem} \label{prop-I-burger-sup}
Let $\psi_0$ be the slowly varying function from Lemma~\ref{prop-I-reg-var} and let $I$ be as in~\eqref{eqn-I-def}. For each $m \in\BB N$, we have
\eqbn
\BB P\left(\sup_{i\in[1,I]_{\BB Z}} \left( \mcl N_{\tc H}(X(1,i)) \wedge \mcl N_{\tc C}(X(1,i))    \right) \geq m \right) \preceq \psi_0 (m^2) m^{-2\mu}  
\eqen
with the implicit constant depending only on $p$. 
\end{lem}
\begin{proof}
For $m\in\BB N$, let $T_m$ be the smallest $i\in \BB N$ for which $\mcl N_{\tc H}(X(1,i)) \wedge \mcl N_{\tc C}(X(1,i) ) \geq m/2$. Also let $\BB k_m$ be the largest $k\in\BB N$ for which $2^{ k -1} \leq m^2$. Observe that if
\[
\sup_{i\in[1,I]_{\BB Z}} \left( \mcl N_{\tc H}(X(1,i)) \wedge \mcl N_{\tc C}(X(1,i))    \right) \geq m
\]
then $T_m  < I$ and $\sup_{i \in [T_m+1, I]_{\BB Z}} |X(T_m+1,i)| \geq m/2$. Therefore,
\begin{align}
&\BB P\left(\sup_{i\in[1,I]_{\BB Z}} \left( \mcl N_{\tc H}(X(1,i)) \wedge \mcl N_{\tc C}(X(1,i))    \right) \geq m \right)\notag \\
&\qquad \leq \sum_{k=1}^{\BB k_m} \BB P\left( \sup_{i\in[ T_m  +1 , T_m + 2^k]_{\BB Z}} |X(T_m+1 ,i)| \geq m/2 ,\,  I - T_m \in  [2^{k-1},2^k]_{\BB Z} \right)   + \BB P\left(I > m^2 \right) \notag\\  
&\qquad \leq \sum_{k=1}^{\BB k_m} \BB P\left( \sup_{i\in[ T_m + 1, T_m + 2^k]_{\BB Z}} |X(T_m+1 ,i)| \geq m/2 \,|\,  I > T_m + 2^{k-1} \right) \BB P\left( I >      2^{k-1}\right) + \BB P\left(I > m^2 \right)  . \label{eqn-I-K-sum}
\end{align}
Let $x$ be any realization of $X_1\dots X_{T_m}$ for which $T_m < I$. Then for $k \in [1,\BB k_m]_{\BB Z}$, 
\alb
&\BB P\left( \sup_{i\in[T_m+1,T_m+2^k]_{\BB Z}} |X(T_m+1,i)| \geq  m /2 \,\big|\,  I > T_m + 2^{k-1} ,\, X_1\dots X_{T_m} = x \right)\\
&\qquad \leq \frac{\BB P\left( \sup_{i\in[T_m+1,2^k]_{\BB Z}} |X(T_m+1,i)| \geq  m /2 \,|\,  X_1\dots X_{T_m} = x  \right)}{\BB P\left( I > T_m + 2^{k-1} \,|\, X_1\dots X_{T_m} = x   \right)   } .
\ale
Since $\mcl R(x)$ contains no orders and at least $m/2$ burgers of each type, and since $2^{k-1} \leq m^2$, it follows from \cite[Theorem 2.5]{shef-burger} that $\BB P\left( I > T_m + 2^{k-1} \,|\, X_1\dots X_{T_m} = x   \right) $ is bounded below by a universal constant. 
By \cite[Lemma 3.13]{shef-burger}, 
\eqbn
\BB P\left( \sup_{i\in[T_m+1,2^k]_{\BB Z}} |X(T_m+1,i)| \geq  m /2 \,|\,  X_1\dots X_{T_m} = x  \right) \leq a_0 e^{-a_1 2^{-k/2} m}
\eqen
for universal constants $a_0 , a_1 > 0$. By averaging over all realizations of $X_1\dots X_{T_m}$ for which $T_m < I$, we obtain
\eqb \label{eqn-I-cond-sup}
\BB P\left( \sup_{i\in[T_m+1,2^k]_{\BB Z}} |X(T_m+1,i)| \geq  m /2 \,\big|\,  I > T_m + 2^{k-1} \right) \leq a_0 e^{-a_1 2^{-k/2} m} .
\eqe 
By Lemma~\ref{prop-I-reg-var}, 
\eqbn
\BB P\left(I > 2^{k-1} \right)  \preceq \psi_0(2^k ) 2^{-k \mu}  .
\eqen
Therefore,~\eqref{eqn-I-K-sum} is at most a constant (depending only on $p$) times
\alb
&\sum_{k=1}^{\BB k_m} \psi_0(2^k )2^{-k \mu}  e^{-a_1 2^{-k/2} m}   + \psi_0(m^2) m^{-2\mu} \\
&\qquad \preceq \psi_0(m^2) m^{-2\mu}  \sum_{k=1}^{\BB k_m} \frac{\psi_0(2^k)}{\psi_0(m^2)}  2^{(\BB k_m - k) \mu}  e^{-a_1 2^{(\BB k_m - k)/2} }   + \psi_0(m^2) m^{-2\mu}\\
&\qquad \preceq \psi_0(m^2) m^{-2\mu} . \qedhere
\ale
\end{proof}

\begin{proof}[Proof of Proposition~\ref{prop-no-order-upper}]
Given $(h,c) \in \BB N^2$, let $m_h = \lfloor h/2\rfloor$, $r_h = h-m_h$, $m_c = \lfloor c/2\rfloor$, and $r_c  = c-m_c$. Also define $K_{n,m_h}^H$ and $K_{n,m_c}^C$ as in Section~\ref{sec-no-order-local-setup}. 
On the event $ \mcl E_n^{h,c} $, both $K_{n,m_h}^H$ and $K_{n,m_c}^C$ are $<n$ and non-zero. Furthermore, if $K_{n,m_c}^C < K_{n,m_h}^H$, then $\mcl N_{\tc C}\left(X(1,K_{n,m_h}^H)\right) \geq m_c$, and the analogous statement holds when $K_{n,m_h}^H \leq K_{n,m_c}^C$. Hence
\alb
\BB P\left(\mcl E_n^{h,c} \right) 
&= \BB P\left(\mcl E_n^{h,c}  ,\,  K_{n,m_c}^C \leq K_{n,m_h}^H < n <I   \right) + \BB P\left(\mcl E_n^{h,c} ,\,  K_{n,m_h}^H < K_{n,m_c}^C < n   < I \right) \\ 
& \leq \BB P\left(\mcl E_n^{h,c} ,\,   K_{n,m_h}^H < n  < I,\,  \mcl N_{\tc C}\left(X(1,K_{n,m_h}^H)\right) \geq  m_c \right) \\
&\qquad  + \BB P\left(\mcl E_n^{h,c} , \,    K_{n,m_c}^C < n  < I,\,  \mcl N_{\tc H}\left(X(1,K_{n,m_c}^C)\right) \geq m_h    \right)\\
& \leq \BB P\left(\mcl E_n^{h,c}  \,\big|\,   0< K_{n,m_h}^H   < I  ,\,  \mcl N_{\tc C}\left(X(1,K_{n,m_h}^H)\right) \geq m_c  \right) 
\BB P\left( 0< K_{n,m_h}^H   < I ,\,  \mcl N_{\tc C}\left(X(1,K_{n,m_h}^H)\right) \geq m_c \right) \\
&\qquad  + \BB P\left(\mcl E_n^{h,c}  \,\big|\,  0< K_{n,m_c}^C < I   , \,  \mcl N_{\tc H}\left(X(1,K_{n,m_c}^C)\right) \geq m_h   \right) 
\BB P\left( 0< K_{n,m_c}^C   < I  , \,  \mcl N_{\tc H}\left(X(1,K_{n,m_c}^C)\right) \geq m_h \right)   .
\ale
By symmetry, it suffices to show that
\eqb \label{eqn-no-order-upper-reduce}
\BB P\left(\mcl E_n^{h,c}  \,|\,   0< K_{n,m_h}^H   < I  ,\,  \mcl N_{\tc C}\left(X(1,K_{n,m_h}^H)\right) \geq m_c \right) \preceq (h\wedge c)^{-2 }   .
\eqe 
and
\eqb \label{eqn-no-order-upper-reduce'}
\BB P\left( 0< K_{n,m_h}^H   < I  ,\,  \mcl N_{\tc C}\left(X(1,K_{n,m_h}^H)\right) \geq m_c        \right)  \preceq \psi_0((h\wedge c)^2) (h\wedge c)^{-2\mu }   .
\eqe

To this end, fix a realization $x$ of $X_1\dots X_{K_{n,m_h}^H}$ for which $0< K_{n,m_h}^H < I $ and $\frk c(x) \geq m_c$. By~\eqref{eqn-x-h-c-prob}, we obtain
\begin{align} \label{eqn-burgers-even-ratio}
\BB P\left(\mcl E_n^{h,c}  \,|\,  X_1\dots X_{K_{n,m_h}^H}  =x \right)
 \leq \frac{\BB P\left((J_{n,r_h}^H , L_{n,r_h}^H) = (n-1-|x|, c - \frk c(x) ) \right)}{    \BB P\left(R_n(x)   \right)} ,
\end{align}  
with $J_{n,r_h}^H$ and $L_{n,r_h}^H$ are as in Section~\ref{sec-no-order-local-setup} and $R_n(x)$ as in~\eqref{eqn-J^H-hit-event}.
By Lemma~\ref{prop-J^H-time},  
\eqb \label{eqn-J^H-hit-prob}
\BB P\left(  R_n(x) \right)\asymp (n-|x|)^{-1/2}   
\eqe 
with the implicit constant depending only on $p$. 
By Lemma~\ref{prop-J^H-long-exact}, 
\eqbn
\BB P\left((J_{n,r_h}^H , L_{n,r_h}^H) = (n-1-|x|, c - \frk c(x) ) \right) \preceq  (n-|x|)^{-1/2} h^{-2} 
\eqen
with the implicit constant depending only on $p$. Hence~\eqref{eqn-burgers-even-ratio} yields
\eqbn
\BB P\left(\mcl E_n^{h,c} \,|\,  X_1\dots X_{K_{n,m_h}^H}  =x\right) \preceq h^{-2} .
\eqen 
By averaging over all choices of the realization $x$ for which $0<K_{n,h}^H < I $ and $\frk c(x) \geq m_c$, we obtain~\eqref{eqn-no-order-upper-reduce}. 

For~\eqref{eqn-no-order-upper-reduce'}, we observe that if $0 < K_{n,h}^H < I$ and $\mcl N_{\tc C}\left(X(1,K_{n,h}^H)\right) \geq m_c $, then 
\eqbn
\sup_{i\in [1,I]_{\BB Z}}  \left( \mcl N_{\tc H}(X(1,i)) \wedge \mcl N_{\tc C}(X(1,i))    \right) \geq m_c \wedge m_h .
\eqen
Hence~\eqref{eqn-no-order-upper-reduce'} follows from Lemma~\ref{prop-I-burger-sup}. 
\end{proof}
 
\subsection{Regularity estimates}
\label{sec-no-order-cont-reg}

In this subsection we will consider some regularity results for the conditional law of $X_1\dots X_n$ given $\mcl E_n^{h,c}$. 
Our first proposition tells us, roughly speaking, that the pair $(K_{n,m}^H , Q_{n,m}^H)$ is unlikely to be too far from $(n,c)$ if $m$ is close to $h$ and we condition on $\mcl E_n^{h,c}$. 

\begin{prop} \label{prop-no-order-endpoint}
For $n\in\BB N$, $(h,c) \in \BB N^2$, $\delta \in (0,1/2)$, and $A>1$, let
\eqb \label{eqn-h-c-nghd}
\mcl U_n^\delta(A , h,c) :=  \left[n - A^2  \delta^2 h^2 , n - A^{-2} \delta^2 h^2 \right]_{\BB Z} \times \left[ c - A \delta h  , c + A \delta h\right]_{\BB Z} .
\eqe 
Also write $m_h^\delta := \lfloor (1-\delta)h\rfloor$.
For each $C >1$ and $q\in (0,1)$, there exists $A > 1$ and $\delta_* >0$ (depending only on $C$ and $q$) such that the following is true.  
 For each $\delta \in (0,\delta_*]$, there is an $n_* = n_*(\delta,C,q) \in\BB N$ such that for $n\geq n_*$ and $(h,c) \in \left[C^{-1} n^{1/2} , C n^{1/2} \right]_{\BB Z}^2$, we have
\eqbn
\BB P\left((K_{n,m_h^\delta}^H ,Q_{n,m_h^\delta}^H) \in \mcl U_n^\delta(A,h,c)   \,|\, \mcl E_n^{h,c} \right) \geq 1-q .
\eqen
\end{prop}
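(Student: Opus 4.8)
The plan is to prove Proposition~\ref{prop-no-order-endpoint} by decomposing the event $\mcl E_n^{h,c}$ according to the value of $(K_{n,m_h^\delta}^H, Q_{n,m_h^\delta}^H)$ and showing that the contribution from pairs outside the neighborhood $\mcl U_n^\delta(A,h,c)$ is a small fraction of $\BB P(\mcl E_n^{h,c})$. By Proposition~\ref{prop-no-order-lower} (combined with the regular variation of $\BB P(I>n)$ from Lemma~\ref{prop-I-reg-var}), we have the lower bound $\BB P(\mcl E_n^{h,c}) \succeq n^{-1-\mu+o_n(1)}$ (or, more usefully, $\BB P(\mcl E_n^{h,c} \mid I > n) \succeq n^{-1}$, which is what we will actually compare against). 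So it suffices to show that
\eqbn
\BB P\left(\mcl E_n^{h,c},\, (K_{n,m_h^\delta}^H,Q_{n,m_h^\delta}^H) \notin \mcl U_n^\delta(A,h,c),\, I > n\right) \leq q' \cdot n^{-1} \cdot \BB P(I>n)
\eqen
for $A$ large and $\delta$ small (with $q'$ a constant multiple of $q$ absorbing the implicit constants), and then divide. The heart of the matter is a sum over the possible realizations of $(K_{n,m_h^\delta}^H, Q_{n,m_h^\delta}^H) = (k, Q)$ outside the target window.

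The key step is the conditional probability formula~\eqref{eqn-x-h-c-prob-I}: for any realization $x$ of $X_1\dots X_{K_{n,m_h^\delta}^H}$ with $0 < K_{n,m_h^\delta}^H < I$,
\eqbn
\BB P\left(\mcl E_n^{h,c} \,\big|\, X_1\dots X_{K_{n,m_h^\delta}^H} = x,\, I>n\right) \leq \frac{\BB P\left((J_{n,r}^H, L_{n,r}^H) = (n-|x|-1, c-\frk c(x))\right)}{\BB P\left(R_n(x),\, \mcl N_{\tb C}(X(|x|+1,n)) \leq \frk c(x)\right)},
\eqe
where $r = h - m_h^\delta = \lceil \delta h\rceil$ and $\frk c(x)$ is the realization of $Q_{n,m_h^\delta}^H$. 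The denominator is bounded below using Lemma~\ref{prop-J^H-time} (giving $\asymp (n-|x|)^{-1/2}$, since the extra constraint $\mcl N_{\tb C}(\cdot) \leq \frk c(x)$ has probability bounded below when $\frk c(x)$ is of order $n^{1/2}$, which can be arranged on a high-probability subevent, or handled by restricting to $\frk c(x) \geq c - A\delta h$ via the $Q$-coordinate). For the numerator, when $(k,Q) \notin \mcl U_n^\delta(A,h,c)$ at least one of $n - k$ or $|c - Q|$ is outside its expected range $[A^{-2}\delta^2 h^2, A^2\delta^2 h^2]$ resp. $[-A\delta h, A\delta h]$; since $n - |x| - 1 = n - k - 1 = J_{n,r}^H$ here, we can apply the sharp exact-probability bounds of Lemma~\ref{prop-J^H-long-exact} — specifically $\BB P((J_r^H, L_r^H)=(n-k-1, c-Q)) \preceq r^{-3} \wedge ((n-k)^{-1/2} r^{-2}) \wedge (|c-Q|^{-1} r^{-2}) \wedge (e^{-a_0 r^2/(n-k)} r^{-3})$ — to get decay in whichever deviation is large. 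Summing $\BB P(\mcl E_n^{h,c}, (K^H_{n,m_h^\delta}, Q^H_{n,m_h^\delta}) = (k,Q), I>n)$ over $(k,Q)$ with $n - k$ large, $n-k$ small, $c - Q$ large, or $c - Q < -A\delta h$ (the last handled using that on $I>n$ the path stays positive, so $Q \leq c$ roughly, plus Lemma~\ref{prop-I-burger-sup}-type control), each regime contributes $o_A(1)$ or $o_\delta(1)$ times $n^{-1}\BB P(I>n)$ once we also bound $\BB P(0<K^H_{n,m_h^\delta}<I)$ crudely.

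The main technical step — which I expect to be the main obstacle — is handling the "too close" regime $n - k \leq A^{-2}\delta^2 h^2$ and the "too far left" regime $c - Q < -A\delta h$ (equivalently $Q$ too large), because the simple sum-of-sharp-bounds argument needs to be paired with an a priori estimate on the distribution of $(K^H_{n,m_h^\delta}, Q^H_{n,m_h^\delta})$ itself; these are precisely the estimates relegated to Appendix~\ref{sec-endpoint-reg}. The clean way is to invoke the scaling-limit input \cite[Theorem A.1]{gms-burger-cone} (as already used to prove~\eqref{eqn-KQ-pos} in Proposition~\ref{prop-no-order-lower}): conditionally on $I > n$, the rescaled path $Z^n$ converges to a Brownian motion conditioned to stay in the first quadrant, and $(n^{-1}K^H_{n,m_h^\delta}, n^{-1/2}Q^H_{n,m_h^\delta})$ converges to the corresponding last-crossing time/location of the limiting process across the line at height $\approx n^{-1/2}m_h^\delta \approx (1-\delta)$ times the $x$-coordinate of $Z(1)$; continuity of the limit forces this to be within $O(\delta)$ of $(1, V(1)/\sqrt n$-scale$)$ with probability $\to 1$ as $\delta \to 0$, uniformly over $(h,c)$ in the stated box. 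That handles the bulk; the diadic-scale decomposition plus Lemma~\ref{prop-J^H-long-exact} then upgrades "conditioned on $I>n$" to "on the much smaller event $\mcl E_n^{h,c}$", using that the sharp per-point bound $\preceq (n-|x|)^{-1/2} r^{-2}$ controls the extra cost of the additional conditioning $Z^n(1) = (n^{-1/2}h, n^{-1/2}c)$, and summing geometric-type series in the dyadic scale of $n - k$. I would structure the write-up as: (i) reduce via~\eqref{eqn-x-h-c-prob-I} and Lemma~\ref{prop-J^H-time} to bounding a sum; (ii) split $(k,Q)$ into the good window and four bad regimes; (iii) bound the good-window total below by Proposition~\ref{prop-no-order-lower}; (iv) bound each bad regime above using Lemma~\ref{prop-J^H-long-exact}, \cite[Theorem A.1]{gms-burger-cone}, and Lemma~\ref{prop-I-burger-sup}, choosing $A$ then $\delta_*$ then $n_*$; (v) divide.
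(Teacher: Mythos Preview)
Your overall strategy matches the paper's: decompose $\mcl E_n^{h,c}$ over dyadic annuli for $(K_{n,m_h^\delta}^H, Q_{n,m_h^\delta}^H)$, bound the numerator of~\eqref{eqn-x-h-c-prob-I} via Lemma~\ref{prop-J^H-long-exact} and the denominator via Lemma~\ref{prop-J^H-time}, control the probability of each annulus under $\{I>n\}$ by the scaling limit (the paper's Lemmas~\ref{prop-last-conv} and~\ref{prop-bm-last-cond}, based on \cite[Theorem A.1]{gms-burger-cone}), and compare the sum to Proposition~\ref{prop-no-order-lower}.

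There is, however, a genuine gap in your treatment of the denominator. You say the extra constraint $\mcl N_{\tb C}(X(|x|+1,n)) \leq \frk c(x)$ ``has probability bounded below when $\frk c(x)$ is of order $n^{1/2}$, which can be arranged on a high-probability subevent, or handled by restricting to $\frk c(x) \geq c - A\delta h$.'' This is circular: showing that $Q_{n,m_h^\delta}^H = \frk c(x)$ is of order $n^{1/2}$ with high conditional probability given $\mcl E_n^{h,c}$ is part of the statement being proved, and the denominator bound is what drives the dyadic argument. The paper breaks the circularity with a symmetry trick (Lemma~\ref{prop-bdy-regularity}): on $\mcl E_n^{h,c}$, if $Q_{n,m_c^\delta}^C$ is small then $K_{n,m_h^\delta}^H \geq K_{n,m_c^\delta}^C$, forcing $Q_{n,m_h^\delta}^H \geq m_c^\delta \succeq n^{1/2}$. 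Thus one of the two decompositions ($H$ or $C$) always has $Q$ bounded below, and the bound on the bad event $\{Q^C\ \text{small}\}$ is obtained by running the dyadic argument for the $H$-decomposition on the subevent where $Q^H$ is automatically large. Your plan lacks this or an equivalent device, so step (iv) cannot be completed for realizations with small $\frk c(x)$.

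A smaller correction: ``on $I>n$ the path stays positive, so $Q \leq c$ roughly'' is false---nothing prevents $d^*$ from overshooting $c$ at time $K_{n,m_h^\delta}^H$ and returning by time $n$. The paper handles $Q > c + A\delta h$ by the same dyadic machinery, using the Gaussian decay of the density $g$ in Lemma~\ref{prop-g-form} (via Lemma~\ref{prop-endpoint-realization}) rather than any monotonicity.
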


Note that in the statement of Proposition~\ref{prop-no-order-endpoint} (and in several other lemmas in this paper) one should think of the parameter $q$ as small close to 0. 
The proof of Proposition~\ref{prop-no-order-endpoint} is technical, so is postponed to Section~\ref{sec-endpoint-reg} so that the reader can see the main ideas of the proof of Theorem~\ref{thm-local-conv} sooner.

Our next lemma tells us that if we condition on $\mcl E_n^{h,c}$ and a sufficiently nice realization of $X_1\dots X_{K_{n,m}^H}$ for $m$ slightly smaller than $h$, then it is unlikely that $|X(K_{n,m}^H+1 , i)|$ is very large for any $i\in [K_{n,m}^H +1, n]_{\BB Z}$. This result together with Proposition~\ref{prop-no-order-endpoint} will eventually allow us to deduce a scaling limit result for the conditional law of $Z^n$ given $\mcl E_n^{h,c}$ from statements about the conditional law of $X_1\dots X_{K_{n,m}^H}$ given $\mcl E_n^{h,c}$ for $m$ slightly smaller than $h$, since it implies that $Z^n$ does not move very much in the last $n^{-1}(n - K_{n,m}^H)$ units of time.

\begin{lem} \label{prop-no-order-reg}
Fix $C>1$, $A>0$, and $q\in (0,1)$. Define $\mcl U_n^\delta(A,h,c)$ as in~\eqref{eqn-h-c-nghd} and let $m_h^\delta := \lfloor (1-\delta) h\rfloor$ as in Proposition~\ref{prop-no-order-endpoint}. There is a $\delta_*>0$ and a $B>0$, depending only on $C$, $A$, and $q$, such that the following is true. For each $n\in\BB N$, each $\delta \in (0,\delta_*]$, each $(h,c) \in \left[C^{-1} n^{1/2} , Cn^{1/2}\right]_{\BB Z}^2$, and each realization $x$ of $X_1\dots X_{K_{n,m_h^\delta}^H}$ for which $(K_{n,m_h^\delta}^H , Q_{n,m_h^\delta}^H) \in \mcl U_n^\delta(A,h,c)$, we have
\eqb \label{eqn-no-order-reg}
\BB P\left( \sup_{i\in [K_{n,m_h^\delta}^H +1,n]_{\BB Z}} |X(K_{n,m_h^\delta}^H +1,i)| \leq B \delta n^{1/2} \,|\, \mcl E_n^{h,c} ,\, X_1\dots X_{K_{n,m_h^\delta}^H}   =x  \right) \geq 1-q .
\eqe 
\end{lem}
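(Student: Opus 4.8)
The plan is to condition on a nice realization $x$ of $X_1\dots X_{K_{n,m_h^\delta}^H}$ and then read the word backward from time $n$, exactly as in the formulas~\eqref{eqn-x-h-c-prob} and~\eqref{eqn-x-h-c-prob-I}. Write $k := |x|$, $r := h - m_h^\delta = \lceil \delta h\rceil$, and note that since $(k, \frk c(x)) \in \mcl U_n^\delta(A,h,c)$ and $(h,c) \in [C^{-1}n^{1/2}, Cn^{1/2}]_{\BB Z}^2$, we have $n - k \asymp \delta^2 h^2 \asymp \delta^2 n$, with implicit constants depending only on $A$ and $C$. By Lemma~\ref{prop-J^H-D-law}, the conditional law of $X_{k+1}\dots X_n$ given $\{X_1\dots X_{K_{n,m_h^\delta}^H} = x\}$ and $\mcl E_n^{h,c}$ can be expressed, via~\eqref{eqn-x-h-c-prob}, in terms of the event that $(J_{n,r}^H, L_{n,r}^H) = (n-1-k, c - \frk c(x))$ together with a constraint on cheeseburger orders. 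So it suffices to bound, for $B$ large,
\eqbn
\frac{\BB P\left((J_{n,r}^H, L_{n,r}^H) = (n-1-k, c-\frk c(x)),\ \sup_{j\in[1,J_{n,r}^H]_{\BB Z}} |X(n-j,n)| \geq B\delta n^{1/2}\right)}{\BB P\left((J_{n,r}^H, L_{n,r}^H) = (n-1-k, c-\frk c(x))\right)} \leq q,
\eqen
where the denominator already absorbs the factor $\BB P(R_n(x))$ common to both sides of~\eqref{eqn-x-h-c-prob}. (One should also confirm the extra cheeseburger-order constraint can only decrease both numerator and denominator in a controlled way, or simply drop it from the numerator and keep it in the denominator using the lower bound below — the latter is cleaner.)

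The numerator is controlled by~\eqref{eqn-J^H-long-exact-sup} of Lemma~\ref{prop-J^H-long-exact} applied with $m = r \asymp \delta n^{1/2}$ and "$k$" there equal to $n - 1 - k \asymp \delta^2 n$: this gives an upper bound of order $\exp\!\left(-a_1 r^2/(n-1-k) - a_1 (B\delta n^{1/2})/(n-1-k)^{1/2}\right) r^{-3} \asymp \exp(-a_1' B) r^{-3}$, where the second term in the exponential dominates because $(B\delta n^{1/2})/(\delta^2 n)^{1/2} \asymp B$ while the first term $r^2/(n-1-k) \asymp 1$; the constants $a_1', a_1$ depend only on $A$ and $C$. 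For the denominator, since $(n-1-k, c - \frk c(x))$ lies at distance $\asymp \delta^2 r^2$ (in the first coordinate) and $\asymp \delta r$ (in the second) from $(r^2, \cdot)$-scale — i.e.\ the pair lies in a region of the form $[C'^{-1} r^2, C' r^2]_{\BB Z} \times [-C' r, C' r]_{\BB Z}$ for a constant $C'$ depending only on $A$ and $C$ — Proposition~\ref{prop-J^H-local} together with the positivity and boundedness of $g$ from Lemma~\ref{prop-g-form} gives a matching lower bound $\succeq r^{-3}$. (One must check $g(t,v) > 0$ on the relevant compact set, which is clear from the explicit Gaussian-type formula~\eqref{eqn-g-form}, and that $n$, hence $r$, is large enough for Proposition~\ref{prop-J^H-local} to apply — this is where the $n_*$ dependence enters, though here $B$, $\delta_*$ can be chosen first.) Taking the ratio, the numerator-over-denominator bound is $\preceq \exp(-a_1' B)$ with constant depending only on $A$ and $C$, so choosing $B$ large enough relative to $A, C, q$ makes this $\leq q$, completing the proof.

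The main obstacle I anticipate is bookkeeping around the cheeseburger-order constraint $\mcl N_{\tb C}(X(n-J_{n,r}^H,n)) \leq \frk c(x)$ that appears in both numerator and denominator of~\eqref{eqn-x-h-c-prob}: one wants it present in the denominator (so that the ratio really equals the conditional probability in~\eqref{eqn-no-order-reg}) but the clean lower bound $\succeq r^{-3}$ from Proposition~\ref{prop-J^H-local} does not obviously include it. The fix is to note $\frk c(x) = Q_{n,m_h^\delta}^H \asymp c \asymp n^{1/2}$ on the event $(K_{n,m_h^\delta}^H, Q_{n,m_h^\delta}^H) \in \mcl U_n^\delta(A,h,c)$, so the constraint is satisfied with probability bounded below — either by a direct scaling-limit argument (few cheeseburger orders accumulate in a window of length $\asymp \delta^2 n$ compared to $\asymp n^{1/2}$ available slots) or, more simply, by invoking Lemma~\ref{prop-J^H-corner-lower} with the roles of hamburgers/cheeseburgers and the parameter $\ep$ chosen small relative to $C$, which gives a lower bound of the right order $r^{-3}$ for the full event including the order constraint. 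With that in hand the ratio argument goes through verbatim.
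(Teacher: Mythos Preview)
Your approach is essentially the same as the paper's, which is more compact: after identifying via Lemmas~\ref{prop-J^H-D-equiv} and~\ref{prop-J^H-D-law} that the conditional law of $X_{K_{n,m_h^\delta}^H+1}\dots X_n$ given $\mcl E_n^{h,c}\cap\{X_1\dots X_{K_{n,m_h^\delta}^H}=x\}$ is that of $X_{-J_{r_h^\delta}^H}\dots X_{-1}$ conditioned on $(J_{r_h^\delta}^H,L_{r_h^\delta}^H)=(n-|x|-1,c-\frk c(x))$ together with the cheeseburger-order constraint, the paper simply invokes Lemma~\ref{prop-J^H-local-reg}. Your argument unrolls the proof of Lemma~\ref{prop-J^H-local-reg} (ratio of~\eqref{eqn-J^H-long-exact-sup} over the Proposition~\ref{prop-J^H-local} lower bound), which is fine.

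One small correction on the constraint bookkeeping: your appeal to Lemma~\ref{prop-J^H-corner-lower} does not quite do what you want, since that lemma bounds $\mcl N_{\tc C}$ (cheeseburgers) rather than $\mcl N_{\tb C}$ (cheeseburger orders). The cleanest fix is the containment argument you allude to informally: on the good event $\{\sup_{j}|X(n-j,n)|\le B\delta n^{1/2}\}$ one automatically has $\mcl N_{\tb C}\le B\delta n^{1/2}$, and since $\frk c(x)\ge c-A\delta h\succeq n^{1/2}$, taking $\delta_*$ small enough (depending on $A,B,C$) forces $B\delta n^{1/2}\le\frk c(x)$. Writing $E=\{(J_r^H,L_r^H)=(n-|x|-1,c-\frk c(x))\}$, $F=\{\mcl N_{\tb C}\le\frk c(x)\}$, $G=\{\sup\le B\delta n^{1/2}\}$, this gives $G\cap E\subset F$, hence
\[
\BB P(G\mid E\cap F)=\frac{\BB P(G\cap E)}{\BB P(E\cap F)}\ge\frac{\BB P(G\cap E)}{\BB P(E)}=\BB P(G\mid E)\ge 1-q,
\]
the last inequality being exactly Lemma~\ref{prop-J^H-local-reg}. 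This is what the paper's one-line invocation is implicitly using.
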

\begin{proof}
Let $\delta>0$, $n\in\BB N$, $(h,c) \in \left[C^{-1} n^{1/2} , Cn^{1/2}\right]_{\BB Z}^2$, and let $x$ be a realization of $X_{K_{n,m_h^\delta}+1}\dots X_n$ for which $(K_{n,m_h^\delta}^H , Q_{n,m_h^\delta}^H) \in \mcl U_n^\delta(A,h,c)$. Also let $r_h^\delta:= h-m_h^\delta$.  
By Lemmas~\ref{prop-J^H-D-equiv} and~\ref{prop-J^H-D-law}, the conditional law of $X_{K_{n,m_h^\delta}^H+1}\dots X_n$ given $\mcl E_n^{h,c} \cap\{ X_1\dots X_{K_{n,m_h^\delta}^H}   =x\} $ is the same as the conditional law of $X_{-J_{r_h^\delta}^H}\dots X_{-1}$ given that $J_{r_h^\delta}^H = n-|x|-1$, $L_{r_h^\delta}^H = c-\frk c(x)$, and $\mcl N_{\tb C}\left(X( -J_{r_h^\delta}^H , -1)\right) \leq \frk c(x)$, where here $(J_{r_h^\delta} , L_{r_h^\delta}^H)$ are as in~\eqref{eqn-J^H-def}. The statement of the lemma now follows from Lemma~\ref{prop-J^H-local-reg}. 
\end{proof}

 \subsection{Continuity estimates}
 \label{sec-no-order-cont}
 
In this subsection we will prove some lemmas to the effect that the conditional probability of $\mcl E_n^{h,c}$ given a realization of $X_1\dots X_{K_{n,m}^H}$ for $m$ slightly smaller than $h$ does not depend too strongly on the realization. These lemmas together with Bayes' rule will allow us to compare the conditional law of $Z^n$ given $\mcl E_n^{h,c}$ to its conditional law given $\{I > n\} = \{\text{$X(1,n)$ contains no orders}\}$ and the approximate (rather than exact) number of burgers of each type in the reduced word $X(1,n)$. We know the scaling limit of the law of $Z^n$ under the latter conditioning due to~\cite[Theorem A.1]{gms-burger-cone}.  
Throughout this subsection we define the sets $\mcl U_n^\delta(A,h,c)$ as in~\eqref{eqn-h-c-nghd} and let $m_h^\delta := \lfloor (1-\delta) h \rfloor$ be as in Proposition~\ref{prop-no-order-endpoint}. 

We first prove a lemma to the effect that the conditional probability of $\mcl E_n^{h,c}$ given $X_1\dots X_{K_{n,m_h^\delta}^H}$ does not change by too much if we slightly vary $h,c$, the realization of $K_{n,m_h^\delta}$, and/or the realizations of the numbers of burgers of each type in $X(1,K_{n,m_h^\delta})$.

\begin{lem}  \label{prop-no-order-cont}
For each $q \in (0,1)$, $C>1$, and $A>0$, there exists $\delta_* > 0$ such that for each $\delta \in (0,\delta_*]$, the following holds. There exists $\zeta_* > 0$ such that for each $\zeta \in (0,\zeta_*]$, there exists $n_* = n_*(\delta,\zeta,q,C) \in \BB N$ such that the following is true. 
Suppose $n\geq  n_*$ and $(h,c) , (h',c') \in [C^{-1} n^{1/2} , C n^{1/2}]_{\BB Z}^2$ with $|(h,c) - (h',c')| \leq \zeta n^{1/2}$. Suppose also that $x$ and $x'$ are realizations of $X_{1} \dots X_{K_{n,m_h^\delta}^H} $ and $X_{1} \dots X_{K_{n,m_{h'}^\delta}^H}  $, respectively, such that 
\begin{align} \label{eqn-close-realizations}
&(|x| , \frk c(x))  \in \mcl U_n^\delta(2A,h,c) ,\quad (|x'| , \frk c(x'))  \in \mcl U_n^\delta(2A,h',c') , \notag \\
&||x|- |x'|| \leq \zeta n ,\quad\op{and}\quad |\frk c(x) - \frk c(x')|   \leq \zeta n^{1/2} .
\end{align}
 Then we have
\eqb \label{eqn-no-order-cont} 
1-q\leq    \frac{\BB P\left( \mcl E_n^{h,c} \,|\,  X_{1} \dots X_{K_{n,m_h^\delta}^H}  = x,\,    I > n\right)  }{\BB P\left(\mcl E_n^{h',c'} \,|\,  X_{1} \dots X_{K_{n,m_h^\delta}^H} = x',\,  I > n\right) } \leq \frac{1}{1-q} .
\eqe 
\end{lem}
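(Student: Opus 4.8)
The plan is to reduce the conditional probability $\BB P(\mcl E_n^{h,c} \,|\, X_1\dots X_{K_{n,m_h^\delta}^H} = x,\, I>n)$ to an expression purely in terms of the pairs $(J_{r}^H, L_{r}^H)$ from Section~\ref{sec-J^H-local} via formula~\eqref{eqn-x-h-c-prob-I}, and then to show that two such expressions, built from nearby data $(h,c,x)$ and $(h',c',x')$, differ by a multiplicative factor close to $1$. First I would apply~\eqref{eqn-x-h-c-prob-I} with $r_h^\delta := h - m_h^\delta = \lceil \delta h\rceil$ and $r_{h'}^\delta := h' - m_{h'}^\delta$ to write the numerator of the left-hand ratio as
\eqbn
\BB P\left(\mcl E_n^{h,c} \,|\, X_1\dots X_{K_{n,m_h^\delta}^H} = x,\, I>n\right) = \frac{\BB P\left((J_{r_h^\delta}^H, L_{r_h^\delta}^H) = (n-|x|-1,\, c-\frk c(x)),\, \mcl N_{\tb C}(X(-J_{r_h^\delta}^H,-1)) \leq \frk c(x)\right)}{\BB P\left(R_n(x),\, \mcl N_{\tb C}(X(|x|+1,n)) \leq \frk c(x)\right)},
\eqen
using that the law of $(J_{n,r}^H, L_{n,r}^H)$ coincides with that of $(J_r^H, L_r^H)$ (Lemma~\ref{prop-J^H-D-law}), and an analogous formula for the denominator with primes. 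The denominators $\BB P(R_n(x),\, \mcl N_{\tb C}(X(|x|+1,n)) \leq \frk c(x))$ are comparable because, by Lemma~\ref{prop-J^H-time}, $\BB P(R_n(x)) \asymp (n-|x|)^{-1/2}$ with $|x|, |x'|$ both within $\zeta n$ of $n - \Theta(\delta^2 h^2)$, hence $(n-|x|)/(n-|x'|) \to 1$ as $\zeta \to 0$; and the extra cheeseburger-order constraint, which has probability bounded below uniformly (since $\frk c(x) \succeq \delta h \asymp \delta n^{1/2}$ while the walk only runs for $n - |x| \asymp \delta^2 n$ steps, so the cheeseburger deficit rarely exceeds $\frk c(x)$), changes by a $1+o_\zeta(1)$ factor by a similar continuity argument in $|x|$ and $\frk c(x)$.

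The main work is the numerator. Here I would use the local limit theorem, Proposition~\ref{prop-J^H-local}: for $r = r_h^\delta \asymp \delta n^{1/2}$ and arguments $(k,l) = (n-|x|-1,\, c-\frk c(x))$ with $k \asymp \delta^2 n = \Theta(r^2)$ and $l \asymp \delta h = \Theta(r)$ (these orders follow from the constraint $(|x|,\frk c(x)) \in \mcl U_n^\delta(2A,h,c)$), we have
\eqbn
\BB P\left((J_r^H, L_r^H) = (k,l)\right) = r^{-3}\left(g\!\left(\tfrac{k}{r^2}, \tfrac{l}{r}\right) + o_r(1)\right),
\eqen
and by Lemma~\ref{prop-g-form} the density $g$ is continuous and bounded, with $g(t,v) = a_0 t^{-2}\exp(-(a_1 + a_2(v+a_3)^2)/t)$ bounded away from $0$ on the compact set of relevant $(t,v)$ (which lie in $[(2A)^{-2}, (2A)^2]\times[-2A,2A]$ after rescaling). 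Comparing $(h,c,x)$ with $(h',c',x')$: the ratios $r_h^\delta/r_{h'}^\delta$, $k/k'$, $l/l'$, and the rescaled arguments $k/r_h^{\delta 2}$ vs.\ $k'/r_{h'}^{\delta 2}$ and $l/r_h^\delta$ vs.\ $l'/r_{h'}^\delta$ all converge to $1$ (resp.\ to equality) as $\zeta \to 0$, uniformly over the allowed ranges, because $|h-h'|, |c-c'| \leq \zeta n^{1/2}$, $||x|-|x'|| \leq \zeta n$, $|\frk c(x)-\frk c(x')| \leq \zeta n^{1/2}$ while $h,h' \asymp n^{1/2}$ and $\delta$ is fixed. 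Uniform continuity of $g$ on the relevant compact set then gives that the ratio of the two numerators (with the cheeseburger-order constraints temporarily dropped) is $1 + o_\zeta(1) + o_n(1)$, where the $o_n(1)$ comes from the local limit theorem error being uniform over the range $(k,l)$ in question. The order of quantifiers in the statement — first choose $\delta_*$ depending on $(q,C,A)$, then $\zeta_*$ depending on $\delta$, then $n_*$ — is exactly what is needed: $\delta$ pins down the scale $r \asymp \delta n^{1/2}$ on which $g$ is evaluated and the compact set on which uniform continuity is applied (so the modulus of continuity of $g$ determines how small $\zeta$ must be), and then $n_*$ is taken large enough that the local-limit error $o_r(1)$ and all the $o_n(1)$ terms are below the tolerance.

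The step I expect to be the main obstacle is controlling the cheeseburger-order constraint $\mcl N_{\tb C}(X(-J_{r}^H, -1)) \leq \frk c(x)$ simultaneously in numerator and denominator: one must show both that it has probability bounded below (so that including it does not make the numerator negligible relative to the unconstrained local-limit estimate) and that it is continuous under the perturbation $(h,c,x) \leadsto (h',c',x')$. For the lower bound I would argue as in Lemma~\ref{prop-J^H-corner-lower}: condition on a favorable event at an intermediate time (here one wants $X(-J_{r_h^\delta}^H,-1)$ to accumulate only $O(\delta h)$ cheeseburger orders, which holds with probability bounded below by \cite[Theorem 2.5]{shef-burger} and a scaling argument, since the walk runs for only $\asymp \delta^2 n$ steps) and combine with the local-limit lower bound via independence. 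For the continuity, I would note that this constraint depends on the word only through the increment process over the last $J_{r}^H \asymp k$ steps, and couple the two words read backward on their common portion; the discrepancy in the threshold $\frk c(x)$ versus $\frk c(x')$ is $\leq \zeta n^{1/2}$, negligible compared to the typical fluctuation scale $\asymp (\delta^2 n)^{1/2} = \delta n^{1/2}$ since $\zeta \ll \delta$, so the probabilities of the two constrained events differ by $o_\zeta(1)$. Assembling the numerator ratio $(1+o_\zeta(1))$, the denominator ratio $(1+o_\zeta(1))$, and choosing $\zeta_*$ and $n_*$ small/large enough yields~\eqref{eqn-no-order-cont}.
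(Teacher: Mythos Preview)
Your proposal is essentially correct and follows the same route as the paper's proof: apply formula~\eqref{eqn-x-h-c-prob-I}, control the denominator via Lemma~\ref{prop-J^H-time}, and control the numerator via the bivariate local limit theorem (Proposition~\ref{prop-J^H-local}) together with the continuity and positivity of $g$ on the relevant compact set.

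The one place where the paper is cleaner is the treatment of the cheeseburger-order constraint $\mcl N_{\tb C}(X(n-J_{n,r_h^\delta}^H,n)) \leq \frk c(x)$. You propose to show this constraint has probability bounded below and then argue separately for its continuity under the perturbation $(h,c,x)\leadsto(h',c',x')$ via a coupling. The paper instead observes that $\frk c(x)$ is not merely $\succeq \delta n^{1/2}$ but in fact $\asymp n^{1/2}$ (since $\frk c(x)\in[c-2A\delta h,\,c+2A\delta h]$ and $c\asymp n^{1/2}$), while the word in question has length $n-|x|\asymp\delta^2 n$; hence for $\delta$ small the constraint holds with conditional probability at least $1-\alpha$, both in the numerator (via Lemma~\ref{prop-J^H-local-reg}) and in the denominator (via Lemma~\ref{prop-J^H-time-reg}). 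This lets one simply drop the constraint at the cost of a multiplicative $(1-\alpha)^{\pm 2}$, obviating any separate continuity argument for it. Your approach would also work, but the coupling you sketch is vague (the two conditioned laws involve different values of $r$ and different target points, so they are not literally the same process on a common portion), and the paper's route avoids this entirely.
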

\begin{proof}
Fix $\alpha>0$ to be chosen later, depending only on $q$. 
Let $r_h^\delta := h-m_h^\delta$. Suppose given a realization $x$ of $X_1\dots X_{K_{n,m_h^\delta}^H}$ such that $(K_{n,m_h^\delta}^H , Q_{n,m_h^\delta}^H) \in \mcl U_n^\delta(2A,h,c)$ and $0< K_{n,m_h^\delta}^H < I$.  
By~\eqref{eqn-x-h-c-prob-I}, 
\alb
&\BB P\left(\mcl E_n^{h,c}  \,|\,  X_1\dots X_{K_{n,m_h}^H}  =x   ,\, I > n \right)\notag \\
&\qquad = \frac{\BB P\left((J_{n,r_h^\delta}^H , L_{n,r_h^\delta}^H) = (n- |x| - 1, c - \frk c(x) ) ,\, \mcl N_{\tb C}\left(X(n-J_{n,r_h^\delta}^H)\right) \leq \frk c(x) \right)}{    \BB P\left(R_n(x)  ,\, \mcl N_{\tb C}\left(X(n-J_{n,r_h^\delta}^H , n )\right) \leq \frk c(x)  \right)}  .
\ale
By Lemmas~\ref{prop-J^H-local-reg} and~\ref{prop-J^H-time-reg}, we can find $\delta_* > 0$, depending only on $\alpha$, $A$, and $C$, such that for each $\delta \in (0,\delta_* ]$ there exists $n_*^0  = n_*^0(\delta,C,A,\alpha) \in \BB N$ such that for $n\geq n_*^0$ and $(h,c) \in [C^{-1} n^{1/2} , C n^{1/2}]_{\BB Z}^2$, we have
\alb
&\BB P\left( \mcl N_{\tb C}\left(X(n-J_{n,r_h^\delta}^H, n)\right) \leq \frk c(x) \,|\, (J_{n,r_h^\delta}^H , L_{n,r_h^\delta}^H) = (n- |x| - 1, c - \frk c(x) )    \right) \geq 1-\alpha \quad \op{and}  \\
&\BB P\left(  \mcl N_{\tb C}\left(X(n-J_{n,r_h^\delta}^H, n)\right) \leq \frk c(x)  \,|\, R_n(x) \right) \geq 1-\alpha .
\ale
By Lemma~\ref{prop-J^H-time}, there is a constant $a_0  >0$, depending only on $p$, such that for each $\delta \in (0,\delta_* ]$ there exists $n_*^1 = n_*^1(\delta , C, A, \alpha) \in\BB N$ such that for $n\geq n_*^1$ and each realization $x$ as above,
\eqbn
b_1 (1-\alpha)(n-|x|)^{-1/2} \leq \BB P\left(R_n(x) \right) \leq \frac{b_1}{1-\alpha} (n-|x|)^{-1/2}  
\eqen
where here $b_1> 0$ is a constant depending only on $p$. 
Hence if $\delta \in (0,\delta_*]$, $n\geq n_*^1$, and  $(h,c) \in [C^{-1} n^{1/2} , C n^{1/2}]_{\BB Z}^2$ then
\eqb \label{eqn-D-J^H-compare-cont}
b_1^{-1} (1-\alpha)^3   \leq 
\frac{ \BB P\left(\mcl E_n^{h,c}  \,|\,  X_1\dots X_{K_{n,m_h}^H}  =x   ,\, I > n \right) }{(n-|x|)^{1/2} \BB P\left((J_{n,r_h^\delta}^H , L_{n,r_h^\delta}^H) = (n- |x| - 1, c - \frk c(x) ) \right) }   
\leq  \frac{b_1^{-1} }{(1-\alpha)^3}   .
\eqe 
By Proposition~\ref{prop-J^H-local} (in particular, by continuity of the function $g$ of that proposition), we can find $\zeta  > 0$, depending only on $\delta$, $C$, and $q$, and $n_*^2  = n_*^2(\delta , C, \alpha) \geq n_*^1$ such that for any $n\geq n_*^2$, any two pairs $(h,c) , (h',c') \in [C^{-1} n^{1/2} , C n^{1/2}]_{\BB Z}^2$ with $|(h,c) - (h',c')| \leq \zeta n^{1/2}$, and any realizations $x$ of $X_1\dots X_{K_{n,m_h^\delta}^H}$ and $x'$ of $X_1\dots X_{K_{n,m_{h'}^\delta}^H}$ for which~\eqref{eqn-close-realizations} holds, we have
\eqbn
1-\alpha \leq \frac{ \BB P\left((J_{n,r_h^\delta}^H , L_{n,r_h^\delta}^H) = (n- |x| - 1, c - \frk c(x) ) \right)  }{\BB P\left((J_{n,r_{h'}^\delta}^H , L_{n,r_{h'}^\delta}^H) = (n- |x'| - 1, c' - \frk c(x') ) \right)} \leq \frac{1}{1-\alpha} .
\eqen
By combining this with~\eqref{eqn-D-J^H-compare-cont} and choosing $\alpha$ sufficiently small, depending only on $q$, we obtain the statement of the lemma. 
\end{proof}

For $n,m,k,l \in \BB N$ and $\zeta > 0$, define 
\eqb \label{eqn-P-event}
\mcl P_{n,m}^{k,l}(\zeta) := \left\{ 0< K_{n,m}^H < I  ,\,   |K_{n,m}^H - k| \leq \zeta n ,\, |Q_{n,m}^H - l| \leq \zeta n^{1/2} \right\} .
\eqe 
If $\zeta$ is close to 0 (but independent of the other parameters), $m$ is slightly smaller than $h$, $k$ is slightly smaller than $n$, and $l \approx c$, then $\mcl P_{n,m}^{k,l}$ is an approximate version of $\mcl E_n^{h,c}$ which depends only on $X_1\dots X_{K_{n,m}^H}$. The following lemma allows us to compare the conditional law of $X_1\dots X_{K_{n,m}^H}$ given $\mcl E_n^{h,c}$ and its conditional law given $\mcl P_{n,m}^{k,l}(\zeta)$.

\begin{lem} \label{prop-no-order-abs-cont}
For each $q \in (0,1)$, $C>1$, and $A>0$, there exists $\delta_* > 0$ such that for each $\delta \in (0,\delta_*]$, there exists $\zeta_* > 0$ such that for each $\zeta \in (0,\zeta_*]$, there exists $n_* = n_*(\delta,\zeta,q,C) \in \BB N$ such that the following is true. 
Suppose $n\geq  n_*$, $(h,c)   \in [C^{-1} n^{1/2} , C n^{1/2}]_{\BB Z}^2$, and $(k,l) \in \mcl U_n^\delta(A,h,c)$. The conditional law of $X_1\dots X_{K_{n,m_h^\delta}^H}$ given $\mcl P_{n,m_h^\delta }^{k,l}(\zeta) \cap \mcl E_n^{h,c}$ is mutually absolutely continuous with respect to its conditional law given only $\mcl P_{n,m_h^\delta}^{k,l}(\zeta) \cap \{I>n\}$, with Radon-Nikodym derivative bounded above by $(1-q)^{-1}$ and below by $1-q$.
\end{lem}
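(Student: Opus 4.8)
The plan is to deduce the lemma from the continuity estimate of Lemma~\ref{prop-no-order-cont}, together with the elementary observation that if an event $\mcl A$ is a.s.\ determined by a random object $W$ and $\mcl B$ is an event with $\BB P(\mcl A\cap\mcl B)>0$, then on the support of the conditional law of $W$ given $\mcl B$ the Radon--Nikodym derivative of the conditional law of $W$ given $\mcl A\cap\mcl B$ with respect to the conditional law of $W$ given $\mcl B$ is the function $x\mapsto\BB P(\mcl A\mid W=x)/\BB P(\mcl A\mid\mcl B)$.

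First I would take $W:=X_1\dots X_{K_{n,m_h^\delta}^H}$, write $f(x):=\BB P(\mcl E_n^{h,c}\mid W=x)$, and note that any realization $x$ in the support of the conditional law of $W$ given $\mcl P_{n,m_h^\delta}^{k,l}(\zeta)$ must satisfy that $\mcl R(x)$ contains no orders, $\mcl N_{\tc H}(\mcl R(x))=m_h^\delta$, $0<|x|$, $\big||x|-k\big|\le\zeta n$, and $|\frk c(x)-l|\le\zeta n^{1/2}$; conversely, any such $x$ has positive probability and, since $\mcl R(x)$ has no orders, forces $K_{n,m_h^\delta}^H=|x|<I$ and $Q_{n,m_h^\delta}^H=\frk c(x)$, whence $\{W=x\}\subseteq\mcl P_{n,m_h^\delta}^{k,l}(\zeta)$ and $\BB P(\mcl E_n^{h,c}\mid W=x,\mcl P_{n,m_h^\delta}^{k,l}(\zeta))=f(x)$. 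By the observation above the Radon--Nikodym derivative in the statement equals $f(x)$ divided by $\BB E[f(W)\mid\mcl P_{n,m_h^\delta}^{k,l}(\zeta)]$, which is an average of $f$ over the support; hence it suffices to produce $\delta_*,\zeta_*,n_*$ with the stated dependencies so that $f(x)/f(x')\le(1-q)^{-1}$ for any two realizations $x,x'$ in the support (this immediately yields the two-sided bound $1-q\le f(x)/\BB E[f(W)\mid\mcl P_{n,m_h^\delta}^{k,l}(\zeta)]\le(1-q)^{-1}$).

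To compare $f(x)$ and $f(x')$ I would factor $f(x)=\BB P(\mcl E_n^{h,c}\mid W=x,I>n)\cdot\BB P(I>n\mid W=x)$, which is legitimate since $\mcl E_n^{h,c}\subseteq\{I>n\}$. The first factor is exactly what Lemma~\ref{prop-no-order-cont} controls: applying it with $q/3,C,2A$ in place of $q,C,A$ and with $(h',c')=(h,c)$, then taking $\delta_*$ below the threshold it provides and, for each admissible $\delta$, taking $\zeta_*$ at most half its threshold and small enough that $(|x|,\frk c(x))\in\mcl U_n^\delta(2A,h,c)$ for every $x$ in the support, any two realizations $x,x'$ in the support satisfy the closeness hypotheses~\eqref{eqn-close-realizations}, so the ratio of first factors lies in $[1-q/3,(1-q/3)^{-1}]$. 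For the second factor, Lemma~\ref{prop-J^H-D-law} identifies the conditional law of $X_{|x|+1}\dots X_n$ given $\{W=x\}$ with its conditional law given $R_n(x)$, on which $X(|x|+1,n)$ contains no hamburger orders and no flexible orders (exactly as in the proof of Lemma~\ref{prop-J^H-D-equiv}); consequently $\{I>n\}$ agrees on $\{W=x\}$ with $\{\mcl N_{\tb C}(X(|x|+1,n))\le\frk c(x)\}$, and so $\BB P(I>n\mid W=x)=\BB P(\mcl N_{\tb C}(X(|x|+1,n))\le\frk c(x)\mid R_n(x))$. By Lemma~\ref{prop-J^H-time-reg} there is $A'$ (depending on $q$) such that $\mcl N_{\tb C}(X(|x|+1,n))\le|X(|x|+1,n)|\le A'(n-|x|)^{1/2}$ with conditional probability at least $1-q/3$ given $R_n(x)$; on the other hand, for $x$ in the support, $n-|x|\le 2A^2\delta^2 h^2$ (using $n-k\le A^2\delta^2 h^2$, $|x|\ge k-\zeta n$, and $\zeta$ small) and $\frk c(x)\ge c-A\delta h-\zeta n^{1/2}\ge\tfrac12 C^{-1}n^{1/2}$, so shrinking $\delta_*$ once more (depending on $q,C,A$ through $A'$) makes $A'(n-|x|)^{1/2}<\frk c(x)$ and puts the second factor into $[1-q/3,1]$ for every $x$ in the support. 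Multiplying the two bounds gives $f(x)/f(x')\le(1-q/3)^{-2}\le(1-q)^{-1}$ once $n$ exceeds the largest of the finitely many thresholds produced above, which completes the argument.

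The one genuinely delicate point is the treatment of the factor $\BB P(I>n\mid W=x)$: one must show that adjoining the conditioning $\{I>n\}$ perturbs $f$ by at most a factor $1+o_q(1)$ uniformly over the support, and this is exactly what forces $\delta_*$ to be chosen as a function of $q,C,A$, since it requires $\frk c(x)$, which is of order $n^{1/2}$, to dominate the typical length $O(\delta n^{1/2})$ of the word $X(|x|+1,n)$. Everything else is a routine specialization of Lemma~\ref{prop-no-order-cont} repackaged through the Radon--Nikodym formula; an essentially equivalent route, if one prefers not to invoke Lemma~\ref{prop-no-order-cont}, is to re-run its proof starting from~\eqref{eqn-x-h-c-prob} instead of~\eqref{eqn-x-h-c-prob-I}, which has the simpler denominator $\BB P(R_n(x))=(a_0+o_n(1))(n-|x|)^{-1/2}$ (by Corollary~\ref{prop-J^H-time}) and uses Lemma~\ref{prop-J^H-local-reg} together with Proposition~\ref{prop-J^H-local} and the continuity of $g$ from Lemma~\ref{prop-g-form}.
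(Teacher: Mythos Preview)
Your proof is correct and follows essentially the same route as the paper: apply Lemma~\ref{prop-no-order-cont} with $(h,c)=(h',c')$ to see that $\BB P(\mcl E_n^{h,c}\mid X_1\dots X_{K_{n,m_h^\delta}^H}=x,\,I>n)$ is nearly constant over the support of $\mcl P_{n,m_h^\delta}^{k,l}(\zeta)$, then use Bayes' rule to read off the Radon--Nikodym derivative. In fact your treatment is slightly more careful than the paper's at one point: the paper writes the Bayes formula with $\BB P(\mcl E_n^{h,c}\mid W=x,\,I>n)$ in the numerator, whereas strict Bayes gives $\BB P(\mcl E_n^{h,c}\mid W=x)$; you correctly isolate the discrepancy as the factor $\BB P(I>n\mid W=x)$ and show it lies in $[1-q/3,1]$ via Lemma~\ref{prop-J^H-time-reg} (exactly the mechanism used inside the proof of Lemma~\ref{prop-no-order-cont}).
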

\begin{proof}
Let $\delta_*$ be chosen so that the conclusion of Lemma~\ref{prop-no-order-cont} holds. Given $\delta \in (0,\delta_*]$, let $\zeta_* > 0$ be chosen as in Lemma~\ref{prop-no-order-cont}. By possibly shrinking $\zeta_*$ we can arrange that for $\zeta \leq \zeta_*$, we have $(k', l') \in \mcl U_n^\delta(A,h,c)$ whenever $(k,l) \in \mcl U_n^\delta(A,h,c)$, $|k-k'| \leq \zeta n$, and $|l-l' | \leq \zeta n^{1/2}$. 
For $\zeta \in (0,\zeta_*]$, let $n_* = n_*(\delta,\zeta,q,C)$ be as in Lemma~\ref{prop-no-order-cont}. 

Let $n\geq n*$, $(h,c) \in \left[C^{-1} n^{1/2} , C n^{1/2}\right]_{\BB Z}$, and let $x$ be a realization of $X_{1} \dots X_{K_{n,m_h^\delta}} $ for which $\mcl P_{n,m_h^\delta}^{k,l}(\zeta)$ occurs. 
By applying~\eqref{eqn-no-order-cont} with $(h,c) = (h',c')$ and averaging over all choices of realization $x'$ for which $\mcl P_{n,m_h^\delta}^{k,l}(\zeta)$ occurs, we obtain
\eqb \label{eqn-x-B-compare}
1-q\leq    \frac{\BB P\left( \mcl E_n^{h,c} \,|\,  X_{1} \dots X_{K_{n,m_h^\delta}} = x,\,    I > n\right)  }{\BB P\left(\mcl E_n^{h ,c } \,|\,    \mcl P_{n,m_h^\delta}^{k,l}(\zeta) ,\, I > n \right) } \leq \frac{1}{1-q} .
\eqe 
By Bayes' rule (applied to the conditional probability measure $\BB P\left( \cdot \,|\, \mcl P_{n,m_h^\delta}^{k,l}(\zeta),\, I > n \right)$ and the events $\{X_{1} \dots X_{K_{n,m_h^\delta}} = x \} \subset \mcl P_{n,m_h^\delta}^{k,l}(\zeta) $ and $ \mcl E_n^{h,c} \subset \{I > n\}$), we have
\alb
&\BB P\left( X_{1} \dots X_{K_{n,m_h^\delta}} = x \,|\, \mcl P_{n,m_h^\delta}^{k,l}(\zeta) \cap \mcl E_n^{h,c}    \right)  \\
&\qquad = \frac{\BB P\left(    \mcl E_n^{h,c} \,|\, X_{1} \dots X_{K_{n,m_h^\delta}} = x ,\, I > n    \right) \BB P\left( X_{1} \dots X_{K_{n,m_h^\delta}} = x \,|\,   \mcl P_{n,m_h^\delta}^{k,l}(\zeta) ,\, I >n \right) }{ \BB P\left(\mcl E_n^{h ,c } \,|\,    \mcl P_{n,m_h^\delta}^{k,l}(\zeta) ,\, I> n \right) }. 
\ale
By combining this with~\eqref{eqn-x-B-compare} we conclude.
\end{proof}

\subsection{Proof of Theorem~\ref{thm-local-conv}}
\label{sec-local-conv-proof}

In this subsection we will prove Theorem~\ref{thm-local-conv}.
In what follows, let $\wh Z = (\wh U , \wh V)$ be a correlated two-dimensional Brownian motion as in~\eqref{eqn-bm-cov} conditioned to stay in the first quadrant for one unit of time. For $u > 0$, let $\wh\tau_u$ be the last time $t\in [0,1]$ such that $\wh U(t) \geq u$ for each $s\in [t,1]$; or $\wh\tau_u = 0$ if no such $t$ exists. The pair $(\wh \tau_u , \wh V(\wh\tau_u))$ is the continuum analogue of the pairs $(K_{n,m}^H , Q_{n,m}^H)$ of Section~\ref{sec-no-order-local-setup}, which leads to the following lemma. 

\begin{lem} \label{prop-last-conv}
Define the times $K_{n,m}^H$ and the quantities $Q_{n,m}^H$ as in Section~\ref{sec-no-order-local-setup}. For each $\ep > 0$ and $C>1$, there exists $n_* = n_*(\ep , C) \in\BB N$ such that for each $n\geq n_*$ and each $m \in [C^{-1} n^{1/2} , Cn^{1/2}]_{\BB Z}$, the Prokhorov distance between the conditional law of $(Z^n , n^{-1} K_{n,m}^H , n^{-1/2} Q_{n,m}^H )$ given $\{I >n\}$ and the law of $(\wh Z,\wh\tau_{m/n^{1/2}} , \wh V(\wh\tau_{m/n^{1/2}}))$ is at most $\ep$. 
\end{lem}

Lemma~\ref{prop-last-conv} will be used in the proof of Theorem~\ref{thm-local-conv} as well as in the proof of Proposition~\ref{prop-no-order-endpoint} in Section~\ref{sec-endpoint-reg} below. The proof of the lemma does not use any of the other results in this paper, however, so there is no circularity. 
We emphasize that the bound on the Prokhorov distance in Lemma~\ref{prop-last-conv} is uniform over all $m \in [C^{-1} n^{1/2} , Cn^{1/2}]_{\BB Z}$. This will be important in the proof of Theorem~\ref{thm-local-conv} (since the theorem asserts a uniform bound on the Prokhorov distance between the conditional law of $Z^n$ given $\mcl E_n^{h,c}$ and the law of the limiting Brownian motion) as well as in the argument of Section~\ref{sec-endpoint-reg}. 

\begin{proof}[Proof of Lemma~\ref{prop-last-conv}]
By \cite[Theorem A.1]{gms-burger-cone} and the Skorokhod theorem, we can find a coupling of a sequence of words $(\wh X^n)$ distributed according to the conditional law of the word $X$ given $\{I>n\}$; and the path $\wh Z$ such that in this coupling $\wh Z^n \rta \wh Z$ uniformly a.s.\ on $[0,1]$, where $\wh Z^n = (\wh U^n , \wh V^n)$ is the path constructed from the word $\wh X^n$ as in~\eqref{eqn-Z^n-def}. For $u > 0$, let $\wh\tau_u^n$ be the last time $t\in [0,1]$ for which $U^n(s) > u$ for each $s  \in (t,1]$; and otherwise let $\wh\tau_u^n =0$. If we construct the pair $(n^{-1} K_{n,m}^H , n^{-1/2} Q_{n,m}^H)$ as in Section~\ref{sec-no-order-local-setup} from the word $\wh X^n$, then for $m \in [C^{-1} n^{1/2} , Cn^{1/2}]_{\BB Z}$ it holds with probability at least $1-o_n(1)$ that 
\eqbn
(n^{-1} K_{n,m}^H , n^{-1/2} Q_{n,m}^H)  =  (\wh\tau_{m/n^{1/2}}^n + o_n(1) , \wh V^n(\wh\tau_{m/n^{1/2}}^n) + o_n(1)) ,
\eqen
where the $o_n(1)$ is uniform in $m$ (it comes from rounding error). 
Hence it suffices to show that in our coupling, $(\wh\tau_u^n ,  \wh V^n(\wh\tau_u^n)) \rta (\wh\tau_u , \wh V(\wh\tau_u))$ in probability, uniformly for $u\in [C^{-1} , C]$. 

Fix $\ep > 0$ and $C>1$. We observe the following.
\begin{enumerate}
\item By equicontinuity, we can find $\delta_1   \in (0,\ep]$ such that with probability at least $1-\ep$, we have $|\wh Z(t) - \wh Z(s)| \leq \ep$ and $|\wh Z^n(t) - \wh Z^n(s)| \leq \ep$ whenever $n\in\BB N$ and $s,t\in [0,1]$ with $|s-t|\leq \delta_1$. \label{item-tau_u-cont}
\item The conditional law of $ \wh U(\cdot +\wh\tau_u) - u$ given $\wh\tau_u$ on the event $\{\wh\tau_u > 0\}$ is that of a one-dimensional Brownian motion conditioned to stay positive until time 1. Hence we can find $\delta_2  \in (0,\delta_1/2]$, independent from $u$, such that with probability at least $(1-\ep) \BB P(\wh\tau_u >  0)$, we have $\wh\tau_u  > 0$ and $\wh U(t) \geq u + \delta_2$ for each $t \in [\wh\tau_u + \delta_1 , 1]$.  \label{item-tau_u-stay}
\item We can find $\delta_3 >0$, independent from $u \in [C^{-1} , C]$, such that for $u\in [C^{-1} , C]$ it holds with probability at least $(1-\ep)\BB P(\wh\tau_u > 0)$ that $\wh\tau_u  > 0$ and $\inf_{t\in [\wh\tau_u - \delta_1 , \wh\tau_u]} \wh U(t) \leq u - \delta_3$. \label{item-tau_u-inf}
\item By uniform convergence, we can find $N_1 \in \BB N$ such that with probability at least $1-\ep$ it holds for $n\geq N_1$ that $|\wh Z(t) - \wh Z^n(t)| \leq (\delta_2 \wedge \delta_3)/2$ for each $t\in [0,1]$. \label{item-tau_u-conv}
\end{enumerate}
Let $E_u$ be the event that $\wh\tau_u > 0$ and the above four conditions hold, so that for $u\in [C^{-1} , C]$ we have $\BB P(E_u) \geq (1-2\ep) \BB P(\wh\tau_u > 0) - 2\ep$. 

By conditions~\ref{item-tau_u-stay},~\ref{item-tau_u-inf}, and~\ref{item-tau_u-conv}, if $E_u$ occurs adn $\wh\tau_u > 0$ then for $n\geq N_1$ there exists $t_* \in [\wh\tau_u - \delta_1 , \wh\tau_u]$ with $\wh Z^n(t_*)  < u - \delta_3/2$ and $\wh Z^n(t) \geq u +\delta_2/2$ for each $t \in [\wh\tau_u + \delta_1 , 1]$. 
Therefore $\wh\tau_u^n \in [\wh\tau_u -\delta_1 ,\wh\tau_u + \delta_1]$. By conditions~\ref{item-tau_u-cont} and~\ref{item-tau_u-conv}, on $E_u \cap \{\wh\tau_u > 0\}$ we have $|\wh V^n(\wh\tau_u^n) - \wh V(\wh\tau_u)| \leq 2\ep$. Hence for $n\geq N_1$, 
\eqb \label{eqn-close-nonzero}
\BB P\left(|(\wh\tau^n_u , \wh V^n(\wh\tau_u^n)) - (\wh\tau_u , \wh V^n(\wh\tau_u))| \leq 3\ep ,\, \wh\tau_u > 0    \right) \geq (1-2\ep) \BB P(\wh\tau_u > 0) - 2\ep .
\eqe 

We have
\eqbn
\{\wh\tau_u = 0 \} = \left\{\sup_{t\in [0,1]} \wh U(t)   < u \right\} .
\eqen
Since $\BB P\left( \sup_{t\in [0,1]} \wh U(t)  \in [u- \zeta , u + \zeta]\right) \rta 0$ as $\zeta \rta 0$, uniformly over $u \in [C^{-1} , C] $, we can find a deterministic $N_2  \in\BB N$ such that for $n\geq N_2$ and $u\in [C^{-1} , C]$, 
\eqbn
\BB P\left(\left\{ \wh\tau_u^n = 0  ,\, \wh\tau_u >0\right\} \cup \left\{\wh\tau_u^n  > 0,\, \wh\tau_u =0\right\}\right) \leq \ep .
\eqen
It now follows from~\eqref{eqn-close-nonzero} that for $n\geq N_1 \vee N_2$ and $u\in [C^{-1} , C]$,
\eqbn
\BB P\left(|(\wh\tau^n_u , \wh V^n(\wh\tau_u^n)) - (\wh\tau_u , \wh V^n(\wh\tau_u))| \leq 3\ep   \right) \geq 1 - 5\ep .
\eqen
Since $\ep$ is arbitrary, we conclude.
\end{proof}
 
In the remainder of this subsection, for $(u,v) \in (0,\infty)^2$ and $t\in [0,1]$, let $\BB P_t^{u,v}$ be the regular conditional law of $\wh Z$ given $\{\wh Z(t) = (u,v)\}$, as described in Section~\ref{sec-bm-cond}. Let $\wh Z^{u,v} = (\wh U^{u,v} , \wh V^{u,v})$ be a path distributed according to the law $\BB P_1^{u,v}$. We record the following basic continuity property of the laws $\BB P_1^{u,v}$.

\begin{lem} \label{prop-bm-uniform}
Fix $C>1$. For each $\ep  >0$, there exists $\delta>0$ such that the following is true. Let $(u,v) \in \left[C^{-1} , C\right]^2 $ and $(t,v') \in [1-\delta^2, 1] \times [v-\delta , v+\delta]$. Also set $u_\delta:= (1-\delta) u$. The Prokhorov distance between any two of the following three laws is at most $\ep$. 
\begin{enumerate}
\item The regular conditional law of $\wh Z  |_{[0,t]}$ given $\{(\wh\tau_{u_\delta}  , \wh V (\wh\tau_{u_\delta}) ) = (t,v') \}$. 
\item The regular conditional law of $\wh Z^{u,v} |_{[0,t ]}$ given $\{(\wh\tau_{u_\delta}  , \wh V^{u,v}(\wh\tau_{u_\delta}) ) = (t,v') \}$. 
\item The law of $\wh Z^{u,v}|_{[0,t]}$.
\end{enumerate} 
\end{lem}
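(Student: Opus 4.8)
The plan is to argue that all three laws are close to a single reference law—namely the law of $\wh Z^{u,v}|_{[0,t]}$ but ``frozen'' at the target point $(t,v')$ and with the conditioning $\tau_{u_\delta} = t$ discarded—and then use the triangle inequality for the Prokhorov distance. The key structural input is Lemma~\ref{prop-bm-tip}: under $\BB P_1^{u,v}$, the path $\wh Z^{u,v}$ lies in the open first quadrant at every fixed positive time and has the Markov-bridge property in item~\eqref{item-bm-tip-markov}. In particular, as $\delta \to 0$, the conditioning event $\{\wh Z^{u,v}(1) = (u,v)\}$ becomes ``invisible'' on the time interval $[0,t]$ with $t = 1-O(\delta^2)$, in the sense that the regular conditional law of $\wh Z^{u,v}|_{[0,t]}$ given $\wh Z^{u,v}(t) = (u'',v'')$ is continuous in $(t,u'',v'')$ near $(1,u,v)$ (this follows from the continuity of the bridge/conditioned densities $f_{1-t}^z$ appearing in Section~\ref{sec-bm-cond}, together with the description of $\BB P_t^{u,v}$ there). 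This handles the comparison of law~(2) with law~(3): conditioning further on $\{(\tau_{u_\delta},\wh V^{u,v}(\tau_{u_\delta})) = (t,v')\}$ only re-weights and re-roots $\wh Z^{u,v}|_{[0,t]}$ near its endpoint, and since $\wh U^{u,v}$ a.s.\ stays strictly above $u_\delta = (1-\delta)u$ on a time interval of the form $[t_0,1]$ for $t_0$ slightly less than $1$ once $\delta$ is small (using $\wh U^{u,v}(1) = u > u_\delta$ and path continuity), the event $\{\tau_{u_\delta} \le 1-\delta^2\}$ has probability $o_\delta(1)$ and on its complement $\tau_{u_\delta} = \tau_{u_\delta}$ is already within $\delta^2$ of $1$; so the conditioning perturbs the path on $[0,t]$ by an amount tending to $0$.

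Next I would compare law~(1) with law~(2). Here the point is that the two laws are both obtained by conditioning $\wh Z$ (resp.\ $\wh Z^{u,v}$) on the same event $\{(\tau_{u_\delta},\wh V(\tau_{u_\delta})) = (t,v')\}$, and the difference between $\wh Z$ and $\wh Z^{u,v}$ is exactly the extra conditioning $\{\wh Z(1) = (u,v)\}$ in the latter. By the Markov-bridge structure (Lemma~\ref{prop-bm-tip}\eqref{item-bm-tip-markov} versus Lemma~\ref{prop-bm-tip} of \cite{gms-burger-cone}, i.e.\ \cite[Lemma 2.1]{gms-burger-cone}), the conditional law of the restriction to $[0,t]$ given $\wh Z(t)$, resp.\ $\wh Z^{u,v}(t)$, differs only through the relative weight on the value $\wh Z(t)$, which is the ratio of the density of a Brownian motion conditioned to stay positive for $1-t$ units of time to the density of a Brownian bridge to $(u,v)$ conditioned to stay positive for $1-t$ units of time. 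On the event we condition on, $\wh Z(t) = (u_\delta, v')$ is pinned, so once we also pin the value at time $t$ these two conditional laws of the $[0,t]$-restriction coincide exactly; the only remaining discrepancy is in the law of $\wh Z(t)$ itself, i.e.\ in how the conditioning event $\{(\tau_{u_\delta},\wh V(\tau_{u_\delta})) = (t,v')\}$ localizes the path near time $t$—but this localization is by definition the same functional for both processes, and since $t\in[1-\delta^2,1]$ and $v'\in[v-\delta,v+\delta]$, the law of the rooted value and of the short remaining piece contribute Prokhorov-error $o_\delta(1)$. Thus laws~(1) and~(2) are within $o_\delta(1)$ of each other.

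Finally I would assemble: choose $\delta$ small enough that each of the two comparisons above gives Prokhorov distance at most $\ep/2$, uniformly over $(u,v)\in[C^{-1},C]^2$ and $(t,v')\in[1-\delta^2,1]\times[v-\delta,v+\delta]$—uniformity comes from the fact that all the continuity moduli involved (of $f_{1-t}^z$ in $z$ and $t$, of the bridge densities, and of the cone-exit probabilities) are uniform on the compact set $[C^{-1},C]^2$ bounded away from the boundary of the quadrant—and conclude by the triangle inequality that any two of the three laws are within $\ep$. The main obstacle I anticipate is the bookkeeping around the event $\{(\tau_{u_\delta},\wh V(\tau_{u_\delta})) = (t,v')\}$: since $\tau_{u_\delta}$ is a \emph{last} hitting-type time rather than a stopping time, one cannot invoke the strong Markov property directly, and one must instead use the explicit description of $\BB P_t^{u,v}$ from Section~\ref{sec-bm-cond} (weighting $\wh Z|_{[0,t]}$ by $f_{1-t}^{\wh Z(t)}$ and gluing a conditioned bridge) to see that conditioning on the value and re-weighting near the endpoint are the only effects, and that both are continuous in the relevant parameters. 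Once that is set up carefully, the estimates are routine applications of continuity of the conditioned Brownian densities and of the fact that a path staying in the first quadrant with $\wh U(1)=u$ stays bounded away from $\{u_\delta\}$ on $[1-\delta^2,1]$ with probability $1-o_\delta(1)$.
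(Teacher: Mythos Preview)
Your approach is on the right track and would ultimately work, but you are making it harder than necessary and there is one genuinely muddled passage. The paper's proof is much shorter because it observes that laws~(1) and~(2) are not merely close but \emph{exactly equal}: both coincide with $\BB P_t^{u_\delta,v'}$. The reason is that the event $\{(\tau_{u_\delta},\wh V(\tau_{u_\delta})) = (t,v')\}$ is the intersection of $\{\wh Z(t) = (u_\delta,v')\}$ with an event depending only on $\wh Z|_{[t,1]}$ (namely that $\wh U$ stays above $u_\delta$ on $(t,1]$). By the ordinary Markov property at the \emph{deterministic} time $t$, conditioning on this future event has no effect on the law of $\wh Z|_{[0,t]}$ once $\wh Z(t)$ is fixed. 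The same reasoning applies to $\wh Z^{u,v}$, where the extra constraint $\wh Z(1)=(u,v)$ is again a post-$t$ event. So both conditional laws satisfy the two characterizing conditions of Lemma~\ref{prop-bm-tip} with terminal point $(u_\delta,v')$ and terminal time $t$, hence both equal $\BB P_t^{u_\delta,v'}$. After that, only the comparison of $\BB P_t^{u_\delta,v'}$ with law~(3) remains, and that is a one-line appeal to continuity of $(t,u,v)\mapsto \BB P_t^{u,v}$.

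Two specific issues in your write-up: first, your sentence ``the only remaining discrepancy is in the law of $\wh Z(t)$ itself'' is confused, since on the conditioning event $\wh Z(t)$ is pinned to $(u_\delta,v')$ for both processes, so there is no discrepancy at all---this is exactly where you should have concluded equality. Second, your worry about $\tau_{u_\delta}$ not being a stopping time is a red herring. You never need the strong Markov property: you condition on $\{\tau_{u_\delta}=t\}$ for a fixed $t$, which factors as $\{\wh Z(t)=(u_\delta,v')\}$ intersected with a $\sigma(\wh Z|_{[t,1]})$-event, and the ordinary Markov property at time $t$ is all that is used. Once you see this, the elaborate argument you sketch for (2) versus (3) (involving the probability that $\wh U^{u,v}$ stays above $u_\delta$ on $[1-\delta^2,1]$, etc.) becomes unnecessary.
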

\begin{proof}
Since $\wh\tau_u$ is the \emph{last} $t \in [0,1]$ for which $\wh U$ crosses $u$, we infer from the Markov property of $\wh Z$ that for each $s\in (0,t)$, the regular conditional law of $\wh Z  |_{[0,s ]}$ given $\wh Z|_{[0,s]}$ and $\{(\wh\tau_{u_\delta}  , \wh V  (\wh\tau_{u_\delta}) ) = (t,v') \}$ is that of a correlated Brownian bridge from $\wh Z(s)$ to $(u_\delta , v')$ in time $t-s$ conditioned to stay in the first quadrant. By Lemma~\ref{prop-bm-tip}, the regular conditional law of $\wh Z|_{[0,t]}$ given $\{(\wh\tau_{u_\delta}  , \wh V  (\wh\tau_{u_\delta}) ) = (t,v') \}$ is $\BB P_t^{u_\delta,v'}$. By a similar argument, the regular conditional law of $\wh Z^{u,v} |_{[0,\wh\tau_{u_\delta}]}$ given $\{(\wh\tau_{u_\delta}  , \wh V^{u,v}(\wh\tau_{u_\delta}) ) = (t,v') \}$ is also given by $\BB P_t^{u_\delta,v'}$. From the description of the law $\BB P_t^{u,v}$ in Section~\ref{sec-bm-cond}, we see that this law depends continuously on its parameters. It follows that when $\delta$ is small and $(t,v') \in [1-\delta^2, 1] \times [v-\delta , v+\delta]$, the Prokhorov distance between the law of $\wh Z^{u,v}|_{[0,t]}$ and the law $\BB P_t^{u_\delta,v'}$ is at most $\ep$.  
\end{proof}

\begin{proof}[Proof of Theorem~\ref{thm-local-conv}] 
Fix $\ep >0$ and $C>1$. By Proposition~\ref{prop-no-order-endpoint} and Lemma~\ref{prop-no-order-reg}, we can find $A > 1$, $B>0$, and $\delta_*^0 >0$ (depending only on $\ep$ and $C$) such that the following is true.  
For each $\delta \in (0,\delta_*^0]$, there exists $n_*^0 = n_*^0(\delta,\ep, C) \in\BB N$ such that for $n\geq n_*^0$ and $(h,c) \in \left[C^{-1} n^{1/2} , C n^{1/2} \right]_{\BB Z}^2$, we have
\eqb \label{eqn-A-B-choice}
\BB P\left((K_{n,m_h^\delta}^H ,Q_{n,m_h^\delta}^H) \in \mcl U_n^\delta(A,h,c)  ,\, \sup_{i\in [K_{n,m_h^\delta}^H +1, n]_{\BB Z}} |X(K_{n,m_h^\delta}^H +1,i)| \leq B \delta n^{1/2}  \,|\, \mcl E_n^{h,c} \right) \geq 1-\ep ,
\eqe 
with $\mcl U_n^\delta(A,h,c)$ as in Proposition~\ref{prop-no-order-endpoint}.
Choose $\delta_*^1 \in (0,\delta_*^0]$ in such a way that $B \delta_*^1 \leq \ep$. 

By Lemma~\ref{prop-no-order-abs-cont}, there exists $\delta_*^2  = \delta_*^2(\ep,C) \in (0, \delta_*^0]$ such that for each $\delta \in (0,\delta_*^2]$, there exists $\zeta_* > 0$ such that for each $\zeta \in (0,\zeta_*]$, there exists $n_*^1 = n_*^1(\delta,\zeta,\ep ,C) \in \BB N$ such that the following is true. 
Suppose $n\geq  n_*^1$, $(h,c)   \in [C^{-1} n^{1/2} , C n^{1/2}]_{\BB Z}^2$, and $(k,l) \in \mcl U_n^\delta(A,h,c)$. Then the Prokhorov distance between the conditional law of $X_1\dots X_{K_{n,m_h^\delta}^H}$ given $\mcl P_{n,m_h^\delta}^{k,l}(\zeta) \cap \mcl E_n^{h,c}$ and its conditional law given only $\mcl P_{n,m_h^\delta}^{k,l}(\zeta)\cap \{I>n\}$ is at most $\ep$, where here $\mcl P_{n,m_h^\delta}^{k,l}(\zeta)$ is as in~\eqref{eqn-P-event} with $m= m_h^\delta = \lfloor (1-\delta) h \rfloor$. 

Define the times $\tau_u$, the laws $\BB P_t^{u,v}$, and the paths $\wh Z $ and $\wh Z^{u,v}$ as in the discussion just above Lemma~\ref{prop-bm-uniform}. For $\zeta>0$, $t\in (0,1)$, and $u,v > 0$, let
\eqb \label{eqn-P-event-bm}
\mcl P_u^{t,v}(\zeta) := \left\{     | \tau_u - t  | \leq \zeta  ,\, |  \wh V(\tau_u) - v    | \leq \zeta  \right\} .
\eqe 
For $\delta>0$, let $u_\delta := (1-\delta) u$. 
By averaging the estimate of Lemma~\ref{prop-bm-uniform} over pairs $(\wt t , \wt v')$ in a small neighborhood of a given pair $(t , v')$, we can find $\delta  = \delta (\ep,C) \in (0,\delta_*^2]$ such that the following is true. For each $(u,v) \in \left[C^{-1} , C\right]^2$, each $(t,v') \in [1- A^2\delta^2, 1] \times [v-2A\delta , v+2A\delta]$, and each $\zeta \in (0, \delta]$, the Prokhorov distance between the conditional law of $\wh Z  |_{[0,1 - A^2\delta^2 ]}$ given $\mcl P_{u_\delta}^{t,v'}(\zeta)$ and the law of $\wh Z^{u,v}|_{[0,1 - A^2\delta^2]}$ is at most $\ep$. By possibly decreasing $\delta$ and using continuity of the law $\BB P_1^{u,v}$ in $u$ and $v$, we can arrange that for each $(u,v) \in \left[C^{-1} , C\right]^2$, we have
\eqb \label{eqn-sup-choice-bm}
\BB P \left(\sup_{s\in  [ 1-A^2\delta^2  ,1 ]} |\wh Z^{u,v}(s) - (u,v) | \leq \ep\right) \geq 1-\ep .
\eqe 

By Lemma~\ref{prop-last-conv}, for each $\zeta \in (0,\zeta_*\wedge \delta]$ we can find $n_*^2 = n_*^2(\zeta,\delta,\ep,C) \geq n_*^1$ such that for each $n\geq n_*^2$, each $(h,c) \in \left[C^{-1}n^{1/2} , C n^{1/2} \right]_{\BB Z}$, and each $(k,l) \in \mcl U_n^\delta(A,h,c)$ the Prokhorov distance between the conditional law of $Z^n|_{[0,1]}$ given $\mcl P_{n,m_h^\delta}^{k,l}(\zeta) \cap \{I>n\}$ and the conditional law of $\wh Z|_{[0,1]}$ given $\mcl P_{ (1-\delta) h/n^{1/2}}^{k/n, l/n^{1/2}}(\zeta)$ is at most $\ep$. By our choices of parameters above, we also have that the Prokhorov distance between the conditional law of $Z^n|_{[0,1-A^2\delta^2]}$ given $\mcl P_{n,m_h^\delta}^{k,l}(\zeta) \cap \mcl E_n^{h,c}$ and the law of $\wh Z^{h/n^{1/2} , c/n^{1/2}}|_{[0,1-A^2\delta^2]}$ is at most $3\ep$. 

Since this holds for each choice of $(k,l) \in \mcl U_n^\delta(A,h,c)$ and by our choice of $A$, it follows that the Prokohorov distance between the conditional law of $Z^n|_{[0,1-A^2\delta^2]}$ given $ \mcl E_n^{h,c}$ and the conditional law of $\wh Z^{h/n^{1/2} , c/n^{1/2}}|_{[0,t-A^2\delta^2]}$ is at most $4\ep$. By~\eqref{eqn-A-B-choice}, our choice of $\delta_*^1$, and~\eqref{eqn-sup-choice-bm}, it follows that the Prokhorov distance between the conditional law of $Z^n|_{[0,1 ]}$ given $ \mcl E_n^{h,c}$ and the conditional law of $\wh Z^{h/n^{1/2} , c/n^{1/2}}|_{[0,1]}$ is at most $8\ep$. Since $\ep$ is arbitrary we conclude. 
\end{proof}

The reader should note that the proof of Theorem~\ref{thm-local-conv} fails in the case when either $u=0$ or $v=0$ (i.e., when we conditions on the reduced word $X(1,n)$ to contain no orders and  $h = o_n(n^{1/2})$ hamburgers and/or $h =  o_n(n^{1/2})$ cheeseburgers). Indeed, if the reduced word $X(1,n)$ contains $o_n(n^{1/2})$ hamburgers or $o_n(n^{1/2})$ cheeseburgers, then either $K_{n,m}^H$ or $K_{n,m}^C$ (or both) will be zero when $n$ is close to $m$. 
Furthermore, the conditional law of $X_{K_{n,m}^H+1}\dots X_n$ does not admit as simple a description as in our setting since this word must contain an unusually small number of hamburgers and/or cheeseburgers, so we cannot apply the estimates of Section~\ref{sec-J^H-local}.
We prove an analogue of Theorem~\ref{thm-local-conv} for $ u = v = 0$ in the subsequent work~\cite{gms-burger-finite} using a more sophisticated argument which builds on the estimates of the present paper.

\subsection{Proof of Proposition~\ref{prop-no-order-endpoint}}
\label{sec-endpoint-reg}

In this subsection, we will prove Proposition~\ref{prop-no-order-endpoint}, which is the key input in the proofs of Lemmas~\ref{prop-no-order-reg} and~\ref{prop-no-order-cont}. Throughout this subsection we continue to use the notation of Section~\ref{sec-no-order-local-setup}. 

The proof of Proposition~\ref{prop-no-order-endpoint} is by brute force. We start in Section~\ref{sec-no-order-local-bm} by proving estimates for a correlated two-dimensional Brownian motion $\wh Z = (\wh U , \wh V)$ as in~\eqref{eqn-bm-cov} started from 0 and conditioned to stay in the first quadrant.
In particular, we will consider the times $\wh \tau_u$ for $u > 0$ which are the continuum analogues of the times $K_{n,m}^H$ of Section~\ref{sec-no-order-local-setup} and prove bounds for the probability that $(\wh \tau_u , \wh V(\wh\tau_u))$ belongs to a given subset of $[0,1] \times [0,\infty)$ (see in particular Lemma~\ref{prop-bm-last-cond}). 
In Section~\ref{sec-no-order-local-endpoint}, we will use the scaling limit result Lemma~\ref{prop-last-conv} to transfer these estimates for $\wh Z$ to estimates for the word $X$ conditioned on the event $\{I>n\}$. 
We will then use these estimates together with a decomposition of $[0,n]_{\BB Z} \times [0,\infty)_{\BB Z}$ into exponential scales to prove Proposition~\ref{prop-no-order-endpoint}.

\subsubsection{Brownian motion estimates}
\label{sec-no-order-local-bm}

In this section we will prove some estimates for Brownian motion which are related to the scaling limits of the quantities considered in Section~\ref{sec-no-order-local-setup}. We start with some basic calculations for an unconditioned Brownian motion.

\begin{lem} \label{prop-bm-last-hit}
Let $B$ be a standard linear Brownian motion. For $u \in \BB R$, let $\tau_u $ be the first time $t\in [0,1]$ such that $B_s > u$ for each $s \in (t,1]$; or $\tau_u = 0$ if no such time exists. The density of $\tau_u$ restricted to the event $\{\tau_u > 0\}$ is given by
\eqbn
\BB P\left(t < \tau_u < t+dt \right) =   \frac{ e^{-u^2/2t}}{ \pi  \sqrt{t(1-t)}} \, dt  ,\quad \forall t \in (0,1) .
\eqen 
\end{lem}
\begin{proof}
Let $\sigma_u$ be the first time $B$ hits $u$ and for $s\geq 0$ set $\wt B_s := B_{s+\sigma_u} - B_{\sigma_u}$. Then $\wt B$ is a standard linear Brownian motion independent from $\sigma_u$. On the event $\{\sigma_u \leq 1\}$, let $\wt\tau_u  := \sup\{s\in [0,1-\sigma_u] \,:\,\wt  B_s= 0\}$. Then $\wt\tau_u$ is equal to $\tau_u - \sigma_u$ on the event $\{\wt B_s > 0 ,\: \forall s \in (\wt\tau_u , 1-\sigma_u]\} \cap \{  \sigma_u \leq 1\}$, which by symmetry has conditional probability $1/2$ given $Z|_{[0,\wt\tau_u  +\sigma_u]}$ on the event $\{ \sigma_u \leq 1\}$. 
The conditional law of $\wt\tau_u$ given $\sigma_u$ on the event $\{\sigma_u < 1\}$ is given by the arcsine distribution,
\eqbn
\BB P\left(\wt \tau_u \leq t \,|\, \sigma_u\right) 
= \frac{2}{\pi} \arcsin\left( \sqrt{\frac{t}{1-\sigma_u}} \right)  .
\eqen
The law of $\sigma_u$ is given by
\eqbn
\frac{|u|}{\sqrt{2\pi}} s^{-3/2} e^{-u^2/2s} \, ds .
\eqen
Hence for $t\in (0,1)$,
\alb
\BB P\left(0 < \tau_u \leq t\right) &=\frac12 \BB E\left( \BB P\left(0 < \wt\tau_u \leq t - \sigma_u\,|\, \sigma_u \right) \BB 1_{\sigma_u \leq t} \right) \\
&=  \frac{|u|}{ \sqrt{2} \pi^{3/2} }   \int_0^t \arcsin\left( \sqrt{\frac{t-s}{1-s}} \right) s^{-3/2} e^{-u^2/2s} \, ds .
\ale
By differentiating (using the Liebniz rule), we get that the density of $\tau_u$ is given by
\eqbn
 \frac{|u|}{2\sqrt{2} \pi^{3/2} } \int_0^t \frac{s^{-3/2} e^{-u^2/2s}}{ \sqrt{(1-t)(t-s)}} \,ds  = \frac{ e^{-u^2/2t}}{\pi \sqrt{t(1-t)}} .    
\eqen
\end{proof}

\begin{lem} \label{prop-Z-last-hit}
Let $Z = (U,V)$ be a correlated two-dimensional Brownian motion as in~\eqref{eqn-bm-cov}. For $T>0$ and $u \in \BB R$, let $\tau_u(T)$ be the first time $t\in [0,T]$ such that $ U(s) > u$ for each $s \in (t,T]$; or $\tau_u(T) = 0$ if no such time exists. The joint density of $\tau_u(T) $ and $V(\tau_u(T) )$ on the event $\{\tau_u(T)  > 0\}$ is given by
\eqb \label{eqn-Z-last-hit}
\BB P\left( t <  \tau_u(T)  < t+ dt ,\, v <  V(\tau_u(T)  ) < v +  dv\right) =  \frac{ a_0    }{ t    \sqrt{ T-   t }  } \exp\left(-  \frac{ a_2 u^2 + a_3(  v-a_1   u )^2  }{   t} \right)  \, dt \, dv  
\eqe 
for $(t,v) \in (0,1) \times \BB R$, where $a_0,a_1,a_2,a_3>0$ are constants depending only on $p$.
\end{lem}
\begin{proof}
First consider the case where $T = 1$. The density of $ \tau_u(T)  $ with respect to Lebesgue measure on the event $\{ \tau_u(T) > 0\}$ is computed in Lemma~\ref{prop-bm-last-hit}. The Brownian motion  $V$ can be written as the sum of a constant $a_1$ (depending only on $p$) times $U$; and a Brownian motion $\wt V$ which is independent from $U$. On the event $\{\tau_u(T) > 0\}$, we therefore have $V(\tau_u(T)) = \wt V(\tau_u(T)) + a_1 u$. Hence the conditional law of $  V(\tau_u(T))$ given $\tau_u(T)$ on the event $\{\tau_u(T) > 0\}$ is that of a Gaussian random variable with mean $a_1 u$ and variance a constant depending only on $p$ times $\tau_u(T)$. 
This yields the formula~\eqref{eqn-Z-last-hit} in the case $T = 1$. 
For general $T>0$, we have by Brownian scaling that
\eqbn
(\tau_u(T) , V(\tau_u(T))) \eqD \left(T   \tau_{T^{-1/2} u}(1) , T^{ 1/2} V( \tau_{T^{-1/2} u}(1) ) \right).
\eqen
This yields the formula~\eqref{eqn-Z-last-hit} in general.
\end{proof}

We want to use Lemma~\ref{prop-Z-last-hit} and the Markov property of Brownian motion to prove an estimate for a Brownian motion conditioned to stay in the first quadrant. 
For this purpose we will first need the following lemma.

\begin{lem} \label{prop-last-unconditioned}
Suppose we are in the setting of Lemma~\ref{prop-Z-last-hit}.   
For $u,v,\ep_1,\ep_2 > 0$ and $T>0$, let
\eqbn
  E_{u,v}^{\ep_1,\ep_2}(T) := \left\{ ( \tau_u(T) , V(  \tau_u(T))) \in [T-\ep_1^2 ,T  ]\times [v-\ep_2 , v+\ep_2 ] \right\} .
\eqen
For $b  > 0$ and $\zeta \in (0,1)$, also let  
\eqbn
H_u^\zeta(T) := \left\{ \inf_{t\in [0,\tau_u]} U(t) \geq -\zeta \:\op{or}\: \inf_{t\in [0,\tau_u]} V(t) \geq -\zeta \right\} .
\eqen 
Then for $T>0$, $(u,v) \in [2b , \infty) \times (0,\infty)$ and $\ep_1,\ep_2 , \zeta \in (0,1)$, 
\eqb \label{eqn-last-unconditioned}
\BB P\left( E_{u,v}^{\ep_1,\ep_2} \cap H_b^\zeta(T)   \right)  \preceq  \ep_1 \ep_2 \zeta 
\eqe 
with the implicit constant depending only on $b$. 
\end{lem}
\begin{proof}
It follows from Lemma~\ref{prop-Z-last-hit} that for $T>0$, $(u,v) \in [  b , \infty) \times (0,\infty)$ and $\ep_1,\ep_2 ,\zeta \in (0,1)$,  
\eqb \label{eqn-last-uncond-E}
\BB P\left( E_{u,v}^{\ep_1,\ep_2}(T) \right)  \preceq   \ep_1 \ep_2
\eqe 
with the implicit constant depending only on $b$.  
We will deduce~\eqref{eqn-last-unconditioned} from this estimate using the strong Markov property.
Let $\sigma_b$ be the minimum of $T$ and the smallest $t  >0$ for which $U(t) = b$. 
For $u \geq b$ we have $\sigma_b \leq \tau_u$ on the event $\{\tau_u > 0\}$, so if $\tau_u > 0$ and $H_u^\zeta(T)$ occurs then the event 
\eqbn 
\wt H_b^\zeta(T) := \left\{ \inf_{t\in [0,\sigma_b]} U(t) \geq -\zeta \:\op{or}\: \inf_{t\in [0,\sigma_b]} V(t) \geq -\zeta \right\} 
\eqen
occurs. 
The probability of this event is at most a $b$-dependent constant times $\zeta$. 
By the strong Markov property, if $T,u,v,\ep_1,\ep_2,\zeta$ are as in~\eqref{eqn-last-unconditioned} then
\eqbn
\BB P\left(  E_{u,v}^{\ep_1,\ep_2}(T) \,|\, Z|_{[0,\sigma_b]} \right)  \BB 1_{\wt H_b^\zeta(T)}
 = \BB P\left( E_{u-U(\sigma_b) , v - V(\sigma_b)}^{\ep_1,\ep_2}(T - \sigma_b) \right) \BB 1_{\wt H_b^\zeta(T)} 
 \preceq \ep_1 \ep_2 \BB 1_{\wt H_b^\zeta(T)} ,
\eqen
where here we have used that $u\geq 2b$ so $u-U(\sigma_b) \geq b$. We conclude by taking expectations of both sides and recalling~\eqref{eqn-last-uncond-E} and our estimate for $\BB P( \wt H_b^\zeta(T))$. 
\end{proof}

Now we can prove our desired estimate for Brownian motion conditioned to stay in the first quadrant. 
 
\begin{lem} \label{prop-bm-last-cond}
Let $\wh Z = (\wh U ,\wh V)$ have the law of a correlated Brownian motion as in~\eqref{eqn-bm-cov} conditioned to stay in the first quadrant until time 1. For $u > 0$, let $\wh\tau_u$ be the first time $t\in [0,1]$ such that $\wh U(s) > u$ for each $s \in (t,1]$; or $\wh\tau_u = 0$ if no such time exists.

Fix $C>1$.
For each $(u,v) \in [C^{-1} , \infty) \times (0,\infty)$ and each $\ep_1 , \ep_2 \in (0,1)$, 
\eqb \label{eqn-last-cond}
\BB P\left((\wh\tau_u , \wh V(\wh\tau_u)) \in [1-\ep_1^2 , 1]\times [v-\ep_2 , v+\ep_2 ]\right) \preceq \ep_1  \ep_2 
\eqe 
with the implicit constant depending only on $C$. 

Furthermore, for $b\in (0,C^{-1})$ and $\zeta \in (0, (2C)^{-1})$, let $F_{u,b}(\zeta) $ be the event that there is a $t \in [0,\wh\tau_u]$ such that $ \wh U(t)  \leq \zeta$ and $\wh V(t) \geq b$. For each $(u,v) \in [C^{-1} , \infty) \times (0,\infty)$ and $\ep_1 , \ep_2 \in (0,1)$, we have
\eqb \label{eqn-last-cond-bad-event}
\BB P\left((\wh\tau_u , \wh V(\wh\tau_u)) \in [1-\ep_1^2 ,1 ]\times [v-\ep_2 , v+\ep_2 ],\, F_{u,b}(\zeta) \right) \leq \ep_1  \ep_2 o_\zeta(1)
\eqe 
where here the $o_\zeta(1)$ depends only on $\zeta$, $b$, and $C$. 
\end{lem}
\begin{proof}
For $(u,v) \in [C^{-1} , \infty) \times (0,\infty)$ and $\ep_1,\ep_2\in (0,1)$, let
\eqbn
\wh E_{u,v}^{\ep_1,\ep_2} := \left\{ (\wh\tau_u , \wh V(\wh\tau_u)) \in [1-\ep_1^2 ,1  ]\times [v-\ep_2 , v+\ep_2 ] \right\} 
\eqen
be the event appearing in the lemma.
The idea of the proof is to condition on $\wh Z$ run up to an appropriate time and use the Markov property and Lemma~\ref{prop-last-unconditioned} to estimate the conditional probability of $\wh E_{u,v}^{\ep_1,\ep_2}$. 

Let $Z = (U,V)$ be an unconditioned Brownian motion started from 0 with variances and covariances as in~\eqref{eqn-bm-cov} and let $\BB P^0$ denote its law.
Define the events $E_{u,v}^{\ep_1,\ep_2}(T)$ and $ H_u^\zeta(T)$ as in Lemma~\ref{prop-last-unconditioned} and for $T>0$ and $z = (z_1,z_2) \in (0,\infty)^2$, let
\eqbn
G^z(T) :=  \left\{ U(t) \geq -z_1 ,\, V(t) \geq -z_2 ,\, \forall t\in [0,T] \right\} .
\eqen
Let $\sigma$ be a stopping time for $\wh Z$ with $\sigma \leq 1$ a.s.\ (we will use a different choice of $\sigma$ for~\eqref{eqn-last-cond} and for~\eqref{eqn-last-cond-bad-event}).
By the Markov property of $\wh Z$ (see~\cite[Lemma 3.1]{gms-burger-cone} or~\cite[Section 3]{shimura-cone}), the conditional law of $(\wh Z(\cdot+\sigma) - \wh Z(\sigma))|_{[0,1-\sigma]}$ given $\wh Z|_{[0,\sigma]}$ is the same as the conditional law of $Z|_{[0,1-\sigma]}$ given $G^{\wh Z(\sigma)}(1-\sigma)$. 
Furthermore, on the event $\{\wh U(\sigma) \leq u\}$, 
\eqbn
 \{ \tau_{u-\wh U(\sigma)}(1-\sigma)  > 0\} \cap G^{\wh Z(\sigma)}(1-\sigma) \subset   H^{\wh U(\sigma) \wedge \wh V(\sigma) }_{u - \wh U(\sigma)}(1-\sigma) 
\eqen
with the latter events defined in terms of the process $(\wh Z(\cdot+\sigma) - \wh Z(\sigma))|_{[0,1-\sigma]}$.
Therefore, on $\{\wh U(\sigma) \leq u\}$,
\begin{align} \label{eqn-last-markov0}
\BB P\left(\wh E_{u,v}^{\ep_1,\ep_2}  , \, \wh\tau_u > \sigma \,|\, \wh Z|_{[0,\sigma]} \right)  
&= \BB P^0\left( E_{u - \wh U(\sigma) ,v - \wh V(\sigma) }^{\ep_1,\ep_2}(1-\sigma) \,|\, G^{\wh Z(\sigma)}(1-\sigma) \right) \notag \\
&\leq \frac{   \BB P^0 \left( E_{u - \wh U(\sigma) ,v - \wh V(\sigma) }^{\ep_1,\ep_2}(1-\sigma) \cap H^{\wh U(\sigma) \wedge \wh V(\sigma) }_{u - \wh U(\sigma)}(1-\sigma) \right)     }{\BB P^0\left( G^{\wh Z(\sigma)}(1-\sigma)  \right)}  .
\end{align}   
If $ \wh U(\sigma) \vee \wh V(\sigma)  \geq a$ for some $a>0$, then
\eqb \label{eqn-last-markov0'}
\BB P^0\left( G^{\wh Z(\sigma)}(1-\sigma)  \right)  \succeq  1 \wedge (\wh U(\sigma) \wedge \wh V(\sigma) )
\eqe  
with the implicit constant depending only on $a$. If this is the case and also $u - \wh U(\sigma) \geq a$, then Lemma~\ref{prop-last-unconditioned} and~\eqref{eqn-last-markov0} yields
\eqb \label{eqn-last-markov}
\BB P\left(\wh E_{u,v}^{\ep_1,\ep_2}  , \, \wh\tau_u > \sigma \,|\, \wh Z|_{[0,\sigma]} \right) 
\preceq \ep_1 \ep_2 . 
\eqe 
 
To prove~\eqref{eqn-last-cond}, suppose $u\geq C^{-1}$ and let $\sigma$ be the smallest $t \in [0,1]$ for which $\wh U(t) \geq \frac12 C^{-1}$ (or $\sigma = 1$ if no such $t$ exists). Then $\{\wh\tau_u > \sigma\} \subset \wh E_{u,v}^{\ep_1,\ep_2}  $ and on this event $(u - \wh U(\sigma)) \wedge  \wh U(\sigma) \geq \frac12 C^{-1}$ so~\eqref{eqn-last-markov} yields~\eqref{eqn-last-cond}.
 
Next we consider~\eqref{eqn-last-cond-bad-event}. If we let $\sigma$ be the smallest $t\in [0,1]$ for which $ \wh U(t)  \leq \zeta$ and $\wh V(t) \geq b$ (or $\sigma = 1$ if no such $u$ exists) then with $F_{u,b}(\zeta)$ as in~\eqref{eqn-last-cond-bad-event}, 
\eqbn
F_{u,b}(\zeta)\subset  \{ \sigma  < 1\} .
\eqen
Since $\wh Z$ a.s.\ does not hit the boundary of the first quadrant after time 0, it follows that $\BB P\left( \sigma < 1  \right) = o_\zeta(1)$, at a rate depending only on $b$. By choosing this $\sigma$ in~\eqref{eqn-last-markov}, we obtain that the left side of~\eqref{eqn-last-cond-bad-event} is at most
\eqbn
\BB E\left(  \BB P\left(\wh E_{u,v}^{\ep_1,\ep_2}  , \, \wh\tau_u > \sigma \,|\, \wh Z|_{[0,\sigma]} \right)  \BB 1_{\sigma < 1} \right) \preceq \ep_1\ep_2 o_\zeta(1) . \qedhere
\eqen
\end{proof}

\subsubsection{Proof of the proposition}
\label{sec-no-order-local-endpoint}

In this subsection, we will establish Proposition~\ref{prop-no-order-endpoint}.  
We continue to use the notation introduced in Section~\ref{sec-no-order-local-setup} plus the notation in the statement of Proposition~\ref{prop-no-order-endpoint} (recall in particular the notation $m_h^\delta := \lfloor (1-\delta) h \rfloor$). We also introduce the following additional notation. For $\delta \in (0,1/2)$ and $C>1$, let $\BB k_\delta =\BB k_\delta(C) $ be the smallest $k\in\BB N$ for which $2^k\delta \geq 2C$. For $n\in\BB N$, $(k_1 , k_2) \in (-\infty , \BB k_\delta]_{\BB Z} \times [1 , \BB k_\delta)_{\BB Z}$, and $(h,c) \in \BB N^2$, let $\mcl U_n^{k_1 , k_2} (\delta, h,c)$ be the set of those pairs $(r,l)\in\BB N^2$ such that 
\alb
n - r   \in \left[2^{2(k_1 - 1)} \delta^2 h^2 , 2^{2 k_1  } \delta^2 h^2 \right]_{\BB Z} \quad \op{and} \quad |l - c| \in \left[2^{k_2-1} \delta h , 2^{k_2} \delta h \right]_{\BB Z} .
\ale  
Also let $\mcl U_n^{k_1 , \BB k_\delta} (\delta, h,c)$ be the set of those pairs $(r,l)\in\BB N^2$ such that 
\alb
n - r  \in \left[2^{2(k_1 - 1)} \delta^2 h^2, 2^{2 k_1  } \delta^2 h^2 \right]_{\BB Z} \quad \op{and} \quad |l - c| \in \left[2^{\BB k_\delta} \delta h , \infty \right)_{\BB Z} ;
\ale  
and let $\mcl U_n^{k_1 , 0} (\delta, h,c)$ be the set of those pairs $(r,l)\in\BB N^2$ such that 
\alb
n - r \in \left[2^{2(k_1 - 1)} \delta^2 h^2, 2^{2 k_1  } \delta^2 h^2 \right]_{\BB Z} \quad \op{and} \quad |l - c| \in \left[0  ,  \delta h \right]_{\BB Z} .
\ale  
Note that if $(h,c) \in [C^{-1} n^2 , C n^2]_{\BB Z}$ (which is the setting of Proposition~\ref{prop-no-order-endpoint}), then the union of the sets $\mcl U_n^{k_1 , k_2} (\delta, h,c)$ over all $(k_1,k_2) \in  (-\infty , \BB k_\delta]_{\BB Z} \times [0 , \BB k_\delta]_{\BB Z}$ covers all of $[0,n-1]_{\BB Z} \times [0,\infty)_{\BB Z}$, which is the possible range for the pair $(K_{n,m_h}^\delta , |Q_{n,m_h^\delta} - l|)$. 

The idea of the proof of Proposition~\ref{prop-no-order-endpoint} is to show that for a large enough choice of $A$, the conditional probability given $\mcl E_n^{h,c}$ that the pair $(K_{n,m_h^\delta}^H , Q_{n,m_h^\delta}^H)$ in the proposition statement belongs to one of the sets $\mcl U^{k_1,k_2}_n(\delta,h,c)$ which is not contained in the ``good" region $\mcl U_n^\delta(A,h,c)$ in the proposition statement is small. 
For this purpose, we will need to show that if $\mcl U^{k_1,k_2}_n(\delta,h,c) \not\subset \mcl U_n^\delta(A,h,c)$ then
\begin{itemize}
\item If we condition on $\{I> n\}$, then it is unlikely that $ (K_{n,m_h^\delta}^H , Q_{n,m_h^\delta}^H) \in \mcl U^{k_1,k_2}_n(\delta,h,c) $. 
\item If we condition on $\{I > n\} \cap \{(K_{n,m_h^\delta}^H , Q_{n,m_h^\delta}^H) \in \mcl U^{k_1,k_2}_n(\delta,h,c)\}$, then it is unlikely that $\mcl E_n^{h,c}$ occurs.
\end{itemize} 
We start with the following basic estimate for the probability that the pair $(K_{n,m_h^\delta}^H , Q_{n,m_h^\delta}^H)$ belongs to one of the above defined sets, which will be deduced from the estimates of the previous subsection together with Lemma~\ref{prop-last-conv}.

\begin{lem} \label{prop-K-uniform}
Fix $C>1$. For $\delta\in (0,1/2)$, $n\in\BB N$, $(h,c) \in \left[C^{-1} n^{1/2} , C n^{1/2}\right]_{\BB Z}^2$, and $(k_1,k_2) \in (-\infty ,\BB k_\delta] \times [0,\BB k_\delta]_{\BB Z}$, we have
\eqb \label{eqn-K-uniform}
\BB P\left((K_{n,m_h^\delta}^H , Q_{n,m_h^\delta}^H) \in \mcl U_n^{k_1,k_2}(\delta, h,c) \,|\ I > n\right) \preceq   2^{ k_1 + k_2} \delta^2   + o_n(1) 
\eqe 
with the implicit constant depending only on $C$ and the rate of the $o_n(1)$ depending only on $\delta$ and $C$. Furthermore, for $\zeta>0$ and $b \in (0,C^{-1})$, let $F_{n,b}^H(\zeta)  $ be the event that there is an $i \in [1,K_{n,m_h^\delta}^H]_{\BB Z}$ such that $\mcl N_{\tc C}\left(X(1,i)\right) \geq b n^{1/2}$ and $\mcl N_{\tc H}\left(X(1,i)\right) \leq \zeta n^{1/2}$. Then we have
\eqb \label{eqn-K-uniform-zeta}
\BB P\left((K_{n,m_h^\delta}^H , Q_{n,m_h^\delta}^H) \in \mcl U_n^{k_1,k_2}(\delta, h,c) ,\,  F_{n,m_c^\delta}^H(\zeta)  \,|\ I > n\right) \leq 2^{k_1+k_2} \delta^2 o_\zeta(1) +o_n(1) ,
\eqe 
with the $o_\zeta(1)$ depending only on $\zeta$, $b$, and $C$ and the rate of the $o_n(1)$ depending only on $\zeta$, $b$, $\delta$, and $C$. 
\end{lem} 

We emphasize that the $o_n(1)$ error in~\eqref{eqn-K-uniform} tends to 0 as $n\rta\infty$ for fixed values of $\delta$ and $C$. In particular, for each fixed $\ep \in (0,1)$ and $\delta \in (0,1/2)$ we can choose $n$ large enough that this error is smaller than $\ep \delta^2$, uniformly over all possible values of $(k_1,k_2)$. Similarly for the error in~\eqref{eqn-K-uniform-zeta}. 

\begin{proof}[Proof of Lemma~\ref{prop-K-uniform}]
First we prove~\eqref{eqn-K-uniform}. Fix $\ep  \in (0,1/100)$. Define $\wh Z = (\wh U , \wh V)$ and $\wh\tau_u$ for $u > 0$ as in Lemma~\ref{prop-bm-last-cond}. 
For $\delta>0$, let $\ul{\BB k}_\delta^\ep$ be the smallest $k\in \BB Z$ for which $2^{2k }\delta^2 \geq 100\ep$. 
By Lemma~\ref{prop-bm-last-cond}, for $(k_1,k_2) \in [\ul{\BB k}_\delta^\ep  , \BB k_\delta]_{\BB Z} \times [\ul{\BB k}_\delta^\ep \vee 1 , \BB k_\delta]_{\BB Z}$ and $(h,c) \in \left[C^{-1} n^{1/2} , C n^{1/2}\right]_{\BB Z}^2$, the probability that $(\wh\tau_{m_h^\delta /n^{1/2}} , \wh V(\wh\tau_{m_h^\delta/n^{1/2}} ))$ belongs to 
\begin{align} \label{eqn-K-uniform-bm}
  [1-  2^{ 2k_1} C \delta^2 - \ep , 1   ] \times [n^{-1/2} c - 2^{k_2} C \delta - \ep   , n^{-1/2} c + 2^{k_2} C \delta + \ep ]  
\end{align}
is at most a constant (depending only on $C$) times $2^{k_1 + k_2} \delta^2$. Furthermore, if $(\wh\tau_{m_h^\delta /n^{1/2}} , \wh V(\wh\tau_{m_h^\delta/n^{1/2}} ))$ does not belong to any of the sets~\eqref{eqn-K-uniform-bm} then either $1 - \wh\tau_{m_h^\delta /n^{1/2}} \leq 200 C \ep$ or $|\wh V(\wh\tau_{m_h^\delta/n^{1/2}}) - n^{-1/2} c| \leq 200 C \ep$. The same argument used to prove Lemma~\ref{prop-bm-last-cond} (but with only bounds for either $\wh\tau_u$ or $\wh V(\wh\tau_u)$, not both, specified) shows that the probability that this is the case is at most a constant (depending only on $C$) times $\ep^{1/2}$. 
 
By Lemma~\ref{prop-last-conv}, we can find $n_* = n_*(\ep , \alpha, \delta ,   C) \in\BB N$ such that for each $n\geq n_*$ and each $m \in [C^{-1} n^{1/2} , Cn^{1/2}]_{\BB Z}$, the Prokhorov distance between the conditional law of $(Z^n , n^{-1} K_{n,m}^H , n^{-1/2} Q_{n,m}^H )$ given $\{I >n\}$ and the law of $(\wh Z,\wh\tau_{m/n^{1/2}} , \wh V(\wh\tau_{m/n^{1/2}}))$ is at most $\ep$. The above estimates then imply that for $n\geq n_*$ and $h,c,k_1,k_2$ as in the statement of the lemma, 
\eqbn
 \BB P\left((K_{n,m_h^\delta}^H , Q_{n,m_h^\delta}^H) \in \mcl U_n^{k_1,k_2}(\delta, h,c) \,|\ I > n\right) \preceq  2^{ k_1 + k_2} \delta^2   + O_\ep(\ep^{1/2}) ,
\eqen
with implicit constants depending only on $C$. This proves~\eqref{eqn-K-uniform}. We similarly obtain~\eqref{eqn-K-uniform-zeta} using~\eqref{eqn-last-cond-bad-event} of Lemma~\ref{prop-bm-last-cond}. 
\end{proof}

Our next estimate is an upper bound for the conditional probability of $\mcl E_n^{h,c}$ given $\{I > n\} \cap \{(K_{n,m_h^\delta}^H , Q_{n,m_h^\delta}^H) \in \mcl U^{k_1,k_2}_n(\delta,h,c)\}$, but with the additional regularity condition that $Q_{n,m_h^\delta}^H$ is not unusually small. This regularity condition will be shown to hold with high conditional probability given $\mcl E_n^{h,c}$ in Lemma~\ref{prop-bdy-regularity} below. 

\begin{lem} \label{prop-endpoint-realization}
Fix $C>1$. For $(h,c)\in\BB N^2$, $k_1,k_2\in\BB N$, and $\delta>0$, define $\BB k_\delta$ and $ \mcl U_n^{k_1,k_2}(\delta, h,c)$ as in the discussion just above Lemma~\ref{prop-K-uniform}. Fix $C >1$ and $\zeta>0$. For $\delta  \in(0,1)$, let $m_h^\delta := \lfloor (1-\delta) h\rfloor$ be as in Lemma~\ref{prop-no-order-endpoint}. There exists $\delta_* \in (0,1)$ (depending only on $C$ and $\zeta$) such that for each $\delta\in (0,\delta_*]$, there exists $n_* = n_*(\delta , C, \zeta) \in \BB N$ such that the following is true.

Suppose $n \geq n_*$, $(h,c) \in \left[C^{-1} n^{1/2} , C n^{1/2}\right]_{\BB Z}$, and $(k_1,k_2) \in (-\infty , \BB k_\delta]_{\BB Z} \times [0,\BB k_\delta]_{\BB Z}$. Let $x$ be a realization of $X_1\dots X_{K_{n,m_h^\delta}^H}$ for which $(K_{n,m_h^\delta}^H ,Q_{n,m_h^\delta}^H) \in \mcl U_n^{k_1,k_2}(\delta,h,c)$ and $Q_{n,m_h^\delta}^H \geq \zeta n^{1/2}$. There is a constant $a_0 > 0$, depending only on $C$ and $\zeta$, such that
\begin{align} \label{eqn-endpoint-realization}
 \BB P\left( \mcl E_n^{h,c} \,|\, X_1\dots X_{K_{n,m_h^\delta}^H} = x , \, I > n \right) 
 \preceq 
\begin{cases}
2^{-3 k_1  }  \exp\left( -a_0 2^{-2k_1 + 2k_2 } \right)  \delta^{-2} n^{-1} ,\quad &k_1 \geq 0\\
2^{ k_1} \exp\left( -a_0 2^{-2 k_1} - a_0   2^{-   k_1  +  k_2 }  \right) \delta^{-2} n^{-1} ,\quad &k_1 < 0  ,
\end{cases}
\end{align} 
with the implicit constant depending only on $C$ and $\zeta$.  
\end{lem}
\begin{proof}
We will prove the lemma using Lemma~\ref{prop-J^H-D-formulas} and the estimates of Section~\ref{sec-J^H-local}.
Let $(h,c) \in \left[C^{-1} n^{1/2} , C n^{1/2}\right]_{\BB Z}^2$ and let $r_h^\delta := h-m_h^\delta \asymp \delta n^{1/2}$. 
Let $x$ be a realization of $X_1 \dots X_{K_{n,m_h^\delta}^H}$ as in the statement of the lemma, so that $\mcl R(x) $ contains no orders, $(|x| , \frk c(x) ) \in \mcl U_n^{k_1,k_2}(\delta,h,c)$, and $\frk c(x)  \geq \zeta n^{1/2}$. Also define $(J_{n,r_h^\delta}^H , L_{n,r_h^\delta}^H)$ as in Section~\ref{sec-no-order-local-setup} and let $R_n(x)$ be as in~\eqref{eqn-J^H-hit-event}. 

Since $\frk c(x) \geq \zeta n^{1/2}$, Lemma~\ref{prop-J^H-time-lower} implies that (if $k_1$ is such that $n-|x| \geq 1$, so that $ \mcl E_n^{h,c} \not=\emptyset$),
\eqb \label{eqn-R_n(x)-lower}
\BB P\left( \mcl N_{\tb C}\left(X(|x|+1 , n)\right) \leq \frk c(x)    ,\,  R_n(x) \right) \succeq (n-|x|)^{-1/2}  \asymp 2^{- k_1} \delta^{-1} n^{-1/2} 
\eqe 
with the implicit constant depending only on $\zeta$.  

In the case $k_1 \geq 0$, Proposition~\ref{prop-J^H-local} and Lemma~\ref{prop-g-form} imply
\alb
\BB P\left((J_{n,r_h^\delta}^H , L_{n,r_h^\delta}^H) = (n-|x|-1 , c - \frk c(x)) \right)& \preceq  \left(2^{-4 k_1  } \exp\left(-a_0 2^{-2k_1 + 2k_2 }\right) + o_{\delta h}(1) \right) \delta^{-3} h^{-3} \\
&\preceq  \left(2^{-4 k_1 } \exp\left( -a_0 2^{-2k_1 + 2k_2} \right) + o_{\delta^2 n}(1) \right) \delta^{-3} n^{-3/2}
\ale
with $a_0 > 0$ and the implicit constants and the rate of the $o_{\delta^2 n}$ depending only on $C$ and $\zeta$ (and in particular not on $x$, $k_1$, or $k_2$). It follows that we can find $n_* = n_*(\delta , \zeta , C)$ such that for each $(k_1,k_2) \in [0, \BB k_\delta]_{\BB Z} \times [0,\BB k_\delta]_{\BB Z}$, and each realization $x$ as above, we have 
\eqb \label{eqn-endpoint-realization'}
\BB P\left((J_{n,r_h^\delta}^H , L_{n,r_h^\delta}^H) = (n-|x|-1 , c - \frk c(x)) \right) \preceq  2^{-4 k_1  }  \exp\left(-a_0 2^{-2k_1 + 2k_2 }\right) \delta^{-3} n^{-3/2}
\eqe  
By combining this with~\eqref{eqn-x-h-c-prob-I} and~\eqref{eqn-R_n(x)-lower}, we obtain~\eqref{eqn-endpoint-realization} in the case $k_1\geq 0$.  

In the case $k_1 < 0$,~\eqref{eqn-J^H-long-exact-sup} of Lemma~\ref{prop-J^H-long-exact} (applied with $m = r_h^\delta$, $k = n-|x|-1 \asymp 2^{2k_1} \delta^2 n$ and $R = |c - \frk c(x)| \asymp 2^{k_2} \delta n^{1/2}$) implies
\alb
\BB P\left((J_{n,r_h^\delta}^H , L_{n,r_h^\delta}^H) = (n-|x|-1 , c - \frk c(x)) \right) 
&\preceq   \exp\left( -a_0 2^{-2 k_1} - a_0   2^{-   k_1  +  k_2 }        \right)   \delta^{-3} n^{-3/2} 
\ale
for a constant $a_0 > 0$ depending only on $C$. By combining this with~\eqref{eqn-x-h-c-prob-I} and~\eqref{eqn-R_n(x)-lower} we obtain~\eqref{eqn-endpoint-realization} in the case $k_1 <  0$.
\end{proof}

To complement Lemma~\ref{prop-endpoint-realization}, we will now prove that $Q_{n,m_h^\delta}^H$ is unlikely to be smaller than $\zeta n^{1/2}$. 

\begin{lem} \label{prop-bdy-regularity}
Let $C>1$ and $q \in (0,1)$. For $(h,c) \in \BB N^2$ and $\delta >0$, write $m_h^\delta = \lfloor (1-\delta) h\rfloor$ (as above) and $m_c^\delta = \lfloor (1-\delta) c \rfloor$. There exists $\delta_* \in (0,1)$ and $\zeta > 0$ (depending only on $C$ and $q$) such that for each $\delta \in (0,\delta_*]$, we can find $n_* = n_*(\delta, \zeta , C , q) \in \BB N$ such that for $n\geq n_*$ and $(h,c) \in \left[C^{-1} n^{1/2} , C n^{1/2}\right]_{\BB Z}$, we have
\eqbn
\BB P\left( Q_{n,m_h^\delta}^H \wedge Q_{n,m_c^\delta}^C \geq \zeta n^{1/2}      \,|\, \mcl E_n^{h,c} \right) \geq 1-q .
\eqen
\end{lem}
\begin{proof}
Let $(h,c) \in \left[C^{-1} n^{1/2} , C n^{1/2}\right]_{\BB Z}$. 
By definition, 
\eqbn
\inf_{i \in [K_{n,m_h^\delta}^H +1 , n]_{\BB Z}} \mcl N_{\tc H}\left(X(1,i)\right) \geq m_h^\delta \geq (2C)^{-1} n^{1/2}
\eqen
so if $\delta \in (0, 1/2)$, $ Q_{n,m_c^\delta}^C < (2C)^{-1} n^{1/2}$, $I>n$, and $\mcl E_n^{h,c}$ occurs, then $K_{n,m_h^\delta}^H \geq K_{n,m_c^\delta}^C$ and hence  
\eqbn
Q_{n,m_h^\delta}^H \geq \inf_{i \in [K_{n,m_c^\delta}^C +1 , n]_{\BB Z}} \mcl N_{\tc C}\left(X(1,i)\right) \geq m_c^\delta \geq   (2C)^{-1} n^{1/2} .
\eqen
Therefore, for $\zeta \leq (2C)^{-1}$, 
\begin{align} \label{eqn-bdy-reg-reduce}
 \BB P\left(  \mcl E_n^{h,c}      ,\,     Q_{n,m_c^\delta}^C < \zeta n^{1/2} \,|\, I > n  \right)  
 \leq \BB P\left(   \mcl E_n^{h,c}      ,\,     Q_{n,m_h^\delta}^H \geq (2C)^{-1} n^{1/2} ,\, F_{n,b}^H(\zeta) \,|\, I > n  \right)  ,
\end{align}
where $F_{n,b}^H(\zeta)$ is the event of Lemma~\ref{prop-K-uniform} with $b  = (2C)^{-1}$. 
We will now estimate the right side of~\eqref{eqn-bdy-reg-reduce} using~\eqref{eqn-K-uniform-zeta} and a union bound argument.

By Lemma~\ref{prop-endpoint-realization} (applied with $(2C)^{-1}$ in place of $\zeta$), we can find $  \delta_*  \in (0,1)$, depending only on $C$, such that for each $\delta \in (0, \delta_*]$, there exists $\wt n_* = \wt n_*(\delta,C) \in\BB N$ such that for each $n\geq n_*$, each $(h,c) \in \left[C^{-1} n^{1/2} , C n^{1/2}\right]_{\BB Z}$, and each $(k_1,k_2) \in (-\infty , \BB k_\delta]_{\BB Z} \times [0,\BB k_\delta]_{\BB Z}$,  
\begin{align} \label{eqn-bdy-reg-split}
 \BB P\left(  \mcl E_n^{h,c}   \,|\, E^{k_1,k_2}_n(\delta , h,c)    ,\, I> n  \right) \preceq 
\begin{cases}
 2^{-3 k_1  } \exp\left(- a_0 2^{-2k_1 + 2k_2 }\right) \delta^{-2} n^{-1} ,\qquad &k_1 \geq 0 \\
2^{ k_1} \exp\left(   -a_0 2^{-2k_1}  - a_0   2^{-k_1  + k_2  }        \right)  \delta^{-2} n^{-1},\qquad &k_1 < 0  ,
\end{cases}  
\end{align}
with $a_0 > 0$ the implicit constants depending only on $C$, where here
\eqbn
E^{k_1,k_2}_n(\delta , h,c) := \left\{(K_{n,m_h^\delta}^H ,Q_{n,m_h^\delta}^H) \in \mcl U_n^{k_1,k_2}(\delta,h,c) ,\,   Q_{n,m_h^\delta}^H \geq (2C)^{-1} n^{1/2} ,\, F_{n,b}^H(\zeta) \right\} .
\eqen 

By the second assertion of Lemma~\ref{prop-K-uniform}, for any given $\alpha \in (0,1)$, we can find $\zeta>0$ (depending on $C$ and $\alpha$) such that for each $\delta \in (0,\delta_*]$, there exists $n_* = n_*(\delta , \zeta  ,C) \geq \wt n_*$ such that for each $n\geq n_*$, $(k_1,k_2) \in  (-\infty , \BB k_\delta]_{\BB Z} \times [0,\BB k_\delta]_{\BB Z}$, and $(h,c) \in \left[C^{-1} n^{1/2} , C n^{1/2}\right]_{\BB Z}$,  
\eqbn
\BB P\left( E^{k_1,k_2}_n(\delta , h,c) \,|\, I >n   \right) \preceq    2^{k_1 \vee 0 + k_2  } \delta^2    \alpha 
\eqen  

By~\eqref{eqn-bdy-reg-split}, for $n\geq n_*$ and $(k_1,k_2 )\in (-\infty , \BB k_\delta]_{\BB Z} \times [0,\BB k_\delta]_{\BB Z}$ we have
\alb
\BB P\left(  \mcl E_n^{h,c} \cap  E^{k_1,k_2}_n(\delta , h,c)   \,|\, I> n  \right) \preceq 
\begin{cases}
 2^{-2 k_1  + k_2  }  \exp\left(-a_0 2^{-2k_1 + 2k_2 }\right)  \alpha n^{-1}  ,\qquad &k_1 \geq 0 \\
2^{ k_1 + k_2 } \exp\left( -a_0 2^{-2k_1}  - a_0   2^{-k_1  + k_2  }  \right)  \alpha   n^{-1} , \qquad &k_1 < 0  ,
\end{cases}  
\ale
with the implicit constants depending only on $C$. By summing over all such $k_1$ and $k_2$, we infer
\eqbn
\BB P\left(   \mcl E_n^{h,c}    ,\, Q_{n,m_h^\delta}^H \geq (2C)^{-1} n^{1/2} ,\, F_{n,b}^H(\zeta) \,|\, I > n \right) \preceq \alpha n^{-1} .
\eqen
 
By combining this with~\eqref{eqn-bdy-reg-reduce} and Proposition~\ref{prop-no-order-lower}, we get
\eqbn
\BB P\left(       Q_{n,m_c^\delta}^C < \zeta n^{1/2} \,|\,\mcl E_n^{h,c} \right) 
= \frac{ \BB P\left(  \mcl E_n^{h,c} ,\,     Q_{n,m_c^\delta}^C < \zeta n^{1/2} \,|\, I > n  \right)      }{  \BB P\left( \mcl E_n^{h,c} \,|\, I > n \right)  }
\preceq \alpha ,
\eqen
with the implicit constant depending only on $C$. By symmetry and the union bound, also
\eqbn
\BB P\left(       Q_{n,m_c^\delta}^C \wedge Q_{n,m_h^\delta}^H < \zeta n^{1/2} \,|\, \mcl E_n^{h,c} \right) \preceq \alpha .
\eqen
By choosing $\alpha$ sufficiently small (and hence $\zeta$ sufficiently small and $n_*$ sufficiently large), depending only on $q $ and $C$, we conclude.  
\end{proof}

\begin{proof}[Proof of Proposition~\ref{prop-no-order-endpoint}]
Fix $\alpha>0$ to be chosen later, depending only on $q$ and $C$. 
Choose $\zeta>0$ and $ \delta_1  \in (0,1)$ such that the conclusion of Lemma~\ref{prop-bdy-regularity} holds with $\delta_1$ in place of $\delta_*$ and $\alpha$ in place of $q$. Then choose $\delta_* \in (0,\delta_1]$ such that the conclusion of Lemma~\ref{prop-endpoint-realization} holds with this choice of $\zeta$. Then for $\delta \in (0,\delta_*]$, there exists $\wt n_* = \wt n_*(\delta , C , \alpha)$ which simultaneously satisfies the conclusions of Lemmas~\ref{prop-endpoint-realization} and~\ref{prop-bdy-regularity}. 
Note that for $\delta \in (0,\delta_*]$ and $n\geq \wt n_*$, 
\eqb \label{eqn-use-bdy-reg}
\BB P\left( Q_{n,m_h^\delta}^H \leq \zeta n^{1/2}      \,|\, \mcl E_n^{h,c} \right) \leq \alpha .
\eqe
 
By the first assertion of Lemma~\ref{prop-K-uniform} for $\delta \in (0,\delta_*]$, we can find $ n_* = n_*^1(\delta , C , \alpha) \geq \wt n_*$ such that for $n\geq n_*$ and $(k_1,k_2) \in (-\infty , \BB k_\delta]_{\BB Z} \times [0,\BB k_\delta]_{\BB Z}$, we have
\eqbn
\BB P\left(    (K_{n,m_h^\delta}^H ,Q_{n,m_h^\delta}^H) \in \mcl U_n^{k_1,k_2}(\delta,h,c)    \,|\, I >n   \right) \preceq 2^{k_1 \wedge 0 + k_2  } \delta^2   ,
\eqen 
with the implicit constants depending only on $C$. By combining this with Lemma~\ref{prop-endpoint-realization}, we infer
\alb
&\BB P\left(   \mcl E_n^{h,c}    ,\,  (K_{n,m_h^\delta}^H ,Q_{n,m_h^\delta}^H) \in \mcl U_n^{k_1,k_2}(\delta,h,c) ,\,    Q_{n,m_h^\delta}^H \geq \zeta n^{1/2}   \,|\, I> n  \right)\\
& \qquad \preceq 
\begin{cases}
 2^{-2 k_1  + k_2   }  \exp\left(-a_0 2^{-2k_1 + 2k_2 }\right)   n^{-1}  ,\qquad &k_1 \geq 0 \\
2^{ k_1 + k_2 } \exp\left(-a_0 2^{-2k_1}  - a_0   2^{-k_1  + k_2  } \right)  n^{-1}  ,\qquad &k_1 < 0  , 
\end{cases}  
\ale
with $a_0 > 0$ and the implicit constants depending only on $C$ and $\zeta$. By summing this estimate over those $(k_1,k_2) \in (-\infty, \BB k_\delta]_{\BB Z} \times [0,\BB k_\delta]_{\BB Z}$ for which $\mcl U_n^{k_1,k_2}(\delta,h,c) \not\subset \mcl U_n^\delta(A,h,c)$, we infer that there exists $A > 2$, depending only on $C$, $\alpha$, and $\zeta$, such that with $\mcl U_n^\delta(A,h,c)$ as in the statement of the proposition,  
\eqbn
\BB P\left(  \mcl E_n^{h,c}  ,\,  (K_{n,m_h^\delta}^H ,Q_{n,m_h^\delta}^H)  \notin \mcl U_n^\delta(A,h,c)  ,\,  Q_{n,m_h^\delta}^H \geq \zeta n^{1/2} \,|\, I > n\right) \leq \alpha  n^{-1}  .
\eqen
By dividing this by the estimate of Proposition~\ref{prop-no-order-lower}, we get
\eqbn
\BB P\left(    (K_{n,m_h^\delta}^H ,Q_{n,m_h^\delta}^H)  \notin \mcl U_n^\delta(A,h,c)  ,\,  Q_{n,m_h^\delta}^H \geq \zeta n^{1/2} \,|\, \mcl E_n^{h,c}  \right) \preceq \alpha ,
\eqen
with the implicit constant depending only on $C$. 
By combining this with~\eqref{eqn-use-bdy-reg}, we find that
\eqbn
\BB P\left(    (K_{n,m_h^\delta}^H ,Q_{n,m_h^\delta}^H)  \notin \mcl U_n^\delta(A,h,c) \,|\, \mcl E_n^{h,c}   \right) \preceq \alpha ,
\eqen
with the implicit constant depending only on $C$. We now conclude by choosing $\alpha$ sufficiently small that $\alpha$ times this implicit constant is at most $q$.
\end{proof}

\section{Local estimates with few orders}
\label{sec-few-order-local}

In the previous section, we proved various estimates associated with the event that the reduced word $X(1,n)$ contains no orders and a particular number of burgers of each type. In this subsection we will use the results of the previous sections to prove analogous estimates for the event that a reduced word contains a small number of orders and approximately a particular number of burgers of each type. The main result of this subsection is Proposition~\ref{prop-corner-local} below, which is needed for the proof of the upper bound in Theorem~\ref{thm-empty-prob}, and will also be used in \cite{gms-burger-finite}. Unlike in Section~\ref{sec-no-order-local}, in this section we will read the word backward, rather than forward so as to make use of the backward stopping times $J^H$ of Section~\ref{sec-J^H-local}. 

Before stating and proving our main estimate, in Section~\ref{sec-interval-reg-var} we will prove a regular variation type estimate for the probability that there is a time $j$ in a given discrete interval with the property that $X(-j,-1)$ contains no orders.

\subsection{Probability of a time with no orders in a small interval}
\label{sec-interval-reg-var}
 
In this section we will prove a sharp estimate for the probability that there is a time slightly smaller than $n$ for which $X(-j,-1)$ contains no orders. We recall the definitions of regular varying and slowly varying from Section~\ref{sec-reg-var}. 

\begin{lem} \label{prop-P-interval-reg-var}
Let $\psi_0$ be the slowly varying function from Lemma~\ref{prop-I-reg-var}. There is a slowly varying function $\psi_2$ such that for $n\in\BB N$ and $k \in [1 ,  n]_{\BB Z}$, we have
\eqbn
\BB P\left(\text{$\exists \, j \in [n- k ,n  ]_{\BB Z}$ such that $X(-j,-1)$ contains no orders}\right) = (1+o_{k/n}(1)) \psi_0(n) \psi_2(k)  (k/n)^\mu ,
\eqen
with $\mu$ as in~\eqref{eqn-cone-exponent}.
\end{lem}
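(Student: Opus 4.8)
The plan is to relate the event $E_{n,k} := \{\exists\, j \in [n-k,n]_{\BB Z} : X(-j,-1)\text{ contains no orders}\}$ to the stopping time $P$ of Lemma~\ref{prop-P-reg-var} (the smallest $j$ with $X(-j,-1)$ order-free) by decomposing according to the \emph{first} such time after $n-k$. Concretely, write $E_{n,k}$ as the disjoint union over $j \in [n-k,n]_{\BB Z}$ of the event that $X(-j,-1)$ contains no orders but $X(-i,-1)$ contains an order for every $i \in [n-k, j-1]_{\BB Z}$. The first such time $j$ after a cutoff $n-k$ has two pieces: either $P > n-k$ already (so $j$ is just the first order-free time, governed purely by the law of $P$), or $P \le n-k$, in which case reading $X$ backward we first see $X(-P,-1)$ order-free, then must ``restart'' — the key structural fact (a renewal/regeneration idea à la \cite[Section A.2]{gms-burger-cone}) is that conditionally on the prefix $X_{-m}\dots X_{-1}$ being order-free for some $m$, the configuration read further back is again a fresh copy of the model, so the first order-free time exceeding $n-k$ is $m$ plus an independent copy of (a variant of) the time $I$ measured forward. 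I would make this precise using the backward stopping-time structure already set up around \eqref{eqn-J^H-def}, \eqref{eqn-I-def}, and Lemmas~\ref{prop-I-reg-var}, \ref{prop-P-reg-var}.

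The main calculation is then a convolution estimate. Writing $\BB P(E_{n,k})$ as a sum $\sum_{m} \BB P(P = m)\,\BB P(\text{some order-free time in }(n-k, n]\mid P=m)$ over $m \le n$, I would split at $m$ near $n-k$: for $m$ in a bulk range, the inner probability behaves (by Lemma~\ref{prop-I-reg-var} applied to the forward restart and the renewal structure) like a power of $(\text{length of remaining interval})$ over $(n-m)$, while $\BB P(P=m) \asymp \psi_1(m) m^{-(2-\mu)}$ by Lemma~\ref{prop-P-reg-var} and the monotone density theorem for regularly varying tails \cite{reg-var-book}. Summing, one gets a Riemann-sum / Karamata-type integral $\int$ whose value is, up to a slowly varying factor and a $(1+o_{k/n}(1))$ error, a constant times $\psi_0(n)(k/n)^\mu$; the contribution of $m$ within $O(k)$ of $n$ is handled separately and shown to be of the same order, and it is precisely the $m$-dependence of the slowly varying parts that gets absorbed into a single new slowly varying function $\psi_2(k)$ (using the representation theorem for slowly varying functions recalled in Section~\ref{sec-reg-var}, and the fact that $\psi_0(n)/\psi_0(n-m)\to 1$ uniformly in the relevant range). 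The exponent $\mu$ emerges because $I$ has tail exponent $\mu$ (Lemma~\ref{prop-I-reg-var}), and the interval $[n-k,n]$ has relative length $k/n$.

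A cleaner route, which I would try first, is to sidestep the convolution: observe that $E_{n,k}$ occurs iff the forward time $I$ (defined in \eqref{eqn-I-def}) started from some regeneration point lands in $[n-k,n]$, equivalently that the \emph{last} order-free time of $X(-\cdot,-1)$ before $n$ exceeds $n-k$, OR there is an order-free time in the window; by inclusion–exclusion with the event $\{P > n-k\}$ one reduces $\BB P(E_{n,k})$ to $\BB P(\exists j\in[n-k,n]_{\BB Z}: X(-j,-1)\text{ order-free}\mid P\le n-k)\BB P(P \le n-k)$ plus a term controlled directly by $\BB P(I > n-k) - \BB P(I>n)$, and the latter difference is $\psi_0(n-k)(n-k)^{-\mu} - \psi_0(n)n^{-\mu}$, which by the uniform convergence theorem for regularly varying functions is $(1+o_{k/n}(1))\psi_0(n)n^{-\mu}(((n/(n-k))^\mu) - 1)\asymp \psi_0(n)n^{-\mu}(k/n)$ when $k/n$ is small — too small, so the dominant contribution must come from the regeneration term, confirming that the convolution analysis above is unavoidable. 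I expect the \textbf{main obstacle} to be bookkeeping the slowly varying factors: showing that the sum over regeneration points $m$ produces a genuinely slowly varying $\psi_2(k)$ (and not something that also depends on $n$) requires care with the $\psi_0, \psi_1$ ratios and a dominated-convergence argument on the Karamata integral, uniform over $k \le n$; the exponent $\mu = (1-\mu) + (2\mu - 1)$ arithmetic is forced, but extracting the precise $(1+o_{k/n}(1))$ rather than just $\asymp$ is where the work lies.
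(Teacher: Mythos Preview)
Your renewal-process insight is correct: the order-free times $0 = P_0 < P_1 < P_2 < \cdots$ (with $P_\ell$ the $\ell$th smallest $j\geq 1$ for which $X(-j,-1)$ contains no orders) have iid increments, and $E_{n,k}$ is the event that some $P_\ell$ lies in $[n-k,n]$. But decomposing on the \emph{first} such $P_\ell$ gives only the renewal equation $\BB P(E_{n,k}) = \BB P(n-k < P_1 \leq n) + \sum_{m\leq n-k}\BB P(P_1=m)\,\BB P(E_{n-m,k})$, whose right-hand side contains the quantity you are trying to compute; the circularity you flag as the ``main obstacle'' is real, and I do not see a bootstrap that extracts the sharp $(1+o_{k/n}(1))$ from it. (A small slip: in your second paragraph the direct term should be $\BB P(P>n-k)-\BB P(P>n)$, not the analogous difference for $I$; your conclusion that it is lower order survives.)

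The paper's route is to decompose on the \emph{last} renewal point in $[n-k,n]$ rather than the first. Write $\ol P_n$ for the largest $P_\ell \leq n$, so $E_{n,k} = \{\ol P_n \geq n-k\}$. For $i\in[n-k,n]$ the event $\{\ol P_n = i\}$ is $\{i = P_\ell\text{ for some }\ell\}\cap\{\text{next increment} > n-i\}$; by the iid structure these are independent, and translation invariance gives the key identity $\BB P(i = P_\ell\text{ for some }\ell) = \BB P(X(-i,-1)\text{ order-free}) = \BB P(I>i)$. Thus one has the \emph{exact} product formula $\BB P(\ol P_n = i) = \BB P(I>i)\,\BB P(P_1 > n-i)$, and summing over $i$ yields $\BB P(E_{n,k}) = (1+o_{k/n}(1))\,\psi_0(n)n^{-\mu}\sum_{j\leq k}\BB P(P_1 > j)$, since $\psi_0(i)i^{-\mu}=(1+o_{k/n}(1))\psi_0(n)n^{-\mu}$ uniformly for $i\in[n-k,n]$. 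The remaining sum equals $\psi_2(k)k^\mu$ for some slowly varying $\psi_2$ by the integrated-tail (Karamata) argument of Lemma~\ref{prop-sum-reg-var} applied to the tail of $P_1$ from Lemma~\ref{prop-P-reg-var}. No convolution, and the only slowly-varying bookkeeping is a single uniform-convergence step for $\psi_0$.
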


To prove Lemma~\ref{prop-interval-reg-var} we need some basic facts about regularly varying functions. We leave the proof of the first fact to the reader.

\begin{lem} \label{prop-sum-reg-var}
Let $g : (0,\infty) \rta (0,\infty)$ be bounded and regularly varying with exponent $\alpha \in (0,1)$. For $t > 0$, let $\wt g(t) := \int_0^{\lfloor t \rfloor} g(s) \, ds$. Then $\wt g$ is regularly varying with exponent $-(1-\alpha)$.
\end{lem}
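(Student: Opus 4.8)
The plan is to reduce the statement to Karamata's theorem on integrals of regularly varying functions. In the convention of Section~\ref{sec-reg-var}, the hypothesis that $g$ is regularly varying of exponent $\alpha$ means $g(t) = \psi(t) t^{-\alpha}$ for some slowly varying $\psi$; equivalently, $g$ is regularly varying of index $-\alpha$ in the standard convention of~\cite{reg-var-book}. Since $\alpha \in (0,1)$ we have $-\alpha > -1$, which is exactly the hypothesis under which Karamata's theorem applies.

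First I would apply Karamata's theorem (see~\cite{reg-var-book}) to conclude that $G(t) := \int_1^t g(s)\,ds$ is regularly varying of exponent $-(1-\alpha)$ and that $G(t) \sim (1-\alpha)^{-1}\, t\, g(t)$; in particular $G(t) \rta \infty$ as $t\rta\infty$. Starting the integral at $1$ rather than $0$ is harmless here because $g$ is bounded, so $\int_0^1 g(s)\,ds$ is a finite constant.

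Next I would pass from $G$ to $\wt g$. Writing $M := \sup_{s > 0} g(s) < \infty$, we have $\wt g(t) = \int_0^1 g(s)\,ds + G(\lfloor t\rfloor)$ and $0 \le G(t) - G(\lfloor t\rfloor) = \int_{\lfloor t\rfloor}^t g(s)\,ds \le M$ for all $t \ge 1$. Since $G(t)\rta\infty$, these two bounds give $\wt g(t) \sim G(t)$ as $t\rta\infty$. As a function which is asymptotically equivalent to a regularly varying function of exponent $-(1-\alpha)$ is itself regularly varying of exponent $-(1-\alpha)$, this finishes the proof.

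I do not expect any serious obstacle: the content is entirely Karamata's theorem, and the floor $\lfloor\cdot\rfloor$ in the definition of $\wt g$ is absorbed trivially using the boundedness of $g$. The only point requiring a moment's care is matching the two sign conventions for the exponent of regular variation.
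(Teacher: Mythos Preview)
Your proof is correct and follows essentially the same approach as the paper's (commented-out) argument: both reduce $\wt g$ to the non-floored integral $\int_0^t g(s)\,ds$ via the boundedness of $g$ and the divergence of the integral, and then verify that the latter is regularly varying of exponent $-(1-\alpha)$. The only difference is that you invoke Karamata's theorem as a black box for this last step, whereas the paper establishes it directly via the change of variables $s\mapsto cs$ and the pointwise ratio limit $g(cs)/g(s)\rta c^{-\alpha}$.
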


\begin{lem} \label{prop-interval-reg-var}
Let $\alpha \in (0,1)$. Let $(Y_j)$ be an iid sequence of non-negative random variables. For $m\in\BB N$, let $S_m := \sum_{j=1}^m Y_j$. For $n\in\BB N$ and $k\in [1,   n]_{\BB Z}$, let $G_n^k$ be the event that there is an $m\in\BB N$ with $S_m \in [n- k , n]_{\BB Z}$. Assume that the functions $t\mapsto \BB P(Y_1 \geq t)$ and $t\mapsto \BB P\left(\text{$\lfloor t \rfloor = S_m$ for some $m\in\BB N$}\right)$ are regularly varying with exponents $\alpha$ and $1-\alpha$, respectively, so that in particular $\BB P\left(\text{$n = S_m$ for some $m\in\BB N$}\right) = \psi(n) n^{-(1-\alpha)}$ for some slowly varying function $\psi$. There is a slowly varying function $\wt \psi$, depending only on the law of $Y_1$, such that
\eqbn
\BB P\left( G_n^k \right) = (1+o_{k/n}(1)) \psi(n) \wt\psi(k ) (k/n)^{1-\alpha} .
\eqen 
\end{lem}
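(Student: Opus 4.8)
The plan is to reduce $\BB P(G_n^k)$ to an \emph{exact} sum over the renewal set of $(S_m)$, and then read off the asymptotics from Karamata-type results — in particular Lemma~\ref{prop-sum-reg-var} — together with the uniform convergence theorem for regularly varying functions. The key realization is that one should not try to expand $G_n^k$ by inclusion--exclusion over the $k+1$ sites of the window $[n-k,n]_{\BB Z}$ (this couples arbitrarily many renewals and is intractable for $k$ large), but instead pass to the backward recurrence time (age).

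First I would rewrite the event. Let $\mcl T := \{S_m : m \geq 0\}$ with $S_0 = 0$, write $u(\ell) := \BB P(\ell = S_m \text{ for some } m)$ for the renewal mass function — so $u(0)=1$ and, by hypothesis, $u(\ell) = \psi(\ell)\ell^{-(1-\alpha)}$ with $\psi$ slowly varying — and set $\mu(r) := \BB P(Y_1 \geq r)$. For $1 \leq k < n$, the event $G_n^k$ that $\mcl T$ meets $[n-k,n]_{\BB Z}$ is exactly the event that the age $R_n := n - \max(\mcl T \cap [0,n]_{\BB Z})$ satisfies $R_n \leq k$. Using that the part of $\mcl T$ strictly to the right of a renewal point is an independent translated copy of $\mcl T$ (and that the increments $Y_j$ are a.s.\ positive, as in all applications of this lemma in the paper; the general case needs only a routine modification), one obtains $\BB P(R_n = r) = u(n-r)\,\mu(r+1)$ for $0 \leq r \leq n$, and hence the identity
\[
\BB P(G_n^k) = \sum_{r=0}^{k} u(n-r)\,\mu(r+1) .
\]
(The case $k=n$ is degenerate and irrelevant to the stated asymptotic, which concerns $k/n \to 0$.)

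Next I would define $\wt\psi$. The function $g(s) := \mu(\lfloor s\rfloor + 1)$ is bounded and, since $\mu$ is regularly varying of exponent $\alpha \in (0,1)$, so is $g$; hence by Lemma~\ref{prop-sum-reg-var} the function $t \mapsto \int_0^{\lfloor t\rfloor} g(s)\,ds = \sum_{i=1}^{\lfloor t\rfloor}\mu(i)$ is regularly varying of exponent $-(1-\alpha)$. In particular $\Sigma(k) := \sum_{r=0}^{k}\mu(r+1) = \sum_{i=1}^{k+1}\mu(i)$ can be written $\Sigma(k) = \wt\psi(k)\,k^{1-\alpha}$ with $\wt\psi$ slowly varying and depending only on the law of $Y_1$. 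Finally, since $u$ is regularly varying of exponent $1-\alpha$, the uniform convergence theorem gives $u(n-r)/u(n) \to 1$ uniformly over $0 \leq r \leq k$ as $n\to\infty$ with $k/n\to 0$ (because $(n-r)/n \in [1-k/n,1]$, and $u(\lambda n)/u(n) \to \lambda^{-(1-\alpha)}$ uniformly for $\lambda$ in compact subsets of $(0,\infty)$). As the weights $\mu(r+1)$ are nonnegative, this converts the identity above into
\[
\BB P(G_n^k) = (1+o_{k/n}(1))\,u(n)\sum_{r=0}^{k}\mu(r+1) = (1+o_{k/n}(1))\,\psi(n)\,\wt\psi(k)\,(k/n)^{1-\alpha} ,
\]
which is the claim.

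The main obstacle is the first step: finding the right exact identity. Once $G_n^k = \{R_n \leq k\}$ is recognized, the renewal property makes $\BB P(R_n = r)$ factor cleanly, and everything afterward is bookkeeping with slowly varying functions via Lemma~\ref{prop-sum-reg-var} and Karamata's theorem. The one remaining point requiring a little care is justifying the replacement $u(n-r)\approx u(n)$ uniformly over $r\le k$ in the \emph{joint} regime $n\to\infty$, $k/n\to 0$ (rather than for a fixed ratio $k/n$); this follows directly from the uniform convergence theorem applied on a fixed interval such as $[1/2,1]$, together with $\lim_{k/n\to0}(1-k/n)^{-(1-\alpha)}=1$.
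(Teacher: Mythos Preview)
Your proof is correct and essentially identical to the paper's. The paper decomposes over the \emph{largest} renewal $S_{M_n^k}$ in the window $[n-k,n]_{\BB Z}$, obtaining $\BB P(G_n^k)=\sum_{i=n-k}^n f(i)g(n-i+1)$ with $f(i)=\BB P(i\in\mcl T)$ and $g(j)=\BB P(Y_1\ge j)$; under the change of variables $r=n-i$ this is exactly your identity $\BB P(G_n^k)=\sum_{r=0}^k u(n-r)\mu(r+1)$, and both proofs then conclude via regular variation of $u$ (to pull out $u(n)$) and Lemma~\ref{prop-sum-reg-var} (to identify $\wt\psi$). Your framing in terms of the backward recurrence time $R_n$ is the standard renewal-theoretic language for the same decomposition, and your explicit invocation of the uniform convergence theorem is if anything slightly more careful than the paper's bare $(1+o_{k/n}(1))$.
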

\begin{proof} 
For $n\in\BB N$ and $k\in [1 , n]_{\BB Z}$, let $M_n^k$ be the largest $m\in\BB N$ for which $S_m \in [n- k,n  ]_{\BB Z}$ if such a time exists and $M_n^k := n+1$ otherwise. Then we have
\eqbn
\BB P\left( G_n^k  \right) = \sum_{i=  n-k }^n \BB P\left(S_{M_n^k} = i\right). 
\eqen
For $i\in [n- k ,n ]_{\BB Z}$, the event $\{S_{M_n^k} = i\}$ is the intersection of the event that $i = S_m$ for some $m\in\BB N$ and the event that for this $m$, we have $Y_{m+1} \geq n-i + 1$. By the Markov property, the conditional probability of the latter event given the former is the same as the probability that $Y_1 \geq n-j+1$. Thus
\eqbn
\BB P\left(S_{M_n^k} = i\right) = f(i) g(n-i+1)  
\eqen
where $f(i) :=  \BB P\left(\text{$i = S_m$ for some $m\in\BB N$}\right)$ and $g(i) := \BB P\left(Y_1 \geq i\right)$. 
Since $f(n) = \psi(n) n^{-(1-\alpha)}$, we have
\alb
\BB P\left(G_n\right) &= \sum_{i=n-k}^n f(i) g(n-i+1)  
 = (1+o_{k/n}(1)) f(n) \sum_{i= n-k }^n  g(n-i+1) \\
&= (1+o_{k/n}(1)) \psi(n) n^{-(1-\alpha)} \sum_{j=1}^{k} g(j) .
\ale 
By Lemma~\ref{prop-sum-reg-var}, $t\mapsto \sum_{j=1}^{\lfloor  t  \rfloor} g(j)$ is regularly varying of exponent $- (1-\alpha)$, so there is a slowly varying function $\wt\psi$ such that
\eqbn
\sum_{j=1}^{k} g(j) = \wt \psi(k) k^{1-\alpha} .
\eqen
The statement of the lemma follows.
\end{proof}

\begin{proof}[Proof of Lemma~\ref{prop-P-interval-reg-var}]
Let $P_0 = 0$ and for $m\in\BB N$, let $P_m$ be the $m$th smallest $j\in\BB N$ for which $X(-j,-1)$ contains no orders. Then the increments $P_m - P_{m-1}$ are iid, and by~\cite[Lemma A.8]{gms-burger-cone}, $P_1$ is regularly varying with exponent $1-\mu$. By translation invariance,
\eqbn
\BB P\left(\text{$\exists \, m \in\BB N$ such that $P_m = n$}  \right)    =  \BB P\left(I > n\right) .
\eqen
By Lemma~\ref{prop-I-reg-var}, this quantity is regularly varying in $n$ with exponent $\mu$ and slowly varying correction $\psi_0$.  
The statement of the lemma now follows from Lemma~\ref{prop-interval-reg-var}.
\end{proof}

\subsection{Statement and proof of local estimates with few orders} 
\label{sec-few-order-local-proof}
 
In this subsection we will use the following notation. For $n,k \in\BB N$ and $(h,c) \in \BB N^2$, let $E_{n,k,r}^{h,c} $ be the event that there exists $j\in\BB N$ such that the following is true:
\begin{align}  \label{eqn-corner-local-event}
&j \in [n-k,n]_{\BB Z},\quad \mcl N_{\tc H}\left(X(-j,-1)\right) \in \left[h - r, h\right]_{\BB Z}, \quad   \mcl N_{\tc C}\left(X(-j,-1)\right) \in \left[c - r, c\right]_{\BB Z} , \notag \\
& \mcl N_{\tb H}\left(X(-j , -1)\right) + \mcl N_{\tb F}\left(X(-j,-1)\right) \leq r , \quad \op{and} \quad \mcl N_{\tb C}\left(X(-j , -1)\right) + \mcl N_{\tb F}\left(X(-j,-1)\right) \leq r. 
\end{align}
Let $J_{n,k,r}^{h,c}$ be the minimum of $n+1$ and the smallest $j\in [n-k,n]_{\BB Z}$ for which~\eqref{eqn-corner-local-event} holds. 
Heuristically speaking, if $k$ and $r$ are small then $E_{n,k,r}^{h,c}$ is an approximate version of the event $\mcl E_n^{h,c}$ of Definition~\ref{def-local-event} but with the word read backward rather than forward (we read the word backward to enable us to apply the estimates of Section~\ref{sec-J^H-local}). The goal of this subsection is to estimate the probability of this event in terms of $n,k,r,h,$ and $c$. 

Instead of proving a lower bound for the probability of $E_{n,k,r}^{h,c}$, we will instead prove a lower bound for a smaller event which we now define. 
For $m\in\BB N$, let $J_m^H$ and $L_m^H$ be as in~\eqref{eqn-J^H-def}.  
Let $\wt E_{n,k,r}^{h,c} $ be the event that the following is true: 
\eqb  \label{eqn-corner-local-event-J^H}
J_h^H \in \left[n- k , n   \right]_{\BB Z}, \quad   L_h^H  \in \left[c - r, c\right]_{\BB Z} , \quad \op{and} \quad \mcl N_{\tc C}\left(X(-J_h^H , -1)\right) \leq c .
\eqe  
We note that $L_h^H = \mcl N_{\tc C}\left(X(-J_h^H , -1)\right) - \mcl N_{\tb C}\left(X(-J_h^H , -1)\right)$, so if $\wt E_{n,k,r}^{h,c}$ occurs then 
\eqb \label{eqn-C-orders-small}
\mcl N_{\tb C}\left(X(-J_h^H , -1)\right) \leq r .
\eqe
 Since $X(-J_h^H , -1)$ contains no hamburger orders or flexible orders, it follows that
\eqbn
\wt E_{n,k,r}^{h,c} \subset E_{n,k,r}^{h,c} .
\eqen

The main result of this section is the following proposition, which is an analogue of Propositions~\ref{prop-no-order-lower} and~\ref{prop-no-order-upper} of Section~\ref{sec-no-order-local} with the event $E_{n,k,r}^{h,c}$ or $\wt E_{n,k,r}^{h,c}$ in place of the event $\mcl E_n^{h,c}$. We also include a regularity estimate. 
 
\begin{prop} \label{prop-corner-local} 
Suppose we are in the setting described just above. Let $\psi_0$ be the slowly varying function of Lemma~\ref{prop-I-reg-var} and let $\psi_2$ be the slowly varying function of Lemma~\ref{prop-P-interval-reg-var}. Fix $C>1$. Suppose $n ,k,r,h,c \in\BB N $ with 
\eqb  \label{eqn-corner-local-cond}
k\leq n,\quad C^{-1} k\leq r^2 \leq C k, \quad \op{and} \quad  r  \leq C (h\wedge c)  .
\eqe
\begin{enumerate}
\item For each $n,h,c,r,k \in \BB N$ satisfying~\eqref{eqn-corner-local-cond}, we have
\eqbn
\BB P\left(   E_{n,k,r}^{h,c}   \right) \preceq  \psi_0((h\wedge c)^2)  \psi_2(r^2)   (h\wedge c)^{-2 - 2\mu}  k^{ 1+\mu}
\eqen 
with the implicit constants depending only on $C$. \label{item-corner-upper}
\item There exists $m_* \in\BB N$, depending only on $C$, such that for each $n,h,c,r,k\in \BB N$ satisfying~\eqref{eqn-corner-local-cond} with $n,h,c,r,k \geq m_*$ and $(h,c) \in \left[C^{-1} n^{1/2} , C n^{1/2}\right]_{\BB Z}^2$, we have
\eqbn
\BB P\left( \wt E_{n,k,r}^{h,c} \right) \succeq  \psi_0(n) \psi_2(k) n^{-1- \mu}  k^{1+\mu} 
\eqen
with the implicit constants depending only on $C$. \label{item-corner-lower}
\item For each $q\in (0,1)$, there exists $A >0$ and $m_* \in\BB N$ (depending only on $C$ and $q$) such that for each $n,h,c,r,k\in \BB N$ satisfying~\eqref{eqn-corner-local-cond} with $n,h,c,r,k \geq m_*$ and $(h,c) \in \left[C^{-1} n^{1/2} , C n^{1/2}\right]_{\BB Z}^2$, we have
\eqbn
\BB P\left(  \sup_{j \in \left[1, J_{n,k,r}^{h,c} \right]_{\BB Z}} |X(-j,-1)| \leq A n^{1/2}   \,|\,  E_{n,k,r}^{h,c}   \right) \geq 1-q. 
\eqen
\label{item-corner-reg}
\end{enumerate}
\end{prop}

\begin{remark}
Only assertion~\ref{item-corner-upper} of Proposition~\ref{prop-corner-local} is needed for the proof of Theorem~\ref{thm-empty-prob}. However, the other assertions do not take much additional effort to prove and will be used in~\cite{gms-burger-finite}. 
\end{remark}
 
We will extract Proposition~\ref{prop-corner-local} from the following lemma, which in turn is a consequence of the results of Section~\ref{sec-no-order-local} together with Lemma~\ref{prop-P-interval-reg-var}. 

\begin{lem} \label{prop-interval-local}
For $n,k\in\BB N$, let $\ol P_{n,k}   $ be the largest $j\in  \left[n- k , n    \right]_{\BB Z}$ for which $X(-j,-1)$ contains no orders (or $\ol P_{n,k}  = 0$ if no such $j$ exists).
For $n,k,r \in \BB N$ and $(h,c) \in \BB N^2$, let $\ol{E}_{n,k,r}^{h,c} $ be the event that $\ol P_{n,k} > 0$ and
\eqbn
\left|\mcl N_{\tc H}\left(X(-\ol P_{n,k} , -1)\right)  - h \right|\leq r \quad \op{and}\quad \left|\mcl N_{\tc C}\left(X(-\ol P_{n,k}, -1)\right)  - c\right|\leq r .
\eqen
Let $\psi_0$ be the slowly varying function of Lemma~\ref{prop-I-reg-var} and let $\psi_2$ be the slowly varying function of Lemma~\ref{prop-P-interval-reg-var}. Fix $C>1$. Suppose $n\in\BB N$, $(h,c) \in\BB N^2$ with $h , c \geq n^{\xi/2}$, $k\in [1 ,   n /2]_{\BB Z}$, and $r    \in [1,  C (h\wedge c) ]_{\BB Z}$.
\begin{enumerate}
\item For each $n,h,c,r,k$ as above, we have
\eqbn
\BB P\left(   \ol{E}_{n,k,r}^{h,c}   \right) \preceq    \psi_0((h\wedge c)^2)  \psi_2(k)   (h\wedge c)^{-2 - 2\mu}  r^2 k^\mu  
\eqen 
with the implicit constants depending only on $C$. \label{item-interval-upper}
\item There exists $n_* \in \BB N$ (depending only on $C$) such that for each $n\geq n_*$ and each $n,h,c,r,k$ as above with $(h,c) \in \left[C^{-1} n^{1/2} , C n^{1/2}\right]_{\BB Z}^2$, we have
\eqbn
\BB P\left( \ol{E}_{n,k,r}^{h,c} \right) \succeq   \psi_0(n) \psi_2(k) n^{-1- \mu}  r^2 k^\mu 
\eqen
with the implicit constants depending only on $C$. \label{item-interval-lower}
\item For each $q\in (0,1)$, there exists $A >0$ and $n_*\in\BB N$ (depending only on $C$ and $q$) such that for each $n,h,c,r,k$ as above with $n\geq n_*$ and $(h,c) \in \left[C^{-1} n^{1/2} , C n^{1/2}\right]_{\BB Z}^2$, we have
\eqbn
\BB P\left(  \sup_{j \in [1,\ol P_{n,k}]_{\BB Z}} |X(-j,-1)| \leq A n^{1/2}   \,|\,  \ol{E}_{n,k,r}^{h,c}   \right) \geq 1-q. 
\eqen
\label{item-interval-reg}
\end{enumerate}
\end{lem}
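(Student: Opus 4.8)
The plan is to reduce all three assertions to the estimates of Section~\ref{sec-no-order-local} for the events $\mcl E_m^{h,c}$, via a decoupling identity at the time $\ol P_{n,k}$ together with the renewal estimate underlying Lemma~\ref{prop-P-interval-reg-var}. The key combinatorial observation is that if $\mcl R(X_{-j}\cdots X_{-1})$ consists of burgers only, then for any word $w$ one has $\mcl R(w\cdot X_{-j}\cdots X_{-1})=\mcl R(w)\cdot \mcl R(X_{-j}\cdots X_{-1})$, since the burgers of $\mcl R(X_{-j}\cdots X_{-1})$ occur after every symbol of $w$ and hence cannot cancel any order of $\mcl R(w)$. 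Consequently, on the event $\{X(-j,-1)\ \text{contains no orders}\}$, for $j'>j$ the reduced word $X(-j',-1)$ contains an order if and only if $\mcl R(X_{-j'}\cdots X_{-j-1})$ does, and the latter depends only on $X_{-j-1},\dots,X_{-j'}$. Hence for $j\in[n-k,n]_{\BB Z}$ and $(h',c')\in\BB N^2$, the event $\{\ol P_{n,k}=j\}\cap\{\mcl N_{\tc H}(X(-j,-1))=h',\ \mcl N_{\tc C}(X(-j,-1))=c'\}$ is the intersection of an event measurable with respect to $X_{-1}\cdots X_{-j}$ --- which by translation invariance has probability $\BB P(\mcl E_j^{h',c'})$ --- with an independent event measurable with respect to $X_{-j-1}\cdots X_{-n}$, of probability $\BB P(P>n-j)$, with $P$ as in Lemma~\ref{prop-P-reg-var}. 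Summing over $j$ and over $(h',c')$ with $|h'-h|\le r$, $|c'-c|\le r$, and writing $i=n-j$, we obtain the exact identity
\eqbn
\BB P\big(\ol E_{n,k,r}^{h,c}\big)=\sum_{i=0}^{k}\bigg(\sum_{\substack{|h'-h|\le r\\ |c'-c|\le r}}\BB P\big(\mcl E_{n-i}^{h',c'}\big)\bigg)\,\BB P(P>i).
\eqen

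Granting this identity, assertion~\ref{item-interval-upper} follows by bounding the inner sum via Proposition~\ref{prop-no-order-upper} --- using that $h'\wedge c'\asymp h\wedge c$ on the relevant window, together with the trivial bound $\sum_{h',c'}\BB P(\mcl E_m^{h',c'})\le\BB P(I>m)\preceq\psi_0(n)n^{-\mu}$ to cover the case where $r$ is comparable to $h\wedge c$ --- so that the inner sum is $\preceq r^2\psi_0((h\wedge c)^2)(h\wedge c)^{-2-2\mu}$, and then by $\sum_{i=0}^{k}\BB P(P>i)\asymp\psi_2(k)k^\mu$; the latter is the computation in the proof of Lemma~\ref{prop-interval-reg-var} applied with $Y_1=P$, and can alternatively be read off from Lemma~\ref{prop-P-interval-reg-var} by summing the total-mass version of the identity above. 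For assertion~\ref{item-interval-lower}, note that when $(h,c)\in[C^{-1}n^{1/2},Cn^{1/2}]_{\BB Z}^2$ and $i\le k\le n/2$ the whole window lies in $[(C')^{-1}(n-i)^{1/2},C'(n-i)^{1/2}]_{\BB Z}^2$ for an appropriate $C'$; hence Proposition~\ref{prop-no-order-lower} and Lemma~\ref{prop-I-reg-var} give, for every $(h',c')$ in the window, $\BB P(\mcl E_{n-i}^{h',c'})=\BB P(\mcl E_{n-i}^{h',c'}\,|\,I>n-i)\,\BB P(I>n-i)\succeq(n-i)^{-1}\psi_0(n-i)(n-i)^{-\mu}\asymp\psi_0(n)n^{-1-\mu}$, and summing the $\asymp r^2$ window terms and then over $i\le k$ yields $\BB P(\ol E_{n,k,r}^{h,c})\succeq r^2\psi_0(n)n^{-1-\mu}\sum_{i=0}^k\BB P(P>i)\asymp r^2\psi_0(n)n^{-1-\mu}\psi_2(k)k^\mu$.

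For the regularity estimate~\ref{item-interval-reg}, the same decoupling shows that conditionally on $\{\ol P_{n,k}=n-i\}$ and on the counts $(h',c')$ at $\ol P_{n,k}$, the word $X_{-1}\cdots X_{-(n-i)}$ has the conditional law of $X_1\cdots X_{n-i}$ given $\mcl E_{n-i}^{h',c'}$ (up to reversing the order of symbols), and $\sup_{j'\in[1,\ol P_{n,k}]_{\BB Z}}|X(-j',-1)|$ is a functional of this word; so, dividing by the identity above, $\BB P\big(\sup_{j'\in[1,\ol P_{n,k}]_{\BB Z}}|X(-j',-1)|>An^{1/2}\,\big|\,\ol E_{n,k,r}^{h,c}\big)$ is at most the supremum, over $0\le i\le k$ and over $(h',c')$ with $|h'-h|\le r$ and $|c'-c|\le r$, of
\eqbn
\BB P\Big(\sup_{j'\in[1,n-i]_{\BB Z}}|X(-j',-1)|>An^{1/2}\ \Big|\ \mcl E_{n-i}^{h',c'}\Big).
\eqen
Since $n-i\asymp n$ and every $(h',c')$ in the window satisfies $h'\wedge c'\asymp n^{1/2}$, this last probability is made $\le q$ for $A$ large (depending only on $C$ and $q$) by combining Proposition~\ref{prop-no-order-endpoint}, Lemma~\ref{prop-no-order-reg}, Lemma~\ref{prop-I-burger-sup} and \cite[Lemma 3.13]{shef-burger} in the manner these are used in the proof of Theorem~\ref{thm-local-conv}.

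The conceptual heart of the argument is the decoupling identity, and the point that requires care is checking that it is \emph{exact} --- in particular that it correctly handles the boundary cases $i=0,k$ and the requirement $\ol P_{n,k}>0$ --- after which~\ref{item-interval-upper} and~\ref{item-interval-lower} are bookkeeping with already-established local and renewal estimates, the only delicate aspect being the behaviour of the window near the axes (i.e.\ when $r$ is comparable to $h\wedge c$), controlled by the total-mass bound $\sum_{h',c'}\BB P(\mcl E_m^{h',c'})=\BB P(I>m)$. I expect the most technical part to be assertion~\ref{item-interval-reg}, which rests on a uniform version of the path-regularity estimates underlying Theorem~\ref{thm-local-conv}.
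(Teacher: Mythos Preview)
Your proposal is correct and follows essentially the same route as the paper. The key step in both is the decoupling at $\ol P_{n,k}$: on $\{\ol P_{n,k}=m\}$, the event that $X(-j,-1)$ has an order for $j\in[m+1,n]_{\BB Z}$ depends only on $X_{-n}\cdots X_{-m-1}$ (since the burgers of $X(-m,-1)$ cannot absorb orders to their left), and hence the conditional law of $X_{-m}\cdots X_{-1}$ given $\{\ol P_{n,k}=m\}$ is its conditional law given that $X(-m,-1)$ has no orders. The paper packages the ``future'' piece via $\BB P(\ol P_{n,k}>0)\asymp\psi_0(n)\psi_2(k)(k/n)^\mu$ from Lemma~\ref{prop-P-interval-reg-var} and then multiplies by the conditional probabilities $\BB P(\mcl E_m^{h',c'}\,|\,I>m)$ coming from Propositions~\ref{prop-no-order-upper} and~\ref{prop-no-order-lower}; you instead write out the exact renewal identity $\BB P(\ol E_{n,k,r}^{h,c})=\sum_{i=0}^k\big(\sum_{h',c'}\BB P(\mcl E_{n-i}^{h',c'})\big)\BB P(P>i)$ and sum $\sum_{i\le k}\BB P(P>i)\asymp\psi_2(k)k^\mu$ directly. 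These are two presentations of the same computation. For assertion~\ref{item-interval-reg}, the paper invokes Theorem~\ref{thm-local-conv} (already proven) to get the uniform path bound under $\mcl E_m^{h',c'}$, whereas you cite the ingredients of that theorem; either way works. One cosmetic point: no reversal is needed, only translation --- the law of $X_{-m}\cdots X_{-1}$ is that of $X_1\cdots X_m$ by stationarity.
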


The reader should think of the event of Lemma~\ref{prop-interval-local} as an approximation to the events of Proposition~\ref{prop-corner-local}, which concerns a particular time (namely $\ol P_{n,k}  $) which is easier to analyze. We will eventually deduce Proposition~\ref{prop-corner-local} from Lemma~\ref{prop-interval-local} by comparing the events considered in the two results.

\begin{proof}[Proof of Lemma~\ref{prop-interval-local}]
Observe that 
\[
\{\ol P_{n,k} \not= 0 \} = \{\text{$\exists \, j\in \left[n- k , n  \right]_{\BB Z}$ such that $X(-j,-1)$ contains no orders}\} .
\]
By Lemma~\ref{prop-P-interval-reg-var}, 
\eqb \label{eqn-olP-prob}
\BB P\left(\ol P_{n,k} > 0\right) \asymp    \psi_0(n) \psi_2(k) (k/n)^\mu
\eqe 
with $\psi_0$ as in Lemma~\ref{prop-I-reg-var}, $\psi_2$ as in Lemma~\ref{prop-P-interval-reg-var}, and the implicit constants depending only on $p$. 

For $m\in [n- k , n  ]_{\BB Z}$, the event $\{\ol P_{n,k} = m\}$ is the same as the event that $X(-m,-1)$ contains no orders and there is no $j\in [m+1,n]$ for which $X(-j,-m-1)$ contains no orders. Hence the conditional law of $X_{-m} \dots X_{-1}$ given $\{\ol P_{n,k} = m\}$ is the same as its conditional law given that $X(-m,-1)$ contains no orders. 
By Lemma~\ref{prop-I-reg-var} and Proposition~\ref{prop-no-order-upper}, it follows that for each $m\in [n-k , n ]_{\BB Z}$,
\eqb \label{eqn-olP-preceq}
\BB P\left( \mcl E_m^{h,c} \,|\,   \ol P_{n,k} = m\right)    \preceq    \frac{\psi_0((h\wedge c)^2)   (h\wedge c)^{-2 - 2\mu}  }{\psi_0(n) n^{-\mu} }  
\eqe 
with the implicit constant depending only on $p$. By Proposition~\ref{prop-no-order-lower}, for each $C>1$ there exists $n_*\in\BB N$ such that for each $n\geq n_*$ and each $(h,c) \in \left[(2C)^{-1} n^{1/2} , 2C n^{1/2}\right]_{\BB Z}^2$, 
\eqb \label{eqn-olP-succeq}
\BB P\left(\mcl E_m^{h,c} \,|\,   \ol P_{n,k} = m \right)    \succeq  n^{ - 1}
\eqe 
with the implicit constant depending only on $C$. By Theorem~\ref{thm-local-conv}, for each $C>1$ and $q\in (0,1)$, there exists $A >0$ and $n_*\in\BB N$ (depending only on $C$ and $q$) such that for each $n\in\BB N$, each $m\in [n-k , n ]_{\BB Z}$, and each $(h,c) \in \left[(2C)^{-1} n^{1/2} , 2C n^{1/2}\right]_{\BB Z}^2$, we have
\eqb \label{eqn-olP-reg}
\BB P\left(    \sup_{j \in [1, m]_{\BB Z}} |X(-j,-1)| \leq A n^{1/2}     \,|\,   \mcl E_m^{h,c} ,\,    \ol P_{n,k} = m\right)    \geq 1-q .
\eqe 

We obtain assertion~\ref{item-interval-upper} by combining~\eqref{eqn-olP-prob} and~\eqref{eqn-olP-preceq} then summing over all $(h' , c') \in [0\vee(h-r), h+ r]_{\BB Z} \times [0\vee (c-r) , c+r]_{\BB Z}$. We similarly obtain assertion~\ref{item-interval-lower} from~\eqref{eqn-olP-prob} and~\eqref{eqn-olP-succeq}. Assertion~\ref{item-interval-reg} is immediate from~\eqref{eqn-olP-reg}. 
\end{proof}

The following lemma will be used to deduce Proposition~\ref{prop-corner-local} from Lemma~\ref{prop-interval-local}.

\begin{lem} \label{prop-corner-to-interval}
For $n,k,r,h,c\in\BB N$, let $ \ol P_{n,k}$ and $  \ol{E}_{n,k,r}^{h,c} $ be defined as in Lemma~\ref{prop-interval-local}. Let $E_{n,k,r}^{h,c}$ be defined as in~\eqref{eqn-corner-local-event} and let $J_{n,k,r}^{h,c}$ be as in the discussion just after. For each $C>1$ and each $q\in (0,1)$, there is an $m_*\in\BB N$ and a $ B \geq C$, depending only on $C$ and $q$, such that for each $n,h,c,r,k \in\BB N$ with $k\leq n$, $C^{-1} k\leq r^2 \leq C k$, and $r  \leq C (h\wedge c) $ and each realization $x$ of $X_{-J_{n,k,r}^{h,c}} \dots X_{-1}$ for which $E_{n,k,r}^{h,c}$ occurs, we have
\eqbn
\BB P\left(  \ol E_{n+ B r^2, B r^2 , B r}^{h,c} \,|\,  X_{-J_{n,k,r}^{h,c}} \dots X_{-1} =x \right) \geq 1-q  .
\eqen 
\end{lem}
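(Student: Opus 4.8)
\emph{Proof plan.} Write $J:=J_{n,k,r}^{h,c}$. Since $E_{n,k,r}^{h,c}$ occurs we have $J=|x|\in[n-k,n]$, the reduced word $\mcl R(x)=X(-J,-1)$ has at most $2r$ orders, and $h_0:=\mcl N_{\tc H}(\mcl R(x))\in[h-r,h]$, $c_0:=\mcl N_{\tc C}(\mcl R(x))\in[c-r,c]$. By the strong Markov property for the iid sequence $(X_i)$, conditionally on $\{X_{-J}\dots X_{-1}=x\}$ the symbols $X_{-J-1},X_{-J-2},\dots$ are iid with law~\eqref{eqn-theta-prob}. Throughout, $B$ denotes a large constant depending only on $C$ and $q$, to be chosen at the end.

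First I record a monotonicity observation. For $j>J$ we have $X(-j,-1)=\mcl R\big(X(-j,-J-1)\cdot X(-J,-1)\big)$, and prepending the $|X(-j,-J-1)|$ symbols of $X(-j,-J-1)$ to $X(-J,-1)$ one at a time changes $\mcl N_{\tc H}$ by $0$ or $+1$ at each step: a prepended order does not affect burger counts, while a prepended burger either cancels an order (leaving $\mcl N_{\tc H}$ unchanged) or joins the burger pile (increasing $\mcl N_{\tc H}$ by one if it is a hamburger). Hence
\eqbn
0\le \mcl N_{\tc H}(X(-j,-1))-h_0\le |X(-j,-J-1)|,
\eqen
and likewise for $\mcl N_{\tc C}$ and $c_0$. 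Consequently, on the event $G_1:=\{\sup_{1\le \ell\le 2Br^2}|X(-J-\ell,-J-1)|\le \tfrac B2 r\}$, every no-orders time $j$ of $X$ with $J<j\le n+Br^2$ (note that $n+Br^2-J\le Br^2+k\le 2Br^2$ once $B\ge C$) satisfies $\mcl N_{\tc H}(X(-j,-1))\in[h-r,h+\tfrac B2 r]\subseteq[h-Br,h+Br]$ and similarly $\mcl N_{\tc C}(X(-j,-1))\in[c-Br,c+Br]$. Applying \cite[Lemma 3.13]{shef-burger} to the iid sequence $X_{-J-1},X_{-J-2},\dots$ (whose conditional law given $x$ is the unconditional one) gives $\BB P(G_1^c\mid x)\preceq e^{-c\sqrt{B/8}}$ for a universal $c>0$, which is $\le q/2$ once $B$ is large.

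It remains to choose $B$ so that $\BB P(G_2\mid x)\ge 1-q/2$, where $G_2$ is the event that $X$ has at least one no-orders time in $[n+k,n+Br^2]$: indeed, on $G_1\cap G_2$ we have $\ol P_{n+Br^2,Br^2-k}>0$ and this time, being a no-orders time in $[n+k,n+Br^2]\subseteq(J,n+Br^2]$, has $\mcl N_{\tc H},\mcl N_{\tc C}$ within $Br$ of $h,c$, so $\ol E_{n+Br^2,Br^2-k,Br}^{h,c}$ occurs and $\BB P(\ol E_{n+Br^2,Br^2-k,Br}^{h,c}\mid x)\ge \BB P(G_1\cap G_2\mid x)\ge 1-q$. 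To bound $\BB P(G_2^c\mid x)$, let $Q$ be the first no-orders time $\ge J$, so $n-k\le J\le Q$; if $X(-J-\sigma,-J-1)$ is an all-burger reduced word with at least $2r$ hamburgers and at least $2r$ cheeseburgers then all $\le 2r$ orders of $X(-J,-1)$ are fulfilled (each of the $\le r$ hamburger-or-flexible orders by a hamburger, each of the $\le r$ cheeseburger orders by a cheeseburger), so $Q\le J+\sigma$, where $\sigma$ is the least such index and depends only on the fresh iid part. Using the renewal structure of the no-orders times (their successive increments are iid copies of $P_1$ from Lemma~\ref{prop-P-reg-var}, as in the proof of Lemma~\ref{prop-P-interval-reg-var}), the heavy tail $\BB P(P_1>t)=\psi_1(t)t^{-(1-\mu)}$ with $1-\mu\in(0,1/2)$, and the scaling limit \cite[Theorem 2.5]{shef-burger} (which shows that a no-orders time of size $\asymp r^2$ typically carries an all-burger word with $\asymp r$ burgers of each type), one checks that for $B$ large, $\BB P(Q\le n+Br^2/2\mid x)\ge 1-q/4$. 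On that event $X(-Q,-1)$ is all-burgers, so the no-orders times $\ge Q$ form a renewal process started at $Q$ with $P_1$-distributed increments, and since $[n+k,n+Br^2]\supseteq[Q+2k,Q+Br^2/2]\supseteq[Q+2Cr^2,Q+Br^2/2]$ (using $n-k\le Q\le n+Br^2/2$, $k\le Cr^2$ and $B\ge 4C$), it suffices that this renewal process has a point in the window $[2Cr^2,Br^2/2]$; by standard estimates for renewal processes with regularly varying increments of index $1-\mu\in(0,1)$ this has probability $\ge 1-\delta(B/(4C))$ with $\delta(\lambda)\to0$ as $\lambda\to\infty$, uniformly in $r$. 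Taking $B$ large enough gives $\BB P(G_2^c\mid x)\le q/2$, completing the proof.

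\textbf{The main obstacle} is the last step — controlling the ``clearing time'' $Q-J$ (equivalently $\sigma$) and forcing a no-orders time into the prescribed window $[n+k,n+Br^2]$, uniformly over all admissible $(n,h,c,r,k)$. This is where one must deploy heavy-tailed renewal theory carefully: the increments $P_1$ have infinite mean, so the relevant quantities are governed by Dynkin--Lamperti-type behaviour of the overshoot, and the slowly varying corrections $\psi_1,\psi_2$ must be dominated via Potter's bounds; this is also the only point at which one might genuinely need the parameters to be at least $m_*$. The ``counts stay good'' part of the argument, by contrast, is routine once one has the monotonicity observation above together with \cite[Lemma 3.13]{shef-burger}.
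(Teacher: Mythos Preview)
Your plan is sound and the ingredients are all correct, but you are doing more work than necessary. The extra work is precisely what you flag as the ``main obstacle''.

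The paper's simplification is this: instead of seeking a no-orders time for the \emph{original} word $X(-j,-1)$ in the target window --- which forces your two-stage ``clear to $Q$, then renew from $Q$'' structure --- look directly for a no-orders time of the \emph{fresh} word $X(-j,-|x|-1)$. The fresh process has iid $P_1$-increments from $j=|x|$ onward, so a single Dynkin--Lamperti application places one in $[|x|+B_0r^2,\,|x|+B_1r^2]$ with probability $\ge 1-q/3$; choosing $B_0$ large relative to $C$ (the paper takes $B_0>C^2\ge k/r^2$) ensures this window sits inside the target interval for $\ol P$. Separately, by the scaling limit, $X(-|x|-B_0r^2,-|x|-1)$ already carries $\ge r$ burgers of each type with probability $\ge 1-q/3$; your own monotonicity observation then gives the fresh no-orders time $\ge r$ burgers of each type as well. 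This lower bound does double duty: it absorbs the $\le r$ hamburger-or-flexible orders and $\le r$ cheeseburger-or-flexible orders in $X(-|x|,-1)$ (so this is simultaneously a no-orders time for the original word), and together with your $G_1$-type sup bound it pins the burger counts within $Br$ of $(h,c)$.

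In other words, the event you use to bound $\sigma$ (fresh word all-burger with $\ge 2r$ of each type) is already the whole argument: that time is itself a no-orders time for the original word, and by choosing the lower window edge $B_0r^2$ large enough it already lies in the correct interval. The subsequent renewal from $Q$ is superfluous, and with it the uniformity-in-$r$ concern you raise reduces to the single Dynkin--Lamperti application (which both arguments need, and which is where the $m_*$ in the statement is implicitly used).
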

\begin{proof}
Given $n,h,c,r,k$, a realization $x$ as in the statement of the lemma, and $B\geq C$, let $F(x;B) = F_{n,k,r}^{h,c}(x;B)$ be the event that there is a $j\in [|x|+k,|x|+B r^2]_{\BB Z}$ such that $X(-j,-|x|-1)$ contains no orders and at least $r$ burgers of each type; and $\sup_{j\in [|x|+1,|x|+B_1 r^2]_{\BB Z}} |X(-j,-|x|-1)| \leq B_2 r$.
Then we have
\eqbn
\{ X_{-J_{n,k,r}^{h,c}} \dots X_{-1} =x\} \cap F(x;B) \subset \ol E_{n+ B r^2, B r^2 , B r}^{h,c} 
\eqen
so we just need to show that for large enough $B$,
\eqb \label{eqn-no-order-next-increment}
\BB P\left(F(x;B) \,|\,X_{-J_{n,k,r}^{h,c}} \dots X_{-1} =x\right) \geq 1-q. 
\eqe   
By \cite[Theorem 2.5]{shef-burger}, we can find $B_0 > C^2  \, (\geq k/r^2)$ such that for each $r >0$, it holds with probability at least $1-q/3$ that the word $X(-|x| - B_0 r^2 , -|x|-1)$ contains at least $r$ burgers of each type. 
By Lemma~\ref{prop-P-reg-var} and the Dynkin-Lamperti theorem~\cite{dynkin-limits,lamperti-limits}, we can find $B_1 > B_0$, depending only on $q$, such that with probability at least $1-q/3$, there is a $j\in [|x| + B_0 r^2, |x| +B_1 r^2]_{\BB Z}$ such that $X(-j , -|x|-1)$ contains no orders. 
By a second application of \cite[Theorem 2.5]{shef-burger}, we can find $B_2  > 0$ such that with probability at least $1-q/3$, we have $\sup_{j\in [|x|+1,|x|+B_1 r^2]_{\BB Z}} |X(-j,-|x|-1)| \leq B_2 r$. Since $J_{n,k,r}^{h,c}$ is a stopping time for $X$, read backward, the word $\dots X_{-|x|-2} X_{-|x|-1}$ is independent from the event $\{ X_{-J_{n,k,r}^{h,c}} \dots X_{-1} =x\}$. By combining these observations, we obtain~\eqref{eqn-no-order-next-increment} with $B = C\vee B_0 \vee B_1 \vee B_2$. 
\end{proof}
 
\begin{proof}[Proof of Proposition~\ref{prop-corner-local}] 
Suppose given $n,k,r,h,c$ as in the statement of the proposition and define the event $\ol E_{n,k,r}^{h,c}$ as in Lemma~\ref{prop-interval-local}. 
By Lemma~\ref{prop-corner-to-interval}, we can find $B > 0$, depending only on $C$, such that  
\eqbn
\BB P\left( \ol E_{n + B r^2, B r^2 , B r}^{h,c}    \,|\,     E_{n,k,r}^{h,c}   \right) \geq \frac12 .
\eqen 
By combining this with assertion~\ref{item-corner-upper} of Lemma~\ref{prop-interval-local}, we obtain assertion~\ref{item-corner-upper} of the present proposition. 
 
From now on we assume further that that $(h,c) \in \left[C^{-1} n^{1/2} , C n^{1/2}\right]_{\BB Z}$. 
To obtain assertion~\ref{item-corner-lower}, let $\ul P_{n,k,r}^{h,c} $ be the smallest $j \in [n-k , n]_{\BB Z}$ for which 
\eqbn
\left|\mcl N_{\tc H}\left(X(-j , -1)\right)  - h \right|\leq r \quad\op{and}\quad \left|\mcl N_{\tc C}\left(X(-j, -1)\right)  - c\right|\leq r ,
\eqen
or $\ul P_{n,k,r}^{h,c} = n+1$ if no such $j$ exists. Observe that $\ul P_{n,k,r}^{h,c}$ is a stopping time for the filtration generated by $X$, read backward; and $\ul P_{n,k,r}^{h,c} \leq n$ on $\ol E_{n,k,r}^{h,c}$. By assertion~\ref{item-interval-lower} of Lemma~\ref{prop-interval-local}, if $n$ is chosen sufficiently large (depending only on $C$) then
\eqb \label{eqn-ulP-prob}
\BB P\left(     \ul P_{n,k,r}^{h,c} \leq n    \right) \succeq   \psi_0(n) \psi_2(k) r^2 k^\mu n^{-1-2\mu} 
\eqe 
with the implicit constant depending only on $C$. Set $\ul P^* :=   \ul P_{n-r^2,r^2 ,r/8}^{h-r/8  ,c-r/8}$.  
We observe that $\ul P^*$ is a stopping time for the word $X$, read forward. 
By the strong Markov property and~\cite[Theorem 2.5]{shef-burger} (and since $C^{-1} r^2 \leq k \leq Cr^2$) we can find $m_* \in\BB N$, depending only on $C$, such that for $n,h,c,k,r $ as above with $n,h,c,k,r \geq m_*$,   
\eqbn
\BB P\left( \wt E_{n,k,r}^{h,c}  \,|\,    \ul P^*  \leq n-r^2  \right) \succeq 1 ,
\eqen
implicit constant depending only on $C$. Here we recall that $\wt E_{n,k,r}^{h,c} $ is defined as in~\eqref{eqn-corner-local-event-J^H}. By~\eqref{eqn-ulP-prob} (applied to $\ul P^*$) we obtain assertion~\ref{item-corner-lower}. 
 
It remains to prove assertion~\ref{item-corner-reg}. For $A>0$, let
\eqbn
G(A) = G_{n,k,r}^{h,c}(  A) :=  E_{n,k,r}^{h,c}  \cap \left\{ \sup_{j \in \left[1,J_{n,k,r}^{h,c}\right]_{\BB Z}} |X(-j,-1)| \leq A n^{1/2} \right\} .
\eqen 
By Lemma~\ref{prop-corner-to-interval}, we can find a $B  > 1$, depending only on $C$, such that for each $n,h,c,k,r$ as above, we have
\eqb \label{eqn-corner-reg1}
\BB P\left(  \ol E    \,|\, G( A)^c \cap E_{n,k,r}^{h,c}  \right) \geq \frac12 ,
\eqe 
where $\ol E := \ol E_{n+Br^2 , Br^2  , B r}^{h,c}$. 
By assertions~\ref{item-interval-upper} and~\ref{item-interval-lower} of Lemma~\ref{prop-interval-local} together with assertions~\ref{item-corner-upper} and~\ref{item-corner-lower} of the present lemma, there is an $\wt m_* \in \BB N$, depending only on $C$, such that for $n,h,c,r,k$ as above with $n,h,c,r,k \geq \wt m_*$, we have
\eqbn
\BB P\left(\ol E \right) \asymp \BB P\left( E_{n,k,r}^{h,c}  \right)
\eqen
with the implicit constant depending only on $C$. 
 
By assertion~\ref{item-interval-reg} of Lemma~\ref{prop-interval-local}, for each $\alpha>0$ we can find $A > 0$ and $m_* \geq \wt m_*$, depending only on $C$ and $\alpha$, such that for $m\geq m_*$, we have
\eqbn
\BB P\left( \sup_{j \in [1, n]_{\BB Z}} |X(-j,-1)| > A n^{1/2}      \,|\, \ol E  \right) \leq \alpha ,
\eqen
which implies
\eqb \label{eqn-corner-reg0} 
\BB P\left( G(A)^c \cap E_{n,k,r}^{h,c}      \,|\, \ol E  \right) \leq \alpha .
\eqe 
By~\eqref{eqn-corner-reg1} and~\eqref{eqn-corner-reg0}, 
\alb
\BB P\left(G(A)^c \,|\, E_{n,k,r}^{h,c}   \right) 
= \frac{  \BB P\left(  G(A)^c \cap E_{n,k,r}^{h,c}    \,|\, \ol E    \right) \BB P\left(\ol E  \right)   }{  \BB P\left(  \ol E    \,|\, G(A)^c \cap E_{n,k,r}^{h,c} \right)   \BB P\left(   E_{n,k,r}^{h,c} \right)   }  
 \preceq \alpha
\ale
with the implicit constant depending only on $C$. We now conclude by choosing $\alpha$ smaller than $q$ divided by this implicit constant. 
\end{proof}

\section{Empty reduced word exponent}
\label{sec-empty-prob}

In this section we will prove Theorem~\ref{thm-empty-prob}. The proof of the lower bound for $\BB P(X(1,2n) = \emptyset)$ is easier, and is given in Section~\ref{sec-empty-lower}. The argument is similar to some of the proofs given in \cite[Section 3]{gms-burger-cone} (and in fact appeared in an earlier version of that paper). In Section~\ref{sec-empty-estimates}, we will prove some estimates which are needed for the proof of the upper bound, namely a variant of \cite[Lemma 3.7]{gms-burger-cone} which tells us that a word with few hamburger orders is very unlikely to contain too many flexible orders; and an estimate to the effect that the path $Z^n$ is unlikely to stay close to the boundary of the first quadrant for a long time. In Section~\ref{sec-empty-upper}, we use the estimates of Section~\ref{sec-few-order-local} and Section~\ref{sec-empty-estimates} to prove the upper bound in Theorem~\ref{thm-empty-prob}.

\subsection{Lower bound} 
\label{sec-empty-lower}

In this subsection we will prove the lower bound in Theorem~\ref{thm-empty-prob}. The content of this subsection appeared in an earlier version of~\cite{gms-burger-cone}, but was moved to its present location to present a more logical progression of ideas. We thank Cheng Mao for his contributions to this subsection.

The argument of this subsection is similar to the proofs in~\cite[Section 3.2]{gms-burger-cone}, and proceeds as follows. 
We prove the lower bound with $X(-2n,-1)$ in place of $X(1,2n)$ (which we can do by translation invariance). 
Let $\delta>0$ be a small parameter which is independent from $n$. We divide the interval $[-2n,-1]_{\BB Z}$ into the interval $[-2n,-n-1]_{\BB Z}$ plus $\BB k_n = \left\lceil \frac{\log n}{\log \delta} \right\rceil$ intervals of the form $[-\delta^{k-1} n , -\delta^k n]_{\BB Z}$. We will define events $ G_{n,k} $ corresponding to these intervals such that the intersection of the $G_{n,k}$'s over all of the intervals is contained in $\{X(1,2n) =\emptyset\}$. By~\cite[Theorem 2.5]{shef-burger}, one can approximate the path $D = (d, d^*)$ of Notation~\ref{def-theta-count} on all but finitely many of the intervals $[-\delta^{k-1} n , -\delta^k n]_{\BB Z}$ (i.e., those for which $\delta^k n$ is at least some $\delta$-dependent constant) by a correlated two-dimensional Brownian motion as in~\eqref{eqn-bm-cov}.  
This will lead to a lower bound for the probability of the intersection of the events $G_{n,k}$ for all but finitely many values of $k$. The finitely many small intervals remaining do not pose a problem since there are only finitely many possibilities for the symbols in the word restricted to these intervals. 

To carry out the above argument we first need to estimate an appropriate probability for Brownian motion. 
 
\begin{lem} \label{prop-bm-point-asymp}
Fix a constant $C>1$. Let $z \in [C^{-1} , C]^2$. Let $Z$ be a Brownian motion as in~\eqref{eqn-bm-cov} (started from 0). For $\delta > 0$, let $F_\delta^z$ be the event that $U(t) \geq - \delta^{1/2}$ and $V(t) \geq -\delta^{1/2}$ for each $t\in [0,1]$; and $|Z(1) - z| \leq \delta^{1/2} $. Then $\BB P\left(F_\delta^z\right) \succeq \delta^{ \mu+1}$, where $\mu$ is as in~\eqref{eqn-cone-exponent} and the implicit constant depends on $C$, but not $z$ or $\delta$. 
\end{lem}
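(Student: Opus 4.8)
\medskip

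The plan is to lower bound $\BB P(F_\delta^z)$ by combining a two-step decomposition of the time interval $[0,1]$ with the cone exponent $\mu$. First I would split the path at a fixed time, say $t = 1/2$: use the first half of the path to get the Brownian motion safely into the interior of the first quadrant at a point of order-$1$ distance from the axes, and then use the second half as a Brownian bridge that hits the target $z$ (up to error $\delta^{1/2}$) while staying in the first quadrant. The point of this split is that only one of the two pieces needs to be responsible for the small-probability cone-avoidance near the axes — and actually the constraint ``$U \geq -\delta^{1/2}$, $V \geq -\delta^{1/2}$'' is a very mild relaxation of ``$U \geq 0$, $V \geq 0$'', so the dominant cost is really just forcing the endpoint to be near $z$ and the path to stay in (a $\delta^{1/2}$-neighborhood of) the cone for unit time.

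\medskip

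Here are the steps in order. \emph{Step 1 (reduce to a cone probability with free endpoint in a ball).} Note $F_\delta^z \supset \{U(t), V(t) \geq -\delta^{1/2} \ \forall t\in[0,1]\} \cap \{|Z(1)-z| \leq \delta^{1/2}\}$. By a linear change of coordinates $T$ taking the first quadrant to the wedge $W_\theta := \{w : 0 \leq \arg w \leq \theta\}$ with $\theta = \theta(p)$ (so $\mu = \pi/(2\theta)$ up to the identification in \eqref{eqn-cone-exponent}) and taking the correlated BM of \eqref{eqn-bm-cov} to a standard planar BM $B$, the event becomes: $B$ stays in a $O(\delta^{1/2})$-neighborhood of $W_\theta$ for unit time and $B(1)$ lies in a ball of radius $\asymp \delta^{1/2}$ around $w := Tz$, where $|w| \asymp 1$ and $\op{dist}(w, \partial W_\theta) \asymp 1$, all uniformly over $z \in [C^{-1},C]^2$. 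It suffices to bound the probability that $B$ stays in $W_\theta$ itself (the slightly larger neighborhood only helps) and $B(1) \in B(w, c\delta^{1/2})$ for a small constant $c$.

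\medskip

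\emph{Step 2 (two-step: enter the wedge, then a conditioned bridge).} Decompose as $\BB P(B|_{[0,1/2]} \in \mathcal{A}) \cdot \inf_{y \in \mathcal{A}'} \BB P_y(B|_{[0,1/2]} \text{ stays in } W_\theta, \ B(1/2) \in B(w, c\delta^{1/2}))$, where $\mathcal{A}$ is the event that $B|_{[0,1/2]}$ stays in $W_\theta$ and ends in some fixed compact set $\mathcal{A}'$ deep inside $W_\theta$ at order-$1$ distance from $\partial W_\theta$ and from $w$. The first factor is a positive constant depending only on $\theta$ (hence only on $p$); the standard way to see it is the $h$-transform / Brownian-motion-in-a-cone literature, or just an explicit sub-event built from a tube around a smooth path. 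For the second factor, starting from a point $y$ at order-$1$ distance from $\partial W_\theta$, the event ``stay in $W_\theta$ for time $1/2$ and land in a given ball of radius $\rho := c\delta^{1/2}$ around $w$'' has probability $\asymp \rho^2 \cdot (\text{a bounded factor})$ when $w$ is also at order-$1$ distance from the boundary — this is just the Gaussian density of landing in that ball ($\asymp \rho^2$) times the positive probability of the stay-in-cone event, which can be decoupled because neither $y$ nor $w$ is near $\partial W_\theta$. This gives the second factor $\succeq \delta$.

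Wait — that would only give $\delta$, not $\delta^{\mu+1}$, and $\mu > 1/2 > 0$ so $\delta^{\mu+1} < \delta$; so in fact the bound I sketched is \emph{stronger} than needed. Let me reconcile: the point is that $\mu < 1$, so $\delta^{\mu+1} > \delta^2$, and my two-step bound gives $\succeq \delta \gg \delta^{\mu+1}$. So actually the naive two-step argument \emph{over}-shoots, meaning the lemma as stated is the easy direction and any of these constructions works. The cleanest honest route: use the two-step decomposition above to get $\BB P(F_\delta^z) \succeq \delta^{2}$? No — $\delta^2 < \delta^{\mu+1}$ since $\mu+1 < 2$, so $\delta^2$ is \emph{too small}. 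I need at least $\delta^{\mu+1}$, and $\delta \geq \delta^{\mu+1}$, so I need the bound to be at least $\delta^{\mu+1}$, i.e. I need it to be \emph{no smaller} than $\delta^{\mu+1}$; getting $\succeq \delta$ suffices. So the plan works: the endpoint-in-a-ball costs $\asymp \delta$ (two free coordinates each localized to an interval of length $\asymp \delta^{1/2}$), the stay-in-cone costs only a constant because we keep the path at macroscopic distance from $\partial W_\theta$ throughout, and $\delta \succeq \delta^{\mu+1}$ since $\mu + 1 > 1$ and $\delta \leq 1$ (we may assume $\delta$ small; for $\delta$ bounded away from $0$ the statement is trivial).

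\medskip

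\emph{Main obstacle.} The only real subtlety is the uniformity over $z \in [C^{-1},C]^2$ and making sure the implicit constant depends only on $C$ and $p$, not on $z$ or $\delta$. This is handled because after the change of coordinates the target $w = Tz$ ranges over a \emph{compact} subset of the open wedge $W_\theta$, so both $\inf_{z}\op{dist}(Tz, \partial W_\theta)$ and $\sup_z |Tz|$ are finite positive constants; all the Gaussian density estimates and stay-in-cone lower bounds are then uniform over this compact set by a routine compactness/continuity argument. A secondary minor point is justifying the lower bound ``stay in $W_\theta$ for unit time starting at an interior point $\succeq$ const'': this follows from Shimura's construction \cite{shimura-cone} (the conditioned process has positive total mass), or more elementarily from a tube-around-a-path estimate, exactly as such facts are used in \cite[Section 2]{gms-burger-cone}. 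I would cite \cite{shimura-cone} and \cite[Section 2.1]{gms-burger-cone} for these inputs rather than reprove them.
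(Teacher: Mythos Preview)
Your two-step decomposition at time $1/2$ is the right structure and in fact matches the paper's approach, but Step~2 contains a genuine error that invalidates the argument. You claim that the first factor
\[
\BB P\bigl(B|_{[0,1/2]} \text{ stays in } W_\theta \text{ and } B(1/2) \in \mcl A'\bigr)
\]
is a positive constant depending only on $\theta$. This is false: the Brownian motion $B$ starts at the \emph{apex} of the wedge, and a planar Brownian motion started on the boundary of a cone exits the cone instantaneously almost surely (project onto the normal to an edge through $0$; the resulting one-dimensional Brownian motion started at $0$ a.s.\ goes negative immediately). So this probability is exactly $0$, and your lower bound is vacuous. The ``tube around a smooth path'' heuristic does not save this, because any such tube must start at the apex and the same instantaneous-exit problem persists once you have discarded the $\delta^{1/2}$-relaxation.

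The fix is precisely \emph{not} to throw away the relaxation $U,V \geq -\delta^{1/2}$ in Step~1. With the relaxation in place, the first-half event (stay in the $\delta^{1/2}$-enlarged quadrant on $[0,1/2]$ and end in a fixed compact subset of $(0,\infty)^2$) has probability $\asymp \delta^\mu$ by the cone exponent (this is \cite[Lemma~2.2]{gms-burger-cone} and scale invariance, or \cite[Section~4]{shimura-cone}). The second-half event (from an interior point, stay in the quadrant and land in a ball of radius $\delta^{1/2}$ around $z$) then costs $\asymp \delta$ exactly as you argued. Multiplying gives $\delta^{\mu+1}$, which is the paper's proof. Your confusion in the ``Wait'' paragraph---where you oscillate between $\delta$, $\delta^2$, and $\delta^{\mu+1}$---is a symptom of the Step~2 miscount: once the first factor is correctly identified as $\delta^\mu$ rather than a constant, the arithmetic resolves cleanly.
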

\begin{proof}
Let $\wt E_\delta$ be the event that $U(t) \geq -\delta^{1/2}$ and $V(t) \geq -\delta^{1/2}$ for each $t\in [0,1/2]$; and $Z(1/2) \in [C^{-1} , C]^2$. 
By \cite[Lemma~3.2]{gms-burger-cone} and scale invariance (see also \cite[Section 4]{shimura-cone}), we have $\BB P\left(\wt E_\delta\right) \asymp \delta^\mu$. The conditional law of $Z|_{[1/2,1]}$ given $Z|_{[0,1/2]}$ is that of a Brownian motion with covariances as in~\eqref{eqn-bm-cov} started from $Z(1/2)$. On the event $\wt E_\delta$, the probability that such a Brownian motion stays in the first quadrant until time 1 and satisfies $|Z(1) - z| \leq \delta^{1/2}$ is proportional to $\delta$. Hence $\BB P\left(F_\delta^z \,|\, \wt E_\delta\right) \asymp \delta$. The statement of the lemma follows.
\end{proof}

\begin{proof}[Proof of Theorem~\ref{thm-empty-prob}, lower bound]
Fix $\delta>0$ and $C>100$, to be chosen later. Let $\BB k_n = \left\lceil \frac{\log n}{\log \delta^{-1} } \right\rceil$ be the smallest $k\in\BB N$ such that $\delta^k n \leq 1$.
Also fix $\nu\in (1-\mu ,1/2)$. 

Let $b_0^H$ (resp. $b_0^C$) be the number of hamburgers (resp. cheeseburgers) in $X(-2n , -n-1)$.
For  $k\in [0,  \BB k_n ]_{\BB Z}$ let 
\eqbn
b_k^H := \mcl N_{\tc H}\left(X(-2n,-\lfloor \delta^k n \rfloor-1) \right) \quad \op{and} \quad
b_k^C := \mcl N_{\tc C}\left(X(-2n,-\lfloor \delta^k n \rfloor-1) \right)  .
\eqen 
Let $G_{n,0} $ be the event that $X(-2n,-n-1)$ contains no orders, at least $7 C^{-1} n^{1/2}$ hamburgers, at least $7 C^{-1} n^{1/2}$ cheeseburgers, and at most $C n^{1/2}$ total burgers.  
For $k\in [1,   \BB k_n]_{\BB Z}$, let $G_{n,k}$ be the event that the following is true.
\begin{enumerate}
\item $\mcl N_\theta \left( X(  -  \lfloor \delta^{k-1} n \rfloor ,- \lfloor \delta^k n \rfloor-1 ) \right) \leq  0\vee ( C^{-1} (\delta^k n)^{1/2} - (\delta n)^{\nu (k-1) } -1 )$ for $\theta\in\{\tc H,\tc C\}$. \label{item-empty-order-hc}
\item $b_{k-1}^H - 4 C^{-1} (\delta^k n )^{1/2}   - (\delta n)^{\nu (k-1) } \leq \mcl N_{\theta}\left( X( -  \lfloor \delta^{k-1} n \rfloor ,- \lfloor \delta^k n \rfloor-1 ) \right) \leq   b_{k-1}^H - 3 C^{-1} (\delta^k n )^{1/2}   -  (\delta n)^{\nu (k-1) }$ for $\theta \in \{\tb H , \tb C\}$. \label{item-empty-lower-b}
\item $\mcl N_{\tb F}\left( X(-  \lfloor \delta^{k-1} n \rfloor , -\lfloor \delta^k n \rfloor-1 ) \right) \leq  (\delta n)^{\nu (k-1) }$. \label{item-empty-lower-F}
\end{enumerate} 
Roughly speaking, $G_{n,k}$ is the event that $X(  -   \lfloor \delta^{k-1} n \rfloor , \lfloor \delta^k n \rfloor-1 ) $ has at most $\approx C^{-1}(\delta^k n)^{1/2}$ burgers of each type; the orders in this word fulfill all but $\asymp (\delta^k n)^{1/2}$ of the burgers in $X(-2n, -\lfloor \delta^{k-1} n \rfloor -1)$; and $X(  -   \lfloor \delta^{k-1} n \rfloor , \lfloor \delta^k n \rfloor-1 ) $ does not have too many leftover $\tb F$'s. 

By conditions~\ref{item-empty-lower-b} and~\ref{item-empty-lower-F}, if $k\in [1,\BB k_n]_{\BB Z}$ and $ G_{n,k}$ occurs then every order in $X(    -  \lfloor \delta^{k-1} n \rfloor , \lfloor \delta^k n \rfloor-1 ) $ has a match in $X(-2n,-\lfloor \delta^{k-1} n \rfloor)$. By induction, on $\bigcap_{j=0}^k G_{n,j}$ the word $X(-2n, -\lfloor \delta^k n \rfloor-1)$ contains no orders. Furthermore, using condition~\ref{item-empty-lower-b} in the definition of $G_{n,k}$, we find that if $\lfloor \delta^k n \rfloor$ is at least some $(\delta,C)$-dependant constant, then
\eqb \label{eqn-remaining-burgers}
C^{-1}  (\delta^k n )^{1/2}     \leq b_k^H  \leq 6C^{-1}  (\delta^k n )^{1/2}  \quad \op{and} \quad C^{-1}  (\delta^k n )^{1/2}     \leq b_k^C  \leq 6C^{-1}  (\delta^k n )^{1/2}  .
\eqe  

We will now estimate the conditional probability of $G_{n,k}$ given $X_{-2n} \dots X_{-\lfloor \delta^{k-1} n \rfloor -1} $ using the scaling limit result~\cite[Theorem 2.5]{shef-burger} and Lemma~\ref{prop-bm-point-asymp}. 
By \cite[Lemma 3.3]{gms-burger-cone}, if $C$ is chosen sufficiently large then we have $\BB P(G_{n,0}) \geq n^{-\mu + o_n(1)}$. 
With $D = (d,d^*)$ the walk as in~\eqref{eqn-discrete-path}, we have
\alb
\mcl N_{\tc H} \left( X(  - \lfloor \delta^{k-1} n \rfloor , -\lfloor \delta^{k } n \rfloor -1 ) \right) & =   -\inf_{j \in[ \lfloor \delta^{k-1} n \rfloor , -\lfloor \delta^{k } n \rfloor -1]_{\BB Z}} (d(-j)  - d(-\lfloor \delta^{k } n \rfloor -1))  \quad \op{and}\\
\mcl N_{\tb H}\left( X(  - \lfloor \delta^{k-1} n \rfloor , -\lfloor \delta^{k } n \rfloor -1 ) \right) &=  d(-\lfloor \delta^{k-1} n \rfloor) -  \inf_{j \in[ \lfloor \delta^{k-1} n \rfloor , -\lfloor \delta^{k } n \rfloor  -1]_{\BB Z}} d(-j) + \xi_n   ,
\ale 
where $\xi_n$ denotes an error term which is bounded above in absolute value by one plus the number of $\tb F$'s in $X(  - \lfloor \delta^{k-1} n \rfloor , -\lfloor \delta^{k } n \rfloor -1 )$.
Similar formulas hold for cheeseburgers and cheeseburger orders with $d^*$ in place of $d$. 
By \cite[Theorem 2.5]{shef-burger} and translation invariance, the restriction to $[0,1]$ of
\eqbn
t\mapsto  \left( \delta^{(k-1)/2} - \delta^{k/2} \right)^{-1/2}  n^{-1/2} \left( D( - \lfloor t (  \lfloor \delta^{k-1} n \rfloor  - \lfloor \delta^{k } n \rfloor ) + \lfloor \delta^k n \rfloor  \rfloor ) - D(- \lfloor \delta^{k } n \rfloor  -1)  \right)
\eqen
converges in law to the restriction to $[0 ,1]$ of a correlated Brownian motion $Z$ as in~\eqref{eqn-bm-cov}. 
By the strong Markov property, the same holds if we condition on $X_{-2n} \dots X_{-\lfloor \delta^{k-1} n \rfloor -1}$. 
By combining these observations with \cite[Corollary 6.2]{gms-burger-cone} (to deal with condition~\ref{item-empty-lower-F}) and Lemma~\ref{prop-bm-point-asymp}, we see that there exists $m_* \in \BB N$, independent of $n$, and a constant $q   \in(0,1)$, independent of $n$ and $\delta$, such that whenever $k\in [1,   \BB k_n]_{\BB Z}$ with $\lfloor \delta^k n \rfloor \geq m_*$,~\eqref{eqn-remaining-burgers} holds on the event $G_{n,k}$ and
\[
\BB P\left(G_{n,k} \,|\,  X_{-2n} \dots X_{-\lfloor \delta^{k-1} n \rfloor -1}  \right) \BB 1_{G_{n,k-1}} \geq q \delta^{\mu+1} \BB 1_{G_{n,k-1}} .
\]
Let $k_*$ be the largest $k\in [1,    \BB k_n]_{\BB Z}$ for which $\lfloor \delta^k n \rfloor \geq m_*$. Then
\eqb \label{eqn-P-big-events}
\BB P\left(\bigcap_{k=0}^{k_*} G_{n,k} \right) \geq q^{ \BB k_n} \delta^{ \BB k_n(\mu+1)} n^{-\mu  + o_n(1)}   \geq n^{-2\mu-1+o_n(1) +o_\delta(1)}
\eqe 
with the $o_\delta(1)$ independent of $n$. Here we used that
\eqbn
\log \left( q^{-\BB k_n} \right) \leq  \frac{ \log q^{-1} \log n}{\log\delta^{-1}} \leq  o_\delta(1) \log n .
\eqen
It follows from~\eqref{eqn-remaining-burgers} that on $\bigcap_{k=0}^{k_*} G_{n,k}$, the word $X(-2n,-\lfloor \delta^{k_*} n \rfloor -1)$ contains no orders and fewer than $\lfloor \delta^{k_*} n \rfloor$ burgers. Since $m_*$ does not depend on $n$ and $\lfloor \delta^{k_*} n \rfloor \leq \delta^{-1} m_*$, we have
\eqb \label{eqn-P-reduced|big-events} 
\BB P\left(  X(1,2n) = \emptyset   \,|\, \bigcap_{k=0}^{k_*} G_{n,k}   \right) \succeq 1, 
\eqe 
with the implicit constant independent of $n$. 
By combining~\eqref{eqn-P-big-events} and~\eqref{eqn-P-reduced|big-events} and sending $\delta\rta 0$, we obtain the lower  bound in Theorem~\ref{thm-empty-prob}. 
\end{proof}

\subsection{Some preliminary estimates}
\label{sec-empty-estimates}

In this subsection we will prove some estimates which are needed to rule out pathological behavior in the proof of the upper bound in Theorem~\ref{thm-empty-prob}. 
The reader may wish to skim this section and refer back to it while reading Section~\ref{sec-empty-upper}. 

First we prove a variant of \cite[Lemma 3.7]{gms-burger-cone} where we consider the number of hamburger orders in a word, rather than the length of the word.

\begin{lem} \label{prop-few-F-H}
Let $\mu'$ be as in~\eqref{eqn-cone-exponent}. For $ m\in\BB N$ and $\nu >  \mu'$, we have
\eqb \label{eqn-few-F-H}
\BB P\left(\text{$\exists \, n\in\BB N$ such that $\mcl N_{\tb H}\left(X(1,n)\right) \geq m $ and $ \mcl N_{\tb F}\left(X(1,n)\right) \geq  \mcl N_{\tb H}\left(X(1,n)\right)^{2\nu} $}\right)  = o_m^\infty(m) .
\eqe 
Furthermore, for each fixed $\eta >0$ and each $n\in\BB N$, we have
\eqb \label{eqn-few-F-H'}
\BB P\left(\text{$\exists \, j \in [1,n]_{\BB Z}$ such that $ \mcl N_{\tb F}\left(X(-j,-1)\right) \geq  \mcl N_{\tb H}\left(X(-j,-1)\right)^{2 \nu} \vee n^{\eta  }$}\right)  = o_n^\infty(n ) .
\eqe 
\end{lem}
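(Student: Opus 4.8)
The plan is to reduce \eqref{eqn-few-F-H} to the known estimate \cite[Lemma 2.8]{gms-burger-cone}, which controls the number of flexible orders in terms of the \emph{length} of the word rather than the number of hamburger orders. The key observation is that the two quantities $\mcl N_{\tb H}\left(X(1,n)\right)$ and $n$ are comparable away from a negligible event: by the bivariate invariance principle \cite[Theorem 2.5]{shef-burger} together with the scaling behavior of the walk $d$, the event that $X(1,n)$ contains at least $m$ hamburger orders but $n \geq m^{2+\epsilon}$ for some fixed small $\epsilon > 0$ has probability $o_m^\infty(m)$ (this is because the number of hamburger orders in $X(1,n)$ grows like a fixed fraction of $n$ as $n\to\infty$, up to $\sqrt n$ fluctuations, so $\mcl N_{\tb H}(X(1,n)) \geq m$ forces $n \preceq m^2$ with overwhelming probability; one can make this precise via a union bound over dyadic scales and large-deviations bounds for the simple-random-walk-like process counting burgers minus orders, c.f.\ the trivial bound mentioned after Theorem~\ref{thm-empty-prob}). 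On the complementary event $n \leq m^{2+\epsilon}$, if $\mcl N_{\tb F}(X(1,n)) \geq \mcl N_{\tb H}(X(1,n))^\nu \geq m^\nu$ and $\nu > \mu'$, then $\mcl N_{\tb F}(X(1,n)) \geq m^\nu \geq n^{\nu/(2+\epsilon)}$. Choosing $\epsilon$ small enough that $\nu/(2+\epsilon) > \mu'$ (possible since $\nu > \mu'$), this forces the event in \cite[Lemma 2.8]{gms-burger-cone} with length parameter $n$ and exponent $\nu/(2+\epsilon) > \mu'$, which has probability $o_n^\infty(n) = o_m^\infty(m)$ after summing over the relevant range of $n \leq m^{2+\epsilon}$.

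For \eqref{eqn-few-F-H'}, I would argue similarly but now reading the word backward from $-1$. Split according to whether $\mcl N_{\tb H}(X(-j,-1))$ is large or small. If $\mcl N_{\tb H}(X(-j,-1)) \geq n^{\eta'}$ for a suitable $\eta' \in (0,\eta)$, then the bound $\mcl N_{\tb F}(X(-j,-1)) \geq \mcl N_{\tb H}(X(-j,-1))^\nu$ together with the reduction above (comparing $\mcl N_{\tb H}(X(-j,-1))$ to $j$, hence $\mcl N_{\tb F}(X(-j,-1)) \succeq j^{\nu/(2+\epsilon)}$ off a negligible event) puts us in the situation of \cite[Lemma 2.8]{gms-burger-cone}; here we also use $j \leq n$ so the error is $o_n^\infty(n)$. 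If instead $\mcl N_{\tb H}(X(-j,-1)) \leq n^{\eta'}$, then the other clause $\mcl N_{\tb F}(X(-j,-1)) \geq n^{\eta\nu}$ is in force, and again—since the total number of flexible orders among the first $j \leq n$ symbols being at least $n^{\eta\nu}$ is, via \cite[Lemma 2.8]{gms-burger-cone} with length $n$ and any exponent strictly between $\mu'$ and $\eta\nu \wedge \nu$—an event of probability $o_n^\infty(n)$ provided we also arrange $n^{\eta\nu} \geq j^{\mu' + \text{(something positive)}}$; this requires care when $j$ is itself comparable to $n$, but since $\eta\nu > 0$ is a fixed positive power of $n$ we can absorb it. A union bound over $j \in [1,n]_{\BB Z}$ costs only a factor of $n$, which is harmless against $o_n^\infty(n)$.

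The main obstacle I anticipate is making the comparison "$\mcl N_{\tb H}(X(-j,-1)) \geq m \Rightarrow j \preceq m^{2+\epsilon}$ off an event of probability $o_m^\infty(m)$" fully rigorous with the right quantitative strength, since we need the error to beat \emph{every} polynomial in $m$, not just be small. The cleanest route is probably to use that the process $i \mapsto \mcl N_{\text{burgers}}(X(1,i)) - \mcl N_{\text{orders}}(X(1,i))$ is a mean-zero simple random walk (as noted in the paper), combined with the fact that the \emph{number of orders} among the first $i$ symbols is tightly concentrated around $\frac{1-p}{2} i$ (by Hoeffding, since each symbol is independently an order with fixed probability); hence if $\mcl N_{\tb H}(X(1,n)) \geq m$ and $n$ is large then $n$ cannot be too much larger than $m^2$ without a random-walk excursion of atypical size, whose probability decays faster than any polynomial. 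I would then feed the resulting bound into \cite[Lemma 2.8]{gms-burger-cone} as described. Once this comparison lemma is in hand, the rest is bookkeeping with the exponents and a union bound over dyadic (or all) values of $n$ (resp.\ $j$).
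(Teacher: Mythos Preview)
Your reduction to \cite[Lemma 2.8]{gms-burger-cone} has a genuine gap, and in fact two related problems.

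First, the comparison you assert is false. Recall that $X(1,n)$ denotes the \emph{reduced} word, so $\mcl N_{\tb H}(X(1,n))$ counts \emph{unmatched} hamburger orders; this quantity is non-decreasing in $n$ and scales like $n^{1/2}$ (it equals $-\inf_{i\leq n} d(i)$ up to a flexible-order correction, by the description of $d$ in Section~\ref{sec-burger-prelim}). In particular it does \emph{not} grow ``like a fixed fraction of $n$''. Consequently, for $n\geq m^{2+\ep}$ the event $\{\mcl N_{\tb H}(X(1,n))\geq m\}$ is typical, not rare: its probability tends to $1$, not to $0$. So the ``$n\geq m^{2+\ep}$'' branch of your dichotomy cannot be discarded on those grounds.

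Second, even if you keep the full event and try to combine $\mcl N_{\tb F}(X(1,n))\geq \mcl N_{\tb H}(X(1,n))^\nu$ with $\mcl N_{\tb H}(X(1,n))\preceq n^{1/2}$ (which \emph{is} true off an $o_n^\infty(n)$ event), you only get $\mcl N_{\tb F}(X(1,n))\succeq n^{\nu/2}$. Feeding this into \cite[Lemma 2.8]{gms-burger-cone} requires $\nu/2>\mu'$, i.e.\ $\nu>2\mu'$; since $\mu'\in(1/3,1/2)$ this is strictly stronger than the hypothesis $\nu>\mu'$, so your argument cannot cover the full range. The same exponent loss propagates to your treatment of~\eqref{eqn-few-F-H'}.

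The paper avoids this loss by \emph{not} passing through the time variable $n$ at all. It first proves a tailored estimate (Lemma~\ref{prop-few-F-orders}) bounding $\mcl N_{\tb F}(X(1,\wt I_k^H))$ directly in terms of the index $k$, where $\wt I_k^H$ is the $k$th time the reduced word contains no hamburgers. The point is that $\mcl N_{\tb F}(X(1,\wt I_k^H))$ can increase (as a function of $k$) only at those $k$ for which $X(1,\wt I_k^H)$ also contains no cheeseburgers; the first such index has tail exponent $2\mu'$ by \cite[Lemma 2.7]{gms-burger-cone}, and then \cite[Lemma 2.10]{gms-burger-cone} converts this into the bound $\mcl N_{\tb F}(X(1,\wt I_k^H))\leq k^{2\nu}$ for $\nu>\mu'$. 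One then relates $\mcl N_{\tb H}(X(1,n))$ to the index $m_n=\max\{k:\wt I_k^H\leq n\}$ via Hoeffding (they differ by a bounded factor except on an $o_m^\infty(m)$ event), which transfers the bound to $\mcl N_{\tb H}(X(1,n))$ with the correct exponent. This renewal structure at the times $\wt I_k^H$ is the missing idea in your proposal.
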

 
To prove Lemma~\ref{prop-few-F-H}, we first require the following further lemma. 

\begin{lem} \label{prop-few-F-orders}
Let $\mu'$ be as in~\eqref{eqn-cone-exponent}. For $m\in\BB N$, let $\wt I_m^H$ be the $m$th smallest $i\in\BB N$ for which the word $X(1,i)$ contains no hamburgers. Then for each $\nu > \mu'$, we have
\eqbn
\BB P\left(\text{$\exists \, k \geq m$ such that $ \mcl N_{\tb F}\left(X(1, \wt I_k^H)\right) \geq k^{2\nu}$} \right) = o_m^\infty(m) .
\eqen
\end{lem}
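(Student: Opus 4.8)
Throughout write $\mu'$ for the exponent in \eqref{eqn-cone-exponent}, fix $\nu>\mu'$, and let $f(m)$ denote the probability in the statement. The plan is a union bound over $k\geq m$ combined with the observation that flexible orders are created only at very special times. Indeed, if a $\tb F$-symbol is appended to a reduced word that already contains a burger, it consumes the most recent one and is immediately fulfilled, so it never appears in the reduced word; hence every flexible order present in $X(1,\wt I_k^H)$ is ``born'' at one of the indices $i\leq \wt I_k^H$ for which the reduced word $X(1,i-1)$ contains no burgers, and such a word in particular contains no hamburgers. Since $\wt I_k^H$ is the $k$th index at which $X(1,\cdot)$ has no hamburgers, there are at most $k$ such indices $i$, so we get the deterministic bound $\mcl N_{\tb F}(X(1,\wt I_k^H))\leq k$. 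In particular the lemma is immediate when $\nu>1/2$: then $k^{2\nu}>k$ for $k\geq 2$, so for $m\geq 2$ the event in the statement is empty.

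For the remaining range $\nu\in(\mu',1/2]$ the point is that $\mcl N_{\tb F}(X(1,\cdot))$ is \emph{frozen along hamburger excursions} of the walk $i\mapsto d(X(1,i))$: while $d(X(1,i))>0$ the reduced word contains a hamburger, so no new flexible order survives, and the flexible orders already present can only be cancelled (by incoming hamburgers), never gained. Consequently $\mcl N_{\tb F}(X(1,\wt I_k^H))$ is bounded by the number of $\tb F$-symbols appearing in the (at most $k$) maximal sub-intervals of $[1,\wt I_k^H]$ on which $X(1,\cdot)$ is burger-free, hence by the total length of those sub-intervals; equivalently, letting $\wh X$ be the word obtained by concatenating $X$ over these burger-free sub-intervals, $\mcl N_{\tb F}(X(1,\wt I_k^H))\leq \mcl N_{\tb F}(\wh X)$ and $|\wh X|$ equals this total burger-free length. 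Fixing $\nu''\in(\mu',\nu)$ and applying \cite[Lemma 2.8]{gms-burger-cone} to $\wh X$, we get $\mcl N_{\tb F}(X(1,\wt I_k^H))<|\wh X|^{\nu''}$ outside an event of probability $o_m^\infty(m)$; so it suffices to show that $\BB P\bigl(|\wh X|\geq k^{2\nu/\nu''}\bigr)$ decays faster than any fixed power of $k$ (note $2\nu/\nu''>2$), after which $|\wh X|^{\nu''}<k^{2\nu}$ and the union bound $f(m)\leq\sum_{k\geq m}\BB P\bigl(\mcl N_{\tb F}(X(1,\wt I_k^H))\geq k^{2\nu}\bigr)$ closes the argument.

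The control of the burger-free length $|\wh X|$ is the step I expect to be the main obstacle, and it is here that one genuinely uses that $2\nu/\nu''>2$ (i.e.\ $\nu>\mu'$) together with the excursion structure of $i\mapsto d(X(1,i))$. The naive route — first bounding $\wt I_k^H$ and then applying a deviation estimate for the occupation of the burger-free region over $[1,\wt I_k^H]$ — fails, because $\wt I_k^H$ has only an index-$1/2$ upper tail, $\BB P(\wt I_k^H\geq R)\preceq R^{-1/2}k$ (by Lemma~\ref{prop-I^H-law} and the comparison argument of Lemma~\ref{prop-J^H-sup}), which yields only a fixed polynomial rate. The resolution is exactly the freezing phenomenon above: an anomalously large value of $\wt I_k^H$ is ``used up'' by a long hamburger excursion, which contributes nothing to $|\wh X|$, so that $|\wh X|$ is governed by $k$ (the number of burger-free sub-intervals, each whose length has the light/polynomial tail coming from \cite[Lemma 3.13]{shef-burger} and \cite[Theorem 2.5]{shef-burger}) rather than by $\wt I_k^H$. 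Quantifying this — producing the required superpolynomial-in-$k$ bound on $\BB P\bigl(|\wh X|\geq k^{2\nu/\nu''}\bigr)$ uniformly over the $k$ burger-free sub-intervals — is the technical heart of the proof; the rest is bookkeeping.
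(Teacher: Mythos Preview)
Your opening observation is correct and is also the paper's starting point: a $\tb F$-symbol joins $X(1,\cdot)$ only at a time $i$ with $X(1,i-1)$ burger-free, and such $i-1$ are in particular hamburger-free, hence among $\{0,\wt I_1^H,\dots,\wt I_{k-1}^H\}$. So $\mcl N_{\tb F}(X(1,\wt I_k^H))\leq k$ deterministically and the case $\nu>1/2$ is trivial. (One slip: flexible orders, once in the reduced word, are never cancelled --- incoming burgers are appended to the right of all orders --- so $\mcl N_{\tb F}(X(1,\cdot))$ is genuinely non-decreasing.)

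The argument for $\nu\in(\mu',1/2]$ has a real gap. First, the burger-free times in $[1,\wt I_k^H]$ form a subset of $\{\wt I_1^H,\dots,\wt I_k^H\}$, so $|\wh X|\leq k$ \emph{deterministically}; the ``main obstacle'' you flag, namely $\BB P(|\wh X|\geq k^{2\nu/\nu''})=o_k^\infty(k)$, is therefore vacuous. Second, and more seriously, the inequality $\mcl N_{\tb F}(X(1,\wt I_k^H))\leq |\wh X|^{\nu''}$ is not a consequence of \cite[Lemma~2.8]{gms-burger-cone}. That lemma bounds the $\tb F$-count in the reduction of an i.i.d.\ $\Theta$-word by a power of its length (with exponent $>1-\mu$, incidentally, not $>\mu'$), whereas your $\wh X$ consists entirely of order symbols (if $X(1,i)$ is burger-free then $X_i$ cannot be a burger), so its reduction is itself and the fraction of $\tb F$'s in it is of constant order. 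Your chain of inequalities therefore cannot improve on the linear bound $\mcl N_{\tb F}(X(1,\wt I_k^H))\preceq |\wh X|\leq k$.

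What is actually needed --- and what $|\wh X|$ is really measuring --- is control on the number of $m\leq k$ for which $X(1,\wt I_m^H)$ is burger-free. The paper obtains this via a renewal argument: let $N_l$ be the $l$th such $m$. The increments $N_l-N_{l-1}$ are i.i.d., and \cite[Lemma~2.7]{gms-burger-cone} yields the tail \emph{lower} bound $\BB P(N_1\geq m)\geq m^{-2\mu'+o_m(1)}$ (on the event there, $X(1,C^2m^2)$ already contains $>m$ hamburger orders while $\wt I_{N_1}^H\geq C^2m^2$; since hamburger orders can only be created at $\wt I_j^H$-times this forces $N_1\geq m$). Since $m\mapsto\mcl N_{\tb F}(X(1,\wt I_m^H))$ increases by at most one, and only at the $N_l$, the statement of the lemma is then a direct application of \cite[Lemma~2.10]{gms-burger-cone}. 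This lower tail bound on $N_1$ is the missing ingredient in your outline, and it is precisely where the exponent $\mu'$ enters.
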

\begin{proof} 
Let $N_0 = 0$ and for $l\in\BB N$, let $N_l$ be the $l$th smallest $m\in\BB N$ such that $X(1,\wt I_m^H)$ contains no cheeseburgers (equivalently no burgers). Note that the increments $\{N_l - N_{l-1}\}_{l\in\BB N}$ are iid. Let $M_m := \sup\{l\in\BB N\,:\, N_l \leq m\}$. By \cite[Lemma 3.7]{gms-burger-cone}, for each sufficiently large $C>1$ we have 
\eqb \label{eqn-big-I_M}
\BB P\left(\wt I_{N_1}^H \geq C^2 m^2 ,\, \mcl N_{\tb H}\left(X(1,C^2m^2)\right) > m \right) \geq m^{-2\mu' + o_m(1)} .
\eqe 
A hamburger order can only be added to the reduced word at one of the times $\wt I_k^H$ for $k\in\BB N$. Hence on the event in~\eqref{eqn-big-I_M}, we have $N_1 \geq m$, so
\eqbn
\BB P\left(N_1 \geq m\right) \geq m^{-2\mu'+o_m(1)} .
\eqen
The quantity $\mcl N_{\tb F}\left(X(1, \wt I_m^H)\right)$ can increase by at most 1 when $m$ increases by 1, and can increase only if $m = N_l$ for some $l\in\BB N$. The statement of the lemma therefore follows from \cite[Lemma 3.10]{gms-burger-cone}.
\end{proof}

\begin{proof}[Proof of Lemma~\ref{prop-few-F-H}]
Define the times $\wt I_m^H$ as in Lemma~\ref{prop-few-F-orders}. 
For $m\in\BB N$, let $E_m^H$ be the event that $\wt I_m^H = \wt I_{m-1}^H + 1$ and $X_{\wt I_m^H} = \tb H$. The events $E_m^H$ are independent and by~\eqref{eqn-theta-prob}, we have $\BB P\left(E_m^H\right) = \BB P\left( X_{\wt I_{m-1}^H + 1} = \tb H  \right) =  (1-p)/4$ for each $m\in\BB N$. By Hoeffding's inequality, 
\eqb \label{eqn-I^H-hoeff}
\BB P\left(\mcl N_{\tb H}\left(X(1,\wt I_{m}^H)\right) \leq \frac{1-p}{8} m \right)  = o_m^\infty(m) .
\eqe 
By summing this estimate over $[m,\infty)_{\BB Z}$, we find that the probability that there exists even a single $k \geq m$ for which $\mcl N_{\tb H}\left(X(1,\wt I_{k}^H)\right) \leq \frac{1-p}{8} k$ is of order $o_m^\infty(m)$.
 
Suppose there exists $n\in\BB N$ such that $h_n := \mcl N_{\tb H}\left(X(1,n)\right)   \geq m$ and $ \mcl N_{\tb F}\left(X(1,n)\right) \geq h_n^{2\nu}$. 
For each such $n$, let $m_n$ be the largest $k\in\BB N$ such that $\wt I_k^H \leq n$. We clearly have $m_n \geq h_n$, so by the above estimate it holds except on an event of probability $o_m^\infty(m)$ that $m_n \in [h_n , \frac{8}{1-p} h_n]_{\BB Z}$ for each such $n$. Therefore, for each such $n$ we have $m_n \geq m$ and $\mcl N_{\tb F}\left(X(1,\wt I_{m_n}^H)\right) \geq \left( \frac{1-p}{8} m_n\right)^{2\nu}$. We now obtain~\eqref{eqn-few-F-H} by means of Lemma~\ref{prop-few-F-orders}. 

To obtain~\eqref{eqn-few-F-H'}, we use translation invariance and~\eqref{eqn-few-F-H} with $m = \left\lfloor \frac{1-p}{8} n^{\eta } \right\rfloor$ to find that for each $j \in [1,n]_{\BB Z}$, the probability that $\mcl N_{\tb H}\left(X(-j,-1)\right) \geq \left\lfloor \frac{1-p}{8} n^{\eta } \right\rfloor$ and $ \mcl N_{\tb F}\left(X(-j,-1)\right) \geq  \mcl N_{\tb H}\left(X(-j,-1)\right)^{2\nu}$ is $o_n^\infty(n)$. By~\eqref{eqn-I^H-hoeff}, the probability that $\mcl N_{\tb F}\left(X(-j,-1) \right) \geq n^{\eta }$ and $\mcl N_{\tb H}\left(X(-j,-1)\right) < \left\lfloor \frac{1-p}{8} n^{\eta} \right\rfloor $ is also of order $o_n^\infty(n)$. We then sum over all $j\in [1,n]_{\BB Z}$. 
\end{proof}

Next we will prove a result to the effect that it is very unlikely that the path $n\mapsto D(n)$ of Section~\ref{sec-burger-prelim} stays close to the coordinate axes for a long time. 

\begin{lem} \label{prop-no-tube}
Define $D = (d,d^*)$ as in Notation~\ref{def-theta-count}. There is a constant $a_0 > 0$, depending only on $p$, such that for each $n\in\BB N$ and $r  > 0$, we have 
\eqb \label{eqn-no-tube-time}
\BB P\left(\sup_{j\in [1,n]_{\BB Z}} \left|    d\left(X(-j,-1)\right) \right| \wedge \left|    d^*\left(X(-j,-1)\right) \right|  \leq r  \right) \leq  e^{-a_0 r^{-2} n }  + o_n^\infty(n)  
\eqe  
with the rate of the $o_n^\infty(n)$ depending only on $p$. 
\end{lem}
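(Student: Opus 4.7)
The plan is to reduce the statement to a block argument on the forward walk. First, by translation invariance of the iid word $X$, the joint law of $(X(-j,-1))_{j=1}^n$ agrees with that of $(X(1,j))_{j=1}^n$, so it suffices to bound $\BB P(\wh E_n)$ where $\wh E_n := \{\sup_{j\in[1,n]_{\BB Z}} |d(X(1,j))| \wedge |d^*(X(1,j))| \leq r\}$. Writing $\wh D(j) := (d(X(1,j)), d^*(X(1,j)))$, a direct inspection of the reduction rules shows that appending one symbol changes $\wh D$ by exactly one of $(\pm 1, 0)$, $(0, \pm 1)$, or $(0,0)$: a $\tc H$ or $\tb H$ changes $\wh d$ by $\pm 1$, a $\tc C$ or $\tb C$ changes $\wh d^*$ by $\pm 1$, and a $\tb F$ decrements whichever of $\wh d, \wh d^*$ corresponds to the topmost burger in the current reduced word (or does nothing if no burger is present). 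For $T_k := kCr^2$ and $N := \lfloor n/(Cr^2) \rfloor$ with $C$ to be chosen, the sequence $(X(1, T_k))_{k=0}^N$ is therefore a Markov chain driven by the iid increment words $w_k := X_{T_{k-1}+1}\dots X_{T_k}$.

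The heart of the argument is a uniform block escape estimate: there should exist $C > 1$, $r_0 \geq 1$, and $p_0 > 0$ such that for every $r \geq r_0$ and every reduced word $v$,
\[
\BB P\left(\exists\, j \in [1, Cr^2]_{\BB Z}: |d(\mcl R(v\cdot X_1\dots X_j))| > r \ \text{and}\ |d^*(\mcl R(v \cdot X_1\dots X_j))| > r\right) \geq p_0 .
\]
Granting this, iterating via the Markov property immediately gives $\BB P(\wh E_n) \leq (1-p_0)^N \leq \exp(-(p_0/C)\, n/r^2)$, yielding the main term $e^{-a_0 n/r^2}$ with $a_0 := p_0/C$. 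The $o_n^\infty(n)$ slack is used to absorb the trivial regime $Cr^2 > n$ (where $e^{-a_0 n/r^2} = \Theta(1)$) and any low-probability configuration (for example, an atypical excess of flexible orders, handled via Lemma~\ref{prop-few-F-H}) on which the CLT-type approximation used below would fail.

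To establish the block escape estimate, I would use that the increments of $\wh D$ are bounded in absolute value by $1$ and have conditional variance uniformly bounded below: $\BB P(\Delta \wh d = +1 \mid \text{state}) = 1/4$ coming from $\tc H$ (independent of state) and $\BB P(\Delta \wh d = -1 \mid \text{state}) \geq (1-p)/4$ coming from $\tb H$, and similarly for $\wh d^*$, giving $\op{Var}(\Delta \wh d \mid \text{state}), \op{Var}(\Delta \wh d^* \mid \text{state}) \geq \sigma^2 > 0$. The conditional means satisfy $|\BB E[\Delta \wh d \mid \text{state}]|, |\BB E[\Delta \wh d^* \mid \text{state}]| \leq p/4$ and their sum is $\geq 0$. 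Combining these uniform bounds with the invariance principle (or a direct second-moment plus maximal inequality argument), over a block the walk travels by $\Omega(\sqrt{C}\,r)$ via fluctuations plus $O(Cr^2)$ via drift. A case split on $v$ depending on whether $|d^*(v)|$ exceeds $Cr^2 + r$ (in which case $\wh d^*$ is trapped outside $[-r,r]$ during the whole block and one only needs $|\wh d|$ to exceed $r$ via its own fluctuations) or not (in which case a two-dimensional Gaussian approximation shows both $|\wh d|$ and $|\wh d^*|$ exceed $r$ at some common $j$ with uniform positive probability) then gives the claim.

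The main obstacle will be the uniformity of $p_0$ over all initial reduced words $v$: the drifts of $\wh d$ and $\wh d^*$ depend on $v$ through the topmost-burger variable that governs $\tb F$ steps, and these drifts can switch sign as the walk evolves (for instance, the drift of $\wh d$ is $-p/4$ while hamburgers dominate the burger stack of $v$ but flips to $+p/4$ once the hamburgers are exhausted). The key control is the inequality $\BB E[\Delta \wh d \mid \text{state}] + \BB E[\Delta \wh d^* \mid \text{state}] \geq 0$, which prevents both coordinates from simultaneously drifting back toward $[-r, r]$, combined with the uniform variance lower bound, which forces at least one coordinate to escape any bounded interval of width $r$ within $O(r^2)$ steps.
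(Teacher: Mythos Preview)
Your block-argument skeleton matches the paper's, but the block-escape step is handled differently and your sketch has a real gap. The paper does \emph{not} attempt a uniform-over-$v$ escape estimate for $\wh D$. Instead it defines the block events on the \emph{increment} words $X_{-kr^2}\cdots X_{-(k-1)r^2-1}$, which are iid across $k$; the probability that an increment exits a $(3r{+}1)$-cross is bounded below directly by Sheffield's invariance principle applied to the unconditioned word, with no initial state in sight. The link back to the full path uses the concatenation identity $d(\mcl R(uw)) = d(u) + d(w) - F_H$ with $0 \le F_H \le \mcl N_{\tb F}(w)$, and the $\tb F$-correction is controlled \emph{globally} by restricting to the event $F_n = \{\mcl N_{\tb F}(X(-j,-1)) \le j^{\nu_1}\ \forall\, j \ge n^{1-2\nu_2}\}$, which has probability $1 - o_n^\infty(n)$. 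This, together with the restriction $r \ge n^{\nu_1}$ (smaller $r$ being handled by monotonicity of the event in $r$), is where the $o_n^\infty(n)$ genuinely originates --- not from the regime $Cr^2 > n$, which the main term $e^{-a_0 n/r^2}$ already absorbs since it is then bounded below by a positive constant.

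Your route through step-by-step conditional drift and variance bounds on the chain $\wh D$ does not close. Over a block of length $Cr^2$ the accumulated drift is of order $(p/4)Cr^2$, which swamps both the target scale $r$ and the fluctuation scale $\sqrt C\, r$. Your case split at $|d^*(v)| = Cr^2 + r$ therefore leaves the wide range $r < |d^*(v)| \le Cr^2 + r$ uncovered: there the drift can carry $\wh d^*$ back into $[-r,r]$ during the block (so it is not ``trapped outside''), yet $|d^*(v)|$ is far too large for a symmetric two-dimensional near-origin Gaussian argument to apply. The drift-sum inequality $\BB E[\Delta\wh d\mid\cdot] + \BB E[\Delta\wh d^*\mid\cdot] \ge 0$ is too coarse to repair this. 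The natural fix is to write $\wh D_j - (d(v),d^*(v)) = (d,d^*)(X(1,j)) - (F_H,F_C)$ with $F_H + F_C \le \mcl N_{\tb F}(X(1,j))$, bound the last term on a global good event, and apply the invariance principle to the \emph{free} increment $(d,d^*)(X(1,\cdot))$ --- which is exactly the paper's mechanism and sidesteps the uniformity-over-$v$ issue entirely.
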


Note that the right side of~\eqref{eqn-no-tube-time} is of order $o_n^\infty(n)$ provided $r \leq n^{1/2-\ep}$ for any $\ep > 0$; and is of constant order provided $r \geq n^{1/2}$. 

\begin{proof}[Proof of Lemma~\ref{prop-no-tube}]
The idea of the proof is to use the scaling limit result~\cite[Theorem 2.5]{shef-burger} for $D$ to argue that in each $r^2$-length interval of times, $D$ has positive conditional probability to exit the $r$-neighborhood of the coordinate axes, regardless of what has happened in the past; then multiply over $\asymp r^{-2} n$ such intervals. 
There is a small amount of additional complication arising from the fact that $D$ is not exactly Markovian, due to the presence of $\tb F$'s. 
 
Let
\eqbn
G_n(r):= \left\{  \sup_{j\in [1,n]_{\BB Z}}\left|    d\left(X(-j,-1)\right) \right| \wedge \left|    d^*\left(X(-j,-1)\right) \right|  \leq r \right\}  
\eqen
be the event in the statement of the lemma.
Let $\mu $ be as in~\eqref{eqn-cone-exponent} and fix $\nu_1 , \nu_2 \in (1-\mu ,1/2)$ with $\nu_1 < \nu_2$.
For $n\in\BB N$, let  
\eqbn
F_n :=\left\{ \sup_{j\in [n^{1-2\nu_2} , \infty)} j^{-\nu_1} \mcl N_{\tb F}\left(X(-j,-1)\right) \leq 1 \right\} .
\eqen
By \cite[Corollary 6.2]{gms-burger-cone}, $\BB P(F_n) = 1-o_n^\infty(n)$. Hence is suffices to bound $\BB P\left(G_n(r) \cap F_n\right)$. 

For $n\in\BB N$, $r > 0$, and $k\in \left[1,  r^{-2} n   \right]_{\BB Z}$, let 
\eqbn
E_n^k(r) := \left\{ \sup_{j\in [(k-1)r^2+1 , k r^2 ]_{\BB Z}} \left( |d\left(X(-j,-(k-1) r^2 - 1)\right) | \wedge  |d^*\left(X(-j,-(k-1) r^2 - 1)\right) | \right) \leq 3r+1 \right\}    
\eqen
be the event that (roughly speaking) the walk $D = (d,d^*)$ stays close to the coordinate axes during the time interval $[(k-1)r^2+1 , k r^2 ]_{\BB Z}$. 

We claim that if $r \geq n^{ \nu_1}$, then
\eqb \label{eqn-sup-and-F-contain}
G_n(r ) \cap F_n \subset \bigcap_{k=\lfloor n^{1-2\nu_2} \rfloor+1}^{\lfloor r^{-2} n \rfloor}  E_n^k(r) .
\eqe 
Indeed, suppose by way of contradiction that $G_n(r ) \cap F_n $ occurs but there exists  $k_* \in [n^{1-2\nu_2}+1 , r^{-2} n  ]_{\BB Z}$ such that $E_n^{k_*}(r)$ fails to occur. Then there exists $j \in  [(k_*-1) r^2 , k_* r^2  ]_{\BB Z}$ for which $|d\left( X(-j,-(k_*-1)r^2 - 1)\right)| \geq 3 r + 2$ and $|d^*\left( X(-j,-(k_*-1)r^2 - 1)\right)| \geq 3 r + 2$. By definition of $G_n(r)$, either $\left|d\left( X( -(k_*-1) r^2  , -1 )\right)\right| \leq r$ or $\left|d^*\left( X( -(k_*-1) r^2  , -1 )\right)\right| \leq r$. By symmetry we can assume without loss of generality that we are in the former case. 
Then
\alb
&\left| d\left(X(-j,-1)\right) \right| \\
&\qquad \geq \left|d\left( X(-j,-(k_*-1) r^2 - 1)\right)\right| -  \left|d\left( X( -(k_*-1) r^2  , -1 )\right)\right| - \mcl N_{\tb F}\left(X( -(k_*-1) r^2  , -1 )\right)   \\
&\qquad \geq 2r + 1 - \mcl N_{\tb F}\left(X( -(k-1) r^2  , -1 )\right) .
\ale
On $F_n$, we have 
\eqbn
\mcl N_{\tb F}\left(X( -(k-1) r^2  , -1 )\right)
 \leq (k-1)^{ \nu_1 }  r^{2\nu_1}
\leq n^{\nu_1} 
\leq  r  . 
\eqen
Hence $\left|d\left(X(-j,-1)\right) \right| \geq r+1$, which contradicts occurrence of $G_n(r)$. Thus~\eqref{eqn-sup-and-F-contain} holds.

The events $E_n^k(r)$ for different values of $k$ are independent. By \cite[Theorem 2.5]{shef-burger}, there is a $q > 0$, independent of $n$ and $r$, such that $\BB P\left(E_n^k(r)^c \right) \geq q$ for each $n\in\BB N$, each $r \geq n^{\nu_1}$, and each $k\in [n^{1-2\nu_2} , r^{-2} n ]_{\BB Z}$. Hence it follows from~\eqref{eqn-sup-and-F-contain} that for $r \geq n^{\nu_1  }$, we have
\eqbn
\BB P\left(G_n(r ) \cap F_n \right) \leq (1-q)^{r^{-2} n - n^{1-2\nu_2} -1}  \leq e^{-a_0 r^{-2} n}
\eqen
for an appropriate choice of $a$ as in the statement of the lemma.
It is clear that $\BB P\left(G_n(r) \cap F_n\right)$ is increasing in $r$, so for a general choice of $n\in\BB N$ and $r>0$, we have
\eqbn
\BB P\left(G_n( r ) \cap F_n \right) \leq  e^{-a_0 r^{-2} n} \vee e^{-a_0   n^{1-2\nu_1} }    \leq  e^{-a_0 r^{-2} n} +  o_n^\infty(n) .
\eqen
This yields~\eqref{eqn-no-tube-time}. 
\end{proof}

\subsection{Upper bound}
\label{sec-empty-upper}

We are now ready to prove the upper bound of Theorem~\ref{thm-empty-prob}, and thereby complete the proof of the theorem.
We will read the word backward and prove an estimate for $\BB P\left( X(-2n,-1) = \emptyset \right)$ (which is sufficient by translation invariance) so as to make use of Proposition~\ref{prop-corner-local}. 
 
For the proof, we will define successive stopping times $N_n^k$ and $K_n^k$ and associated events $E_n^k$ for $k\in [1,\BB k]_{\BB Z}$, where $\BB k \in \BB N$ is large but independent of $n$.
These stopping times are defined so that if $E_n^k$ occurs and $\{X(-2n,-1) = \emptyset\}$, then a certain reduced word will have an unusually small number of orders, so Proposition~\ref{prop-corner-local} gives a polynomial upper bound for the probability that this is the case. 
On the other hand, we will deduce from the exponential tail for the length of a reduced word~\cite[Lemma 3.13]{shef-burger} and the estimates of Section~\ref{sec-empty-estimates} that $\BB P\left((E_n^k)^c ,\, X(1,2n) = \emptyset \right) = o_n^\infty(n)$, so that we obtain an upper bound for $\BB P\left( X(-2n,-1) =\emptyset  \right)$. 
See Figure~\ref{fig-empty-upper} for an illustration of the proof. 
We now proceed with the details.

\begin{figure}[ht!]
 \begin{center}
\includegraphics{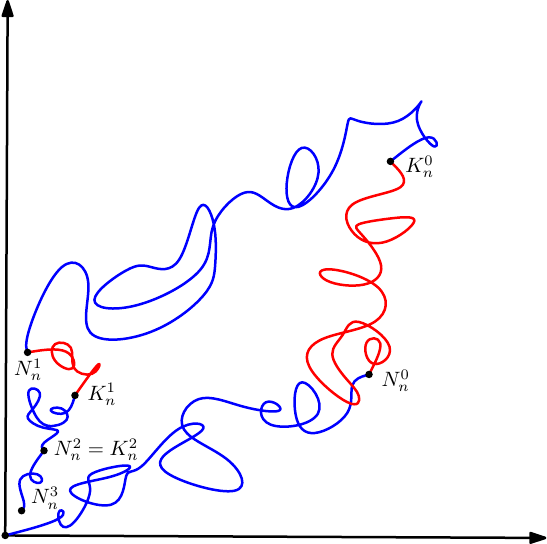} 
\caption{An illustration of the setup for the proof of the upper bound in Theorem~\ref{thm-empty-prob}. Fix $\nu \in (\mu',1/2)$, $\xi \in (2\nu,1)$,  and a small $\zeta>0$. We first grow the word $X$ backward up to a stopping time $K_n^0$ between $N_n^0 = \lfloor n/2 \rfloor$ and $n$ such that it holds on event of probability $1-o_n^\infty(n)$ that the word $X(-K_n^0,-1)$ contains at least $n^{1/2-\zeta}$ hamburger orders and at least $n^{1/2-\zeta}$ cheeseburger orders. Then we iterate the following procedure for $k\in\BB N$: grow until the first time $N_n^k$ after $K_n^{k-1}$ at which we have at most $n^{\xi^k+\zeta}$ orders; then, if we do not have at least $n^{\xi^k-\zeta}$ hamburger orders and at least $n^{\xi^k-\zeta}$ cheeseburger orders, grow until the first time $K_n^k$ at which we have $n^{\xi^k-\zeta}$ hamburger orders and at least $n^{\xi^k-\zeta}$ cheeseburger orders. It holds except on an event of probability $o_n^\infty(n)$ that this latter time is finite by Lemma~\ref{prop-no-tube}. Furthermore, using Lemma~\ref{prop-few-F-H}, we obtain that it holds except on an event of probability $o_n^\infty(n)$ that each of the words $X(-K_n^k , -1)$ contains at most $n^{2\nu \xi^k}$ flexible orders. The figure shows three iterations of this procedure. Segments of the path corresponding to words of the form $X_{N_n^k} \dots X_{-K_n^{k-1}}  $ are shown in blue and those corresponding to words of the form $  X_{-K_n^k} \dots X_{-N_n^k-1}$ are shown in red. Note that it is possible that $K_n^k = N_n^k$ (which is the case for $k=2$ here). }
\label{fig-empty-upper}
\end{center}
\end{figure}

Let $\mu'$ be as in~\eqref{eqn-cone-exponent}. Fix $\nu \in (  \mu' , 1/2)$, $\xi \in (2\nu ,1)$, $\BB k\in\BB N$, and $\zeta \in \left(0, \frac{\nu-\mu'}{100} \nu^{\BB k} \right)$. We will eventually send $\zeta \rta 0$ then $\BB k\rta\infty$. 
For $n\in\BB N$, let $N_n^0 := \lfloor n/2 \rfloor$ and let $K_n^0$ be minimum of $n+1$ and the smallest $j  \geq N_n^0$ such $X(-j,-1)$ contains at least $n^{1/2-\zeta}$ hamburger orders and at least $n^{1/2-\zeta}$ cheeseburger orders. For $k\in [1,\BB k]_{\BB Z}$, 
\begin{itemize}
\item Let $N_n^k$ be the smallest $j \geq  2n-   2 n^{2(\xi^k+\zeta)}  $ such that $X(-j,-1)$ contains at most $n^{\xi^k + \zeta}$ orders. 
\item Let $K_n^k$ be the minimum of $\lfloor 2n - n^{2\xi^k } \rfloor$ and the smallest $j\geq N_n^k$ such that $X(-j,-1)$ contains at least $n^{\xi^k - \zeta}$ hamburger orders and at least $n^{\xi^k-\zeta}$ cheeseburger orders. 
\end{itemize}
Observe that the times $N_n^k$ and $K_n^k$ are stopping times for the word $X$, read backward, and
\eqbn
N_n^0 \leq K_n^0 \leq \dots \leq N_n^{\BB k} \leq K_n^{\BB k} .
\eqen 
The reason why we work with words of length approximately $n^{2\xi^k}$ is that (in light of Lemma~\ref{prop-few-F-H}) the number of $\tb F$'s in the word $X(-K_n^{k-1} , -1)$, which is of order at most $n^{2\nu \xi^{k-1} + o_\zeta(1)}$, is typically smaller order than the number of orders in the word $X(-K_n^k , -K_n^{k-1}-1)$, which is of order $n^{\xi^k + o_\zeta(1)}$. This is important because we only have local estimates for the numbers of hamburgers, cheeseburgers, hamburger orders, and cheeseburger orders in our words so we need to ensure that the contribution of the $\tb F$'s will be insignificant. 

We now define an event associated with each time $k$. 
Let $E_n^0$ be the event that $K_n^0 \leq n$, $X(-K_n^0,-1)$ contains no burgers, and $\mcl N_{\tb F}\left(X(-K_n^0 , -1)\right) \leq n^{\nu }$. For $k\in [1,\BB k]_{\BB Z}$, let $E_n^k$ be the event that the following is true. 
\begin{enumerate}
\item $N_n^k \leq 2n -  2n^{2\xi^k}$ and $  K_n^k < \lfloor 2n - n^{2 \xi^k  } \rfloor  $. \label{item-empty-prob-times}
\item $X(-K_n^k , -1)$ contains at most $n^{\xi^k +  2\zeta}$ orders. \label{item-empty-prob-orders}
\item $X(-K_n^k ,-1)$ contains no burgers. \label{item-empty-prob-burgers}
\item $\mcl N_{\tb F}\left(X(-K_n^k , -1)\right) \leq n^{2 \nu \xi^{k  } }$. \label{item-empty-prob-F}
\end{enumerate}
Heuristically speaking, if $X(-2n,-1)=\emptyset$, then $E_n^k$ should typically occur, since the reduced word $X(-2n , -2n  + 2n^{2\xi^k})$ should typically contain approximately $n^{\xi^k}$ hamburger orders and cheeseburger orders (by Brownian scaling) and a word with $n^{\xi^k}$ hamburger orders is very unlikely to contain more than $n^{2\nu \xi^{k }}$ flexible orders (Lemma~\ref{prop-few-F-H}). 
In fact, we have the following lemma.
 
\begin{lem} \label{prop-reduced-empty-event-prob}
For each $k\in [0,\BB k]_{\BB Z}$, we have 
\eqb \label{eqn-reduced-event-prob}
\BB P\left( (E_n^k)^c ,\,  X(-2n,-1) = \emptyset\right) = o_n^\infty(n)   .
\eqe 
\end{lem}
\begin{proof} 
The estimate~\eqref{eqn-reduced-event-prob} in the case where $k=0$ follows from Lemma~\ref{prop-no-tube} together with \cite[Corollary 6.2]{gms-burger-cone}. 
Next suppose $k\in [1,\BB k]_{\BB Z}$. If $N_n^k >  2n-  2n^{2 \xi^k  }   $ then $ X(-N_n^k , -1) $ contains at least $n^{\xi^k + \zeta}$ orders. If also $X(-2n,-1) =\emptyset$, then $X(-2n , -N_n^k-1)$ contains at least $n^{\xi^k+\zeta}$ burgers. Hence
\eqbn
\sup_{j\in [2n-2 n^{2\xi^k} , 2n]_{\BB Z}} |X(-j, -2n +  2n^{2\xi^k})| \geq n^{\xi^k + \zeta} .
\eqen
By \cite[Lemma 3.13]{shef-burger}, the probability that this is the case is of order $o_n^\infty(n)$. 

Let $\nu_\zeta \in (\mu' , \nu)$ be chosen so that $    \nu_\zeta (\xi^{\BB k} + \zeta)  \leq \nu \xi^{\BB k}$, which implies that $  \nu_\zeta  (\xi^k +  \zeta) \leq   \nu \xi^{k}$ for $k \leq \BB k$. 
By Lemma~\ref{prop-few-F-H}, the probability that $\mcl N_{\tb H}\left(X(-N_n^k , -1)\right) \leq n^{\xi^k +\zeta}$ and $\mcl N_{\tb F}\left(X(-N_n^k ,-1)\right) \geq n^{2\nu_\zeta (\xi^k + \zeta)}$ is of order $o_n^\infty(n)$. 
Hence except on an event of probability $o_n^\infty(n)$, whenever $X(-2n,-1) =\emptyset$ we have 
\eqb  \label{eqn-reduced-event-N}
N_n^k \leq  2n-  2n^{2 \xi^k  } \quad \op{and} \quad \mcl N_{\tb F}\left(X(-N_n^k ,-1)\right) \leq n^{2 \nu \xi^{k }} .
\eqe 
 
 Next we will show that it is very unlikely that $X(-2n,-1) =\emptyset$ and $K_n^k \geq 2n-n^{2\xi^k}$. To this end, suppose $X(-2n,-1) =\emptyset$,~\eqref{eqn-reduced-event-N} holds, and $K_n^k  > N_n^k$. Then $X(-N_n^k , -1)$ either has fewer than $n^{\xi^k-\zeta}$ hamburger orders or fewer than $n^{\xi^k-\zeta}$ cheeseburger orders. Assume without loss of generality that we are in the former setting. Let $\wt K_n^k$ be the smallest $j \geq N_n^k+1$ for which $X(-j,-N_n^k-1)$ contains at least $n^{\xi^k-\zeta}$ hamburger orders. By~\eqref{eqn-reduced-event-N} and Lemma~\ref{prop-no-tube}, if $X(-2n,-1) =\emptyset$ then except on an event of probability $o_n^\infty(n)$ we have $\wt K_n^k \leq  2n - (3/2) n^{2\xi^k}$. If $K_n^k > \wt K_n^k$, then $X(-\wt K_n^k , -1)$ contains at most $n^{\xi^k-\zeta}+1$ hamburger orders and at most $n^{\xi^k-\zeta}$ cheeseburger orders. By Lemma~\ref{prop-few-F-H} (c.f.\ the argument above), if this is the case then except on an event of probability $o_n^\infty(n)$, the word $X(-\wt K_n^k,-1)$ contains at most $n^{2\nu \xi^{k}} $ flexible orders. Hence if $K_n^k \geq 2n-n^{2\xi^k}$ and $X(-2n,-1)=\emptyset$, then except on an event of probability $o_n^\infty(n)$ we have
\eqbn
\sup_{j\in [\wt K_n^k +1, 2n - n^{2\xi^k}]_{\BB Z}} \left| d\left( X(-j, -\wt K_n^k-1) \right) \right| \wedge \left| d^*\left( X(-j, -\wt K_n^k-1) \right) \right|  \leq n^{\xi^k-\zeta} +n^{2\nu \xi^k} + 1 .
\eqen
By Lemma~\ref{prop-no-tube}, the probability that this is the case is of order $o_n^\infty(n)$.  
It follows that the probability that $ X(-2n,-1) =\emptyset $ and condition~\ref{item-empty-prob-times} in the definition of $E_n^k$ fails to occur is $o_n^\infty(n)$. 
 
We have $K_n^k \geq N_n^k \geq 2n-  n^{2(\xi^k + \zeta)}$ by definition, so \cite[Lemma 3.13]{shef-burger} implies that except on an event of probability at most $o_n^\infty(n)$, 
\eqb \label{eqn-K-to-2n-sup}
\sup_{j\in [K_n^k+1 , 2n]_{\BB Z}} |X(-j,-K_n^k-1 )| \leq n^{\xi^k + 2\zeta}  .
\eqe 
If $X(-2n,-1) =\emptyset$, then $X(-2n,-K_n^k-1)$ contains at least $|X(-K_n^k,-1)|$ burgers. From this and~\eqref{eqn-K-to-2n-sup}, we infer that condition~\ref{item-empty-prob-orders} in the definition of $E_n^k$ holds with probability at least $1-o_n^\infty(n)$.

It is clear that condition~\ref{item-empty-prob-burgers} in the definition of $E_n^k$ occurs whenever $X(-2n,-1) =\emptyset$. 
By Lemma~\ref{prop-few-F-H}, applied as in the argument leading to~\eqref{eqn-reduced-event-N} (but with $2\zeta$ in place of $\zeta$) the probability that condition~\ref{item-empty-prob-orders} in the definition of $E_n^k$ occurs but condition~\ref{item-empty-prob-F} in the definition of $E_n^k$ fails to occur is of order $o_n^\infty(n)$. This completes the proof of~\eqref{eqn-reduced-event-prob}. 
\end{proof}

\begin{proof}[Proof of Theorem~\ref{thm-empty-prob}, upper bound]
Define the times $N_n^k$ and $K_n^k$ and the events $E_n^k$ as above. We will show that
\eqb \label{eqn-reduced-event-cap}
\BB P\left(\bigcap_{k=0}^{\BB k} E_n^k  \right) \leq n^{-1-2\mu + 2(1+\mu) \xi^{\BB k}  + o_\zeta(1) + o_n(1)}    .
\eqe
The desired upper bound for $\BB P(X(-2n,-1) =\emptyset)$ will follow by combining this estimate with Lemma~\ref{prop-reduced-empty-event-prob} and sending $\zeta \rta 0$ and then $\BB k\rta\infty$. 
  
By \cite[Lemma 6.1]{gms-burger-cone}, 
\eqb \label{eqn-empty-E0-prob}
\BB P\left(E_n^0\right) \leq n^{-\mu + o_n(1)} .
\eqe
Now let $k\in [1,\BB k]_{\BB Z}$. We will estimate the conditional probability of $E_n^k$ given $\bigcap_{j=0}^{k-1} E_n^j$ by applying assertion~\ref{item-corner-upper} of Proposition~\ref{prop-corner-local} to the word $X_{-2n} \dots X_{-K_n^{k-1}-1}$. 
To do this, we need to check that if $\bigcap_{j=0}^{k-1} E_n^j$ occurs, then $E_n^k$ is contained in the event~\eqref{eqn-corner-local-event} of Proposition~\ref{prop-corner-local} for an appropriate choice of parameter values depending only on $X_{-K_n^{k-1}}\dots X_{-1}$. 

To this end, suppose that $\bigcap_{j=0}^{k} E_n^j$ occurs. 
By condition~\ref{item-empty-prob-orders} in the definition of $E_n^{k}$ and the definition of $K_n^{k}$, the word $X(-K_n^{k} , -1)$ contains at most $n^{\xi^k + 2\zeta}$ orders. 
Hence the word $X(-K_n^k,-K_n^{k-1}-1)$ contains at least
\eqb \label{eqn-burger-count-lower}
\mcl N_{\tb H}\left(X(-K_n^{k-1} , -1)\right) - n^{\xi^k + 2\zeta}  
\eqe  
hamburgers, and similarly for cheeseburgers. 
Furthermore, 
\eqb \label{eqn-order-upper}
\text{the total number of orders in $X(-K_n^k,-K_n^{k-1}-1)$ is at most $n^{\xi^k + 2\zeta}$}
\eqe
since orders cannot be removed when we add additional symbols to the right of a word.  

By condition~\ref{item-empty-prob-F} in the definition of $E_n^{k-1}$ (or the definition of $E_n^0$ if $k=1$) and condition~\ref{item-empty-prob-burgers} in the definition of $E_n^k$, the word $X(-K_n^k , -K_n^{k-1} -1)$ contains at most 
\eqb \label{eqn-burger-count-upper}
\mcl N_{\tb H}\left(X(-K_n^{k-1} , -1)\right) + 
\begin{cases}
n^\nu ,&\quad k=1 \\
n^{2\nu \xi^{k-1} } , &\quad k \in [2,\BB k]_{\BB Z} .
\end{cases}
\eqe 
hamburgers, and similarly for cheeseburgers.
By definition of $K_n^k$, for each $k\in [1,\BB k]_{\BB Z}$, 
\eqb  \label{eqn-time-approx}
K_n^k \in 
 \left[  2n - 2 n^{2 (\xi^k+2\zeta)   } , 2 n    \right]_{\BB Z}  .
\eqe

By condition~\ref{item-empty-prob-times} in the definition of $E_n^{k-1}$ (or the definition of $E_n^0$ if $k=1$),
\eqbn
2n - K_n^{k-1} \geq \begin{cases}
n ,&\quad k=1 \\
n^{2\xi^{k-1}} , &\quad k \in [2,\BB k]_{\BB Z} .
\end{cases}
\eqen
and by this same condition together with the definition of $K_n^{k-1}$,  
\eqb \label{eqn-empty-count}
\mcl N_{\tb H}\left(X(-K_n^{k-1} , -1)\right)
\geq \begin{cases}
  n^{1/2-\zeta} , &\quad k=1 \\
 n^{\xi^{k-1}-\zeta},  &\quad k \in [2,\BB k]_{\BB Z} ;
 \end{cases}
\eqe 
and similarly with $\tb C$ in place of $\tb H$. 
  
For $k \in [2,\BB k]_{\BB Z}$, we infer from~\eqref{eqn-burger-count-lower}, \eqref{eqn-order-upper}, \eqref{eqn-burger-count-upper}, and \eqref{eqn-time-approx} that $\bigcap_{j=0}^{k} E_n^j$ is contained in the event $E_{\wt n,\wt k , r}^{h,c}$ of~\eqref{eqn-corner-local-event} defined with $X_{-2n} \dots X_{-K_n^{k-1}-1}$ in place of $X_{-n} \dots X_{-1}$, 
\alb
\wt n &=   2n- K_n^{k-1} \geq  n^{  2 \xi^{k-1}} \: \text{in place of $n$} \\
h &=  \mcl N_{\tb H}\left(X(-K_n^{k-1} , -1)\right) + n^{2\nu \xi^{k-1}} \\
c &=  \mcl N_{\tb C}\left(X(-K_n^{k-1} , -1)\right)  + n^{2\nu \xi^{k-1}} \\
\wt k &=  2 n^{2 (\xi^k + \zeta)  } \leq \wt n \: \text{in place of $k$}  \quad \op{and} \\
r &= n^{\xi^k + 2\zeta} + n^{2\nu \xi^{k-1} } \in \left[ \frac12 \wt k^{1/2}  , 2 n^{\xi^k+2\zeta} \right]_{\BB Z} .
\ale
Note that the conditions in the definition of $E_{\wt n,\wt k , r}^{h,c}$ are satisfied with $X_{-2n} \dots X_{-K_n^{k-1}-1}$ in place of $X_{-n}\dots X_{-1}$ and $j = K_n^k -K_n^{k-1}$. 

We have $E_{\wt n,\wt k , r}^{h,c} \subset E_{\wt n , 2 r^2 , r}^{h,c}$ and by~\eqref{eqn-empty-count}, the above choices of parameters satisfy $r \leq n^{\xi^{k-1} -\zeta} \leq h\wedge c$ on $E_n^{k-1}$. By assertion~\ref{item-corner-upper} of Proposition~\ref{prop-corner-local} together with~\eqref{eqn-empty-count}, on the event $ \bigcap_{j=1}^{k-1} E_n^j$ it holds that 
\alb
&\BB P\left(E_n^k   \,|\,  X_{-K_n^{k-1}} \dots X_{-1} \right)    \\
&\qquad  \leq n^{o_n(1)} \left( \mcl N_{\tb H}\left(X(-K_n^{k-1} , -1)\right) \wedge \mcl N_{\tb C}\left(X(-K_n^{k-1} , -1)\right)  + n^{\xi^k} \right)^{-2-2\mu} n^{(1+\mu) (2\xi^k + 4\zeta )} \\
&\qquad \leq  n^{ - 2(1+\mu)( \xi^{k-1} -   \xi^{k } ) +o_\zeta(1) + o_n(1)} .
\ale
with the $o_\zeta(1)$ depending on $\BB k$ but not $n$. Similarly,
\eqbn
\BB P\left( E_n^1 \,|\, X_{-K_n^0} \dots X_{-1} \right) \leq n^{ - 2(1+\mu)(1/2 -   \xi ) +o_\zeta(1) + o_n(1)} .
\eqen
Recalling~\eqref{eqn-empty-E0-prob} and multiplying our estimates over all $k\in [0,\BB k]_{\BB Z}$, we infer that~\eqref{eqn-reduced-event-cap} holds.
By combining this with Lemma~\ref{prop-reduced-empty-event-prob}, we get
\eqbn
\BB P\left( X(-2n,-1) = \emptyset \right) \leq n^{-1-2\mu + 2(1+\mu) \xi^{\BB k}  + o_\zeta(1) + o_n(1)} .
\eqen
By sending $\zeta\rta 0$ and then $\BB k\rta\infty$, we conclude.
\end{proof}

\appendix

\section{Index of symbols}
\label{sec-index}

Here we record some commonly used symbols in the paper, along with reminders of their definitions and the locations where they are precisely defined. Notations which are only used locally (i.e., just in one subsection) are not included.

\begin{multicols}{2}
\begin{itemize}
\item $p$; parameter of the model. Section~\ref{sec-burger-prelim}. 
\item $\mcl R(\cdot)$; reduced word. Section~\ref{sec-burger-prelim}.
\item $|x|$; number of symbols in the word $x$. Section~\ref{sec-burger-prelim}.
\item $X$; bi-infinite word with iid symbols. Section~\ref{sec-burger-prelim}.
\item $X(a,b)$; reduction of $X_{\lfloor x \rfloor} \dots X_{\lfloor b\rfloor}$.~\eqref{eqn-X(a,b)}. 
\item $\phi$; match operator. Notation~\ref{def-match-function}. 
\item $\mcl N_{\tb F}(x)$, etc.; number of symbols of a given type in $x$. Definition~\ref{def-theta-count}. 
\item $D = (d,d^*)$; $d = \mcl N_{\tc H} - \mcl N_{\tb H}$ and $d^* =\mcl N_{\tc C} - \mcl N_{\tb C}$. Definition~\ref{def-theta-count}.
\item $Z^n = (U^n ,V^n)$; re-scaled discrete walk.~\eqref{eqn-Z^n-def}.
\item $Z = (U,V)$; correlated two-dimensional Brownian motion.~\eqref{eqn-bm-cov}.
\item $I$; first time $X(1,i)$ contains an order.~\eqref{eqn-I-def}.
\item $J^H$; first time $X(-j,-1)$ contains a $\tc H$.~\eqref{eqn-J^H-def}.
\item $L^H = d^*(X(-J^H,-1))$.~\eqref{eqn-J^H-def}. 
\item $\mu  =    \frac{\pi}{2\left( \pi - \arctan \frac{\sqrt{1-2p} }{p} \right) } \in (1/2,1)$.~\eqref{eqn-cone-exponent}.
\item $\mu' = \frac{\pi}{2\left( \pi + \arctan \frac{\sqrt{1-2p} }{p} \right) }   \in (1/3,1/2)  $.~\eqref{eqn-cone-exponent}.
\item $\nu$; typically an exponent in $(\mu',1/2)$.
\item $\mcl E_n^{h,c}$; event that $X(1,n)$ contains no orders, $h$ $\tc H$'s, and $c$ $\tc C$'s. Definition~\ref{def-local-event}. 
\item $\psi_0$; slowly varying correction for the tail of $I$. Lemma~\ref{prop-I-reg-var}.
\item $\psi_1$; slowly varying correction for the first time $X(-j,-1)$ contains no orders. Lemma~\ref{prop-P-reg-var}.
\item $\frk h(x) = \mcl N_{\tc H}(\mcl R(x))$. Definition~\ref{def-frk}.
\item $\frk c(x)= \mcl N_{\tc C}(\mcl R(x))$. Definition~\ref{def-frk}.
\item $K_{n,m}^H $; last time $i$ before $n$ such that $\mcl N_{\tc H}(X(1,K_{n,m}^H))=m$ and $X_{i+1}$ is a burger not consumed before time $n$. Section~\ref{sec-no-order-local-setup}. 
\item $Q_{n,m}^H = \mcl N_{\tc C}(X(1,K_{n,m}^H))$. Section~\ref{sec-no-order-local-setup}. 
\item $J_{n,r}^H$. Smallest $j\in\BB N$ such that $X(n-j,n)$ contains $r$ hamburgers. Section~\ref{sec-no-order-local-setup}. 
\item $L_{n,r}^H  = d^*\left(X(n-J_{n,r}^H , n)\right)$. Section~\ref{sec-no-order-local-setup}. 
\item $R_n(x)  = \{ \text{$n-1- |x|  =   J_{n,r}^H$ for some $r\in\BB N$}\}$.~\eqref{eqn-J^H-hit-event}.
\item $m_h^\delta  = \lfloor (1-\delta) h\rfloor$. Proposition~\ref{prop-no-order-endpoint}.
\item $\mcl U_n^\delta(A , h,c)  =  \left[n - A^2  \delta^2 h^2 , n - A^{-2} \delta^2 h^2 \right]_{\BB Z} \times \left[ c - A \delta h  , c + A \delta h\right]_{\BB Z}$. Proposition~\ref{prop-no-order-endpoint}. 
\item $E_{n,k,r}^{h,c}$; event that there exists $j\in [n-k,n]_{\BB Z}$ such that $X(-j,-1)$ contains approximately $h$ $\tc H$'s, approximately $c$ $\tc C$'s, and few orders.~\eqref{eqn-corner-local-event}.
\item $J_{n,k,r}^{h,c}$; the smallest $j$ satisfying the definition of $E_{n,k,r}^{h,c}$. 
\item $\wt E_{n,k,r}^{h,c}$; variant of $E_{n,k,r}^{h,c}$ which is contained in $\wt E_{n,k,r}^{h,c}$.~\eqref{eqn-corner-local-event-J^H}.
\end{itemize}
\end{multicols}

\bibliography{cibiblong,cibib} 
\bibliographystyle{hmralphaabbrv}

\end{document}